\def \eps {\varepsilon}

\documentclass[11pt]{article}

\usepackage{fullpage}
\usepackage{amsfonts}
\usepackage{bbding}
\usepackage{graphicx,float}
\usepackage{amsfonts,amssymb,amsmath,amsthm,amscd}
\usepackage{mathrsfs}
\usepackage{xcolor}
\usepackage{empheq}
\usepackage{adforn}
\usepackage{cancel}
\usepackage{xr-hyper}
\usepackage{xr}
\usepackage{mdframed}
\usepackage{mathdots}
\usepackage{enumerate}
\usepackage{lmodern}
\usepackage{mdframed}
\usepackage{stmaryrd}
\usepackage{blindtext}
\usepackage[all, knot]{xy}
\usepackage{leftidx}
\usepackage{accents}

\definecolor{slightblue}{rgb}{.8, .8, 1}
\definecolor{hair}{RGB}{100,225,190}
\definecolor{ruby}{RGB}{220,50,120}
\definecolor{grass}{RGB}{150,220,110}
\definecolor{ceruleanblue}{rgb}{0.16, 0.32, 0.75}
\definecolor{deepcarmine}{rgb}{0.66, 0.13, 0.24}
\definecolor{otterbrown}{rgb}{0.4, 0.26, 0.13}
\definecolor{sapphire}{rgb}{0.03, 0.15, 0.4}

\usepackage[colorlinks=true,  linkcolor=otterbrown, citecolor=sapphire,  urlcolor=hair!80!black]{hyperref}

\usepackage{soul}

\usepackage[T1]{fontenc}

\newtheorem{theorem}{Theorem}[section] \newtheorem{lemma}[theorem]{Lemma}
\newtheorem{proposition}[theorem]{Proposition} \newtheorem{corollary}[theorem]{Corollary}

\theoremstyle{definition} 
\newtheorem{definition}[theorem]{Definition}

\newtheorem{remark}[theorem]{Remark} \numberwithin{equation}{section}
\numberwithin{figure}{section}

\newcommand{\Cb}{\mathbb{C}}

\newcommand{\Eb}{\mathbb{E}}

\newcommand{\Hb}{\mathbb{H}}

\newcommand{\Nb}{\mathbb{N}}

\newcommand{\Pb}{\mathbb{P}}
\newcommand{\Qb}{\mathbb{Q}}
\newcommand{\Rb}{\mathbb{R}}

\newcommand{\Ub}{\mathbb{U}}

\newcommand{\Zb}{\mathbb{Z}}

\newcommand{\Pf}{\mathbf{P}}

\newcommand{\Ac}{\mathcal{A}}
\newcommand{\Bc}{\mathcal{B}}
\newcommand{\Cc}{\mathcal{C}}
\newcommand{\Dc}{\mathcal{D}}
\newcommand{\Ec}{\mathcal{E}}
\newcommand{\Fc}{\mathcal{F}}
\newcommand{\Gc}{\mathcal{G}}

\newcommand{\Ic}{\mathcal{I}}

\newcommand{\Lc}{\mathcal{L}}
\newcommand{\Nc}{\mathcal{N}}

\newcommand{\Rc}{\mathcal{R}}
\newcommand{\Sc}{\mathcal{S}}
\newcommand{\Tc}{\mathcal{T}}

\newcommand{\Cs}{\mathscr{C}}
\newcommand{\Es}{\mathscr{E}}

\newcommand{\Sfr}{\mathfrak{S}}

\usepackage{sidecap,subcaption}
\usepackage[section]{placeins}
\usepackage{wrapfig}
\usepackage{lpic}

\sidecaptionvpos{figure}{c}

\newcommand{\wt}{\widetilde}
\newcommand{\wh}{\widehat}
\newcommand{\ol}{\overline}

\newcommand{\dimh}{\mathrm{dim}_\mathrm{H}}
\newcommand{\mub}{\mu^{\mathrm{bub}}}
\newcommand{\fr}{\mathrm{fr}}

\newcommand{\area}{\mathrm{Area}}
\newcommand{\dist}{\mathrm{dist}}
\newcommand{\one}{\mathbf{1}}

\begin{document}

\title{Multiple points on the boundaries of Brownian loop-soup clusters}

\author{Yifan Gao
	\thanks{City University of Hong Kong} 
	\and Xinyi Li\thanks{Peking University}
	\and Wei Qian	\footnotemark[1] \thanks{CNRS and Laboratoire de Math\'{e}matiques d'Orsay, Universit\'{e} Paris-Saclay (on leave)}
	}
	\date{}
	
	\maketitle

\begin{abstract}
For a Brownian loop soup with intensity $c\in(0,1]$ in the unit disk, we show that almost surely, the set of simple (resp.\ double) points on any portion of boundary of any of its clusters has Hausdorff dimension $2-\xi_c(2)$ (resp.\ $2-\xi_c(4)$), where $\xi_c(k)$ is the generalized disconnection exponent computed in \cite{MR4221655}. 
As a consequence, when the dimension is positive, such points are a.s.\ dense on every boundary of every cluster. There are a.s.\ no triple points on the cluster boundaries.

 As an intermediate result, we establish a \emph{separation lemma} for Brownian loop soups, which is a powerful tool for obtaining sharp estimates on non-intersection and non-disconnection probabilities in the setting of loop soups.
In particular, it allows us to define a family of \emph{generalized intersection exponents} $\xi_c(k, \lambda)$, and
show that $\xi_c(k)$ is the limit  as $\lambda\searrow 0$ of  $\xi_c(k, \lambda)$.
\end{abstract}

\tableofcontents


	\section{Introduction}
	\subsection{Background and overview}
 The Brownian loop soup was introduced by Lawler and Werner in \cite{MR2045953}, and has deep connections with many conformally invariant objects in two-dimensional random geometry. Moreover, the Brownian loop soup itself possesses a complicated structure which is not yet completely understood. 
Sheffield and Werner showed in \cite{MR2979861} that the Brownian loop soup undergoes a phase transition at $c=1$. At a supercritical intensity $c>1$, a Brownian loop soup in the unit disk a.s.\ has one single cluster; at a subcritical or critical intensity $c\in (0,1]$, it a.s.\ has infinitely many clusters, and the outer boundaries of these clusters are distributed as a CLE$_\kappa$, where $\kappa\in(8/3,4]$ and $c\in(0,1]$ are related by
\begin{align}\label{eq:c_kappa}
c(\kappa) =(6-\kappa)(3\kappa-8) /(2\kappa), \quad \kappa(c)=\left(13 -c -\sqrt{(1-c)(25-c)}\right)/3.
\end{align}
The Hausdorff dimension $d_0 (c)$ of the outer boundary of a cluster in a loop soup with intensity $c\in(0,1]$ is equal to the dimension of a CLE$_\kappa$ loop, which is equal to $1+\kappa(c)/8$ by \cite{MR2435854}. As $c\searrow 0$, $d_0(c)$ decreases continuously to $4/3$, which is the dimension of the \emph{frontier} (i.e., outer boundary) of a Brownian motion. 

It was shown by Lawler, Schramm and Werner in \cite{MR1992830} that the Brownian frontier is an SLE$_{8/3}$-type curve, but the dimension of the Brownian frontier was obtained a bit earlier using so-called \emph{Brownian disconnection exponents} which characterize the decay rate of the non-disconnection probabilities of Brownian motion\footnote{When $\beta$ is an integer, the probability that the union of the trace of $\beta$ independent Brownian motions started from the origin and stopped at first exit of $\Bc_r$, the disk of radius $e^r$, do not disconnect the unit disk from infinity decays like $\exp(-r \xi(\beta))$. }
\begin{equation}\label{eq:discexp}
    \xi(\beta)=\frac{1}{48} \left(\left(\sqrt{24\beta+1} - 1\right)^2 -4 \right),
\end{equation} computed in a celebrated work \cite{MR1879851} by the same authors using SLE techniques.

Disconnection exponents play a central role in the computation of the dimension of fractals related to the Brownian frontier. On a heuristic level, conditioned on a point being on the frontier (resp.\ being a double point on the frontier), the forward and backward paths act as two (resp.\ four) independent Brownian motions whose union does not disconnect this point from infinity. In fact, it was proved by Lawler \cite{MR1386292} that the dimension of the Brownian frontier is equal to $2-\xi(2)$, and by Kiefer and M\"orters \cite{MR2644878} that the dimension of double points on the Brownian frontier is equal to $2-\xi(4)$.

As a matter of fact, every point on the Brownian frontier is also on the Brownian motion itself, because the Brownian motion is a.s.\ a continuous curve. In contrast, most points on the outer boundary of a loop-soup cluster are not visited by any Brownian loop, because the dimension of the Brownian frontier $4/3$ is strictly less than $1+\kappa/8$, the dimension of the CLE$_\kappa$ loops.\footnote{In this paper, we only focus on the $c\in(0,1]$ regime. If a point on the outer boundary of a cluster is visited by a Brownian loop, then it is also on the outer boundary of this loop. Therefore, such points belong to the countable union of the outer boundaries of the loops in a Brownain loop soup, which has dimension $4/3$.} However, as shown by \cite[Theorem 1]{MR3994105}, there do exist points on the outer boundary of a cluster which are visited by at least one Brownian loop.
Back in 1965, L\'evy \cite{MR0190953} already showed the existence and denseness of double points on the Brownian frontier.\footnote{The argument roughly says that if there is a portion of the Brownian frontier that contains no double points, then the Brownian path must follow that portion, which is impossible.} However, this argument fails for the loop soup, and we could not find a simple way to decide whether there exist double points on the outer boundaries of loop-soup clusters. It was shown by Burdzy and Werner \cite{MR1387629} that there are no triple points on the Brownian frontier (using the fact that $\xi(6)>2$), but this does not seem to imply whether there exist triple points on the boundaries of loop-soup clusters.

The third author started to investigate whether there exist double points  on the boundaries of clusters in  \cite{MR4221655}, motivated by a question related to the decomposition of the Brownian loop-soup clusters at the critical intensity $c=1$  \cite{MR3994105}.
The article \cite{MR4221655} tried to generalize the strategy used for the Brownian motion to the loop-soup setting, and to compute explicitly the dimension of multiple points on the boundaries of loop-soup clusters.
As a first step, \cite{MR4221655} extended the Brownian disconnection exponent to some \emph{generalized disconnection exponent} $\eta_\kappa(\beta)$, defined for all $\kappa\in(0,4]$ and
$$\beta\ge \max((6-\kappa)/(2\kappa), (4-\kappa)^2/(2\kappa))$$
in terms of \emph{general restriction measures} which were constructed using radial hypergeometric SLE. The values of $\eta_\kappa(\beta)$ for $\kappa$ and $\beta$ in the previous range were then computed. 
For $c\in [0,1]$, if we let $\kappa\in [8/3,4]$ be determined by $c$ via \eqref{eq:c_kappa}, then for $\beta\ge (6-\kappa)/(2\kappa)$,  we have
	\begin{align}\label{eq:exponent}
	\xi_c(\beta) := \eta_\kappa(\beta)=\frac{1}{48} \left(\left(\sqrt{24\beta+1-c} - \sqrt{1-c}\right)^2 -4(1-c) \right).
	\end{align}
For $c=0$, the value of $\xi_0(\beta)$ equals the Brownian disconnection exponent $\xi(\beta)$ (see \eqref{eq:discexp}, but its definition given in \cite{MR4221655}  was very different from that of the Brownian disconnection exponent. 
Nevertheless,  \cite{MR4221655} gave heuristic indications why the exponents $\eta_\kappa(\beta)$ defined therein should be the correct generalization. In particular, if $\beta$ equals an integer $k\in\Nb$, it was informally argued that  $\xi_c(k)$ should be related to the disconnection probability of $k$ Brownian motions inside an independent Brownian loop soup with intensity $c\in(0,1]$.

In this paper, we rigorously establish the relation between $\xi_c (k)$ for $k\in \Nb$ and the disconnection behavior of $k$ Brownian motions inside a loop soup, and compute the dimensions of multiple points on the cluster boundaries, confirming several predictions in \cite{MR4221655}. 
The actual work of making the statements rigorous is far from obvious, and does not follow from \cite{MR4221655}.
The most  innovative and technical part of this work is the proof of a \emph{separation lemma} for the loop-soup setting. This is a generalization of the separation lemma for Brownian motions established by Lawler \cite{MR1386292}, but its statement is much more intricate, and its proof involves considerably more technicalities which are specific to the loop-soup setting (see Section~\ref{subsec:proof_strategy} for a detailed discussion).
Among other things, our separation lemma implies up-to-constants estimates for the generalized non-intersection and non-disconnection probabilities, which allows us to define  the \emph{generalized intersection exponents} $\xi_c(k, \lambda)$  for $k\in\Nb$ and $\lambda>0$, and to show that 
\begin{align}\label{eq0}
\xi_c(k) = \lim_{\lambda\searrow 0}\xi_c(k, \lambda).
\end{align}

In \cite{MR1879851}, the derivation of the Brownian disconnection exponent \eqref{eq:discexp} involves the computation of a more general family of critical exponents, namely \emph{Brownian intersection exponenents}
\begin{equation}\label{eq:interexp}
\xi(\beta_1,\ldots,\beta_k) =\frac{1}{48} \left(\left(\sum_{i=1}^k\sqrt{24\beta_i+1} - k\right)^2 -4 \right),    
\end{equation}
which characterize the non-intersection probability of (packets of) Brownian motions in a fashion similar to disconnection exponents. In fact, the disconnection exponent $\xi(\beta)$ can be viewed as the limit of $\xi(\beta,\iota)$ as $\iota\to 0$. 
Therefore, \eqref{eq0} generalizes this relation between Brownian disconnection and intersection exponents (which correspond to $c=0$ in our case). 

\subsection{Multiple points on the boundaries of clusters}\label{subsec:main_dimension}
In this section, we state our results on the multiple points on the boundaries of clusters in a Brownian loop soup.

Let $\Gamma_0$ be a Brownian loop soup with intensity $c\in(0,1]$ in the unit disk. A \emph{cluster} in $\Gamma_0$ is a subset of $\Gamma_0$ which is equal to an equivalence class under the equivalence relation $\gamma_1 \sim \gamma_0$ if there exists a finite sequence of loops $\gamma_2, \ldots, \gamma_n$ such that $\gamma_i$ intersects $\gamma_{i+1}$ for $1\le i \le n-1$ and $\gamma_n$ intersects $\gamma_0$.
For every cluster $K$ in $\Gamma_0$, we say that $\ell$ is a boundary of the cluster $K$, if $\ell$ is equal to the boundary of a connected component of $\Cb \setminus \cup_{\gamma\in K} \gamma$.
Let $\partial K$ denote the collection of boundaries of $K$.
Every boundary $\ell\in\partial K$ is a Jordan curve\footnote{This follows from the fact that the loop soup is conformally invariant so one can map via an inversion an inner boundary of a cluster to the outer boundary which is a ${\rm CLE}_\kappa$ ($8/3<\kappa\leq 4$) loop -- a Jordan curve.}, hence can be parametrized by a continuous bijection $f$ from the unit circle to $\ell$. We say that $\ell_0$ is a \emph{portion} of $\ell$, if $\ell$ is the image under $f$ of an arc with positive length on the unit circle.
We say that a point is a simple (resp.\ double or $n$-tuple) point of $\Gamma_0$, if it is visited at least once (resp.\ twice or $n$ times) in total by (the same or different) Brownian loop(s) in $\Gamma_0$. Let $\Sc$ and $\Dc$ be respectively the set of simple and double points in $\Gamma_0$.  Let $\dimh(\cdot)$ denote the Hausdorff dimension of a set.
\begin{theorem}\label{main-thm}
		Let $\Gamma_0$ be a Brownian loop soup with intensity $c \in (0,1]$ in the unit disk. The following holds almost surely.
For every cluster $K$ in $\Gamma_0$,  for any portion $\ell_0$ of any boundary $\ell \in \partial K$,
		we have
		\begin{align*}
		\dimh(\ell_0 \cap \Sc)= 2- \xi_c(2), \quad \dimh(\ell_0 \cap \Dc)= 2- \xi_c(4),
		\end{align*}
		where $\xi_c$ is given by~\eqref{eq:exponent}.
There are no triple points on $\ell$.
	\end{theorem}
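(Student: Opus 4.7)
The plan is to compute the Hausdorff dimensions via the first- and second-moment methods, with the generalized disconnection exponents $\xi_c(2k)$ entering through sharp one- and two-point estimates for the probability that a small disk contains a $k$-tuple point lying on a cluster boundary. The bridge between the geometric statement and the exponents is the separation lemma announced in the introduction. For a fixed bulk point $z$ and scale $\eps$, the event that $B(z,\eps)$ meets a $k$-tuple point on some cluster boundary decomposes as: (i) there are $k$ distinct Brownian loops passing through $B(z,\eps)$, contributing $2k$ Brownian strands joined near $z$, and (ii) these loops together with the rest of their cluster do not disconnect $B(z,\eps)$ from $\partial\Db$. Applying the separation lemma to decouple the local configuration at $z$ from the far-away non-disconnection event, and matching with the definition of $\xi_c(2k)$ recalled in \eqref{eq:exponent}, I expect to get, for $k=1,2,3$ and $z$ in a compact subset of the bulk,
\begin{equation*}
\Pf\bigl(B(z,\eps)\text{ meets a $k$-tuple point on a cluster boundary}\bigr) \asymp \eps^{\xi_c(2k)}.
\end{equation*}

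The upper bounds in Theorem~\ref{main-thm} would follow by a covering argument. Fix dyadic scales $\eps_n=2^{-n}$, cover the disk by $O(\eps_n^{-2})$ balls of radius $\eps_n$, and estimate the expected number of such balls meeting $\ell\cap\Sc$ (resp.\ $\ell\cap\Dc$) by $O(\eps_n^{\xi_c(2)-2})$ (resp.\ $O(\eps_n^{\xi_c(4)-2})$). Markov's inequality combined with Borel--Cantelli at dyadic exponents gives $\dimh(\ell_0\cap\Sc)\le 2-\xi_c(2)$ and $\dimh(\ell_0\cap\Dc)\le 2-\xi_c(4)$. The non-existence of triple points comes from the same first-moment argument applied to $k=3$: inspection of \eqref{eq:exponent} shows $\xi_c(6)>2$ for every $c\in[0,1]$ (for instance $\xi_0(6)=((\sqrt{145}-1)^2-4)/48\approx 2.46$ and $\xi_1(6)=3$), so the expected number of $\eps_n$-balls meeting a triple point on any cluster boundary is summable, and Borel--Cantelli rules out such points entirely.

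For the lower bounds I would use the second-moment method. Fix a portion $\ell_0$ inside a fixed bulk disk and introduce the random measure on the disk with Lebesgue density $\eps^{\xi_c(2)-2}\one_{\{B(z,\eps)\,\cap\,\Sc\,\cap\,\ell_0\,\neq\,\emptyset\}}$. The one-point estimate gives a uniform lower bound on its total mass in expectation. The crucial input is a matching two-point estimate, also obtained from the separation lemma via quasi-multiplicativity:
\begin{equation*}
\Pf\bigl(B(z_1,\eps),\,B(z_2,\eps)\text{ both meet } \Sc\cap\ell_0\bigr) \lesssim \eps^{2\xi_c(2)}|z_1-z_2|^{-\xi_c(2)} \quad \text{for } |z_1-z_2|\gg\eps.
\end{equation*}
Integrating this against $|z_1-z_2|^{-s}$ shows that the $s$-energy is uniformly bounded for every $s<2-\xi_c(2)$, so that any subsequential weak limit $\mu$ is supported on the closed set $\Sc\cap\ell_0$, charges $\ell_0$ with positive probability, and has finite $s$-energy; Frostman's lemma then yields $\dimh(\ell_0\cap\Sc)\ge 2-\xi_c(2)$ on that event. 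The double-point case is entirely analogous with $\xi_c(4)$.

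To upgrade to the statement simultaneously for every portion of every boundary of every cluster, I would run the second-moment argument in each disk of a countable dense dyadic family: any prescribed portion $\ell_0$ must contain a sub-portion lying inside some such disk, and restriction/conformal covariance of the loop soup transfers the dimension lower bound. The main obstacle, absorbing most of the technical effort, will be the two-point estimate. In the pure Brownian case \cite{MR1386292, MR2644878} the analogous bound rests on a relatively clean quasi-multiplicativity for Brownian motions, whereas here one must simultaneously control the $2k$ distinguished strands through each of the two candidate multiple points \emph{and} the ambient loop soup at all intermediate scales, including ruling out configurations where the cluster boundary passes through $B(z,\eps)$ without producing the required number of loops through it. This is precisely where the loop-soup separation lemma — the technical heart of the paper — has to be deployed carefully, and where the loop-soup specific subtleties discussed in Section~\ref{subsec:proof_strategy} come into full play.
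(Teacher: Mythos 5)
Your first- and second-moment plan matches the paper's Section 5 in spirit: the paper proves the analogue of your one- and two-point estimates (Lemma~5.3) by decomposing a single Brownian loop $\gamma$ (sampled from a bubble measure, inside an independent loop soup) into $2k$ crossings across an annulus around each dyadic $(k,n)$-square plus $2k$ bridges, then applying the up-to-constants estimates of Theorem~\ref{thm:up-to-constants-ex-k-sep}. The triple-point exclusion via $\xi_c(6)>2$ also agrees. Two caveats: (i) the paper carefully distinguishes the cases where a $k$-tuple point is visited by one loop versus several (lower bound uses one loop; upper bound must handle all cases, treated in Section~5.4), whereas your framing of the one-point estimate fixes ``$k$ distinct Brownian loops'' and elides this; (ii) the paper's two-point estimate is stated between squares, not disks, but these are equivalent up to constants.

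The genuine gap is the step that upgrades ``positive probability inside a fixed bulk disk'' to ``almost surely, simultaneously for every portion of every boundary of every cluster.'' The second-moment method produces only $\Pb(\dimh \ge 2-\xi_c(2k)) > 0$; a countable dense family of dyadic disks does not by itself convert this into an almost-sure statement (you would only get: a.s.\ there \emph{exists} a disk in the family where, \emph{with conditional probability one relative to nothing in particular}, the lower bound holds --- which is not what is needed). The paper flags precisely this point in the introduction: the classical 0--1 laws of \cite{MR1386292,MR2644878} do \emph{not} adapt to the loop-soup setting. Instead the paper proves a bespoke zero-one law in Section~6, via the CLE/loop-soup Markovian exploration of \cite{MR2979861,MR3901648}: it uses Lemma~\ref{lem:qian} to show that the dimension of multiple points on a partially explored cluster boundary is measurable with respect to a germ $\sigma$-algebra at the exploration tip, then shows this germ $\sigma$-algebra is trivial (using triviality of the SLE driving Brownian motion's germ filtration plus the fact that $\nu$-a.e.\ pinned cluster contains no loop through the pin point), yielding a \emph{deterministic} dimension for pinned cluster boundaries which must equal $2-\xi_c(2k)$ by the positive-probability statement. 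This is then transferred to all outer boundaries (Proposition~\ref{prop:0-1}), to inner boundaries by conformal inversion (Lemma~\ref{lem:inner_boundary}), and to non-outermost clusters by iterating the restriction property (Section~7). None of this machinery appears in your plan, and without it the almost-sure, all-portions, all-clusters statement does not follow.
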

	
This immediately implies the following corollary. 
\begin{corollary}\label{cor:dense}
Let $\Gamma_0$ be a Brownian loop soup with intensity $c\in(0,1]$ in the unit disk. The following holds almost surely.
If $c\in(0,1]$ (resp.\ $c\in(0,1)$), then for every cluster $K$ in $\Gamma_0$ and every boundary $\ell\in\partial K$, $\ell \cap \Sc$ (resp.\ $\ell \cap \Dc$) is dense on $\ell$. 
\end{corollary}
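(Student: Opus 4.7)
The proof is essentially a routine unpacking of Theorem~\ref{main-thm}. The plan is to observe that a set of positive Hausdorff dimension is non-empty, and that a subset of $\ell$ meeting every portion of $\ell$ is dense in $\ell$. Concretely, I would condition on the almost-sure event provided by Theorem~\ref{main-thm}, so that simultaneously for every cluster $K$, every $\ell \in \partial K$, and every portion $\ell_0 \subset \ell$, one has the dimension identities
\[
\dimh(\ell_0 \cap \Sc) = 2 - \xi_c(2), \qquad \dimh(\ell_0 \cap \Dc) = 2 - \xi_c(4).
\]

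Next, I would check the strict positivity of the right-hand sides in the claimed parameter ranges using the explicit formula~\eqref{eq:exponent}. For $\beta = 2$ and $c \in (0,1]$, the expression
\[
\xi_c(2) = \frac{1}{48}\Bigl(\bigl(\sqrt{49-c} - \sqrt{1-c}\bigr)^{2} - 4(1-c)\Bigr)
\]
is strictly less than $2$ (at $c=1$ it equals $1$, and one checks monotonicity/continuity in $c$ on $(0,1]$), so $2 - \xi_c(2) > 0$. For $\beta = 4$ and $c \in (0,1)$, the analogous computation gives $\xi_c(4) < 2$ (at $c = 1$ one has exactly $\xi_1(4) = 2$, and the expression is continuous and strictly less than $2$ on $(0,1)$), so $2 - \xi_c(4) > 0$. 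These are the only inequalities needed.

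Finally, fix a cluster $K$ and a boundary $\ell \in \partial K$ on the almost-sure event above. For any open arc $U \subset \ell$, choose a portion $\ell_0 \subset U$ (possible since $\ell$ is a Jordan curve, so $U$ contains the image of some arc of positive length under the parametrization $f$). Since $\dimh(\ell_0 \cap \Sc) = 2 - \xi_c(2) > 0$ for $c \in (0,1]$, the set $\ell_0 \cap \Sc$ is non-empty, so $U \cap \Sc \neq \emptyset$; as $U$ was arbitrary, $\ell \cap \Sc$ is dense in $\ell$. The same argument with $\Dc$ and $\xi_c(4)$ in place of $\Sc$ and $\xi_c(2)$ gives denseness of $\ell \cap \Dc$ in the range $c \in (0,1)$.

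There is no real obstacle here: the only point requiring any thought is verifying the sign of $2 - \xi_c(\beta)$ in the relevant ranges, which is immediate from the closed form~\eqref{eq:exponent}. Everything else is a direct application of Theorem~\ref{main-thm} together with the fact that Hausdorff dimension zero does not contradict emptiness, so one must invoke the theorem on \emph{every} portion of $\ell$ rather than on $\ell$ alone.
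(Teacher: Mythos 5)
Your proof is correct and is exactly the argument the paper has in mind — it states the corollary follows ``immediately'' from Theorem~\ref{main-thm} without further elaboration. You correctly isolate the only substantive check (that $2-\xi_c(2)>0$ for $c\in(0,1]$ and $2-\xi_c(4)>0$ for $c\in(0,1)$, both immediate from~\eqref{eq:exponent} and the monotonicity in $c$) and correctly observe that density requires applying the theorem to every portion, not just to $\ell$ itself.
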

The dimensions $d_1(c):= 2-\xi_c(2)$ and $d_2(c):= 2-\xi_c(4)$ are both continuous and decreasing in $c\in [0,1]$, contrary to the dimension $d_0(c)$ of the outer boundaries of clusters which is increasing in $c$. For $c=0$, $d_1(0)=d_0(0)$ and $d_2(0)$ are respectively equal to the dimensions of the Brownian frontier and of double points on the Brownian frontier. For $c=1$, $\dimh(\ell \cap \Dc)=0$, and this is not sufficient to determine whether there exist double points on the cluster boundaries in a critical loop soup. The latter is an open question \cite[Question 1.2]{MR4221655} that will be answered in a separate work \cite{nonexistence}.

The most difficult part of the proof is an up-to-constants estimate that we will present in the next subsection. Once we have this estimate, we will apply similar arguments as in \cite{MR2644878}, which were used to get the dimension of double points on the Brownian frontier. (The article \cite{MR1386292} computed the dimension of the set of times at which the Brownian motion is on the frontier, and then used the dimension doubling theorem. This method is suitable for computing the dimension of simple points, but not of double points.)

Finally, to finish the proof of Theorem~\ref{main-thm}, we need to establish a certain zero-one law. The proof of the zero-one laws in \cite{MR1386292} and \cite{MR2644878} for the Brownian motion is not adapted to the loop-soup setting.
Instead, we will make use of the CLE exploration process introduced by Sheffield and Werner \cite{MR2979861}, and more specifically the partial exploration of Brownian loop soups developed in \cite{MR3901648}. 
	
\subsection{Up-to-constants estimate and separation lemma}\label{subsec:intro_separation}

The proof of Theorem~\ref{main-thm} crucially relies on a certain up-to-constants estimate on some non-disconnection probabilities. It allows us to identify the generalized disconnection exponent defined in  \cite{MR4221655} (see Section~\ref{subsec:GDE} for the definition) with the following one, defined using $k$ independent Brownian motions inside a Brownian loop soup (see Figure~\ref{fig:loop soup}).

\begin{figure}
\centering
\includegraphics[scale=.52]{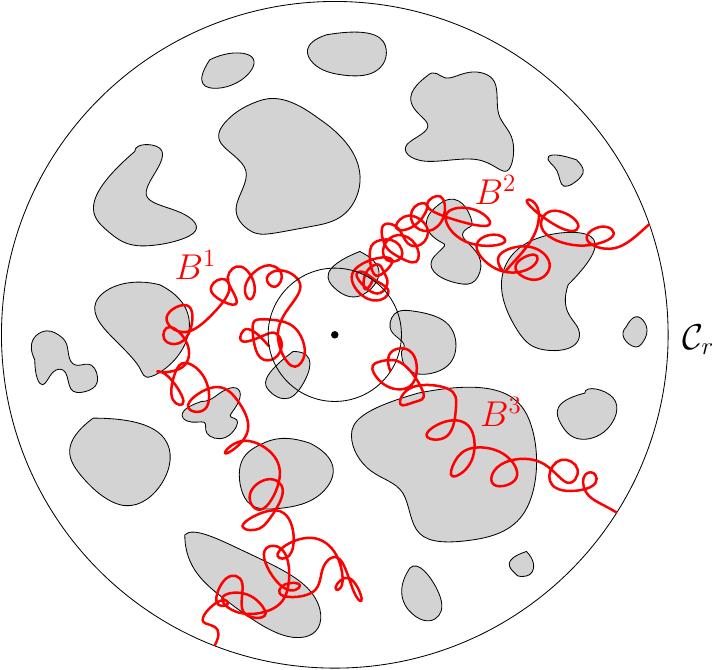}
\caption{We illustrate the event that $ \Theta_r$ does not disconnect $\Cc_0$ from $\infty$ in the case $k=3$. We depict in grey the (filled) clusters of the Brownian loops in $\Gamma_r \setminus \Gamma_0$.}
\label{fig:loop soup}
\end{figure}

Let $\Gamma$ be a Brownian loop soup with intensity $c\in(0,1]$ in the whole plane.
	For all $r>0$, let $\Gamma_r$ be the collection of loops in $\Gamma$ which are contained in the ball $\Bc_r$ of radius $e^r$ centered at $0$.
	Denote $\Cc_r=\partial \Bc_r$.
Let $B^1$, $\cdots$, $B^k$ be $k$ independent Brownian motions started from $k$ uniformly chosen points on $\Cc_0$. For all $r$, define the hitting times 
	\[\tau^i_r=\inf\{ t>0: B^i_t\in \Cc_r\}.\]
	Denote by $\overline B^i_r$ the path $B^i[0,\tau^i_r]$. 
	Let $\Theta_r$ be the union of $\overline B^1_r\cup\cdots\cup \overline B^k_r$ together with the clusters in $\Gamma_r\setminus \Gamma_0$ that it intersects. Let $ p(c,k,r,0)$ be the probability that $ \Theta_r$ does not disconnect $\Cc_0$ from $\infty$.
By the scale-invariance and the Markov property of the Brownian motions and the loop soup, a standard sub-multiplicativity argument (see e.g.\ \cite[Section 2]{MR4221655}) would imply the existence of an exponent $\xi_c(k,0)>0$ such that 
\begin{align}\label{eq:wh-p1}
 p(c,k,r,0) = e^{-r [\xi_c(k,0)+o(1)]}.
\end{align}
In the following, we improve \eqref{eq:wh-p1} to an up-to-constants estimate.
	\begin{theorem}\label{thm:same_exponent}
For all $c\in(0,1]$ and $k\in\Nb$, there exist $\xi_c(k,0)>0$ and $C_2>C_1>0$ such that 
\begin{align}\label{eq:wh-p}
C_1 e^{-r\xi_c(k,0)} \le  p(c,k,r,0) \le C_2 e^{-r\xi_c(k,0)}.
\end{align}
Moreover,  $\xi_c(k,0)$ is the same as  the generalized disconnection exponent $\xi_c(k)$ given by \eqref{eq:exponent}.
\end{theorem}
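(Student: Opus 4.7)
The plan is to derive both statements by establishing a two-sided quasi-multiplicativity
\begin{align*}
C^{-1}\, p(c,k,r,0)\, p(c,k,s,0) \le p(c,k,r+s,0) \le C\, p(c,k,r,0)\, p(c,k,s,0)
\end{align*}
for all $r,s\ge 1$, which together with \eqref{eq:wh-p1} yields the up-to-constants bound \eqref{eq:wh-p} by a standard concatenation argument. The upper bound is essentially sub-multiplicativity: by the strong Markov property of the $B^i$ at time $\tau^i_r$ and conformal covariance of the loop-soup measure, conditionally on the configuration inside $\Bc_r$ the paths must not disconnect $\Cc_r$ from $\infty$ in $\Bc_{r+s}\setminus\Bc_r$, and a Poissonian tail bound on the loops of $\Gamma\setminus\Gamma_r$ that cross $\Cc_r$ decouples the two scales up to a multiplicative constant. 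The core task is the lower bound, which demands that on the event of non-disconnection up to scale $r$ the Brownian endpoints and the attached clusters leave enough room on $\Cc_r$ to plug in fresh paths and fresh loops.

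The main obstacle is to prove the announced \emph{separation lemma} in the loop-soup setting. For pure Brownian motions, Lawler's separation lemma \cite{MR1386292} asserts that conditionally on non-disconnection, with probability bounded away from $0$, the $k$ hitting points on $\Cc_r$ lie in disjoint arcs of comparable size at mutual distance of order $e^r$. I would strengthen the notion of ``well-separated'' to track also the clusters of $\Gamma_r\setminus\Gamma_0$ glued to $\overline B^1_r\cup\cdots\cup \overline B^k_r$: these clusters should neither reach too close to $\Cc_r$ in an unwanted direction nor pinch two Brownian extremities together. The plan is to introduce a quantitative quality functional combining the angular separation of the endpoints with the maximal extension of the adjacent clusters near $\Cc_r$, and to show that any admissible configuration can be improved to ``good quality'' within an additive constant number of scales by running the Brownian motions through an annulus of bounded modulus and sampling fresh loops there; iterating produces a uniform lower bound on the conditional probability of good quality. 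The novel difficulty with respect to \cite{MR1386292} is that a single loop of $\Gamma\setminus\Gamma_r$ crossing $\Cc_r$ can merge otherwise separated clusters, which forces us to control loops across all dyadic scales simultaneously via the $c\log(1/\delta)$-type intensity of the Brownian loop measure on loops of diameter at least $\delta e^r$ crossing $\Cc_r$.

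Once the separation lemma is available, the super-multiplicative lower bound is obtained by conditioning on a well-separated configuration at scale $r$ and extending each Brownian path, together with fresh loops sampled in $\Bc_{r+s}\setminus\Bc_r$, independently of the inside of $\Bc_r$. Well-separation ensures that these fresh decorations avoid disconnecting $\Cc_0$ from $\infty$ with probability bounded below by a constant multiple of $p(c,k,s,0)$, via scale-invariance and a second application of the separation lemma at the inner end to absorb the non-uniform starting configuration on $\Cc_r$. This closes the quasi-multiplicativity and hence proves \eqref{eq:wh-p}.

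To identify $\xi_c(k,0)$ with $\xi_c(k)=\eta_\kappa(k)$ of \cite{MR4221655}, I would show that the filling of $\Theta_r$ seen from $\infty$, conditioned on non-disconnection, converges after rescaling to the generalized restriction hull that defines $\eta_\kappa(k)$ there. The separation lemma provides the tightness and endpoint regularity needed to take a limit in $r\to\infty$ of the conditional law of the outer boundary of $\Theta_r$; the Brownian Markov property and the restriction property of the Brownian loop soup (loops of $\Gamma$ absorbed by the hull can be sampled separately and re-attached) translate into the one-sided restriction property characterising the measures of \cite{MR4221655}, fixing the limiting law together with its exponent. The radial conformal-radius exponent of that restriction measure is $\eta_\kappa(k)$ by construction, so combining this with \eqref{eq:wh-p} and \eqref{eq:exponent} gives $\xi_c(k,0)=\eta_\kappa(k)=\xi_c(k)$. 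The technically heaviest steps are the separation lemma and the verification of the correct restriction property for the limit; everything else is a combination of these with standard loop-soup manipulations.
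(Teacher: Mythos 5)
Your first two steps—establishing two-sided quasi-multiplicativity via a strong separation lemma for Brownian motions inside a loop soup, then concluding the up-to-constants estimate by Fekete—are aligned with the paper's strategy, so I will focus on where the proposal departs or would run into trouble.

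A first issue is your treatment of the loop soup when passing from scale $r$ to scale $r+s$. You appeal to a ``Poissonian tail bound'' and a ``$c\log(1/\delta)$-type intensity'' for Brownian loops of diameter at least $\delta e^r$ crossing $\Cc_r$. But the Brownian loop measure $\mu$ is \emph{not} thin: as Lemma~\ref{lem:thin} and the remark following it emphasize, $\mu(L_R)=\infty$ for every $R>0$, so the mass of loops of a fixed macroscopic diameter that touch a fixed disk is infinite and no Poissonian tail applies. The paper circumvents this precisely by switching from $\Gamma$ to the soup $\Lambda$ of outer boundaries, which \emph{is} thin; all disconnection events depend only on the union of filled loops, so nothing is lost. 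Moreover, even for $\Lambda$, the paper's separation lemma is not a direct analogue of Lawler's: the natural $\delta$-nice event is destroyed too cheaply by a single cluster crossing $\Cc_r$, and the authors instead explore the Brownian excursions and the loop soup at \emph{two different speeds}, comparing the true extremal distance $\wt L_{r+t}$ with an auxiliary one $L^*(r+s,r+t,\delta)$ that sees the loop soup one scale further out (Section~\ref{subsec:proof_strategy} and Lemma~\ref{lem:ed-comparable}). Your sketch via a ``quality functional'' improving over a bounded number of scales does not address this obstacle, which the paper identifies as the central difficulty.

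The more serious gap is the identification $\xi_c(k,0)=\xi_c(k)$. You propose to show that the rescaled filling of $\Theta_r$, conditioned on non-disconnection, converges in law to a generalized restriction hull of the type $\Pf_\kappa^{0,k}$ from \cite{MR4221655}. This cannot be right as stated: a sample from $\Pf_\kappa^{0,k}$ is a compact $K\subset\ol\Ub$ with $K\cap\partial\Ub=\{1\}$, whereas the (rescaled) filled $\Theta_r$ meets $\Cc_r$ at $k$ well-separated arcs precisely because of the separation lemma you are invoking. There is no reason for the conditional law to collapse these to a single boundary touch point, and the one-sided restriction characterization of \cite{MR4221655} does not apply to a multi-boundary-arc limit. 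The paper avoids any scaling limit of laws: in Proposition~\ref{prop:same-exponents} it defines a second probability $q(c,k,r)$ (the conformal-radius probability for $k$ independent Brownian excursions from $0$ to $1$ in $\Ub$ plus the loop soup), shows $q(c,k,r)\asymp p(c,k,r,0)$ by local absolute continuity of the path pieces near $\Cc_{-1}$ together with the $\alpha$-separation event from Theorem~\ref{thm:up-to-constants-ex-k-sep}, and then reads off the exponent of $q(c,k,r)$ directly from Theorem~\ref{thm:dis_def} via Lemma~\ref{lem:obs}. No convergence of laws is needed, only a comparison of non-disconnection and conformal-radius probabilities at a fixed scale. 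You would need to replace the convergence argument by a comparison of this kind for the identification step to go through.
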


In order to keep this introduction brief, we will state more related results in Section~\ref{subsec:Statement of the results}. 
For example, we will show that the exponent  $\xi_c(k,0)$ is the limit as $\lambda\searrow 0$ of a certain family of \emph{generalized intersection exponents} $\xi_c(k, \lambda)$. We will also obtain up-to-constants estimates for the \emph{generalized non-intersection probabilities} (Theorem~\ref{thm:xi-k}). This implies that several other natural ways to define the generalized disconnection and intersection exponents are also equivalent.

\smallskip

The main ingredient for proving Theorem~\ref{thm:same_exponent} is a separation lemma for Brownian motions inside an independent  Brownian loop soup. The precise form of this separation lemma is rather technical, so we postpone its statement to Lemma~\ref{lem:separation_lem_2}.
Roughly speaking, in order to obtain the quasi-multiplicativity of certain arm events in successive annuli, one needs to work on an event where these arms are ``well-separated'' at their endpoints (so that there is some space to extend these arms to further scales) and to control the probability of the separation of arms.

An instance of a separation lemma appeared in Kesten's work \cite{MR879034} on 2D percolation. This was a key step that allows one to relate pivotal points for crossing events to four-arm events, and to obtain the quasi-multiplicativity of four-arm events. Later, in a completely different setting, Lawler established a separation lemma for Brownian motions \cite{MR1386292} which implies up-to-constants estimates for disconnection and intersection probabilities of $n$ independent Brownian motions.
This result has multiple implications, including establishing the dimensions and the existence of the Minkowski content of special points inside the Brownian trajectory (e.g.\ \cite{MR1386292,MR1386294,holden2018minkowski}), and proving analyticity of Brownian intersection exponents \cite{MR1961197}.

We mention that it is in fact possible to establish the up-to-constants estimates without proving the separation lemma. 
For Brownian motions, an analogous up-to-constants estimate was first derived in \cite{MR1386292} using the separation lemma, but later another proof was given in \cite{MR1901950} bypassing the use of the separation lemma. The article \cite{MR1901950} also proved another equivalent form of the separation lemma for Brownian motions, formulated in terms of extremal distances, which is arguably the simplest set-up to work with in dimension two.

In this paper, we decide to establish the separation lemma (Lemma~\ref{lem:separation_lem_2}) which is a stronger result than the up-to-constants estimates, because it has numerous other applications. For example, it will allow us to obtain sharper estimates than Theorem~\ref{thm:same_exponent}, the analyticity in $\lambda$ of the generalized intersection exponent $\xi_c(k,\lambda)$, and the existence of Minkowski content of certain special points. We plan to pursue these questions in  future works.
To prove the separation lemma, we continue to use the set-up of extremal distances in \cite{MR1901950}, but have to overcome many additional difficulties due to the presence of the loop soup. 
In Section~\ref{subsec:proof_strategy}, we describe our general proof strategy of the separation lemma for loop soups, and make a detailed account of the main differences with the usual case (without a loop soup).

\subsection{Structure of the paper}
In Section \ref{sec:pre}, we introduce the notation and discuss some basic properties of the extremal distance and Brownian motions, as well as the generalized disconnection exponents defined in \cite{MR4221655}. 
We prove Theorem~\ref{thm:same_exponent} in Section \ref{sec:up-to-constant}, assuming a crucial ``separation lemma'' (Lemma~\ref{lem:separation_lem_2}) whose proof is deferred to the next section. Section~\ref{sec:separation-lemma} is dedicated to the proof of the said separation lemma in the loop-soup setting. 
In Section~\ref{sec:dim}, we show that with positive probability, multiple points on the outer boundaries of outermost clusters in a Brownian loop soup have the predicted Hausdorff dimension.  
In Section~\ref{sec:0-1}, we prove a zero-one law which implies the almost sure dimension and the denseness of such points.
Finally, we complete the proof of Theorem~\ref{main-thm} in Section~\ref{sec:main}.

\section*{Acknowledgement}
YG and XL acknowledge the support by National Key R\&D Program of China (No.\ 2021YFA1002700 and No.\ 2020YFA0712900) and NSFC (No.\ 12071012). 
WQ acknowledges the support by the National Science Foundation under Grant No.\ 1440140, while she was in residence at the Simons Laufer Mathematical Sciences Institute in Berkeley, California, during the spring semester of 2022. We thank Pierre Nolin for useful comments on an earlier version of the paper.

	\section{Preliminaries}\label{sec:pre}
	In this section, we fix some notation and recall a range of preliminary results.
	\subsection{Notation}
	Let $\Nb$ be the set of natural numbers $\{1,2,\cdots\}$ and $\Nb_0:=\{0\}\cup \Nb$.
	Let $\Cb$ be the complex plane. Let $\Ub$ be the unit disk. 
	For a set $A$ in the plane, let $\partial A$ be the boundary of $A$ and $\ol A$ be the closure of $A$. 
For $x\in \Cb$, write $|x|$ for the Euclidean norm of $x$. For two sets $A$ and $B$ in $\Cb$, define $\dist(A,B):=\inf_{x\in A, y\in B} |x-y|$. We abbreviate $\dist(\{x\},B)$ as $\dist(x,B)$.
	
	Let $\Bc(x,R)$ be the open ball of radius $R$ around $x$. We work on exponential scales and introduce the abbreviations $\Bc_r(x)$ for $\Bc(x,e^r)$, $\Cc_r(x)$ for $\partial \Bc(x,e^r)$. For all $s<r$, let $\mathcal A(x,s,r):=\Bc_r(x)\setminus\ol\Bc_s(x)$ be the open annulus bounded between $\Cc_s(x)$ and $\Cc_r(x)$. We omit $x$ if $x=0$. For example, $\Cc_r$ is the abbreviation for $\Cc_r(0)$.

	We say that $A$ is not \emph{disconnected} from infinity by $B$ if there exists a continuous curve $\gamma: [0,\infty)\rightarrow\Cb$ such that $\gamma(0)\in A$, $\gamma[0,\infty)\cap B=\emptyset$ and $\gamma(t)\rightarrow\infty$ as $t\rightarrow\infty$. We say that $B$ does not disconnect (resp.\ disconnects) $A_1$ from $A_2$ in $D$, if there exists (resp.\ does not exist) a continuous curve $\gamma: [0, t]\rightarrow \ol D$ such that $\gamma(0)\in A_1$, $\gamma(t)\in A_2$ and $\gamma[0,t]\cap B=\emptyset$. 

For two positive functions $f(x)$ and $g(x)$, we write
	\begin{itemize}
		\item $f(x)\approx g(x)$ if $g(x)\neq 1$ and $\lim_{x\rightarrow\infty}\log f(x)/\log g(x)=1$;
		\item $f(x)\lesssim g(x)$ if there exists $C>0$ such that $f(x)\le C g(x)$ for all $x$;
		\item $f(x)\asymp g(x)$ if $f(x)\lesssim g(x)$ and $g(x)\lesssim f(x)$.
	\end{itemize}

Since $c$ has been used as the intensity of the Brownian loop soup, we avoid using $c$ to denote a constant. We will use $C, C', C_1, C_2, \ldots$ to denote arbitrary universal positive constants which may change from line to line. If a constant depends on some other quantity, this will be made explicit. For example, if $C$ depends on $\lambda$, we write $C(\lambda)$.

	\subsection{Extremal distance}\label{subsec:ed}
	Let $O$ be an open set and let $V_1,V_2$ be two subsets of $\ol O$. Let $\Upsilon$ be the set of rectifiable curves that disconnect $V_1$ from $V_2$ in $O$. For any Borel measurable function $\rho$, define the $\rho$-area of $O$ as
	\[
	\area_{\rho}(O):=\iint_{O}\rho(x+iy)^2 dx dy,
	\]
	and the $\rho$-length of any rectifiable arc $\gamma$ as
	\[
	\ell_{\rho}(\gamma):=\int_{\gamma}\rho(z)d |z|.
	\]
	We introduce the minimal length 
	\[
	\ell_{\rho}(\Upsilon):=\inf_{\gamma\in\Upsilon}\ell_{\rho}(\gamma).
	\]
	We say $\rho$ is \textit{admissible} if $\ell_{\rho}(\Upsilon)\ge 1$. The \textit{$\pi$-extremal distance} between $V_1$ and $V_2$ in $O$ is defined as $\pi$ times the extremal distance in the following way
	\begin{equation}\label{eq:def-ex}
	L\left(O ; V_{1}, V_{2}\right):=\pi \inf _{\rho} \area_{\rho}(O)=\pi\sup_{\rho}\ell_{\rho}(\Upsilon)^2,
	\end{equation}
	where the infimum is taken over all admissible $\rho$'s and the supremum is taken over all $\rho$'s subject to the condition $0<\area_{\rho}(O)\le 1$. If an admissible $\rho$ achieves the infimum in \eqref{eq:def-ex}, we call the corresonding $\rho$-metric the \emph{extremal metric} finding the extremal distance between $V_1$ and $V_2$ in $O$.
	The $\pi$-extremal distance is \textit{conformally invariant} (see \cite{MR0357743}), i.e., 
	if $O$ is mapped conformally onto $O'$, and the sets $V_1, V_2$ are mapped to the sets $V'_1, V'_2$, then $L\left(O ; V_{1}, V_{2}\right)=L\left(O' ; V'_{1}, V'_{2}\right)$. 
	
	We first give some simple examples of extremal distances and metrics.
If $\partial_1, \partial_2$ are the two vertical sides of the rectangle $\mathcal{R}_{L}:=(0, L) \times(0, \pi)$, then the $\pi$-extremal distance between $\partial_1$ and $\partial_2$ in $\mathcal{R}_{L}$ is $L$ (see \cite[Example 4-1]{MR0357743}). In this case, $\rho_0\equiv 1$ gives the extremal metric. For a general topological quadrangle, one finds the extremal distance and metric by mapping it to a rectangle with appropriate aspect ratio.
For instance, if $W=\Ac(s,r)\cap\{ z=i e^\alpha: |\alpha|<\theta\}$ is a wedge of angle $2\theta\in(0,2\pi)$, then the mapping $z\mapsto \log z$ transforms $W$ into the rectangle $(s,r)\times (-\theta,\theta)$. Therefore
	\begin{equation}\label{eq:wedge-ex}
	L(W; \partial W\cap\Cc_s,\partial W\cap\Cc_r)=\frac{\pi}{2\theta}(r-s).
	\end{equation}

	In the following, we collect some results about extremal distances. 
	
	\begin{lemma}[{The comparison principle, \cite[Corollary of Theorem 4-1]{MR0357743}}]\label{lem:comparison-principle}
		The $\pi$-extremal distance $L\left(O ; V_{1}, V_{2}\right)$ decreases when $O,V_1$ and $V_2$ increase.
	\end{lemma}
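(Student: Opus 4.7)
My plan is to work with the infimum formulation of the $\pi$-extremal distance given in \eqref{eq:def-ex}. Write the two configurations as $(O, V_1, V_2) \subseteq (O', V_1', V_2')$ componentwise, and let $\Upsilon, \Upsilon'$ be the corresponding families of rectifiable curves disconnecting the respective pairs. Given any $\rho$ admissible for $(O; V_1, V_2)$, I would consider its zero extension $\wt\rho := \rho \cdot \one_O$ to $O'$, which preserves the $\rho$-area: $\area_{\wt\rho}(O') = \area_\rho(O)$. Once $\wt\rho$ is shown to be admissible for the larger configuration, taking the infimum over $\rho$ will yield $L(O'; V_1', V_2') \le L(O; V_1, V_2)$, the desired monotonicity.

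The main step is therefore to verify admissibility of $\wt\rho$, i.e., that $\ell_{\wt\rho}(\gamma') = \int_{\gamma' \cap O} \rho\,|dz| \ge 1$ for every $\gamma' \in \Upsilon'$. For this I plan to use the topological observation that $\gamma' \cap \ol{O}$ still blocks all paths from $V_1$ to $V_2$ inside $\ol{O}$: any such path lies in $\ol{O} \subseteq \ol{O'}$ and connects $V_1 \subseteq V_1'$ to $V_2 \subseteq V_2'$, so the hypothesis $\gamma' \in \Upsilon'$ forces it to meet $\gamma'$, and being confined to $\ol{O}$, that meeting must in fact occur inside $\ol{O}$. Consequently $\gamma' \cap \ol{O}$ separates $V_1$ from $V_2$ in $O$.

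The one point that I expect will require care is that $\gamma' \cap \ol{O}$ need not be a single rectifiable curve; generically it is an at-most-countable union of closed subarcs of $\gamma'$. My plan for this bookkeeping is to follow the standard resolution (implicit in Ahlfors' treatment cited for the lemma): enlarge $\Upsilon$ to contain all at-most-countable unions of rectifiable subarcs whose union disconnects $V_1$ from $V_2$ in $O$, with $\ell_\rho$ interpreted additively over the pieces. A short approximation argument shows that this enlargement leaves the infimum in \eqref{eq:def-ex} unchanged. Under this convention, $\gamma' \cap \ol{O}$ is itself a member of the enlarged $\Upsilon$, so admissibility of $\rho$ delivers the lower bound $\ge 1$ directly and the monotonicity follows. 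This technical bookkeeping is really the only non-routine step; everything else is an unwinding of the definitions.
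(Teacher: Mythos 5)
Your topological observation — that $\gamma'\cap\ol O$ still separates $V_1$ from $V_2$ in $O$ — is correct and is the right starting point. The genuine gap is precisely the step you dismiss as ``technical bookkeeping'': the claim that a $\rho$ with $\ell_\rho(\gamma)\ge 1$ for every \emph{single} separating curve $\gamma\in\Upsilon$ automatically gives $\int_{\gamma'\cap O}\rho\,|dz|\ge 1$ when $\gamma'\cap\ol O$ is a countable union of arcs. Note that simply enlarging $\Upsilon$ to a family $\wt\Upsilon$ of separating unions does not help: from $\Upsilon\subseteq\wt\Upsilon$ one only gets, for free, $\inf\{\area_\rho : \ell_\rho(\wt\Upsilon)\ge 1\}\ge\inf\{\area_\rho : \ell_\rho(\Upsilon)\ge 1\}$, which is the \emph{wrong} direction. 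What your argument actually requires is the implication ``$\rho$ is $\Upsilon$-admissible $\Rightarrow$ $\rho$ is $\wt\Upsilon$-admissible'', i.e., that every separating union of arcs contains (in the sense of total $\rho$-length) a single separating rectifiable curve. This is a nontrivial topological assertion, not bookkeeping. It can be made to work when $O$ is a simply connected quadrilateral, by locating the connected component of $\gamma'\cap\ol O$ that reaches across between the two free boundary arcs and extracting a simple crossing subarc, but that argument has to be given; and it is genuinely delicate for multiply connected $O$, where disjoint pieces of $\gamma'\cap\ol O$ can jointly separate $V_1$ from $V_2$ while no single one of them does.

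There is a route that avoids all of this — and it is the one behind the Ahlfors reference cited for this lemma. Characterize $L(O;V_1,V_2)$ as $\pi$ times the extremal length of the family $\Gamma$ of curves in $\ol O$ \emph{joining} $V_1$ to $V_2$, i.e., $L(O;V_1,V_2)=\pi\sup_\rho\ell_\rho(\Gamma)^2$ over $\rho$ with $0<\area_\rho(O)\le 1$ (testing against the rectangle $\mathcal R_L$ shows that it is this joining family, rather than $\Upsilon$, which the $\sup$ side of \eqref{eq:def-ex} is meant to encode). Monotonicity is then a one-line inclusion. Since $O\subseteq O'$ and $V_i\subseteq V_i'$, every joining curve for the unprimed triple is also one for the primed triple, so $\Gamma\subseteq\Gamma'$. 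Given any $\rho$ on $O'$ with $\area_\rho(O')\le 1$, its restriction $\rho|_O$ satisfies $\area_{\rho|_O}(O)\le 1$ and $\ell_{\rho|_O}(\Gamma)=\inf_{\gamma\in\Gamma}\ell_\rho(\gamma)\ge\inf_{\gamma'\in\Gamma'}\ell_\rho(\gamma')=\ell_\rho(\Gamma')$. Taking the supremum over $\rho$ yields $L(O;V_1,V_2)\ge L(O';V_1',V_2')$, with no decomposition or reassembly of curves required.
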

	
	\begin{lemma}[{The composition laws, \cite[Theorem 4-2]{MR0357743}}]\label{lem:composition-laws}
		Let $O$ be a topological quadrangle in the plane with four marked sides $\partial_i, 1\le i\le 4$ such that $\partial_1, \partial_2$ are opposite. If $\gamma$ is a simple curve in $O$ connecting $\partial_3$ and $\partial_4$, $O'$ is the connected component of $O\setminus\gamma$ whose boundary contains $\partial_1$, $O''$ is the component of $O\setminus\gamma$ whose boundary contains $\partial_2$, then 
		\[
		L\left(O ; V_{1}, V_{2}\right)\ge L\left(O' ; \partial_{1}, \gamma\right)+L\left(O'' ; \gamma, \partial_{2}\right).
		\]
	\end{lemma}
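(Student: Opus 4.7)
My plan is to exploit the variational definition \eqref{eq:def-ex} in its infimum form, $L(\cdot)/\pi = \inf_\rho \area_\rho(\cdot)$ taken over admissible $\rho$'s (those with $\ell_\rho(\Upsilon) \ge 1$). Denote by $\Upsilon$, $\Upsilon_1$, $\Upsilon_2$ the respective families of rectifiable disconnecting curves for $(O;\partial_1,\partial_2)$, $(O';\partial_1,\gamma)$, $(O'';\gamma,\partial_2)$. The crux of the argument is the set-theoretic inclusion $\Upsilon_1 \cup \Upsilon_2 \subset \Upsilon$: a rectifiable curve that disconnects $\partial_1$ from $\gamma$ inside $O'$ must already disconnect $\partial_1$ from $\partial_2$ inside $O$, and symmetrically for the $O''$ side.

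To justify this, I would take $\sigma_1 \in \Upsilon_1$ together with an arbitrary continuous path $\beta:[0,T] \to \ol O$ with $\beta(0) \in \partial_1$ and $\beta(T) \in \partial_2$. Since $\gamma$ is a simple arc across the topological quadrangle $O$ joining the two opposite sides $\partial_3$ and $\partial_4$, a standard Jordan-type argument shows that $\gamma$ splits $O$ into the two components $O'$ and $O''$, with $\ol{O'} \cap \ol{O''} \subset \gamma$. Setting $t^* := \inf\{t\in[0,T] : \beta(t) \in \gamma\}$, which is well-defined and $\le T$ because $\beta$ has to leave $\ol{O'}$ in order to reach $\partial_2 \subset \ol{O''}$, the restriction $\beta|_{[0,t^*]}$ is a continuous path in $\ol{O'}$ beginning at $\partial_1$ and ending on $\gamma$; by definition of $\Upsilon_1$, it must intersect $\sigma_1$, whence so does $\beta$. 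The argument for $\Upsilon_2$ is symmetric.

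Once the inclusion $\Upsilon_1 \cup \Upsilon_2 \subset \Upsilon$ is in hand, the bound follows in one line. For any $\epsilon > 0$, I would pick $\rho$ admissible for $\Upsilon$ with $\area_\rho(O) < L(O; \partial_1, \partial_2)/\pi + \epsilon$. The inclusion implies that the restrictions $\rho|_{O'}$ and $\rho|_{O''}$ are automatically admissible for $\Upsilon_1$ and $\Upsilon_2$, so by \eqref{eq:def-ex}
\[
\area_\rho(O') \ge L(O'; \partial_1, \gamma)/\pi, \qquad \area_\rho(O'') \ge L(O''; \gamma, \partial_2)/\pi.
\]
Adding these, using $\area_\rho(O) = \area_\rho(O') + \area_\rho(O'')$ (since the simple arc $\gamma$ has planar Lebesgue measure zero), multiplying by $\pi$, and letting $\epsilon \searrow 0$ yields the lemma.

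The only (mild) obstacle is the topological assertion that the simple arc $\gamma$ from $\partial_3$ to $\partial_4$ separates the quadrangle $O$ into exactly two components $O'$ and $O''$ sharing only $\gamma$ on their common boundary, so that the ``first hitting time of $\gamma$'' along $\beta$ is well-defined and at that time $\beta$ is indeed on $\gamma$; this is a standard consequence of Jordan-type separation in a topological disc and is anyway implicit in the hypotheses of the lemma.
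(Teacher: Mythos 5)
The paper does not prove this lemma but cites it to Ahlfors (Theorem 4-2 of \cite{MR0357743}); your argument is a correct reconstruction of the standard proof via the infimum (admissible-metric) formulation, resting on the inclusion $\Upsilon_1 \cup \Upsilon_2 \subset \Upsilon$ of separating families, which matches Ahlfors' approach. Two small remarks: the $L(O;V_1,V_2)$ in the statement should read $L(O;\partial_1,\partial_2)$ (a typo you correctly interpreted), and you do not actually need $\gamma$ to have planar measure zero, since $\area_\rho(O) \ge \area_\rho(O') + \area_\rho(O'')$ already holds from $O' \sqcup O'' \subset O$, which is all the final inequality requires.
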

	
Let us give the definition of \emph{good} domains and their boundaries.
	\begin{definition}[Good domains]\label{def:good-domain}
		Suppose that $\Bc'\subset\Bc''$ are two open balls such that $d(\partial\Bc',\partial\Bc'')>0$. Suppose that $D$ is a simply connected domain in $\Bc''\setminus\Bc'$, and define $\partial_1:=\partial D\cap\Bc'$ and $\partial_2:=\partial D\cap\Bc''$. We say that $D$ is a good domain if both $\partial_1$ and $\partial_2$ are (continuous) arcs of positive length. If $D$ is a good domain, let $\partial_3, \partial_4$ be the other two parts of $\partial D$ such that $\partial_1,\partial_3,\partial_2,\partial_4$ are in counterclockwise order. In this paper, if we label the boundaries of a good domain, we always use subscripts $1,3,2,4$ to order indices in a counterclockwise manner. 
	\end{definition}

	The following is a reformulation of \cite[Lemma 2.3]{MR1901950} for the annulus setting.
	\begin{lemma}[{\cite[Lemma 2.3]{MR1901950}}]\label{lem:reverse-composition}
		Let $0<s-\delta<s+\delta<r$. Let $D$ be a good domain associated with $\Bc_0$ and $\Bc_r$ as above. Let $\partial_1,\partial_3,\partial_2,\partial_4$ be the four parts of $\partial D$. Suppose that $\partial_3\cap\Ac(s-\delta,s+\delta)$ and $\partial_4\cap\Ac(s-\delta,s+\delta)$ are both of diameter smaller than $\delta^{1/6} e^s$ and at distance at least $\delta^{1/7} e^s$ from each other. Let $V$ denote the segment in $D\cap\Cc_s$ that disconnects $\partial_1$ and $\partial_2$ in $D$. Then, there exist some $C(\delta)\ge 0$ such that for all $s,r$,
		\begin{equation}
		L(D;\partial_1,\partial_2)\le L(D\cap\Ac(0,s);\partial_1,V)+L(D\cap\Ac(s,r);V,\partial_2)+C(\delta).
		\end{equation}
	\end{lemma}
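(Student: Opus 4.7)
The forward inequality $L(D;\partial_1,\partial_2)\ge L(D\cap\Ac(0,s);\partial_1,V)+L(D\cap\Ac(s,r);V,\partial_2)$ is immediate from the composition law (Lemma~\ref{lem:composition-laws}) applied with $V$ as the separating simple curve. The substance of the lemma is therefore the reverse inequality with error $C(\delta)$, and my plan, following \cite{MR1901950}, is to build a near-optimal admissible metric on $D$ by gluing near-optimal admissible metrics on the two subdomains and adding a correction term supported in the thin transition strip $D\cap\Ac(s-\delta,s+\delta)$, which will account for the $C(\delta)$ overhead.

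Concretely, fix $\eps>0$ and pick admissible metrics $\rho_1$ on $D\cap\Ac(0,s)$ and $\rho_2$ on $D\cap\Ac(s,r)$ whose areas lie within $\eps/\pi$ of the infima $L_1/\pi := L(D\cap\Ac(0,s);\partial_1,V)/\pi$ and $L_2/\pi := L(D\cap\Ac(s,r);V,\partial_2)/\pi$. Extending $\rho_1,\rho_2$ by $0$ to all of $D$, I would define
\[
\rho(z) := \rho_1(z) + \rho_2(z) + \frac{M_\delta}{|z|}\,\mathbf{1}_{D\cap\Ac(s-\delta,s+\delta)}(z),
\]
where $M_\delta$ is a constant to be chosen. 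The conformal density $|z|^{-1}$ makes the area contribution of the correction, $\int_{\Ac(s-\delta,s+\delta)}(M_\delta/|z|)^2\,dA=4\pi\delta M_\delta^2$, scale-invariant in $s$, while the cross terms $2\int \rho_i\,(M_\delta/|z|)\,dA$ are bounded via Cauchy--Schwarz and a preliminary truncation of $\rho_i$ in a thin strip around $\Cc_s$ (using monotonicity to reabsorb the perturbation into $\eps$) by a quantity depending only on $\delta$.

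The core of the argument is to check that, up to multiplication by a bounded constant, $\rho$ is admissible on $D$ for disconnecting $\partial_1$ from $\partial_2$. Given any rectifiable $\gamma\subset\overline D$ disconnecting $\partial_1$ from $\partial_2$ in $D$ (topologically a simple curve from $\partial_3$ to $\partial_4$), the hypothesis that $\partial_3\cap\Ac(s-\delta,s+\delta)$ and $\partial_4\cap\Ac(s-\delta,s+\delta)$ have diameter at most $\delta^{1/6}e^s$ and are at distance at least $\delta^{1/7}e^s \gg \delta^{1/6}e^s$ ensures that $D\cap\Ac(s-\delta,s+\delta)$ is a ``thin channel'' of width $\gtrsim \delta^{1/7}e^s$ with two small boundary fingers. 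One then appends to the connected components of $\gamma\cap\overline{D\cap\Ac(0,s)}$ (resp.\ $\gamma\cap\overline{D\cap\Ac(s,r)}$) auxiliary arcs of Euclidean length $O(\delta^{1/6}e^s)$, drawn inside $D\cap\Ac(s-\delta,s+\delta)$, to obtain curves $\tilde\gamma_1\subset\overline{D\cap\Ac(0,s)}$ and $\tilde\gamma_2\subset\overline{D\cap\Ac(s,r)}$ that disconnect $\partial_1$ from $V$ and $V$ from $\partial_2$, respectively. These auxiliary arcs lie in the support of the correction term, so their $\rho$-length is $O(M_\delta\delta^{1/6})$, which is $O(1)$ for $M_\delta\asymp \delta^{-1/6}$. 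Combining this with the admissibility of $\rho_1,\rho_2$ yields the required lower bound on $\ell_\rho(\gamma)$, and collecting estimates gives
\[
L(D;\partial_1,\partial_2) \le \pi\,\area_\rho(D) \le L_1+L_2+C(\delta)+2\eps,
\]
whence the claim follows upon sending $\eps\searrow 0$.

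The main obstacle is the topological step above: given the arbitrary shape of $D$ and of $\gamma$, one must produce the auxiliary arcs inside $D\cap\Ac(s-\delta,s+\delta)$ with the stated length bound while handling each of the finitely many ways in which $\gamma$ can enter and exit the strip. The quantitative gap $\delta^{1/7}\gg\delta^{1/6}$ between the finger diameters and the mutual distance of the fingers is exactly what makes this construction possible, and it ensures that the constant $C(\delta)$ does not grow with $s$ or $r$. Since no loop-soup structure enters this lemma, the proof follows that of \cite[Lemma 2.3]{MR1901950} with only cosmetic modifications to accommodate the annular geometry stated here.
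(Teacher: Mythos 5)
The paper itself does not prove this lemma: it is stated as a reformulation of \cite[Lemma~2.3]{MR1901950} and simply cited, so there is no ``paper's own proof'' to match your argument against. Your overall strategy---glue near-optimal admissible metrics on the two sub-quadrilaterals, add a conformally scale-invariant correction supported in the thin transition strip $D\cap\Ac(s-\delta,s+\delta)$, and use the diameter/separation hypotheses on $\partial_3,\partial_4$ to produce short auxiliary arcs---is the right one and in the spirit of the cited source, and you correctly identify the topological splitting step as the heart of the matter. However, two steps of the sketch do not close as written.

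First, you say $\rho$ is admissible ``up to multiplication by a bounded constant'' and then apply $L(D;\partial_1,\partial_2)\le\pi\,\area_\rho(D)$; but that inequality requires $\rho$ to be admissible on the nose. If $\rho$ only becomes admissible after scaling by some $c>1$, the resulting bound is $c^2(L_1+L_2)+O(1)$, and the error $(c^2-1)(L_1+L_2)$ is not bounded by any $C(\delta)$ since $L_1,L_2$ are unbounded---so you must choose $M_\delta$ so that $\rho$ (or $(1+o(1))\rho$ as $\eps\to 0$) is genuinely admissible. Second, the cross term $2\int_{\mathrm{strip}}\rho_1\cdot M_\delta|z|^{-1}\,dA$ is, by Cauchy--Schwarz, of order $M_\delta\sqrt{\delta}\cdot\sqrt{\area_{\rho_1}}\asymp M_\delta\sqrt{\delta L_1}$, again unbounded. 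Zeroing out $\rho_1$ on $D\cap\Ac(s-\delta,s)$ does kill this term, but it also destroys the admissibility of $\rho_1$ on $D\cap\Ac(0,s)$ (a separating curve running close to $V$ then has vanishing $\rho_1$-length), and the phrase ``monotonicity to reabsorb the perturbation into $\eps$'' does not actually explain how admissibility is recovered. A cleaner route is to replace the sum by a pointwise maximum, $\rho:=\max\bigl(\rho_1,\rho_2,M_\delta|z|^{-1}\one_{\mathrm{strip}}\bigr)$, which removes cross terms while keeping $\ell_\rho\ge\ell_{\rho_1}$ and $\ell_\rho\ge\ell_{\rho_2}$ pointwise; even so, the auxiliary arcs live in the support of the $\rho_i$ unless $\rho_1,\rho_2$ are taken as near-extremal for the slightly shrunken domains $D\cap\Ac(0,s-\delta)$ and $D\cap\Ac(s+\delta,r)$ and the resulting perturbation of $L_1,L_2$ is controlled via a Lemma~\ref{lem:D'}-type estimate. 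In short, the plan is sound but these quantitative details are genuinely open in your sketch and are precisely where the technical content of \cite[Lemma~2.3]{MR1901950} lies.
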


	The following lemma is a generalized version of \cite[Lemma 2.1]{MR1901950}. 
    \begin{lemma}\label{lem:D'}
    	Suppose that $D$ is a good domain and $\partial_2$ is a counterclockwise arc from $z_1$ to $z_2$. For $i=1,2$, let $W_i'$ and $W_i$ be two pairs of simply connected sets such that $z_i\in W_i'$, $W_i'\subseteq W_i$, $d_i:=\dist(\partial W_i',\partial W_i)>0$ and $W_1\cup W_2$ does not disconnect $\partial_1$ from $\partial_2$ in $D$. For each $i$, denote $a_i:=\area(\partial W_i\setminus\partial W_i')$. Let $D'=D\setminus(W_1'\cup W_2')$, $\partial'_1=\partial_1$ and $\partial'_2=\partial_2\setminus (W_1'\cup W_2')$. Then,
    	\begin{equation}\label{eq:D'}
    	L(D;\partial_1,\partial_2)\le L(D';\partial_1',\partial_2')\le L(D;\partial_1,\partial_2)+\pi\left( \frac{a_1}{d_1^2}+\frac{a_2}{d_2^2} \right).
    	\end{equation}
    \end{lemma}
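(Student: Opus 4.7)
The lower bound $L(D;\partial_1,\partial_2) \le L(D';\partial_1',\partial_2')$ is immediate from the comparison principle (Lemma~\ref{lem:comparison-principle}), since $D' \subseteq D$, $\partial_1' = \partial_1$, and $\partial_2' = \partial_2 \setminus (W_1' \cup W_2') \subseteq \partial_2$.

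For the upper bound the plan is to work with the area characterization $L(O;V_1,V_2) = \pi \inf_\rho \area_\rho(O)$ over admissible metrics. Given any $\rho_0$ admissible for $(D,\partial_1,\partial_2)$, construct a candidate metric on $D'$ by inflating $\rho_0$ in each shell $W_i\setminus W_i'$ in an $L^2$ sense:
\[
\rho'(z)^2 := \rho_0(z)^2\,\mathbf{1}_{D'}(z) + \sum_{i=1,2}\frac{1}{d_i^{2}}\,\mathbf{1}_{W_i\setminus W_i'}(z).
\]
The area bound is then immediate: $\area_{\rho'}(D') \le \area_{\rho_0}(D) + \sum_i a_i/d_i^{2}$. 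Infimizing over $\rho_0$ and multiplying by $\pi$ would give the right-hand inequality of \eqref{eq:D'}, provided $\rho'$ is admissible for $(D', \partial_1', \partial_2')$.

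The crux is this admissibility check: for any rectifiable $\gamma'$ disconnecting $\partial_1'$ from $\partial_2'$ in $D'$, we need $\ell_{\rho'}(\gamma') \ge 1$. The strategy is to augment $\gamma'$ by short closed curves $\beta_i \subset D \cap (W_i\setminus W_i')$ encircling $W_i'$, and to verify that $\gamma := \gamma' \cup \beta_1 \cup \beta_2$ disconnects $\partial_1$ from $\partial_2$ in $D$. A case analysis on an arbitrary path $\sigma$ in $D$ from $\partial_1$ to $\partial_2$ handles this: either $\sigma$ stays in $D'$ and ends in $\partial_2'$ (and so crosses $\gamma'$), or $\sigma$ enters some $W_i'$, in which case it must traverse the annular shell $W_i \setminus W_i'$ en route and cross $\beta_i$. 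The hypothesis that $W_1\cup W_2$ does not disconnect $\partial_1$ from $\partial_2$ in $D$ is precisely what ensures $\beta_i$ can be placed entirely within $D$. Then $\ell_{\rho_0}(\gamma) \ge 1$, i.e.\ $\ell_{\rho_0}(\gamma') \ge 1 - \sum_i \ell_{\rho_0}(\beta_i)$.

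The main obstacle is producing $\beta_i$ whose $\rho_0$-length can be absorbed by the shell contribution of $\rho'$ along $\gamma'$. The natural approach is a coarea slicing of the $1$-Lipschitz function $f_i(z) = \dist(z, W_i')$ on $W_i\setminus W_i'$: the level sets $\{f_i = t\}$, $t\in(0,d_i)$, form a continuous family of curves encircling $W_i'$, and coarea together with Cauchy--Schwarz gives $\int_0^{d_i} \ell_{\rho_0}(\{f_i = t\})\,dt \le \int_{W_i\setminus W_i'}\rho_0\,d\area \le \sqrt{a_i\,\area_{\rho_0}(W_i\setminus W_i')}$, so some level $t_i$ yields $\ell_{\rho_0}(\beta_i) \lesssim \sqrt{a_i\,\area_{\rho_0}(W_i\setminus W_i')}/d_i$. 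The additional $d_i^{-2}$ contribution built into $\rho'^2$ on each shell is precisely calibrated to compensate for this augmentation cost when $\rho'$-length along $\gamma'$ is compared with the $\rho_0$-length of $\gamma$. I expect this final bookkeeping — tracking the $L^2$ combination $\sqrt{\rho_0^2 + d_i^{-2}}$ without losing constants and reconciling it with the $\beta_i$ cost — to be the technical heart; the geometric and topological ingredients are routine.
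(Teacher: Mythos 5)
Your lower bound and your choice of metric are both fine: the comparison principle handles the first inequality, and your metric $\rho'$ with $\rho'^2 = \rho_0^2\one_{D'} + \sum_i d_i^{-2}\one_{W_i\setminus W_i'}$ is, up to the elementary inequality $\max(a,b)^2 \le a^2 + b^2$, the same as the paper's $\rho' = \max\{\rho,\, d_1^{-1}\one_{W_1\setminus W_1'}+d_2^{-1}\one_{W_2\setminus W_2'}\}$, and the area bound $\area_{\rho'}(D') \le \area_{\rho_0}(D) + a_1/d_1^2 + a_2/d_2^2$ is exactly what is needed.

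The admissibility step is where your proposal breaks down, and the bookkeeping you are worried about cannot in fact be made to close. Your augmentation scheme yields, for a disconnecting curve $\gamma'$ in $D'$, only the estimate $\ell_{\rho_0}(\gamma') \ge 1 - \sum_i \ell_{\rho_0}(\beta_i)$. Combined with $\rho' \ge \rho_0$ on $D'$, this gives $\ell_{\rho'}(\gamma') \ge 1 - \sum_i \ell_{\rho_0}(\beta_i)$, which is strictly less than $1$. The hoped-for compensation from the $d_i^{-2}$ term on the shells never materialises, because there is no reason $\gamma'$ should accrue any length in $W_i\setminus W_i'$: a curve $\gamma'$ that disconnects $\partial_1'$ from $\partial_2'$ in $D'$ may avoid the shells altogether, and then $\rho'$ coincides with $\rho_0$ along $\gamma'$. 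The coarea trick shrinks $\ell_{\rho_0}(\beta_i)$ but cannot drive it to zero, so admissibility of $\rho'$ is never actually established by this route.

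What makes the lemma work is not augmentation but a dichotomy, and this is what the paper is using (tersely) when it says admissibility is readily seen. Given any rectifiable $\gamma'$ that disconnects $\partial_1'$ from $\partial_2'$ in $D'$, one of two things happens. Either $\gamma'$ contains a subpath joining $\partial W_i'$ to $\partial W_i$ inside the shell $W_i\setminus W_i'$ for some $i$; such a subpath has Euclidean length at least $d_i$, and since $\rho' \ge d_i^{-1}$ there, already $\ell_{\rho'}(\gamma') \ge d_i \cdot d_i^{-1} = 1$. Or else $\gamma'$ contains no such crossing, and then $\gamma'$ itself must disconnect $\partial_1$ from $\partial_2$ in $D$: any path $\sigma$ from $\partial_1$ to $\partial_2$ in $D\setminus\gamma'$ that enters $W_1'\cup W_2'$ can be rerouted through the shell $W_i\setminus W_i'$ without meeting $\gamma'$ (and the hypothesis that $W_1\cup W_2$ does not disconnect $\partial_1$ from $\partial_2$ is precisely what guarantees the shell lies in $D$), producing a path in $D'$ from $\partial_1'$ to $\partial_2'$ avoiding $\gamma'$ — a contradiction. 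In this branch $\ell_{\rho_0}(\gamma') \ge 1$ by admissibility of $\rho_0$ for $(D,\partial_1,\partial_2)$, and since $\rho' \ge \rho_0$ on $D'$ we are done. In short: drop the $\beta_i$ entirely; the dichotomy makes the admissibility check exact, with no losses to bookkeep.
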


    \begin{proof}
    	The first inequality is a direct consequence of Lemma~\ref{lem:comparison-principle}, so we focus on the second inequality. Note that $D'$ is also a good domain and we denote by $\partial_i'$, $1\le i\le 4$ the four parts of $\partial D'$. Suppose $\rho$ is the admissible metric with respect to the curves connecting $\partial_3$ to $\partial_4$ in $D$ such that $L(D;\partial_1,\partial_2)=\pi \area_{\rho}(D)$. From \eqref{eq:def-ex}, we see that every admissible metric $\rho'$ associated with $\partial_3'$ and $\partial_4'$ will give us an upper bound $\pi \area_{\rho'}(D')$. Since $W_i'$ is away from disconnecting $\partial_1$ from $\partial_2$ in the sense that enlarging them to $W_i$ will not cause disconnection, we can construct an admissible metric $\rho'$ from $\rho$ by modifying it around $W_i'$, namely, letting
        \[
        \rho'=\max\{ \rho, d_1^{-1} \one_{W_1\setminus W_1'}+d_2^{-1} \one_{W_2\setminus W_2'} \}.
        \]
        It is readily to see $\rho'$ is admissible. Hence, 
        \[
        L(D';\partial_1',\partial_2')
        \le \pi \area_{\rho'}(D')
        \le L(D;\partial_1,\partial_2)+\pi\left( \frac{a_1}{d_1^2}+\frac{a_2}{d_2^2} \right).
        \]
        This completes the proof.
    \end{proof}

We are now at the point to present the most complicated case, which allows us to write the extremal distance into the sum of two by splitting the domain at an intermediate scale. The statement is quite lengthy since we need to show that it is still true even if we perturb the domain at an intermediate scale. The proof requires a combination of previous lemmas.

We start with notation. Readers are referred to Figure \ref{fig:very-nice} for an illustration of the various geometric objects defined below. Let $s,r\ge 1$. Suppose $D$ is a good domain in $\Ac(0,s+r+2)$ with boundaries $\partial_i$'s such that $\partial_3\cap \Ac(s-1/5,s+11/5)$ is contained in the wedge 
$$W_1':=\{ z: |\arg(z)|\le 1/15\}\cap\Ac(s-1/5,s+11/5),$$ and $\partial_4\cap \Ac(s-1/5,s+11/5)$ is contained in the wedge 
$$
W_2':=\{ z: |\arg(z)-\pi|\le 1/15\}\cap\Ac(s-1/5,s+11/5).
$$
Let 
$$
W_i:=\{ z: |\arg(z)+(1-i)\pi|\le 1/10\}\cap\Ac(s-1/2,s+1/2),\; i=1,2
$$
be two larger wedges opposite to each other. Suppose that $W_1\cup W_2$ does not disconnect $\partial_1$ from $\partial_2$ in $D$. Let $D'$ and $D''$ be two good domains in $\Ac(0,s)$ and $\Ac(s+2,s+r+2)$ respectively such that
$$
D'\cap\Ac(0,s-1/5)=D\cap\Ac(0,s-1/5)
$$
and  $$
D''\cap\Ac(s+11/5,s+r+2)=D\cap\Ac(s+11/5,s+r+2).
$$
Let $\partial'_i$'s and $\partial''_i$'s be the boundaries of $D'$ and $D''$ respectively. We further suppose that $$(\partial'_3\cup \partial''_3)\cap \Ac(s-1/5,s+11/5)\subseteq W'_1\;\mbox{ and }\;(\partial'_4\cup \partial''_4)\cap \Ac(s-1/5,s+11/5)\subseteq W'_2.
$$
\begin{lemma}\label{lem:concatenation}
	There exist universal constants $C_1,C_2>0$ such that the following holds,
	\[
	L(D';\partial'_1,\partial'_2)+L(D'';\partial''_1,\partial''_2)+C_1\le L(D;\partial_1,\partial_2) \le L(D';\partial'_1,\partial'_2)+L(D'';\partial''_1,\partial''_2)+C_2.
	\]
\end{lemma}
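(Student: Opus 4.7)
The plan is to decompose $L(D;\partial_1,\partial_2)$ into three contributions---inner, middle, and outer---at two intermediate radial scales $\sigma_-\in(s-1/5, s-1/10)$ and $\sigma_+\in(s+21/10, s+11/5)$, both chosen strictly inside the good region where the boundary arcs $\partial_3, \partial_4$ are confined to the narrow wedges $W'_1, W'_2$. For a sufficiently small fixed $\delta>0$, the narrowness of the wedges (angular half-width $1/15$) guarantees at both scales $\sigma_\pm$ that $\partial_3\cap \Ac(\sigma_\pm-\delta,\sigma_\pm+\delta)$ and $\partial_4\cap\Ac(\sigma_\pm-\delta,\sigma_\pm+\delta)$ are well separated and have small enough diameter to satisfy the hypotheses of Lemma~\ref{lem:reverse-composition}.

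For the upper bound, applying Lemma~\ref{lem:reverse-composition} in succession at $\sigma_-$ and $\sigma_+$ yields
$$
L(D;\partial_1,\partial_2)\le L(D_{\mathrm{in}};\partial_1,V_-) + L(D_{\mathrm{mid}};V_-,V_+) + L(D_{\mathrm{out}};V_+,\partial_2) + C,
$$
where $D_{\mathrm{in}}, D_{\mathrm{mid}}, D_{\mathrm{out}}$ are the three pieces of $D$ cut by $\Cc_{\sigma_\pm}$ and $V_\pm$ are the disconnecting segments in $D\cap\Cc_{\sigma_\pm}$. The middle term is bounded above by a universal constant via the wedge formula~\eqref{eq:wedge-ex} and Lemma~\ref{lem:comparison-principle}. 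For the inner term, since $D$ and $D'$ agree on $\Ac(0,s-1/5)$ and both have wedge-confined boundaries on $\Ac(s-1/5,\sigma_-)$, applying Lemma~\ref{lem:composition-laws} to $D'$ at $\Cc_{\sigma_-}$ combined with Lemma~\ref{lem:D'} to reconcile the small wedge discrepancy gives $L(D_{\mathrm{in}};\partial_1,V_-)\le L(D';\partial'_1,\partial'_2)+C'$. The outer term is handled analogously.

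For the lower bound, applying Lemma~\ref{lem:composition-laws} twice with simple disconnecting arcs $\gamma_\pm\subset D\cap\Cc_{\sigma_\pm}$ from $\partial_3$ to $\partial_4$ yields
$$
L(D;\partial_1,\partial_2)\ge L(D_{\mathrm{in}};\partial_1,\gamma_-) + L(D_{\mathrm{mid}};\gamma_-,\gamma_+) + L(D_{\mathrm{out}};\gamma_+,\partial_2).
$$
By Lemma~\ref{lem:comparison-principle}, $L(D_{\mathrm{mid}};\gamma_-,\gamma_+)$ admits a strictly positive universal lower bound $c_0>0$, since $D_{\mathrm{mid}}$ sits inside the annular sector of radial width $\sigma_+-\sigma_-$ between the two wedges, a bounded-geometry domain with explicitly computable extremal distance. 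The reverse comparison using Lemma~\ref{lem:reverse-composition} on $D'$ and $D''$ together with Lemma~\ref{lem:D'} gives $L(D_{\mathrm{in}};\partial_1,\gamma_-)\ge L(D';\partial'_1,\partial'_2)-C''$ and similarly $L(D_{\mathrm{out}};\gamma_+,\partial_2)\ge L(D'';\partial''_1,\partial''_2)-C'''$. Choosing the gap $\sigma_+-\sigma_-$ large enough so that $c_0>C''+C'''$ then yields the lower bound with a strictly positive $C_1$.

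The hard part will be the two-sided comparison between $L(D_{\mathrm{in}})$ and $L(D';\partial'_1,\partial'_2)$: the two domains are only guaranteed to coincide on $\Ac(0,s-1/5)$ and may differ inside the wedge strip $\Ac(s-1/5,\sigma_-)$. Absorbing this difference into a bounded additive constant requires Lemma~\ref{lem:D'} combined with the composition laws applied in opposite directions for the upper and lower bounds, while the strict positivity of $C_1$ in the lower bound forces a careful choice of the radial gap $\sigma_+-\sigma_-$ so that the positive middle contribution dominates all other additive losses.
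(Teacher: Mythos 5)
Your overall strategy is on the right track and close in spirit to the paper's: split the domain at intermediate scales, use the composition laws and Lemma~\ref{lem:reverse-composition} to get the two-sided additive decomposition, and use Lemma~\ref{lem:D'} to reconcile the discrepancies inside the wedges. However, there are two concrete issues.

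First, the paper's decomposition is simpler and avoids the awkwardness you flag in your last paragraph: it cuts $D$ precisely at $\Cc_s$ and $\Cc_{s+2}$, so that $Q' := D\cap\Ac(0,s)$ and $D'$ are both good domains in $\Ac(0,s)$ sharing $\partial_1=\partial'_1$, and they coincide outside the small wedges $W_1'\cup W_2'$. One then sets $\wt D := Q'\setminus(W_1'\cup W_2') = D'\setminus(W_1'\cup W_2')$ and applies Lemma~\ref{lem:D'} once in each direction to get $\lvert L(Q';\partial_1,V') - L(D';\partial_1',\partial_2')\rvert\le C$, with no further cuts needed. By contrast, your choice of $\sigma_-\in(s-1/5,s-1/10)$ strictly inside the wedge strip forces you to additionally truncate $D'$ at $\Cc_{\sigma_-}$ and then compare the truncation back to $D'$, which doubles the number of applications of Lemmas~\ref{lem:D'} and~\ref{lem:reverse-composition} without gaining anything.

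Second, and more seriously, the final step of your lower bound is broken as stated: you write ``choosing the gap $\sigma_+-\sigma_-$ large enough so that $c_0>C''+C'''$,'' but $\sigma_-$ and $\sigma_+$ are confined to the fixed intervals $(s-1/5,s-1/10)$ and $(s+21/10,s+11/5)$, so $\sigma_+-\sigma_-$ is pinned to roughly $[11/5,12/5]$ and cannot be enlarged. You cannot tune the middle annulus to dominate the wedge-correction constants $C''+C'''$ by choice of $\sigma_\pm$; whether that inequality holds for the fixed geometry is a numerical fact you have not verified. Note that the paper's proof of the lower bound simply applies Lemmas~\ref{lem:comparison-principle} and~\ref{lem:composition-laws} to obtain $L(Q')+L(Q'')\le L(D)$ and then the $\wt D$ comparison; it does not claim a strictly positive margin arising from the middle annulus, and in the subsequent application (Theorem~\ref{thm:up-to-constants-ex}) only the upper-bound inequality of this lemma is ever used. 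If you want a rigorous lower bound of the form $L(D)\ge L(D')+L(D'')-C_1$, just drop the ``strictly positive margin'' claim and combine the composition law at $\Cc_s,\Cc_{s+2}$ with the two-sided $\wt D$ comparison.
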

\begin{proof}
	Let $V'$ (resp. $V''$) be the segment in $D\cap\Cc_s$ (resp. $D\cap\Cc_{s+2}$) that disconnects $\partial_1$ from $\partial_2$ in $D$. Let $Q'$ (resp. $Q''$) be the inner (resp. outer) connected component of $D\setminus V'$ (resp. $D\setminus V''$). 
	By Lemmas~\ref{lem:comparison-principle} and \ref{lem:composition-laws}, 
	\[
	L(Q';\partial_1,V')+L(Q'';V'',\partial_2)\le L(D;\partial_1,\partial_2).
	\]
	By using Lemma~\ref{lem:reverse-composition} repeatedly, we obtain that for some constant $C$,
	\[
	L(D;\partial_1,\partial_2)\le L(Q';\partial_1,V')+L(Q'';V'',\partial_2)+ C.
	\]
	Therefore, it remains to show $L(Q';\partial_1,V')$ (resp. $L(Q'';V'',\partial_2)$) is equal to $L(D';\partial'_1,\partial'_2)$ (resp. $L(D'';\partial''_1,\partial''_2)$) up to additive constants, which is guaranteed by Lemma~\ref{lem:D'}. More precisely, if we consider the domain $\wt D$ obtained by subtracting $W'_1\cup W'_2$ from $Q'$ (which is the same if $Q'$ is replaced by $D'$), and note that the two larger wedges $W_1$ and $W_2$ considered here play roles in the counterparts in Lemma~\ref{lem:D'}, we immediately get that both $L(Q';\partial_1,V')$ and $L(D';\partial'_1,\partial'_2)$ are equal to $L(\wt D;\partial\wt D\cap \Cc_0,\partial\wt D\cap \Cc_s)$ up to additive constants. The same argument applies also to $L(Q'';V'',\partial_2)$ and $L(D'';\partial''_1,\partial''_2)$, which concludes the proof of Lemma~\ref{lem:concatenation}.
\end{proof}

To be self-contained, we record Lemma 2.2 in \cite{MR1901950} that will be used in the proof of Corollary~\ref{cor:xi-2}.

\begin{lemma}[{\cite[Lemma 2.2]{MR1901950}}]\label{lem:segment}
	Suppose $D$ is a good domain in $\Ac(0,r)$ with boundaries $\partial_i$'s.
	For all $\delta>0$, there exists $C(\delta)$ such that if $V\subseteq \partial_1$ is a segment of length at least $\delta$, if $\dist(V,\partial_3\cup\partial_4)>\delta$ and if the $\delta$-neighborhood of $V$ disconnects $\partial_3$ from $\partial_4$ in $D\cap \Ac(0,\delta)$, then
	\[
	L(D;V,\partial_2)\le L(D;\partial_1,\partial_2)+C(\delta).
	\]
\end{lemma}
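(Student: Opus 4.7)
The strategy is to split the domain $D$ at an intermediate radius $e^s$ with $s\in(0,\delta)$ and apply the composition laws. The key observation is that the ``outer'' extremal distance contributions (measured beyond $\Cc_s$) are essentially the same for $L(D;V,\partial_2)$ and $L(D;\partial_1,\partial_2)$, so they cancel in the comparison, and only an extremal distance inside a bounded domain near $\Cc_0$ remains.

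Concretely, I would take $s=\delta/2$ (the degenerate case $r\le\delta$ can be handled by a direct bounded-domain estimate) and set $V':=D\cap\Cc_s$, $D_{\mathrm{in}}:=D\cap\Bc_s$, $D_{\mathrm{out}}:=D\setminus\ol{\Bc_s}$. The two hypotheses of the lemma together imply that $V'$ is a single arc separating $V$ from $\partial_2$ in $D$, and that $\partial_3,\partial_4$ are well-separated near $\Cc_s$ in the quantitative sense needed by Lemma~\ref{lem:reverse-composition} (possibly after selecting a ``good'' $s\in(\delta/3,2\delta/3)$ via an averaging argument, which is permissible because the problematic radii have small total measure by the disconnection hypothesis). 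Granted this, the forward composition law (Lemma~\ref{lem:composition-laws}) applied to $L(D;\partial_1,\partial_2)$ yields $L(D_{\mathrm{out}};V',\partial_2)\le L(D;\partial_1,\partial_2)$, and Lemma~\ref{lem:reverse-composition} applied to $L(D;V,\partial_2)$ gives
$$L(D;V,\partial_2)\le L(D_{\mathrm{in}};V,V')+L(D_{\mathrm{out}};V',\partial_2)+C_1(\delta).$$
Combining the two estimates,
$$L(D;V,\partial_2)\le L(D_{\mathrm{in}};V,V')+L(D;\partial_1,\partial_2)+C_1(\delta).$$

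It remains to show $L(D_{\mathrm{in}};V,V')\le C_2(\delta)$. Since $D_{\mathrm{in}}\subseteq\Bc_s$ has diameter at most $2e^{\delta/2}$ and both $V$ and $V'$ are sub-arcs of $\partial D_{\mathrm{in}}$ of length bounded below by a positive function of $\delta$ (for $V$ directly by hypothesis, and for $V'$ via the disconnection hypothesis, which forces $V'$ to be a macroscopic sub-arc of $\Cc_s$), this bound is obtained by testing with the admissible metric $\rho:=C_0(\delta)\one_{D_{\mathrm{in}}}$ for an appropriate constant $C_0(\delta)$. The main obstacle will be the geometric verification of the quantitative well-separation hypotheses required by Lemma~\ref{lem:reverse-composition} at $\Cc_s$, which should be handled via a careful averaging argument together with a possible small perturbation of the domain (e.g.\ using Lemma~\ref{lem:D'}) to ensure the desired regularity of $\partial_3,\partial_4$ in a neighborhood of the split radius.
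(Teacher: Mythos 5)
The paper quotes this result from \cite[Lemma 2.2]{MR1901950} without proof, so there is no internal argument to compare against.

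Your decomposition at $\Cc_s$ is a reasonable starting point, but the central step --- invoking Lemma~\ref{lem:reverse-composition} --- is precisely where the argument breaks down. Lemma~\ref{lem:reverse-composition} requires $\partial_3\cap\Ac(s-\delta',s+\delta')$ and $\partial_4\cap\Ac(s-\delta',s+\delta')$ to have small diameter and to be quantitatively well-separated. Nothing in the hypotheses of Lemma~\ref{lem:segment} constrains the geometry of $\partial_3,\partial_4$ at radius $\approx e^{\delta/2}$: they are not assumed rectifiable and could exhibit large-diameter excursions across every thin annulus $\Ac(s-\delta',s+\delta')$ with $s\in(\delta/3,2\delta/3)$, so your claim that ``the problematic radii have small total measure by the disconnection hypothesis'' has no basis --- that hypothesis is a purely topological constraint on $D\cap\Ac(0,\delta)$ minus a $\delta$-neighborhood of $V$ and says nothing about diameters of $\partial_3,\partial_4$ on intermediate annuli. (There is also the lesser issue that $(D;V,\partial_2)$ is not a good domain in the sense of Definition~\ref{def:good-domain}, since $V\subsetneq\partial D\cap\Cc_0$.) Without a way to verify, or circumvent, the separation hypothesis at the splitting scale, the ``reverse composition'' inequality is simply not available, and this is the crux of the lemma.

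A secondary issue: the bound $L(D_{\mathrm{in}};V,V')\le C(\delta)$ is asserted but not justified. It does hold, but the mechanism should be spelled out: $V'$ is an arc of $\Cc_s\subset\Ac(0,\delta)$ joining $\partial_3$ to $\partial_4$, so by the disconnection hypothesis it meets $A_s:=N_\delta(V)\cap\Cc_s$ (the trace of the $\delta$-neighborhood of $V$ on $\Cc_s$), while its endpoints lie outside $A_s$ because $\dist(V,\partial_3\cup\partial_4)>\delta$. Since $A_s$ is a single sub-arc of $\Cc_s$, any arc of $\Cc_s$ meeting $A_s$ with both endpoints outside $A_s$ must contain all of $A_s$, so $V'\supset A_s$. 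The wedge $\{e^{t}e^{i\phi}:0\le t\le s,\ e^{i\phi}\in V\}$ then lies in $D_{\mathrm{in}}$ (it sits inside the $\delta$-neighborhood of $V$, hence avoids $\partial_3\cup\partial_4$) and joins $V$ to a sub-arc of $V'$; any curve separating $V$ from $V'$ in $D_{\mathrm{in}}$ must cross this wedge laterally and so has length of order $\delta$, which is what makes the constant test metric admissible. This gap is fillable, unlike the first.
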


	\subsection{Brownian motions and excursions}\label{subsec:facts}

	In this subsection, we recall some facts about planar Brownian motions and excursions that will be used later. 
 
	We first recall the measures on Brownian paths introduced in Section 3.3 of \cite{MR2045953}. Note that in this subsection and subsequent applications involving path measures, we always require domains to have \emph{piece-wise analytic} boundaries. 
 Given a domain $D$ with piece-wise analytic boundaries and $x, y \in D$, one can define the \emph{interior-to-interior measure} $\mu^D_{x,y}$ on paths from $x$ to $y$ in $D$ which has total mass given by the Green's function $G_D(x,y)$. In this paper, the Green's function is normalized so that when $D$ is the unit disk, 
	\begin{equation}\label{eq:Green}
	G_D(0,y)=-\frac{1}{\pi} \log |y|.
	\end{equation}
	Moreover, the measure $\mu^D_{x,y}$ satisfies reversibility, i.e., one can sample $\gamma\sim\mu^D_{x,y}$ by first sampling $\gamma'$ according to $\mu^D_{y,x}$ then reversing $\gamma'$ to get $\gamma$.
	By letting the starting point or both the starting and ending points tend to some fixed boundary points of $D$, and rescaling the interior-to-interior measure $\mu^D_{x,y}$ in a proper way, one can defined the \emph{interior-to-boundary measure} or \emph{boundary-to-boundary measure} respectively. 
	(Whenever we consider measures associated with boundary points, we always assume $\partial D$ is an analytic curve in the neighborhood of these points.) 
	With slight abuse of notation, we still denote these two kinds of measures by $\mu^D_{x,y}$ for $x\in D$, $y \in \partial D$ or for $x, y \in \partial D$ corresponding to interior-to-boundary measure or boundary-to-boundary measure respectively. There is no confusion as soon as the location of the starting and ending points are specified.
	Furthermore, under such a rescaling, the interior-to-boundary measure has total mass 
	\begin{align*}
		|\mu^D_{x,y}| =H_D(x,y) \quad \text{ for } x\in D, y\in \partial D, 
	\end{align*}
	where $H_D(x,y)$ is the Poisson kernel between $x$ and $y$ in $D$, and $H_{\Bc_0}(0,y)=1/(2\pi)$ for all $y\in \Cc_0$. Similarly, the total mass of boundary-to-boundary measure is given by the ``boundary Poisson kernel'' (see \cite{MR2045953} for details).
	
	For $x\in D$, let $\Pb^x$ be the law of a Brownian motion $B$ started from $x$ and stopped at time $\tau$, its first hit on $\partial D$. Then, $\Pb^x$ can be written as the following integral of interior-to-boundary measure 
	\begin{equation}\label{eq:int-itb}
		\Pb^x=\mu_{x,\partial D}:=\int_{\partial D} \mu_{x,y}^D \, dy,
	\end{equation}
where $dy$ denotes the arc length on $\partial D$. Note that  $|\mu_{x,\partial D}|=1$. If  $x_0\in \partial D$, then $\mu_{x_0,\partial D}:=\int_{\partial D} \mu_{x_0,y}^D \, dy$ is the integration of boundary-to-boundary path measures and in this case it still holds that 
\begin{equation}\label{eq:totalbdry}
|\mu_{x_0,\partial D}|=1.
\end{equation}

In the following, we record an important property of path measures, namely the last exit path decomposition (see e.g.\ Section~5.2, in particular (5.5) of \cite{MR2129588}). 
For $x\in D'\subset D$ and $y\in\partial D$, by decomposing the path from $x$ to $y$ in $D$ according to its last exit from $D'$ (using the reversibility of path measures and the strong Markov property for the reversed path), we obtain
\begin{equation}\label{eq:led}
    \mu^D_{x,y}=\int_{\partial D'} \mu^D_{x,z}\oplus \mu^{D\setminus D'}_{z,y} \, dz.
\end{equation}
See \cite[pp.~103--104]{MR2129588} for a proof of this formula. This formula implies that the Brownian path before and after the last exit  from $D'$ are ``independent'' when ``conditioned'' on the location of the last exit.   
If we let $\sigma$ be the last exit time of $B$ from $D'$ before $\tau$ (the first time $B$ hits $\partial D$), then the law of $B[0,\sigma]$ conditioned on the location of $B(\sigma)$ is given by the normalized interior-to-interior measure:
\begin{equation}\label{eq:cled}
	\Pb^x(B[0,\sigma]\in \cdot \mid B(\sigma)=z)=\mu^D_{x,z}/|\mu^D_{x,z}|,
\end{equation}
which we will refer to as the \emph{Brownian link} (sometimes abbreviated as ``links'') from $x$ to $z$ in $D$. Note that this definition does not depend on the choice of $D'$ (as long as $z\in \partial D'$. We can derive \eqref{eq:cled} and this fact from \eqref{eq:led} applied to \eqref{eq:int-itb}: we have
$$
\Pb^x(B[0,\sigma]\in \cdot \mid B(\sigma)=z)  = 
\frac{\mu^D_{x,z}(\cdot)\int_{\partial D}  |\mu^{D\setminus D'}_{z,y}| \, dy}{\int_{\partial D} |\mu^D_{x,z}| \cdot |\mu^{D\setminus D'}_{z,y}| \, dy}
$$
and by applying \eqref{eq:totalbdry} we obtain \eqref{eq:cled} as desired. We also remark that similar last-exit decompositions exist for other types of path measures as well and can be proved in a similar fashion.

\smallskip

Below we give three equivalent definitions of the probability measure $\mu_{0,r}^{\#}$ on Brownian excursions between  $\Cc_0$ and $\Cc_r$ in the annulus $\Ac(0,r)$.

\begin{enumerate}[(i)]
\item We define the following measure
\[
\mu_{0,r}=\int_{\Cc_0}\int_{\Cc_r}\mu^{\Ac(0,r)}_{x,y}\, dx \,  dy.
\]
Let  $\mu_{0,r}^{\#}:=\mu_{0,r}/ |\mu_{0,r}|$ be the normalized probability measure.
\item 
Let $Y(t):=B(t+\sigma_{0,r})$ for all $0\le t\le \tau_r-\sigma_{0,r}$, where $B$ is a standard planar Brownian motion started from $0$,  $\tau_r$ is the first time $B$ hits $\Cc_r$, and $\sigma_{0,r}$ is the last time $B$ visits $\Cc_0$ before $\tau_r$.  The process $(Y(t), 0\le t\le \tau_r-\sigma_{0,r})$ is distributed as $\mu_{0,r}^{\#}$.
We also call it the excursion induced by the Brownian motion $B$.

\item\label{bessel} Let $Y(t):= \exp(S_1(H(t)) + iS_2(H(t)))$, where $S_1$ is a three-dimensional Bessel process started from $0$, $S_2$ is an independent linear Brownian motion started uniformly on $[0,2\pi]$, and $H(t)=\int_0^t |Y(s)|^{-2}\,ds$ is the usual time-change, see the skew-product representation in \cite[Theorem 7.26]{MR2604525}. Let $T_r$ be the first time that $S_1(H(t))
$ hits $r$. The process $(Y(t), 0\le t\le T_r)$ is distributed as $\mu_{0,r}^{\#}$. 
\end{enumerate}

\begin{remark}\label{rem:adv}
	Compared with (ii) where the process $Y$ in (ii) is only defined for a finite time interval which depends on $r$,
	the definition \eqref{bessel} has the advantage that $Y$ is defined for all $t\ge 0$.
\end{remark}

We next use the path decomposition to establish the equivalence of definitions (i) and (ii).
Plugging \eqref{eq:led} into \eqref{eq:int-itb} and setting $D'=\Bc_0$, $D=\Bc_r$, we know that the law of $B[\sigma_{0,r},\tau_r]$ is given by
\begin{equation*}
	\wh\mu_{0,r}=\int_{\Cc_0}\int_{\Cc_r} |\mu_{0,x}^{\Bc_r}| \,  \mu^{\Ac(0,r)}_{x,y} \, dx \, dy=\frac{r}{\pi} 	\int_{\Cc_0}\int_{\Cc_r} \mu^{\Ac(0,r)}_{x,y} \, dx \, dy,
\end{equation*}
where we used \eqref{eq:Green} to get  $|\mu_{0,x}^{\Bc_r}|=G_{\Bc_r}(0,x)=r/\pi$ for all $x\in \Cc_0$. Moreover, since $|\mu_{0,\Cc_r}|=1$ and $|\mu_{0,\Cc_r}|=|\mu_{0,x}^{\Bc_r}| |\mu_{0,r}|$. This implies that $\wh\mu_{0,r}=\mu_{0,r}^{\#}$.

	Now, we collect several lemmas which control the initial part of a Brownian motion.

	\begin{lemma}\label{lem:intermediate-part}
	    Let $W_1=\{z: |\arg(z)|\le 1/20 \}\cap \Ac(0,2)$ and $W_2=\{z: |\arg(z)|\le 1/18 \}\cap \Ac(-1/10,21/10)$ be two wedges with $W_1\subset W_2$. There is a universal constant $C$ such that for all $z_1\in W_1\cap\Cc_0$, $z_2\in W_1\cap \Cc_2$ and $r\ge 3$,
	    \begin{equation}\label{eq:ip-1}
	    \Pb^{z_1}( B[0,\sigma_{2,r}]\in W_2 \mid B(\sigma_{2,r})=z_2)\ge Cr^{-1},
	    \end{equation}
	    where $B$ is a planar Brownian motion started from $z_1$ and $\sigma_{2,r}$ is the last time $B$ visits $\Cc_2$ before hitting $\Cc_r$, and moreover, for all $s>1/10$, 
	    \begin{equation}\label{eq:ip-2}
	    \Pb^{z_1}( B[0,\sigma_{2,r}]\in W_2 \mid B(\sigma_{2,r})=z_2, \tau_{-s}>\tau_r)\ge C(r^{-1}\vee s^{-1}),
	    \end{equation}
    where $\tau_r$ is the first time $B$ hits $\Cc_r$.
	\end{lemma}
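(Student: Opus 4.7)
The plan is to reduce both bounds to Green's function estimates by means of the last-exit decomposition \eqref{eq:cled}. Since the two parts of the lemma differ only in the ambient domain --- $\Bc_r$ for \eqref{eq:ip-1} and $\Ac(-s,r)$ for \eqref{eq:ip-2} --- the conditional law of $B[0,\sigma_{2,r}]$ given its endpoint $z_2$ (and the exit condition) will in each case be a normalized interior-to-interior measure, and the desired lower bound will come from comparing the mass of paths trapped in $W_2$ to the full Green's function in the ambient domain.

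For \eqref{eq:ip-1}, I will apply \eqref{eq:cled} with $D=\Bc_r$ and $D'=\Bc_2$, so that $\sigma_{2,r}$ is precisely the last exit from $D'$ before $B$ hits $\partial D=\Cc_r$. This identifies the conditional law of $B[0,\sigma_{2,r}]$ given $B(\sigma_{2,r})=z_2$ with $\mu^{\Bc_r}_{z_1,z_2}/G_{\Bc_r}(z_1,z_2)$. Restricted to paths staying in $W_2\subset\Bc_r$, this measure coincides with $\mu^{W_2}_{z_1,z_2}$, whose total mass is $G_{W_2}(z_1,z_2)$. Since $z_1,z_2$ lie in $W_1$ --- whose angular aperture ($1/20$) and radial range ($[0,2]$) are strictly contained in those of $W_2$ ($1/18$ and $[-1/10,21/10]$) --- both points are uniformly bounded away from $\partial W_2$, so a standard compactness argument gives $G_{W_2}(z_1,z_2)\ge C_0>0$. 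Combined with the explicit disk formula $G_{\Bc_r}(x,y)=\pi^{-1}\log|(e^{2r}-x\bar y)/(e^r(x-y))|$, which yields $G_{\Bc_r}(z_1,z_2)=(r-2)/\pi+O(1)\asymp r$, this proves \eqref{eq:ip-1}.

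For \eqref{eq:ip-2}, the same strategy applies in the annulus. Using the decomposition \eqref{eq:led} in $D=\Ac(-s,r)$ at the last exit from $D'=\Ac(-s,2)$, the conditional law of $B[0,\sigma_{2,r}]$ given $B(\sigma_{2,r})=z_2$ and $\tau_{-s}>\tau_r$ equals $\mu^{\Ac(-s,r)}_{z_1,z_2}/G_{\Ac(-s,r)}(z_1,z_2)$ (the boundary-Poisson-kernel factor from the decomposition of $B[\sigma_{2,r},\tau_r]$ in $\Ac(2,r)$ cancels in the conditional probability). Since $W_2\subset\Ac(-s,r)$ for $s>1/10$ and $r\ge 3$, the numerator restricted to $W_2$ is again $G_{W_2}(z_1,z_2)\ge C_0$. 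For the denominator, I will invoke the strong Markov property at $\tau_{-s}$:
$$G_{\Bc_r}(z_1,z_2)=G_{\Ac(-s,r)}(z_1,z_2)+\Eb^{z_1}\!\left[G_{\Bc_r}(B(\tau_{-s}),z_2);\,\tau_{-s}<\tau_r\right].$$
The disk formula gives $G_{\Bc_r}(w,z_2)=(r-2)/\pi+O(1)$ uniformly for $w\in\Cc_{-s}$, the $O(1)$ remaining uniform because $|w-z_2|$ is bounded above and below by positive constants whenever $|w|=e^{-s}\le e^{-1/10}$ and $|z_2|=e^2$. Together with $\Pb^{z_1}(\tau_{-s}<\tau_r)=r/(s+r)$, this yields $G_{\Ac(-s,r)}(z_1,z_2)=s(r-2)/(\pi(s+r))+O(1)\asymp s\wedge r$. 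The ratio $G_{W_2}(z_1,z_2)/G_{\Ac(-s,r)}(z_1,z_2)\gtrsim 1/(s\wedge r)=r^{-1}\vee s^{-1}$ then gives \eqref{eq:ip-2}. I expect no serious obstacle beyond uniformly controlling the $O(1)$ errors in the disk Green's function expansion across the range $s>1/10$, $r\ge 3$, which amounts to routine bookkeeping.
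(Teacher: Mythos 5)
Your proof is correct and follows the same route as the paper's: apply the last-exit decomposition \eqref{eq:cled} to write each conditional probability as a Green's function ratio, bound the numerator $G_{W_2}(z_1,z_2)$ below by a constant, and bound the denominator by $\asymp r$ (resp.\ $\asymp s\wedge r$). The only difference is that you spell out the Green's function estimates (disk formula and strong-Markov decomposition across $\Cc_{-s}$) that the paper merely asserts.
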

\begin{proof}
	Note that the Green's functions $G_{W_2}(z_1,z_2)\ge C_1$ and $G_{\Bc_r}(z_1,z_2)\le C_2 r$ for some universal constants $C_1,C_2$. 
	By \eqref{eq:cled},
	\[
	\Pb^{z_1}( B[0,\sigma_{2,r}]\in W_2 \mid B(\sigma_{2,r})=z_2)
	=\frac{G_{W_2}(z_1,z_2)}{G_{\Bc_r}(z_1,z_2)}
	\ge C_1C_2^{-1}r^{-1},
	\]
	which concludes the proof of \eqref{eq:ip-1}. As for \eqref{eq:ip-2}, we note that $G_{\Ac(-s,r)}(z_1,z_2)\le C_2 (s\wedge r)$ and  
	\[
	\Pb^{z_1}( B[0,\sigma_{2,r}]\in W_2 \mid B(\sigma_{2,r})=z_2, \tau_{-s}>\tau_r)=\frac{G_{W_2}(z_1,z_2)}{G_{\Ac(-s,r)}(z_1,z_2)}
	\ge C_1C_2^{-1}(r^{-1}\vee s^{-1}).
	\]
	This finishes the proof of the lemma.
\end{proof}

\begin{lemma}\label{lem:localization}
    	Let $r\ge 1, 0<\alpha<1$. Let $B$ be a planar Brownian motion started from $1$ stopped upon reaching $\Cc_r$. Let $\sigma_{0,r}$ be the last time $B$ visits $\Cc_0$.
    	There exists a constant $C=C(\alpha)$ such that for all $r\ge 1, z\in\Cc_0$ with $|z-1|\ge 2\alpha$,
    	\begin{equation}\label{eq:even-br}
    	\Pb\Big(B[0,\sigma_{0,r}]\subseteq \Bc(0,1)\cup \Bc(1,\alpha)\cup \Bc(z,\alpha)\mid B(\sigma_{0,r})=z\Big) \ge C r^{-1}.
    	\end{equation}
\end{lemma}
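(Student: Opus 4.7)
The plan is to follow the template of the proof of Lemma~\ref{lem:intermediate-part}, expressing the conditional probability as a ratio of two Green's functions and estimating each factor separately. First, since $B$ terminates on $\Cc_r$, the last visit time $\sigma_{0,r}$ to $\Cc_0$ coincides with the last exit time of $B$ from $\ol{\Bc_0}$. Applying formula~\eqref{eq:cled} with $D=\Bc_r$ and $D'=\Bc_0$, the conditional law of $B[0,\sigma_{0,r}]$ given $B(\sigma_{0,r})=z$ is the normalized Brownian bridge measure $\mu_{1,z}^{\Bc_r}/|\mu_{1,z}^{\Bc_r}|$. The starting point $x=1$ lies on $\partial D'$ rather than in the open disk $D'$, but both $1$ and $z$ remain interior to $D=\Bc_r$; the formula extends to this boundary case via the same disintegration argument that yields~\eqref{eq:led}, or alternatively by approximating $D'$ by $\Bc(0,1+\eps)$ and letting $\eps\to 0$.

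Set $U:=\Bc(0,1)\cup\Bc(1,\alpha)\cup\Bc(z,\alpha)$. Since $r\ge 1$ and $\alpha<1$, $U$ is contained in $\Bc_r$, and the restriction of $\mu_{1,z}^{\Bc_r}$ to paths staying in $U$ coincides with $\mu_{1,z}^{U}$. Consequently, the conditional probability in~\eqref{eq:even-br} equals $G_U(1,z)/G_{\Bc_r}(1,z)$.

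For the denominator, the explicit formula
\[
G_{\Bc_r}(1,z)=\frac{1}{\pi}\log\frac{|e^{2r}-\bar z|}{e^r|1-z|},
\]
combined with $|1-z|\ge 2\alpha$ and $r\ge 1$, yields $G_{\Bc_r}(1,z)\le C_1(\alpha)\,r$. For the numerator, it suffices to prove the uniform lower bound $G_U(1,z)\ge C_2(\alpha)>0$ over $z\in\Cc_0$ with $|z-1|\ge 2\alpha$; this is the only step requiring real work, and is the main (albeit mild) obstacle. One establishes it by restricting to a dumbbell-shaped subdomain $V\subset U$ of the form $\Bc(1,\alpha/2)\cup T\cup\Bc(z,\alpha/2)$, where $T$ is a thin tube of width of order $\alpha$ lying inside $\Bc_0$ and joining the two small balls, and then bounding $G_V(1,z)$ from below by a direct planar Brownian-motion construction (forcing $B$ started at $1$ to traverse $T$ and spend order-one time near $z$ without leaving $V$). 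Compactness of the set $\{z\in\Cc_0:|z-1|\ge 2\alpha\}$ upgrades this to a uniform bound depending only on $\alpha$, and combining the two estimates yields the desired lower bound $C(\alpha)\,r^{-1}$.
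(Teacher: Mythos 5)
Your proposal is correct and follows the paper's proof almost verbatim: both express the conditional probability via \eqref{eq:cled} as the ratio $G_U(1,z)/G_{\Bc_r}(1,z)$, then bound the denominator by $C(\alpha)r$ and the numerator from below by a constant depending only on $\alpha$. The extra detail you supply (handling $1\in\partial\Bc_0$, and the explicit dumbbell construction for the lower bound on $G_U$) is sound and simply fills in steps the paper leaves implicit.
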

    \begin{proof}
    	By \eqref{eq:cled}, it follows that
    	    \begin{equation} 
 \Pb\Big(B[0,\sigma_{0,r}]\subseteq \Bc(0,1)\cup \Bc(1,\alpha)\cup \Bc(z,\alpha)\mid B(\sigma_{0,r})=z\Big) =\frac{G_{\Bc(0,1)\cup \Bc(1,\alpha)\cup \Bc(z,\alpha)}(1,z)}{G_{\Bc_r}(1,z)}
    		\label{eq:frac1}
\end{equation}   	
        By \eqref{eq:Green}, the numerator in the right-hand side of \eqref{eq:frac1} is greater than $C_1(\alpha)$ and the denominator is less than $C_2(\alpha)r$ for all $z\in\Cc_0$ with $|z-1|\ge 2\alpha$. This finishes the proof.
    \end{proof}

    \begin{lemma}\label{lem:outer-bridge}
	Let $r, r_0\ge 1, 0<\alpha<1$ and $\Bc_{r+1}\subset D\subset \Bc_{r+r_0}$. Let $B$ be a planar Brownian motion started at $e^r$ stopped upon reaching $\Cc_0$. 
	Let $\sigma_{r,0}$ be the last time $B$ visits $\Cc_r$ before hitting $\Cc_0$.
	Let $\tau_0$ be the first time $B$ hits $\Cc_0$ and $\tau_{\partial D}$ be the first time $B$ hits $\partial D$.
	There exists a constant $C=C(\alpha,r_0)$ such that for all $r\ge 1, z\in\Cc_r$ with $|z-e^r|\ge 2\alpha e^r$,
	\begin{equation}\label{eq:odd-br}
		\Pb(B[0,\sigma_{r,0}]\subseteq \Bc_r^c\cup \Bc(e^r,\alpha e^r)\cup  \Bc(z,\alpha e^r)\mid \tau_0<\tau_{\partial D}, B(\sigma_{r,0})=z) \ge C.
	\end{equation}
\end{lemma}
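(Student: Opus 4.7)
The plan is to mimic the proof of Lemma~\ref{lem:localization}, which expresses the conditional probability as a ratio of Green's functions via~\eqref{eq:cled}. The new feature is that the conditioning on $\tau_0<\tau_{\partial D}$ effectively places the Brownian motion in the domain $D\setminus\overline{\Bc_0}$ with absorbing boundary $\Cc_0\cup\partial D$.

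I would first identify the conditional law of the pre-$\sigma_{r,0}$ segment. Applying the last-exit decomposition~\eqref{eq:led} with ambient domain $D\setminus\overline{\Bc_0}$ and inner domain $D\cap\Bc_r^c$ yields
\[
\mu^{D\setminus\overline{\Bc_0}}_{e^r,y}=\int_{\Cc_r}\mu^{D\setminus\overline{\Bc_0}}_{e^r,z}\oplus\mu^{\Bc_r\setminus\overline{\Bc_0}}_{z,y}\,dz,\qquad y\in\Cc_0.
\]
Integrating out the post-$\sigma_{r,0}$ segment and the endpoint $y$, one reads off that, conditional on $B(\sigma_{r,0})=z$ and $\tau_0<\tau_{\partial D}$, the path $B[0,\sigma_{r,0}]$ is a Brownian bridge from $e^r$ to $z$ in $D\setminus\overline{\Bc_0}$, with law $\mu^{D\setminus\overline{\Bc_0}}_{e^r,z}/G_{D\setminus\overline{\Bc_0}}(e^r,z)$. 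The only technical subtlety is that $e^r$ sits on the boundary of $D\cap\Bc_r^c$, which I would handle by first starting from $(1+\varepsilon)e^r$ and letting $\varepsilon\to 0$.

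Step two is to rewrite the probability in~\eqref{eq:odd-br} as
\[
\frac{G_{\tilde R}(e^r,z)}{G_{D\setminus\overline{\Bc_0}}(e^r,z)}, \quad \tilde R:= \bigl(\Bc_r^c\cup \Bc(e^r,\alpha e^r)\cup \Bc(z,\alpha e^r)\bigr)\cap(D\setminus\overline{\Bc_0}).
\]
The denominator is upper-bounded by $G_{\Bc_{r+r_0}}(e^r,z)$ using $D\setminus\overline{\Bc_0}\subset\Bc_{r+r_0}$ and monotonicity, which in turn is at most some $C(\alpha,r_0)$ by the explicit Green's function formula for the disk (here the hypothesis $|z-e^r|\ge 2\alpha e^r$ enters).

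For the numerator, I would observe that $\tilde R$ contains the annulus $\Ac(r,r+1)\subset D\cap\Bc_r^c$ together with the portions of the two disks $\Bc(e^r,\alpha e^r)$ and $\Bc(z,\alpha e^r)$ that lie outside $\overline{\Bc_0}$. Scaling by $e^{-r}$ and using the scale-invariance of the two-dimensional Green's function, the lower bound $G_{\tilde R}(e^r,z)\ge c(\alpha)>0$ reduces to a uniform positive estimate on the family of fixed-scale domains $\Ac(0,1)\cup(\Bc(1,\alpha)\setminus\Bc(0,e^{-r}))\cup(\Bc(z/e^r,\alpha)\setminus\Bc(0,e^{-r}))$, with reference points $1$ and $z/e^r\in\Cc_0$ at distance at least $2\alpha$. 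Since $r\ge 1$, the excised disk $\Bc(0,e^{-r})$ stays uniformly away from the reference points and from the connecting annulus, so compactness and continuity of the Green's function in the domain yield a $c(\alpha)$ independent of $r$ and of the angular position of $z/e^r$.

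Combining the two bounds gives the desired constant $C(\alpha,r_0)$. The main (mild) obstacle is the boundary-point issue in applying~\eqref{eq:led}, which the $\varepsilon\to 0$ limit resolves; beyond that, everything reduces to classical Green's function estimates.
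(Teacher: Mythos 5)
Your proposal follows essentially the same route as the paper: express the conditional probability as a ratio of Green's functions via the bridge characterization \eqref{eq:cled}, upper-bound the denominator by $G_{\Bc_{r+r_0}}(e^r,z)$, and lower-bound the numerator by the Green's function of a fixed-scale domain containing $\Ac(r,r+1)$ and the two $\alpha$-disks, concluding by scale invariance. You are in fact a touch more careful than the paper in intersecting the numerator's domain with $D\setminus\ol{\Bc_0}$ (which is the correct domain for the conditional bridge and matters when $\alpha$ is close to $1$), but the argument and the key estimates are the same.
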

    \begin{proof}
    	Similar to the proof of Lemma~\ref{lem:localization},
    	\begin{align} 
    	\notag
    	&\;\Pb(B[0,\sigma_{r,0}]\subseteq \Bc_r^c\cup \Bc(e^r,\alpha e^r)\cup  \Bc(z,\alpha e^r)\mid \tau_0<\tau_{\partial D}, B(\sigma_{r,0})=z) \\ 
    	\notag
    	=&\; \frac{G_{(D\setminus \Bc_r)\cup\Bc(e^r,\alpha e^r)\cup  \Bc(z,\alpha e^r)}(e^r,z)}{G_{D\setminus \Bc_0}(e^r,z)}\\ 
    	\label{eq:frac}
    	\ge&\; \frac{G_{\Ac(r,r+1)\cup\Bc(e^r,\alpha e^r)\cup  \Bc(z,\alpha e^r)}(e^r,z)}{G_{\Bc_{r+r_0}}(e^r,z)}.
    \end{align}
	By scaling invariance of the Green's function, the numerator in \eqref{eq:frac} is greater than $C_1(\alpha)$ and the denominator is less than $C_2(\alpha,r_0)$ for all $z\in\Cc_r$ with $|z-e^r|\ge 2\alpha e^r$. This finishes the proof.
\end{proof}

    \begin{lemma}\label{lem:BM-prop-1}
For $r>1$, let $B$ and $\sigma_{0,r}$ be defined as in Lemma~\ref{lem:localization}. Let $D_r$ be the event that $B[0,\sigma_{0,r}]$ does not disconnect $\Cc_0$ from infinity. Then, $\Pb(D_r)\lesssim r^{-1}$.
    \end{lemma}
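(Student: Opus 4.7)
The idea is to relate $D_r$ to a Brownian winding estimate via the conformal change $z\mapsto\log z$.

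First, I would apply $w=\log z$ lifted to the universal cover: letting $\theta(t)$ be the continuous argument of $B(t)$ with $\theta(0)=0$, the image $\hat B(t):=\log|B(t)|+i\theta(t)$ is, after the standard time change, a standard planar Brownian motion from the origin stopped at the first time its real part reaches $r$. In particular $X(t):=\log|B(t)|$ and $Y(t):=\theta(t)$ are independent standard $1$D Brownian motions from $0$, and $\sigma_{0,r}$ corresponds to the last zero of $X$ before $X$ first hits $r$.

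Second, I would argue the topological reduction $D_r \subseteq \{|\theta(\sigma_{0,r})|<2\pi\}$. The content is that if the net angular displacement satisfies $|\theta(\sigma_{0,r})|\ge 2\pi$, then closing $B[0,\sigma_{0,r}]$ by either of the two arcs on $\Cc_0$ joining $B(\sigma_{0,r})$ to $1$ yields a closed curve with non-zero winding number around $0$. By a Jordan-curve-type argument applied to the outer boundary of the filled hull of this closed curve, the origin must then lie in a bounded complementary component, forcing $\Cc_0$ to be enclosed and thus disconnected from $\infty$. Making this rigorous for Brownian paths (which have dense self-intersections) is the main obstacle: one needs to pass to the filled hull and use that the outer boundary is a Jordan curve to which the standard winding argument applies.

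Third, I would bound $\Pb(|\theta(\sigma_{0,r})|<2\pi)$ by a direct computation. Since $X$ and $Y$ are independent, conditionally on $\sigma_{0,r}=s$ we have $\theta(\sigma_{0,r})\sim \mathcal{N}(0,s)$, whence $\Pb(|\theta(\sigma_{0,r})|<2\pi\mid\sigma_{0,r}=s)\asymp\min(1,1/\sqrt{s})$. The last zero $\sigma_{0,r}$ of a $1$D Brownian motion from $0$ stopped at its first hit of $r$ is (by the last-exit / Brownian meander decomposition) concentrated on scales of order $r^2$, so $\mathbb{E}[\min(1,1/\sqrt{\sigma_{0,r}})]\asymp 1/r$, yielding the desired $\Pb(D_r)\lesssim r^{-1}$.

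The main technical point is the topological step: since we only need the implication ``large winding $\Rightarrow$ disconnection'' for the upper bound, it should suffice to work with the outer boundary of the filled hull of $B[0,\sigma_{0,r}]$ and apply the Jordan curve theorem there; if this turns out to require too much care, an alternative is to discretize the argument by fixing a finite set of candidate escape angles and using a union bound combined with harmonic-measure estimates for a slit plane (each single-angle avoidance being of probability $\asymp 1/r$ by mapping $\mathbb{C}\setminus R_\alpha\to\mathbb{H}$).
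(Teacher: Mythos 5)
Your approach via winding numbers is genuinely different from the paper's crossing-count argument, but it contains two gaps, the first of which is fatal.

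\textbf{The topological inclusion $D_r\subseteq\{|\theta(\sigma_{0,r})|<2\pi\}$ is false.} Winding accumulated \emph{inside} $\Bc_0$ does not help to disconnect $\Cc_0$ from infinity. Consider a realization in which $B[0,\sigma_{0,r}]$ travels radially inward from $1$ to $\Cc_{-1}$, winds twice around the origin at radius about $e^{-1}$, returns radially to a small neighbourhood of $1$, and then leaves $\Cc_0$ for the last time; this has positive probability. On it $\theta(\sigma_{0,r})\approx 4\pi>2\pi$, yet the whole trace lies in $\ol\Bc_0$ together with a small neighbourhood of $1$, so a point such as $-1\in\Cc_0$ escapes radially to infinity without meeting the path, and $D_r$ occurs. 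What your Jordan-curve argument actually yields is that the origin $0$ is enclosed by the closed curve obtained from $B[0,\sigma_{0,r}]$ and an arc of $\Cc_0$. But ``disconnects $0$'' is strictly weaker than ``disconnects $\Cc_0$,'' and the implication you need points the other way. Passing to the filled hull does not fix this: in the example the filled hull is roughly $\ol\Bc_{-1}$, which again does not separate $\Cc_0$ from infinity.

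\textbf{The expectation estimate is also off by a logarithm.} Use the skew-product parametrization so that $\log|B|=\beta_1$ and $\theta=\beta_2$ for independent linear Brownian motions $\beta_1,\beta_2$ after the standard time change, and let $\hat\sigma$ denote the last zero of $\beta_1$ before its first hit of $r$, so that $\theta(\sigma_{0,r})=\beta_2(\hat\sigma)$ with $\hat\sigma$ independent of $\beta_2$. Conditioning on $\beta_1(a)=x>0$, the probability of avoiding $0$ until hitting $r$ is $x/r$, hence $\Pb(\hat\sigma<a)\asymp \Eb[\beta_1(a)^+]/r\asymp\sqrt a/r$ for $a\lesssim r^2$, i.e.\ $\hat\sigma$ has a density $\asymp (r\sqrt a)^{-1}$ near zero. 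Therefore
\begin{align*}
\Eb\big[\min(1,\hat\sigma^{-1/2})\big]\asymp \Pb(\hat\sigma<1)+\int_1^{r^2}a^{-1/2}\cdot\frac{da}{r\sqrt a}\asymp\frac{1}{r}+\frac{\log r}{r},
\end{align*}
so the resulting bound is only $O((\log r)/r)$, not $O(1/r)$. The statement that $\hat\sigma$ is ``concentrated on scales of order $r^2$'' is correct for the bulk, but $\Eb[(\hat\sigma/r^2)^{-1/2}]=+\infty$ because of the $a^{-1/2}$ density at the origin, and truncation by $\min(1,\cdot)$ only softens that divergence to a logarithm.

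For comparison, the paper's proof works with crossings of the fixed annulus $\Ac(0,1)$: the number $N_r$ of crossings of $B$ from $\Cc_0$ to $\Cc_1$ before $\tau_r$ satisfies $\Pb(N_r=k+1)=((r-1)/r)^k/r$, and, uniformly over its endpoints, each crossing traces a closed sub-loop \emph{inside $\Ac(0,1)$} surrounding $\Cc_0$ with probability at least a universal $p>0$. Requiring the separating loop to lie in $\Ac(0,1)$ is precisely what rules out the counterexample above, and the geometric sum $\sum_{k\ge 0}((r-1)/r)^k r^{-1}(1-p)^{k+1}\lesssim r^{-1}$ gives the clean bound with no logarithmic loss.
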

    \begin{proof}
    	Let $N_r$ denote the number of crossings of $B$ from $\Cc_0$ to $\Cc_{1}$. Then for all $k\in\Nb_0$,
    	\[
    	\Pb(N_r= k+1)= \left(\frac{r-1}{r}\right)^k\frac{1}{r}.
    	\]
	During each crossing from $\Cc_0$ to $\Cc_{1}$, with positive probability $p$ (not depending on $r$), $B$ would make a closed loop which disconnects $\Cc_0$ from infinity. Therefore,
    	\[
    	\Pb(D_r)\le\sum_{k\ge 0} \left(\frac{r-1}{r}\right)^k\frac{1}{r} (1-p)^{k+1}\lesssim \frac1r.
    	\]
This finishes the proof.
    \end{proof}

   Combining Lemma~\ref{lem:localization} and Lemma~\ref{lem:BM-prop-1}, we know that $\Pb(D_r)\asymp r^{-1}$. We also have the following conditioned version of Lemma~\ref{lem:BM-prop-1}.
   
\begin{lemma}\label{lem:D1r}
  	Let $r\ge 2$ and $B$ be a planar Brownian motion started at $1$ stopped upon reaching $\Cc_r$. Denote by $\sigma_{1,r}$ the last time $B$ visits $\Cc_1$ before reaching $\Cc_r$. For all $z\in \Cc_1$, let $W^z$ be the Brownian link (see \eqref{eq:cled} and below) from $1$ to $z$ in $\Bc_r$, i.e., $B[0,\sigma_{1,r}]$ conditioned on $B(\sigma_{1,r})=z$. Let $D^z_{1,r}$ be the event that $W^z$ does not disconnect $\Cc_0$ from infinity. Then, there exists a universal constant $C$ such that for all $z\in \Cc_1$ and $r>1$, we have
  	\[
  	\Pb(D^z_{1,r})\le C r^{-1}.
  	\]
  \end{lemma}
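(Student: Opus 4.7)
The plan is to adapt the crossing argument of Lemma~\ref{lem:BM-prop-1} to the Brownian bridge $W^z$. Let $N$ denote the number of upcrossings of $W^z$ from $\Cc_0$ to $\Cc_1$; since $W^z$ starts at $1 \in \Cc_0$ and ends at $z \in \Cc_1$, we have $N \ge 1$.

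The key inputs I would establish are: (i) during each upcrossing, the bridge forms a closed loop that, together with the previously traced path, disconnects $\Cc_0$ from $\infty$ with probability at least $p > 0$, uniformly in the past and in the bridge conditioning; and (ii) the distribution of $N$ stochastically dominates a geometric random variable with parameter $C/r$, in the sense that $\Pb(N \ge k+1 \mid N \ge k) \ge 1 - C/r$ uniformly in $k$ and in the history. Granting (i) and (ii), the same computation as in the proof of Lemma~\ref{lem:BM-prop-1} yields
$$
\Pb(D^z_{1,r}) \le \Eb\bigl[(1-p)^N\bigr] \lesssim 1/r.
$$

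To establish (i) and (ii), I would compare $W^z$ with the free Brownian motion from Lemma~\ref{lem:BM-prop-1} on portions of the path staying at distance of order $1$ from the bridge endpoint $z$. On such portions, the Radon--Nikodym factor between $W^z$ and the free motion is controlled by ratios of Green's functions $G_{\Bc_r}(\cdot,z)$, and by \eqref{eq:Green} together with scaling, $G_{\Bc_r}(\cdot,z) \asymp r$ on any compact region at distance of order $1$ from $z$. Hence this factor is bounded above and below by universal constants on such regions. Each upcrossing, except possibly the very last, takes place in such a region, so the ``disconnecting loop'' event used in Lemma~\ref{lem:BM-prop-1}, being local, retains positive probability under the bridge measure, which gives (i); and the harmonic-measure estimate that gave $\Pb(N_r \ge k+1 \mid N_r \ge k) = 1 - 1/r$ in the free case transfers in the same way, giving (ii).

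The main obstacle is the treatment of the last upcrossing, which terminates at the prescribed target $z$ and is genuinely biased by the conditioning. However, its effect amounts to a bounded multiplicative correction in the overall estimate that does not alter the $1/r$ scaling.
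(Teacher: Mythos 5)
Your plan, reducing to Lemma~\ref{lem:BM-prop-1} by controlling the bridge tilt via Green's-function ratios, is on the right track, but step~(ii) has a concrete gap. You count upcrossings from $\Cc_0$ to $\Cc_1$ and assert that every upcrossing except possibly the last ``takes place in a region at distance of order~$1$ from $z$''; this is false. Each upcrossing lands on $\Cc_1$, the very circle containing $z$, and an intermediate upcrossing can end arbitrarily close to $z$. If the $k$-th crossing ends at $w$ with $|w-z|\asymp e^{-s}$, the $h$-transform tilt is $G_{\Bc_r}(w,z)/G_{\Bc_r}(1,z)\asymp (r+s)/r$, not $\asymp 1$, and the conditional probability (under the bridge) of hitting $\Cc_0$ again before the path terminates is of order $r/(r+s)$, not $1-1/r$. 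So the uniform bound $\Pb(N\ge k+1\mid N\ge k)\ge 1-C/r$ claimed in~(ii) fails for such histories, and the chain of inequalities ending in $\Eb[(1-p)^N]\lesssim 1/r$ is not established. The ``bounded multiplicative correction'' for the last upcrossing is asserted, not proved, and runs into the same tilt blow-up.

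The paper sidesteps all of this by moving the comparison scale strictly inside $\Cc_1$. It considers the last exit time $\sigma'_{1/2,r}$ of $W^z$ from $\Cc_{1/2}$: since $\dist(\Cc_{1/2},z)\ge e-e^{1/2}>1$, the Green's-function tilt on $\Cc_{1/2}$ is uniformly $\asymp 1$, and (via a last-exit decomposition in the spirit of Lemma~\ref{lem:last-exit-uniform}) the law of $W^z[0,\sigma'_{1/2,r}]$ is mutually absolutely continuous with that of $B[0,\sigma_{1/2,r}]$, with densities bounded away from $0$ and $\infty$ uniformly in $z$ and $r$. Because $D^z_{1,r}$ implies that the initial segment $W^z[0,\sigma'_{1/2,r}]$ does not disconnect $\Cc_0$, one runs the crossing argument of Lemma~\ref{lem:BM-prop-1} on the \emph{free} motion $B[0,\sigma_{1/2,r}]$ to get the $O(1/r)$ bound and transfers it across the absolute continuity, with no need to re-examine crossing probabilities under the bridge measure. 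If you prefer to keep your crossing-by-crossing route, the fix is the same idea: count upcrossings from $\Cc_0$ to $\Cc_{1/2}$ rather than to $\Cc_1$, so that every landing point is uniformly bounded away from $z$; with that change your~(i) and~(ii) become correct and your argument closes.
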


\begin{proof}
	Suppose $z\in \Cc_1$. Let $\sigma_{1/2,r}$ (resp.\  $\sigma'_{1/2,r}$) be the last time $B$ (resp.\ $W^z$) visits $\Cc_{1/2}$ before reaching $\Cc_r$. Then, the law of $B[0,\sigma_{1/2,r}]$ and the law of $W^z[0,\sigma'_{1/2,r}]$ are absolutely continuous with densities bounded from above and below by constants which do not depend on $z$ and $r$. Let $E$ (resp.\ $E'$) be the event that $B[0,\sigma_{1/2,r}]$ (resp.\ $W^z[0,\sigma_{1/2,r}]$) does not disconnect $\Cc_0$ from infinity. Then we know that $\Pb(E)\asymp \Pb(E')$. By an argument similar to Lemma~\ref{lem:BM-prop-1}, $\Pb(E)\lesssim r^{-1}$. Therefore, 
	$\Pb(D^z_{1,r})\le \Pb(E')\lesssim \Pb(E)\lesssim r^{-1}$, which finishes the proof.
\end{proof}

	The next lemma relates the law of the last exit point (on the next scale) to a uniform distribution which also decouples the starting point and the last exit point.
	\begin{lemma}\label{lem:last-exit-uniform}
		Let $r\ge 2$. Let $B_0$ be a random point on $\Cc_0$ and $B$ be a Brownian motion started from $B_0$. Let $X$ denote a uniformly distributed random variable on $\Cc_1$. There exists a universal constant $C>1$ such that for any bounded measurable positive functions $f: \Cc_1\rightarrow \Rb$ and $g: \Cc_0\rightarrow \Rb$, 
		\[
		C^{-1} \Eb(f(X))\Eb(g(B_0))\le \Eb(f(B(\sigma_{1,r})) g(B_0)) \le C \Eb(f(X))\Eb(g(B_0)).
		\] 
	\end{lemma}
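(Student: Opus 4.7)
The plan is to condition on $B_0=z_0$, compute explicitly (via the last-exit decomposition) the density of $B(\sigma_{1,r})$ on $\Cc_1$, show it is bounded above and below by universal constants times the uniform density, and then integrate against $g(B_0)$. The main obstacle is verifying that the Radon--Nikodym derivative between the law of $B(\sigma_{1,r})$ and the uniform law on $\Cc_1$ is bounded by constants that are uniform in both $z_0\in\Cc_0$ and $r\ge 2$; this is essentially a Harnack-type estimate on the Green's function of $\Bc_r$.

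First, I would condition on $B_0=z_0\in\Cc_0$. Under this conditioning the law of $B$ is $\Pb^{z_0}=\int_{\Cc_r}\mu^{\Bc_r}_{z_0,y}\,dy$ by \eqref{eq:int-itb}. Apply the last-exit decomposition \eqref{eq:led} with $D=\Bc_r$ and $D'=\Bc_1$ to write
\[
\mu^{\Bc_r}_{z_0,y}=\int_{\Cc_1}\mu^{\Bc_r}_{z_0,z}\oplus \mu^{\Ac(1,r)}_{z,y}\,dz,
\]
so integrating over $y\in\Cc_r$ gives the marginal density of $B(\sigma_{1,r})$ with respect to Lebesgue measure on $\Cc_1$:
\[
\Pb^{z_0}\bigl(B(\sigma_{1,r})\in dz\bigr)\;=\;G_{\Bc_r}(z_0,z)\cdot h_r(z)\,dz,\qquad h_r(z):=\int_{\Cc_r}|\mu^{\Ac(1,r)}_{z,y}|\,dy.
\]
By the rotational invariance of $\Ac(1,r)$ the function $h_r(z)$ depends only on $r$, so I may write $h_r(z)\equiv h_r$. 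In particular, the density with respect to the uniform law of $X$ on $\Cc_1$ is
\[
\frac{G_{\Bc_r}(z_0,z)}{\frac{1}{2\pi e}\int_{\Cc_1}G_{\Bc_r}(z_0,z')\,dz'}.
\]

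The main step is to bound this ratio above and below by universal constants. For $z_0\in\Cc_0$, the function $z\mapsto G_{\Bc_r}(z_0,z)$ is positive and harmonic on $\Bc_r\setminus\{z_0\}$; since $|z_0|=1$, it is in particular positive and harmonic on the open annulus $\Bc(0,2e)\setminus\overline{\Bc(0,2)}$ (for $r\ge 2$ this annulus is contained in $\Bc_r\setminus\{z_0\}$). Because $\Cc_1$ is a fixed compact subset of this annulus, Harnack's inequality on this fixed domain yields a universal constant $K$ with
\[
\sup_{z\in\Cc_1}G_{\Bc_r}(z_0,z)\;\le\;K\inf_{z\in\Cc_1}G_{\Bc_r}(z_0,z),
\]
independently of $z_0\in\Cc_0$ and of $r\ge 2$. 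Consequently, there is a universal $C>1$ such that for every $z_0\in\Cc_0$, $z\in\Cc_1$ and $r\ge 2$,
\[
C^{-1}\;\le\;\frac{G_{\Bc_r}(z_0,z)}{\frac{1}{2\pi e}\int_{\Cc_1}G_{\Bc_r}(z_0,z')\,dz'}\;\le\;C.
\]

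Finally, this density bound means that for every bounded measurable $f\ge 0$ on $\Cc_1$ and every $z_0\in\Cc_0$,
\[
C^{-1}\Eb(f(X))\;\le\;\Eb\bigl(f(B(\sigma_{1,r}))\bigm|B_0=z_0\bigr)\;\le\;C\,\Eb(f(X)).
\]
Multiplying by $g(z_0)\ge 0$ and taking expectation in $B_0$ gives the claimed two-sided inequality, with the same universal constant $C$.
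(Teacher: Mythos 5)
Your proposal is correct and follows essentially the same route as the paper's proof: both apply the last-exit decomposition to write the density of $B(\sigma_{1,r})$ on $\Cc_1$ as $G_{\Bc_r}(z_0,z)\,h_r(z)$, use rotational invariance of $\Ac(1,r)$ to observe that $h_r$ is constant on $\Cc_1$, and then reduce the claim to a uniform two-sided bound on the Green's function. The only difference is in that last step: the paper reads the comparability $G_{\Bc_r}(x,z)\asymp G_{\Bc_r}(0,e)$ directly off the explicit formula \eqref{eq:Green}, while you derive the same uniformity from Harnack's inequality on the fixed annulus $\Bc(0,2e)\setminus\overline{\Bc(0,2)}$, which is a perfectly valid and slightly more hands-off alternative.
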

	In other words, the law of $B(\sigma_{1,r})$ and the uniform distribution are absolutely continuous w.r.t. each other with densities uniformly bounded away from $0$ and $\infty$. 
\begin{proof}
	By using the last exit decomposition \eqref{eq:led}, it suffices to show that for all $x\in \Cc_0$, $z\in \Cc_1$,
	\[
	a_1\le p(x,z)=\int_{\Cc_r} G_{\Bc_r}(x,z)H_{\Ac(1,r)}(z,y) dy \le a_2,
	\]
	where $a_1,a_2$ are some positive constants that do not depend on $x$ and $z$, and $H_{\Ac(1,r)}(x,y)$ is the boundary Poisson kernel. 
	By \eqref{eq:Green}, $G_{\Bc_r}(x,z)\asymp G_{\Bc_r}(0,e)$. By rotation invariance, $\int_{\Cc_r} H_{\Ac(1,r)}(z,y) dy$ takes the same value for all $z\in\Cc_1$. The lemma follows immediately.
\end{proof}
We remark that, from the above proof it can also be deduced that the analogous result is not true if we consider the last exit point on the unit circle instead of the next scale $\Cc_1$ (w.r.t. the starting scale $\Cc_0$). That is why we need to spare an additional scale when we prove Theorem~\ref{thm:up-to-constants-ex}. However, if we require the Brownian motion ever entered deep inside the unit ball, then the last exit on the same scale has the same property by strong Markov property. This is illustrated in the following lemma.

\begin{lemma}\label{lem:leu-SameScale}
	Let $r\ge 1$. Let $B_0$ and $B$ be defined as in Lemma~\ref{lem:last-exit-uniform}. Let $Y$ denote a uniformly distributed random variable on $\Cc_0$. There exists a universal constant $C>1$ such that for any bounded measurable positive functions $f,g : \Cc_0\rightarrow \Rb$,
	\begin{equation*}
	C^{-1} \Eb(f(X))\Eb(g(B_0))\le \Eb(f(B(\sigma_{0,r})) g(B_0)\mid \tau_{-1}<\sigma_{0,r}) \le C \Eb(f(X))\Eb(g(B_0)).
	\end{equation*}
\end{lemma}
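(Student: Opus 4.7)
The plan is to condition on the first hitting time $\tau_{-1}$ of $\Cc_{-1}$, apply the strong Markov property there, and then invoke Lemma~\ref{lem:last-exit-uniform} after a unit shift of scale. First I would observe that $\{\tau_{-1}<\sigma_{0,r}\}=\{\tau_{-1}<\tau_r\}$: if $B$ reaches $\Cc_{-1}$ before $\Cc_r$, then in order to subsequently exit at $\Cc_r$ it must cross $\Cc_0$ again, so $\sigma_{0,r}>\tau_{-1}$; conversely, if $\tau_{-1}>\tau_r$ (including the case $\tau_{-1}=\infty$), then $\sigma_{0,r}\le \tau_r<\tau_{-1}$. On this event the last exit $\sigma_{0,r}$ of $B$ from $\Cc_0$ coincides with the last visit of the shifted process $\tilde B(\cdot):=B(\tau_{-1}+\cdot)$ to $\Cc_0$ before $\tilde B$ first hits $\Cc_r$, because all visits of $B$ to $\Cc_0$ after $\tau_{-1}$ are precisely the visits of $\tilde B$ to $\Cc_0$.

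Applying the strong Markov property at $\tau_{-1}$ then yields
\[
\Eb\bigl(f(B(\sigma_{0,r}))\,g(B_0)\,\one_{\tau_{-1}<\sigma_{0,r}}\bigr)=\Eb\bigl(g(B_0)\,\one_{\tau_{-1}<\tau_r}\,F(B_{\tau_{-1}})\bigr),
\]
where $F(z):=\Eb^z\bigl[f(\tilde B(\tilde\sigma_{0,r}))\bigr]$ for an independent Brownian motion $\tilde B$ started at $z\in\Cc_{-1}$, and $\tilde\sigma_{0,r}$ denotes its last visit to $\Cc_0$ before reaching $\Cc_r$. By rotational invariance $\Pb^x(\tau_{-1}<\tau_r)=p_r$ is the same constant for every $x\in\Cc_0$, so $\Eb\bigl(g(B_0)\,\one_{\tau_{-1}<\tau_r}\bigr)=p_r\,\Eb(g(B_0))$ and $\Pb(\tau_{-1}<\sigma_{0,r})=p_r$.

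The key remaining input is a uniform two-sided bound $F(z)\asymp\Eb[f(Y)]$ for every $z\in\Cc_{-1}$, which I would obtain directly from Lemma~\ref{lem:last-exit-uniform} via the Brownian scaling $w\mapsto ew$. This scaling sends a Brownian motion from $z\in\Cc_{-1}$ to a Brownian motion from $ez\in\Cc_0$ and transforms the last exit from $\Cc_0$ before reaching $\Cc_r$ into the last exit from $\Cc_1$ before reaching $\Cc_{r+1}$. Since $r\ge 1$ implies $r+1\ge 2$, Lemma~\ref{lem:last-exit-uniform} applies and yields the desired uniform bound on $F$ with constants independent of $z$ and $r$. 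Plugging this into the displayed identity and dividing by $\Pb(\tau_{-1}<\sigma_{0,r})=p_r$ gives the claim. I do not anticipate serious obstacles; the only point that warrants care is the equivalence $\{\tau_{-1}<\sigma_{0,r}\}=\{\tau_{-1}<\tau_r\}$ established in the first step, which is what guarantees that the conditioning truly forces the trajectory to enter $\Bc_{-1}$ and thereby decouples the last exit point from the starting point $B_0$.
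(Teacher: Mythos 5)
Your proof is correct and fills in exactly the argument the paper sketches in one line (``the last exit on the same scale has the same property by strong Markov property''): condition on the first hit of $\Cc_{-1}$, use that $\{\tau_{-1}<\sigma_{0,r}\}=\{\tau_{-1}<\tau_r\}$ is pre-$\tau_{-1}$ measurable along with $g(B_0)$, apply strong Markov there, and reduce to Lemma~\ref{lem:last-exit-uniform} by a unit scaling (which also requires $r+1\ge 2$, satisfied since $r\ge 1$). The paper gives no further proof, so this is precisely the intended route.
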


In fact, by inverting the Brownian motion, the above two lemmas can be formulated for Brownian motions from outside to inside. For example, if we are in the setup of Lemma~\ref{lem:outer-bridge}, then the law of $B(\sigma_{r,0})$ under the condition $\tau_{r+1}<\sigma_{r,0}$ has a density w.r.t.\ the uniform distribution on $\Cc_r$ which is bounded away from $0$ and $\infty$.

	\subsection{Brownian loop soup}\label{subsec:BLS}
	We review the Brownian loop soup in this section. 
	The \textit{Brownian loop measure} $\mu$ is defined in \cite{MR2045953} by
	\begin{equation}\label{eq:def-mu}
	\mu=\int_{\mathbb{C}} \int_{0}^{\infty} \frac{1}{2 \pi t^{2}} \mu^{\#}(z, z ; t) \,d t\, d A(z),
	\end{equation}
	where $\mu^{\#}(z, z ; t)$ is the usual probability measure of a Brownian bridge from $z$ to $z$ with time length $t$, and $dA$ denotes the Lebesgue measure on $\Cb$. In reality, $\mu$ should be viewed as a measure on unrooted loops where one forgets the root $z$ (see \cite{MR2045953} for more details).
	For a domain $D$, let $\mu_D$ be $\mu$ restricted to the loops in $D$.
	A \textit{Brownian loop soup} in $D$ with intensity $c\ge 0$ is a Poisson point process with
	intensity $c\mu_D$. 
	We denote by $\Gamma$ a Brownian loop soup with intensity $c$ in the whole plane. For each domain $D$, we denote by $\Gamma_{D}$ the collection of loops in $\Gamma$ that are in $D$.  Note that $\Gamma_D$ is also a Brownian loop soup with intensity $c$ in $D$. We also set $\Gamma_0:=\Gamma_{\Ub}$. 
The law of the Brownian loop soup is conformally invariant, namely if $f: D \rightarrow D^{\prime}$ is a conformal transformation, then the image of $\Gamma_{D}$ under $f$ is equal to a Brownian loop soup in $D'$.

It was shown in  \cite{MR2979861} that the Brownian loop soup exhibits a phase transition at the critical intensity $c=1$, so that for $c\in(0,1]$ a Brownian loop soup in $\Ub$ a.s.\ has infinitely many clusters. Throughout, we suppose that $c\in(0,1]$ is fixed and often omit it from the notation (except when we say otherwise).

Let us first record an FKG-Harris inequality for the Brownian loop soup, shown in \cite{MR770197}.
	\begin{lemma}[FKG-Harris inequality, \cite{MR770197}]\label{lem:FKG}
		An event $A$ is said to be \emph{decreasing} if for any $\Gamma\notin A$ and $\Gamma\subseteq\Gamma'$ we have $\Gamma'\notin A$, where $\Gamma, \Gamma'$ are realizations of the Brownian loop soup. Then, for any two decreasing events $A$ and $B$, we have
		\begin{align*}
		\Pb(A\cap B)\ge \Pb(A) \Pb(B).
		\end{align*}
	\end{lemma}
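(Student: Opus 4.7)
The plan is to view this as an instance of the FKG inequality for Poisson point processes. By construction, $\Gamma_D$ is a Poisson point process on the space of (unrooted) loops in $D$ with intensity measure $c\mu_D$, and the order ``$\Gamma\subseteq \Gamma'$'' is precisely the natural partial order on Poisson configurations; thus decreasing events in the sense of the statement coincide with the standard monotone decreasing events for point processes.

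First I would reduce to the finite-intensity case. Since $c\mu_D$ is $\sigma$-finite, loop space can be partitioned into a countable disjoint union of measurable pieces of finite $c\mu_D$-mass, so that $\Gamma_D$ is realized as an increasing union $\bigcup_N \Gamma^{(N)}$ of independent Poisson processes, each with finite total intensity. Decreasing events are monotone under inclusion of configurations, hence $\one_A(\Gamma^{(N)}) \downarrow \one_A(\Gamma_D)$ pointwise, and similarly for $B$ and $A\cap B$. Monotone convergence therefore reduces the inequality for $\Gamma_D$ to the inequality for each $\Gamma^{(N)}$.

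Next, for a Poisson process with finite total intensity $\lambda$, the standard approach is to discretize: partition the support of the intensity into $M$ tiny measurable cells, and in each cell independently place at most one loop, sampled from the renormalized local intensity with probability equal to the cell's total mass. As $M\to\infty$ this Bernoulli product measure can be coupled to the target Poisson process so that the two configurations differ only on an event of vanishing probability (since the probability of seeing two or more loops in a single cell tends to $0$). Consequently $\Pb(A)$, $\Pb(B)$ and $\Pb(A\cap B)$ under the Bernoulli model converge to the corresponding probabilities under the Poisson model. But for a Bernoulli product measure on $\{0,1\}^M$ the desired inequality is the classical Harris inequality for decreasing events, which admits a short induction on the number of coordinates. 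Passing to the limits $M\to\infty$ and then $N\to\infty$ yields the stated inequality for $\Gamma_D$.

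The only technical point --- more a bookkeeping issue than a genuine obstacle --- is to verify that the two approximation schemes respect the decreasing events $A$ and $B$. This is clean because the couplings involved are \emph{set-theoretic} (two realizations differ only by a vanishing symmetric difference of loops), and decreasing events are monotone under inclusion, so the required convergences of $\Pb(A)$, $\Pb(B)$ and $\Pb(A\cap B)$ follow immediately from dominated/monotone convergence applied to the relevant indicator functions.
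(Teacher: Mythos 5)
The paper does not prove this lemma---it simply cites the reference---so the comparison is to your argument alone. Your high-level plan (discretize, apply Harris, take limits) is the standard one, but both limiting steps fail to close as written. In the reduction to finite intensity you claim $\one_A(\Gamma^{(N)})\downarrow\one_A(\Gamma_D)$ pointwise. The sequence is indeed non-increasing, but its limit is $\one_{\bigcap_N\{\Gamma^{(N)}\in A\}}$, which may strictly dominate $\one_A(\Gamma_D)$: ``decreasing'' gives $\Gamma_D\in A\Rightarrow\Gamma^{(N)}\in A$, not the converse (take $A=\{\text{finitely many loops}\}$: each $\Gamma^{(N)}\in A$ a.s.\ yet $\Gamma_D\notin A$ a.s.). Passing to the limit in the per-$N$ FKG inequality then only gives $\lim_N\Pb(\Gamma^{(N)}\in A\cap B)\ge\Pb(A)\Pb(B)$, whereas $\lim_N\Pb(\Gamma^{(N)}\in A\cap B)\ge\Pb(A\cap B)$ --- the inequality runs the wrong way. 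The correct reduction goes through conditional probabilities: $\Pb(A\mid\Gamma^{(N)})$ is a decreasing $[0,1]$-valued function of $\Gamma^{(N)}$, so the finite-intensity FKG inequality (in its functional form) applied to the pair $\Pb(A\mid\Gamma^{(N)}),\ \Pb(B\mid\Gamma^{(N)})$, followed by L\'evy's upward theorem, closes the gap.

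The second gap is in the discretization. The event $A$ depends on which loops are present, not merely on which cells are occupied, so it is not a decreasing event on $\{0,1\}^M$ and the classical Harris inequality does not apply in the form you invoke. Conditioning on the iid potential marks $\ell_1,\dots,\ell_M$ does give a conditional Harris inequality over the Bernoulli indicators, but averaging it then leaves the inequality $\Eb[\Pb(A\mid\ell_\bullet)\Pb(B\mid\ell_\bullet)]\ge\Eb[\Pb(A\mid\ell_\bullet)]\Eb[\Pb(B\mid\ell_\bullet)]$ to justify, which is not automatic since the mark space carries no order. A clean route: condition on the cell counts $(N_1,\dots,N_M)$, which are independent Poissons in $\Nb_0^M$ (a product of totally ordered factors); check that $\Pb(A\mid N_\bullet)$ and $\Pb(B\mid N_\bullet)$ are coordinate-wise decreasing (couple the $N_i$-conditional configurations monotonically); apply Harris on $\Nb_0^M$; then refine the partition, using martingale convergence so that $\Pb(A\mid N_\bullet)\to\one_A$ in $L^1$. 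Both limiting steps therefore require conditional-expectation and martingale arguments rather than the pointwise indicator convergences you describe; this is genuine mathematical content, not mere bookkeeping.
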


Let us now obtain a simple estimate for the size of clusters in a sub-critical loop soup.

\begin{lemma}\label{lem:cluster_small}
Let $D$ be a bounded domain and $\Gamma_D$ be a Brownian loop soup with intensity $c\in(0,1]$ in $D$. For all $\eps>0$, the probability that every cluster in $\Gamma_D$ has diameter at most $\eps$ is positive.
\end{lemma}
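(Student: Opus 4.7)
My plan is to reduce the claim to a statement about the outer boundaries of outermost clusters and then combine a covering argument with the FKG-Harris inequality, before invoking a local estimate to finish.

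First, I observe that every cluster $K$ of $\Gamma_D$ is contained in the closed region enclosed by the outer boundary of the outermost cluster that contains $K$ (or by its own outer boundary if $K$ is itself outermost). Thus $\mathrm{diam}(K)$ is bounded above by the diameter of the corresponding outer boundary. By \cite{MR2979861}, the outer boundaries of the outermost clusters of $\Gamma_D$ are jointly distributed as a CLE$_\kappa$ in $D$, where $\kappa\in(8/3,4]$ is determined by $c$ via \eqref{eq:c_kappa}. So it suffices to show that with positive probability, every such outer boundary has diameter at most $\varepsilon$.

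Next, cover $\ol D$ by finitely many open balls $B_1,\dots,B_N$ of radius $\varepsilon/4$, and for each $i\le N$ let
\begin{align*}
E_i := \{\text{no outer boundary of an outermost cluster with diameter} \ge \varepsilon \text{ meets } B_i\}.
\end{align*}
Because the $B_i$ cover $\ol D$, any outer boundary meets at least one $B_i$, so on $\bigcap_{i=1}^N E_i$ every outer boundary has diameter less than $\varepsilon$. Each event $E_i$ is decreasing in the loop-soup configuration: removing a loop from $\Gamma_D$ can only split clusters into smaller pieces whose outer boundaries are contained in the original one, and such a split cannot produce a new outer boundary of diameter $\ge \varepsilon$ meeting $B_i$ if none existed before. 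By Lemma~\ref{lem:FKG},
\begin{align*}
\Pb\Bigl(\bigcap_{i=1}^N E_i\Bigr) \;\ge\; \prod_{i=1}^N \Pb(E_i),
\end{align*}
reducing the problem to showing $\Pb(E_i)>0$ for each individual $i$.

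For a fixed $i$, let $z$ denote the center of $B_i$ and pick $R > \varepsilon$. I claim that $\Pb(E_i)>0$ follows from the local fact that with positive probability, no Brownian loop of $\Gamma_D$ meeting $B_i$ ventures outside $\Bc(z,R)$, combined with the subcritical/critical ($c\le 1$) control on clusters built out of loops confined to $\Bc(z,R)$. The Poisson ingredient uses that the loop measure of loops having diameter $\ge R-\varepsilon/4$ is finite, so ruling out such loops through $B_i$ happens with positive probability; the cluster ingredient then amounts to the statement that in a bounded domain with $c\le 1$, the outermost CLE$_\kappa$ loop surrounding a fixed point has a non-degenerate diameter distribution on $(0,\mathrm{diam}(D)]$, which is a standard CLE consequence of conformal and scale invariance. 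The main obstacle is this last local estimate: naive Poisson bounds alone do not prevent small loops from chaining into clusters of diameter $\ge \varepsilon$, and one must invoke the (sub)critical structure of the loop soup at $c\le 1$ to control such chains; once this is in hand, the FKG reduction above completes the proof cleanly.
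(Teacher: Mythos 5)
Your overall strategy---cover the domain, define per-piece events, apply FKG-Harris, and invoke a local subcritical estimate---has the same shape as the paper's proof, but two steps are genuinely wrong or missing.

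First, your event $E_i = \{\text{no outer boundary of an outermost cluster with diameter} \ge \eps \text{ meets } B_i\}$ is \emph{not} a decreasing event, and the justification you give is incorrect. While it is true that removing a loop only splits clusters into smaller pieces, the outer boundary of a sub-cluster is a subset of that sub-cluster, not a subset of the original cluster's outer boundary. Concretely, suppose in $\Gamma'$ there is a blob $K$ of diameter $\ge\eps$ touching $B_i$, encircled by a ring of further loops so that the outermost cluster $K'\supset K$ has its outer boundary far from $B_i$: then $\Gamma'\in E_i$. Remove the ring to get $\Gamma\subset\Gamma'$; now $K$ is itself an outermost cluster and its outer boundary (a subset of $K$) meets $B_i$, so $\Gamma\notin E_i$. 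This violates the decreasing property of Lemma~\ref{lem:FKG}. The paper sidesteps this by taking $E_i$ to be ``no cluster crosses the annulus $\{z:\eps/2<|z-z_i|<\eps\}$''; this is genuinely decreasing (adding loops can only merge clusters, hence create but never destroy a crossing) and still forces all clusters to have diameter $\le\eps$.

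Second---and you flag this yourself as the ``main obstacle''---the local estimate is not established. Reducing it to ``the outermost CLE$_\kappa$ loop around a fixed point has a non-degenerate diameter distribution'' does not suffice: that controls one loop around one point and does nothing to rule out many small loops chaining into a single large cluster touching $B_i$, which is precisely the phenomenon you need to exclude. The paper instead invokes \cite[Lemma~9.6]{MR2979861}, which states that at intensities $c\in(0,1]$ there is positive probability that no cluster crosses a fixed annulus in $\Ub$; this is the nontrivial subcritical-percolation input that cannot be replaced by conformal/scale invariance of CLE alone. With that reference (and the annulus version of $E_i$, together with a reduction to $D=\Ub$ by scaling), the FKG argument closes, but as written your proposal leaves both the monotonicity and the local estimate unproved.
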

\begin{proof}
It suffices to prove the lemma for $D=\Ub$, because one can always find $\delta>0$ such that $\delta D\subset\Ub$.
Arguments in \cite[Lemma 9.6]{MR2979861} show that at subcritical intensities $c\in (0,1]$, for any $z_0\in\Ub$ and $r_1<r_2$ such that the annulus $\{z: r_1< |z-z_0|<r_2\}$ is contained in $\Ub$, there is a positive probability that no cluster in $\Gamma_0$ crosses this annulus. We can find a finite set of points $\{z_i\}_{i\in I}$ such that $\Ub$ can be covered by the annuli $\{z: \eps/2< |z-z_i|<\eps\}$ and that any cluster with diameter at least $\eps$ must cross one of these annuli. For each $i\in I$, the event $E_i$ that there does not exist any cluster crossing the annulus $\{z: \eps/2< |z-z_i|<\eps\}$ is decreasing.
Applying Lemma~\ref{lem:FKG} for the events $E_i$, we obtain that with positive probability, there does not exist any cluster which crosses any of the annuli, and consequently there does not exist any cluster with diameter more than $\eps$.
\end{proof}

It turns out that in many occasions, it will be more convenient to work with the soup of the outer boundaries of the Brownian loops (which are simple loops \cite{MR2350053}) rather than the Brownian loop soup itself. This soup of simple loops has the same property as the original Brownian loop soup in terms of disconnection from infinity, and was used in \cite{MR2802511} to compute the dimension of the CLE carpet.
More precisely, let $\Lambda$ be the collection of outer boundaries of the loops in $\Gamma$. 
If we let $\nu$ be the image of $\mu$ under the map which maps a Brownian loop to its outer boundary, then $\Lambda$ is a Poisson point process with intensity $c\nu$.
For all $r\in\Rb$, denote by $\Lambda_r$ all the loops in $\Lambda$ that are contained in the ball $\Bc_r$. For all $s<r$, let $\Lambda_{s,r}$ be the collection of the loops in $\Lambda$ that are contained in the annulus $\mathcal A(s,r)$.

It was shown in \cite{MR2802511} that the loop soup $\Lambda$ is \emph{thin}, which is a property described in the following lemma. 
\begin{lemma}[Lemmas 2 and 4 \cite{MR2802511}, Thinness]\label{lem:thin}
Let $L_R$ be the set of loops that intersect the unit disk and have diameter at least $R$. For all $R>0$, we have $\nu(L_R)<\infty$.
\end{lemma}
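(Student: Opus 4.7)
The plan is to follow the approach of Nacu and Werner in \cite{MR2802511}, which hinges on the conformal restriction property of the outer-boundary measure rather than on a direct reduction to the Brownian loop measure. First, I would point out that the most naive bound
$$\nu(L_R) \le \mu(\{\gamma: \gamma \cap \Ub \ne \emptyset, \, \mathrm{diam}(\gamma) \ge R\}),$$
which comes from the containment ``outer boundary $\subset$ loop trace'', is too weak, because the right-hand side is in fact infinite. Indeed, using the explicit form \eqref{eq:def-mu} of $\mu$ and the Gaussian bound $\mu^{\#}(z,z;t)\{\gamma \cap \Ub \ne \emptyset\} \le Ce^{-c(|z|-1)^2/t}$ for $|z|>1$, the $t$-integral gives $\int_0^\infty e^{-c|z|^2/t}t^{-2}\, dt \asymp |z|^{-2}$, and integrating over $|z|>R+1$ yields $\int dr/r=\infty$. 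The content of the lemma is therefore precisely that the outer-boundary map is ``lossy'': many Brownian loops rooted far from $\Ub$ reach $\Ub$ and come back, but their outer boundary typically encloses $\Ub$ in a bounded component of its complement, and hence does \emph{not} touch $\Ub$ at all.

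To quantify this effect, I would use Werner's identification of the outer boundary of a Brownian loop, up to a multiplicative constant, with an SLE$_{8/3}$-type loop, so that $\nu$ satisfies the conformal restriction property. Decomposing $\nu = \int_{\Cb} \nu_z \, dA(z)$ according to a canonical marked point on the loop (for instance the topmost point in the imaginary direction), the restriction covariance yields a polynomial decay
$$\nu_z\bigl(\{\mathrm{diam} \ge R\}\bigr) \lesssim R^{-\alpha}$$
for some exponent $\alpha>2$, uniformly in $z$. Since a loop $\ell\in L_R$ forces its marked point to lie in a disk of radius $O(R)$ (so that $\ell$ both touches $\Ub$ and has diameter at least $R$), integrating the above bound over such $z$ gives
$$\nu(L_R) \lesssim R^2 \cdot R^{-\alpha} = R^{-(\alpha-2)} < \infty,$$
as claimed (and in fact gives quantitative decay as $R\to\infty$).

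The main obstacle is establishing the polynomial bound $\nu_z(\{\mathrm{diam}\ge R\})\lesssim R^{-\alpha}$ with exponent strictly greater than $2$; this is exactly where the SLE$_{8/3}$ restriction machinery is essential and elementary Brownian bridge estimates are insufficient. Scale invariance of $\nu$ (inherited from that of $\mu$ on $\Cb$) can be used to reduce the problem to a single reference scale, but without the finer restriction input one cannot rule out the logarithmic divergence produced by far-away roots.
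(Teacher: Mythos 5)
Your high-level description of why the lemma is non-trivial is accurate: the paper itself points out that $\mu(L_R)=\infty$ precisely because the Brownian loop measure concentrates too much mass on loops rooted far away, while $\nu(L_R)<\infty$ because the outer boundary of such a loop typically encircles $\Ub$ without touching it. But the quantitative argument you sketch does not hold up, for two connected reasons.

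First, the claimed tail exponent is wrong. If $\nu_z$ denotes the disintegration of $\nu$ along a canonical marked point (topmost point, say), then translation and scale invariance of $\nu$ force $\nu_0(\{\mathrm{diam}\ge R\}) = R^{-2}\,\nu_0(\{\mathrm{diam}\ge 1\})$ \emph{exactly}: decomposing $(s_\lambda)_*\nu=\nu$ along the marked point yields $\nu_0 = \lambda^{-2}(s_\lambda)_*\nu_0$, and applying this to $\{\mathrm{diam}\ge R\}$ with $\lambda=R$ gives the identity. Conformal restriction does not produce an exponent greater than $2$; the exponent $\alpha=2$ is forced by two-dimensional scaling alone, and what restriction/uniqueness actually buys you is the \emph{finiteness} of the single number $\nu_0(\{\mathrm{diam}\ge 1\})$. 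So your $\alpha>2$ is not available.

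Second, even granting $\alpha=2$, the marked point of a loop in $L_R$ is \emph{not} confined to a disk of radius $O(R)$: a loop in $L_R$ has $\mathrm{diam}\ge R$ but may have diameter $D\gg R$, in which case the marked point can sit at distance up to $D+1$ from the origin. Decomposing by dyadic diameter $D\in[2^k,2^{k+1}]$ with $2^k\ge R$, the marked point lies in a disk of area $O(4^k)$ and $\nu_z(\{\mathrm{diam}\in[2^k,2^{k+1}]\})\lesssim 4^{-k}$, so each dyadic scale contributes $O(1)$, and summing over $k$ diverges. Your bound therefore produces exactly the logarithmic divergence that the lemma must rule out. The ingredient you are missing is the one that separates $\nu$ from $\mu$: after scaling each dyadic annulus down to unit size, you need that the $\nu$-measure of unit-diameter loops hitting $B(0,\eps)$ decays polynomially in $\eps$ (roughly $\eps^{2-d_\gamma}$ with $d_\gamma=4/3$ the dimension of an SLE$_{8/3}$-type loop), whereas the corresponding quantity for $\mu$ decays only logarithmically because Brownian loops have full dimension $2$. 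Summing these polynomially decaying terms over dyadic scales $\ge R$ is what makes $\nu(L_R)$ finite, and this hitting estimate does not follow from scale covariance alone; it is the genuinely new input in the Nacu--Werner argument.

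Note also that the paper does not prove this lemma; it is quoted directly from \cite{MR2802511}, so there is no in-paper proof to compare against. Your outline is in the right spirit (reduce to a conformally invariant self-avoiding loop measure and exploit its scaling and restriction properties), but the exponent bookkeeping must be corrected and the small-ball hitting estimate must be supplied for the argument to close.
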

Note that the Brownian loop soup $\Gamma$ is not thin, and we have $\mu(L_R)=\infty$ for all $R>0$. This is the reason why it is more convenient to work with $\Lambda$ rather than $\Gamma$. The thinness of $\Lambda$ allows us to collect the following estimate for the clusters in $\Lambda$.
		\begin{lemma}\label{lem:cluster-thin}
			For $\delta>0$, let $E_{\delta}$ be the event that all the clusters in $\Lambda$ that intersect the unit disk have diameter at most $\delta$. Then, there exists a constant $C=C(\delta)>0$ such that $\Pb(E_{\delta})\ge C$. 
		\end{lemma}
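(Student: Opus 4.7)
The plan is to adapt the proof of Lemma~\ref{lem:cluster_small} to the soup $\Lambda$ of outer boundaries, with the thinness (Lemma~\ref{lem:thin}) compensating for the fact that $\Lambda$ lives on the whole plane rather than a bounded domain.

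Cover $\overline{\Ub}$ by finitely many open balls $B(z_j,\delta/6)$ for $j=1,\ldots,N$, and set $A_j := \{w\in\Cb: \delta/4 < |w-z_j| < \delta/3\}$. If a $\Lambda$-cluster $L$ intersects $\overline{\Ub}$ and has diameter at least $\delta$, then $\bigcup_{\ell\in L}\ell$ is connected and contains a point $p\in B(z_j,\delta/6)$ (for some $j$) and a point $q$ with $|p-q|\ge\delta/2$. A continuous arc joining $p$ and $q$ in $\bigcup_{\ell\in L}\ell$ can be truncated (via the intermediate-value theorem applied to the radial coordinate $|\cdot-z_j|$) to a continuous arc in $\bigcup_{\ell\in L}\ell\cap\overline{A_j}$ that joins the inner and outer circles of $A_j$, witnessing that $L$ crosses $A_j$. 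Hence $\bigcap_{j=1}^{N}F_j\subseteq E_\delta$, where $F_j:=\{\text{no $\Lambda$-cluster crosses $A_j$}\}$. Each $F_j$ is decreasing, so by the FKG--Harris inequality, which applies to $\Lambda$ viewed as a Poisson point process exactly as in Lemma~\ref{lem:FKG}, it suffices to show $\Pb(F_j)>0$ for every $j$.

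To prove $\Pb(F_j)>0$, fix $j$ and set $D:=\Bc(z_j,1)$. Any loop in $\Lambda$ meeting $A_j$ without being contained in $D$ has diameter at least $1-\delta/3$; by Lemma~\ref{lem:thin} the $\nu$-measure of such loops is finite, so the event $G_j$ that no such loop belongs to $\Lambda$ has positive probability. On $G_j$, every loop of $\Lambda$ meeting $\overline{A_j}$ lies in $\Lambda_D$, and hence a crossing arc in $\overline{A_j}$ as above lies in $\bigcup_{\ell\in L\cap\Lambda_D}\ell$. Distinct $\Lambda_D$-clusters have disjoint unions (otherwise they would be chain-connected and hence equal), so this connected arc lies entirely in the union of a single $\Lambda_D$-cluster, which therefore crosses $A_j$. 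Since $\ell\subseteq\gamma$ for every Brownian loop $\gamma$ with outer boundary $\ell$, this in turn implies that some $\Gamma_D$-cluster crosses $A_j$. By the Sheffield--Werner argument underlying Lemma~\ref{lem:cluster_small} (\cite[Lemma 9.6]{MR2979861}), with positive probability no $\Gamma_D$-cluster crosses $A_j$; this event is measurable with respect to loops contained in $D$ and is hence independent of $G_j$, which depends only on loops not contained in $D$. Combining yields $\Pb(F_j)>0$.

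The principal obstacle is the localization step via thinness: a priori, a $\Lambda$-cluster in the whole plane can contain loops arbitrarily far from $A_j$ linked through intermediate loops, so the bounded-domain cluster estimate cannot be invoked directly. The thinness of $\Lambda$, which distinguishes it from the Brownian loop soup $\Gamma$, is precisely what allows us to discard the ``long'' loops bridging $A_j$ to the outside with positive probability, after which the disjointness of distinct $\Lambda_D$-clusters' unions reduces the geometry inside $A_j$ to the bounded-domain setting.
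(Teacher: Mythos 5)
Your argument is correct, and it uses the same three core ingredients as the paper's proof --- the thinness of $\Lambda$, the bounded-domain cluster-size estimate rooted in \cite[Lemma 9.6]{MR2979861}, and the FKG inequality --- but deploys them through a different, more local decomposition. The paper localizes once globally: thinness guarantees, with positive probability $p_1$, that every loop of $\Lambda$ crossing the single circle $\Cc_{1+\delta}$ is small; Lemma~\ref{lem:cluster_small} applied to the bounded domain $\Bc_{1+\delta}$ guarantees, with positive probability $p_2$, that every cluster of $\Lambda_{1+\delta}$ is small; on the FKG-combined intersection of these two decreasing events, a $\Lambda$-cluster meeting $\Ub$ cannot escape $\Bc_{1+\delta}$, since any escaping chain would produce either a large loop crossing $\Cc_{1+\delta}$ or a large cluster of $\Lambda_{1+\delta}$. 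You instead cover $\overline{\Ub}$ by finitely many small balls, reduce $E_\delta$ via FKG to the non-crossing of finitely many $\delta$-scale annuli $A_j$, and for each $j$ run the thinness-plus-Sheffield--Werner argument locally inside $\Bc(z_j,1)$; this amounts to inlining the proof of Lemma~\ref{lem:cluster_small} and interleaving it with a localization step at each $z_j$. Both routes work; the paper's is shorter because it treats Lemma~\ref{lem:cluster_small} as a black box, whereas yours re-derives its annulus-covering content at the cost of re-checking the combinatorial details (extraction of a crossing arc, disjointness of distinct $\Lambda_D$-cluster unions, and the passage from $\Lambda_D$-clusters to $\Gamma_D$-clusters). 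Two small points you leave implicit: applying Lemma~\ref{lem:thin} at a centre $z_j\neq 0$ tacitly uses translation invariance of $\nu$, and the event $G_j$ should be formulated for loops meeting $\overline{A_j}$ rather than $A_j$, since the extracted crossing arc lives in the closed annulus.
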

		\begin{proof}
The thinness (Lemma~\ref{lem:thin}) of $\Lambda$ implies that with positive probability $p_1>0$, every loop in $\Lambda$ intersecting $\Cc_{1+\delta}$ has diameter at most $\delta/10$. 
Moreover, Lemma~\ref{lem:cluster_small} implies that with positive probability $p_2>0$, all the clusters in $\Lambda_{1+\delta}$ have diameter at most $\delta/10$. Note that $E_{\delta}$ holds on the intersection of the two previous events which are both decreasing. By Lemma~\ref{lem:FKG}, we know that $\Pb(E_{\delta})\ge p_1p_2$.
		\end{proof}

\subsection{Generalized disconnection exponents}\label{subsec:GDE}

A family of exponents $\eta_\kappa(\alpha, \beta)$ were defined in \cite{MR4221655} using the so-called \emph{general radial restriction measure}
$\Pf^{\alpha, \beta}_\kappa$ with parameters $\kappa$ and $(\alpha, \beta)$.
 Here, we omit the precise definition of $\Pf^{\alpha, \beta}_\kappa$ (we refer the reader to \cite{MR4221655}), but only mention three general facts:
 \begin{itemize}
\item $\Pf^{\alpha, \beta}_\kappa$ is a measure on the space $\Omega$ of simply connected compact sets $K\subset\ol\Ub$ such that $0\in K$ and $K\cap \partial\Ub=\{1\}$. 
\item $\Pf^{\alpha, \beta}_\kappa$ can be explicitly constructed using radial hypergeometric SLE$_\kappa$ with appropriate parameters. 
\item When $\kappa=8/3$, $\Pf^{\alpha, \beta}_{8/3}$ coincide with the standard radial restriction measure with parameters $(\alpha, \beta)$ constructed in \cite{MR3293294}.
\end{itemize}
Recall that for any simply connected domain $D\subset \Ub$ such that $0\in D$, the \emph{conformal radius} of $D$ seen from the origin is defined to be $|f'(0)|^{-1}$ where $f$ is any conformal map from $D$ onto $\Ub$ that leaves the origin fixed. The following theorem is part of \cite[Theorem 1.5]{MR4221655}, and gives a definition of the exponents $\eta_\kappa(\alpha, \beta)$.

\begin{theorem}[\cite{MR4221655} Theorem 1.5]\label{thm:dis_def}
Let $\kappa\in(0,4]$. Let $\alpha \le \eta_\kappa(\beta)$ and $\beta\ge (6-\kappa)/(2\kappa)$ where
\begin{align*}
\eta_\kappa(\beta)= \frac{\left(\sqrt{16\beta\kappa +(4-\kappa)^2} -(4-\kappa)\right)^2 -4(4-\kappa)^2}{32\kappa}.
\end{align*}
Let $K$ be a sample with law $\Pf^{\alpha,\beta}_\kappa$. Let $K_0$ be the connected component containing $0$ of the interior of $K$.
Let $p^R_\kappa(\alpha, \beta)$ be the probability that  the conformal radius of $K_0$ seen from the origin is smaller than $1/R$. 
There exists $\eta_\kappa(\alpha,\beta)>0$ and $C_\kappa(\alpha, \beta)>0$ such that as $R\to\infty$,
\begin{align}\label{p-R-3}
p^R_\kappa(\alpha,\beta)=R^{-\eta_\kappa(\alpha,\beta)} (C_\kappa(\alpha, \beta)+o(1)).
\end{align}
\end{theorem}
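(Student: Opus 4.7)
The plan is to realize the measure $\Pf^{\alpha,\beta}_\kappa$ via its radial hypergeometric SLE$_\kappa$ construction and reduce the asymptotics of $p^R_\kappa(\alpha,\beta)$ to a one-dimensional survival problem.

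\textbf{Step 1 — Reduction to an SLE survival event.} Sample $K\sim\Pf^{\alpha,\beta}_\kappa$ as the filling of a radial hypergeometric SLE$_\kappa$ curve $\eta$ in $\Ub$ from $1$ targeted at $0$, together with the independent ``decoration'' (e.g.\ a Brownian loop soup of intensity $c(\kappa)$ attached to $\eta$, as in the construction of general radial restriction measures in \cite{MR4221655}). Parametrize $\eta$ by the logarithm of the conformal radius seen from $0$, so that at time $t$ the connected component of $\Ub\setminus K_t$ containing $0$ has conformal radius exactly $e^{-t}$. Let $T$ be the swallowing time of $0$. Modulo a factor that I will control (coming from decoration loops attaching to the tip, which can only modify the conformal radius by a bounded random factor with Laplace exponent independent of $R$), the event $\{\phi_{K_0} < e^{-R}\}$ is the survival event $\{T > R\}$.

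\textbf{Step 2 — A positive eigenfunction.} The driving function $X_t$ of the radial hypergeometric SLE is a one-dimensional diffusion on $(0,2\pi)$ with an SDE whose drift is the logarithmic derivative of a hypergeometric weight determined by $(\kappa,\alpha,\beta)$. I would look for a positive $h:(0,2\pi)\to(0,\infty)$, with prescribed vanishing rates at $0$ and $2\pi$, solving
\begin{equation*}
\mathcal L h = -\eta_\kappa(\alpha,\beta)\, h,
\end{equation*}
where $\mathcal L$ is the generator of $X_t$ killed at the boundary. The associated ODE is of hypergeometric type, and the admissible eigenvalue compatible with both boundary indices is precisely the exponent $\eta_\kappa(\alpha,\beta)$ of the theorem. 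The process $M_t := h(X_t)\, e^{\eta_\kappa(\alpha,\beta)\, t}\, \mathbf{1}_{\{T>t\}}$ is then a positive local martingale that can be made a true martingale by a standard localization argument.

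\textbf{Step 3 — Sharp constant via quasi-stationarity.} Optional stopping on $M$ yields
\begin{equation*}
h(X_0) \;=\; e^{\eta_\kappa(\alpha,\beta)\,R}\;\Pb(T>R)\;\Eb\bigl[\,h(X_R)\,\big|\,T>R\bigr],
\end{equation*}
so the theorem is equivalent to showing $\Eb[h(X_R)\mid T>R]\to \int h\,d\pi$ as $R\to\infty$ for a probability measure $\pi$ on $(0,2\pi)$; this then gives $C_\kappa(\alpha,\beta) = h(X_0)/\int h\,d\pi$. Existence of such a Yaglom/quasi-stationary limit $\pi$, together with a quantitative rate of convergence, follows from spectral-gap/Doob $h$-transform theory for one-dimensional diffusions on a compact interval killed at the boundary: the $h$-transformed diffusion is a recurrent diffusion on $(0,2\pi)$ with a strictly positive invariant density, and the spectral gap of its generator provides the rate. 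The error term $o(1)$ in the statement is then inherited from this convergence, after absorbing the decoration correction from Step~1 (which converges to a finite multiplicative constant by dominated convergence, using standard estimates on the Brownian loop soup near the tip of the SLE).

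\textbf{Main obstacle.} The technically hardest step is Step~3: upgrading the mere exponential rate to a sharp constant with explicit $o(1)$ error. This requires establishing quasi-stationary convergence \emph{with rate} for the killed driving diffusion, and separately checking that the decoration (loop soup attached near the tip of the curve) only contributes a bounded multiplicative factor that stabilizes in the limit. Step~2's identification of the eigenvalue as the $\eta_\kappa(\alpha,\beta)$ of the theorem is computational but delicate, as it requires matching Frobenius indices at both endpoints of the interval.
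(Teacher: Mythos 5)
This statement is not proved in the present paper: it is \cite[Theorem 1.5]{MR4221655}, imported verbatim (note the citation in the theorem header and the preceding sentence ``The following theorem is part of \cite[Theorem 1.5]{MR4221655}''). There is therefore no internal proof here to compare your attempt against. Your proposed route---parametrize the hSLE by $\log$ conformal radius, reduce to a survival event for the killed driving diffusion, construct a positive eigenfunction of its generator, and extract the sharp constant via a quasi-stationary/Yaglom limit---is a natural strategy for exact asymptotics of this kind and is plausibly close to what is done in \cite{MR4221655}.

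That said, Step~1 rests on a misconception that you should correct. The measure $\Pf^{\alpha,\beta}_\kappa$ is, for every $\kappa\in(0,4]$, defined directly as the law of the filling of a radial hypergeometric SLE$_\kappa$ curve with appropriate parameters (this is the second bullet point preceding Theorem~\ref{thm:dis_def} in the paper). No Brownian loop-soup decoration enters this construction. The loop soup appears only in Lemma~\ref{lem:obs}, which is a separate statement relating the $\kappa=8/3$ measure to the $\kappa(c)$ measure: if $K\sim\Pf^{\alpha,\beta}_{8/3}$ and you adjoin the clusters of an independent loop soup of intensity $c$, the filling has law $\Pf^{\alpha,\beta}_{\kappa(c)}$. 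Decorating an hSLE$_\kappa$ curve with a further loop soup, as you propose, would produce a third object that is neither of these. Fortunately the confusion cuts in your favor: once you recognize that $K$ is exactly the filling of a single hSLE$_\kappa$ curve, the event $\{\text{conformal radius}<1/R\}$ is \emph{identically} the survival event $\{T>\log R\}$ under the log-conformal-radius parametrization, and the ``decoration correction'' you worry about in Steps~1 and~3 simply does not arise. The genuine technical work then lies entirely in Steps~2 and~3 (matching Frobenius indices to identify the eigenvalue, and establishing the Yaglom limit with rate), for which you would need to consult the argument in \cite{MR4221655} itself.
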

The values of the exponents $\eta_\kappa(\alpha, \beta)$ were also computed in \cite[Theorem 1.5]{MR4221655}, and it turned out that $\eta_\kappa(\beta)=\eta_\kappa(0,\beta)$. The  exponents $\eta_\kappa(\beta)$ were then defined in \cite{MR4221655} as the \emph{generalized disconnection exponents}. If we let $\xi_c(\beta):=\eta_{\kappa}(\beta)$ for $c$ and $\kappa$ related by~\eqref{eq:c_kappa}, then the value of $\xi_c(\beta)$ is given by~\eqref{eq:exponent}.

Recently, it was shown in \cite{cg25} that the general radial restriction measure is unique, and hence its law is determined by $\Pf_\kappa^{\alpha, \beta}$. Then, by \cite[Proposition 2.9]{cg25}, we can relate the exponents $\xi_\kappa(\beta)$ to the Brownian loop soup in the following way.
\begin{lemma}[\cite{cg25} Proposition 2.9]\label{lem:obs}
Let $K$ be distributed according to $\Pf^{\alpha, \beta}_{8/3}$. Let $\Cc(K)$ be the union of $K$ with all the clusters that it intersects in an independent loop soup of intensity $c$ in $\Ub$.
Then the filling of $\Cc(K)$ is distributed according to $\Pf_\kappa^{\alpha, \beta}$, where $\kappa$ and $c$ are related by~\eqref{eq:c_kappa}.
\end{lemma}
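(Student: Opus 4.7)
The plan is to identify the law of $\wt K := \mathrm{Fill}(\Cc(K))$ by verifying that it satisfies the restriction-type formula that characterizes $\Pf_\kappa^{\alpha,\beta}$, in direct analogy with the chordal construction in \cite{MR3035764}. Let $D\subseteq\ol\Ub$ be a simply connected admissible domain (containing $0$ and agreeing with $\Ub$ in a neighborhood of $1$), and let $\Phi_D:D\to\Ub$ be the conformal map fixing $0$ and $1$. The defining property of the standard radial restriction measure gives
$$
\Pf_{8/3}^{\alpha,\beta}[K\subset D] = \Phi_D'(0)^\alpha\,|\Phi_D'(1)|^\beta,
$$
while $\Pf_\kappa^{\alpha,\beta}$ is characterized (up to normalization) by the central-charge-modified version obtained from this right-hand side by including the Brownian loop-soup factor $\exp(c\,M(D))$, for an appropriate loop-measure quantity $M(D)$. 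It then suffices to compute $\Pb[\wt K\subset D]$ and match it with this formula.

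Since $D$ is simply connected, $\wt K\subset D$ is equivalent to $\Cc(K)\subset D$. I would decompose the loop soup as $\Gamma_0 = \Gamma_D \sqcup (\Gamma_0\setminus\Gamma_D)$ into the independent Poisson point processes of loops contained in $D$ and loops not contained in $D$, and write $\Cc_D(K)$ for the union of $K$ with all clusters of $\Gamma_D$ that it intersects. A short chain argument then shows
$$
\{\Cc(K)\subset D\} = \{K\subset D\}\cap\{\text{no loop of }\Gamma_0\setminus\Gamma_D\text{ touches }\Cc_D(K)\}.
$$
The inclusion $\supseteq$ is immediate, and $\subseteq$ follows by tracking the first loop outside $\Gamma_D$ along any cluster chain starting from $K$. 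Conditioning on $(K,\Gamma_D)$ and using the Poisson void property of $\Gamma_0\setminus\Gamma_D$, which has intensity $c(\mu_\Ub-\mu_D)$, one obtains
$$
\Pb[\wt K\subset D] = \Eb\bigl[\one_{\{K\subset D\}}\exp\bigl(-c\,m(\Cc_D(K),D)\bigr)\bigr],
$$
where $m(A,D) := \mu_\Ub\bigl(L:L\cap A\neq\emptyset,\,L\not\subset D\bigr)$.

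The remaining task is to match this right-hand side with $\Phi_D'(0)^\alpha|\Phi_D'(1)|^\beta\exp(c\,M(D))$. The prefactor $\Phi_D'(0)^\alpha|\Phi_D'(1)|^\beta$ is produced by the restriction property of $\Pf_{8/3}^{\alpha,\beta}$ applied to $\Pb[K\subset D]$, while the residual expectation of $\exp(-c\,m(\Cc_D(K),D))$ under the law of $K$ conditioned on $\{K\subset D\}$ reorganizes, via the conformal invariance of the Brownian loop measure $\mu$ together with the Poisson structure of $\Gamma_D$, into an explicit deterministic loop-soup mass $\exp(c\,M(D))$ depending only on $D$. By uniqueness of the measure satisfying the characterizing restriction formula, $\wt K$ has law $\Pf_\kappa^{\alpha,\beta}$. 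The main technical obstacle is this last matching step, which is however entirely analogous to the chordal calculation in \cite{MR3035764}: the two boundary marked points of the chordal setup are simply replaced by one interior marked point (the origin, carrying the bulk exponent $\alpha$) and one boundary marked point ($1$, carrying the boundary exponent $\beta$), and the conformal map $\Phi_D$ is chosen accordingly. No essentially new ideas are needed, which is why a formal write-up is omitted.
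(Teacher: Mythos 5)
The paper gives no proof of this lemma: it simply states that the argument ``is totally similar to \cite{MR3035764}'' and omits it, so there is no reference proof to compare your write-up against line by line. Your decomposition of the event $\{\Cc(K)\subset D\}$ into $\{K\subset D\}\cap\{\text{no loop of }\Gamma_0\setminus\Gamma_D\text{ hits }\Cc_D(K)\}$ and the Poisson void-probability computation that follows are both correct, and this is indeed in the spirit of the chordal argument.

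The gap lies in the way you propose to identify the resulting law. You assert that $\Pf_\kappa^{\alpha,\beta}$ is characterized by a formula of the form $\Pb[K\subset D]=\Phi_D'(0)^\alpha|\Phi_D'(1)|^\beta\exp(c\,M(D))$ with a deterministic quantity $M(D)$ depending only on $D$. No such characterization is given in \cite{MR4221655} (where $\Pf_\kappa^{\alpha,\beta}$ is defined via radial hypergeometric SLE) or in \cite{MR3035764}, and I do not believe it holds: after applying the restriction property of $\Pf_{8/3}^{\alpha,\beta}$ and conformal invariance of the Brownian loop measure, the residual factor is $\Eb\bigl[\exp\bigl(-c\,\mu_\Ub(L:L\cap\Phi_D^{-1}(\Cc(K'))\neq\emptyset,\ L\not\subset D)\bigr)\bigr]$, which is an average over the law of the random filled hull $\Cc(K')$ and therefore genuinely depends on the exponents $(\alpha,\beta)$, not just on $D$; it does not ``reorganize into an explicit deterministic loop-soup mass.'' The restriction covariance that actually characterizes the nonzero--central-charge measures in \cite{MR3035764} is a Radon--Nikodym identity whose density involves the Brownian loop mass of loops linking the \emph{realization} of the hull to the removed region; it is not an avoidance-probability formula with a $K$-independent exponential correction. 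So the key identification step of your argument rests on an unsupported (and, I think, false) characterization. To carry out a proof along the lines the paper gestures at, you would need to establish the radial analogue of that covariance identity for the hypergeometric-SLE hulls defining $\Pf_\kappa^{\alpha,\beta}$ and show $\wt K$ satisfies the same one; that is where the nontrivial content lies, and it is precisely the part your proposal leaves blank.
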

\begin{figure}
\centering
\includegraphics[scale=.56]{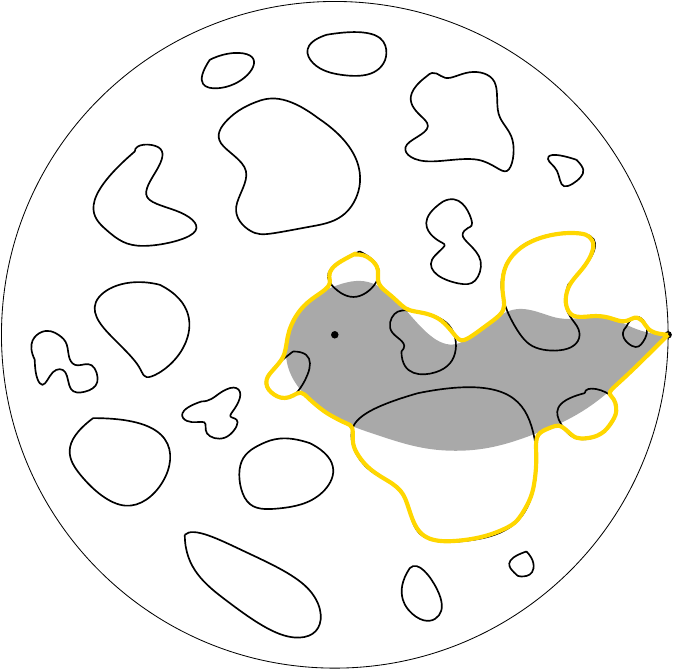}\qquad \qquad
\includegraphics[scale=.56, page=3]{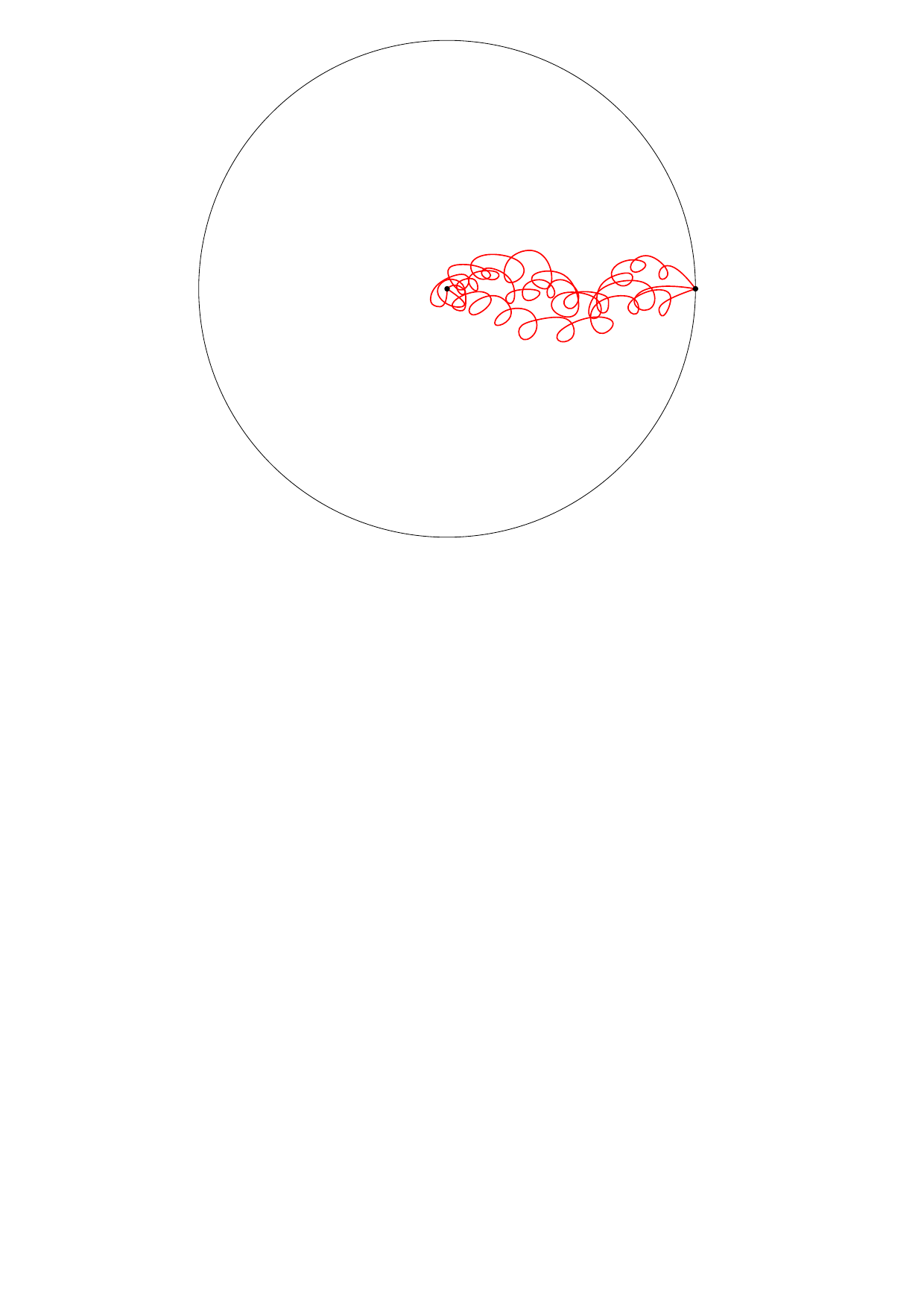}
\caption{\textbf{Left:} Illustration of Lemma~\ref{lem:obs}. The black loops represent the outermost boundaries of the outermost clusters of a loop soup with intensity $c\in(0,1]$ in $\Ub$. The grey region is a sample $K$ with law $\Pf^{\alpha, \beta}_{8/3}$. The yellow line is the outer boundary of the union of $K$ with all the clusters that it intersects. The region encircled by the yellow line has law $\Pf^{\alpha, \beta}_{\kappa(c)}$. \textbf{Right:} A similar picture as the left one where $K$ is replaced by $\beta\in\Nb$ independent Brownian excursions in $\Ub$ between $0$ and $1$. The region encircled by the yellow line has law $\Pf^{0, \beta}_{\kappa(c)}$.} 
\label{fig:def2}
\end{figure}

It was pointed out in \cite{MR1992830} that Brownian excursions satisfy conformal restriction, and in particular the filling of a Brownian excursion from $0$ to $1$ in $\Ub$ satisfies radial restriction with parameters $(0,1)$. If we let $K_\beta$ be the union of $\beta\in \Nb$ independent Brownian excursions in $\Ub$ which start at $0$ and exit $\Ub$ at $1$, then the filling of $K_\beta$ is distributed as $\Pf^{0, \beta}_{8/3}$. 
Let $\Cc(K_\beta)$ be the union of $K_\beta$ with all the clusters that it intersects in an independent loop soup of intensity $c$ in $\Ub$.
Then Lemma~\ref{lem:obs} implies that the filling of $\Cc(K_\beta)$ is distributed according to $\Pf_\kappa^{0, \beta}$, where $\kappa$ and $c$ are related by~\eqref{eq:c_kappa} (see Figure~\ref{fig:def2} right).
\smallskip

Recall that for $k\in\Nb$, the definition of $\xi_c(k,0)$ given by~\eqref{eq:wh-p1} involves Brownian motions and a loop soup in $\Bc(0,r)$ for $r>1$. If we rescale everything by $1/r$, then the definition of $\xi_c(k,0)$ given by~\eqref{eq:wh-p1} is very similar to the definition of $\xi_c(k)$ given just above, with two differences:
\begin{itemize}
\item In the definition of $\xi_c(k,0)$ (where we rescale everything by $1/r$), we consider $k$ Brownian motions started at $k$ uniform points on $\Cc_{1/r}$ and stopped upon exiting $\Ub$; while in the definition of $\xi_c(k)$, we consider $k$ Brownian excursions in $\Ub$ between $0$ and $1$.
\item In the definition of $\xi_c(k,0)$ we consider an event of non-disconnection; while in the definition of $\xi_c(k)$, we consider an event that the conformal radius of the random set generated by the Brownian motions and the loop soup is small.
\end{itemize}
We will show in Proposition~\ref{prop:same-exponents} that $\xi_c(k,0)=\xi_c(k)$.

	\section{Up-to-constants estimates}\label{sec:up-to-constant}
In this section, we prove up-to-constants estimates for the generalized non-intersection probability (Theorem~\ref{thm:xi-k}) and non-disconnection probability (Theorem~\ref{thm:same_exponent}), assuming a crucial ``separation lemma'' (Lemma \ref{lem:separation_lem_2}) whose proof we postpone till  Section~\ref{sec:separation-lemma}. 
Among other things, these results allow us to define the generalized intersection exponents, and relate them to the generalized disconnection exponents defined in \cite{MR4221655} with values \eqref{eq:exponent} (Corollary \ref{cor:xi(k,0)} and Proposition~\ref{prop:same-exponents}).

In Section~\ref{subsec:Statement of the results}, we state the main results. In Section~\ref{subsec:k=2-ee}, we state and prove Theorem \ref{thm:up-to-constants-ex}, which is a variant of  Theorem~\ref{thm:xi-k} formulated in terms of extremal distances. The proof of Theorem \ref{thm:up-to-constants-ex} relies on a separation lemma (Lemma \ref{lem:separation_lem_2}).
In Section~\ref{subsec:k=2}, we derive Theorem \ref{thm:xi-k} in the $k=2$ case as a corollary of Theorem \ref{thm:up-to-constants-ex}. Finally, in Section~\ref{subsec:other-exponents}, we state in Theorem \ref{thm:up-to-constants-ex-k-sep} the corresponding result for general  $k$'s, and then prove Proposition~\ref{prop:same-exponents}.

\subsection{Statement of the results}\label{subsec:Statement of the results}
Recall from Section~\ref{subsec:BLS} that $\Gamma$ is a Brownian loop soup with intensity $c$ in the plane and $\Lambda$ is the collection of outer boundaries of the Brownian loops in $\Gamma$.
	The definition of $\xi_c(k,0)$ given in~\eqref{eq:wh-p1}, as well as the generalized intersection exponents $\xi_c(k, \lambda)$ that we will define in this subsection, remains the same whether we use $\Gamma$ or $\Lambda$. However, later it will be crucial for us to work with $\Lambda$ rather than $\Gamma$, when we switch to the setup of extremal distances, because $\Lambda$ is thin (see the end of Section~\ref{subsec:BLS}). Therefore, we will define the exponents using $\Lambda$ from now on.

	For all $r>0$, let $\Lambda_r$ be the collection of loops in $\Lambda$ which are contained in the ball $\Bc_r$ of radius $e^r$ centered at $0$.
	Let $\Cc_r=\partial \Bc_r$ be the circle of radius $e^r$ centered at $0$.
Let $B^0$, $B^1$, $\cdots$, $B^k$ be $k+1$ independent Brownian motions started from $k+1$ uniformly chosen points on $\Cc_0$. For all $r$, define the hitting times of Brownian motion as
	\[\tau^i_r:=\inf\{ t>0: B^i_t\in \Cc_r\}.\]
	Denote by $\overline B^i_r$ the path $B^i[0,\tau^i_r]$. 
	Let $\Theta_r$ be the union of $\overline B^1_r\cup\cdots\cup \overline B^k_r$ together with the clusters of loops in $\Lambda_r\setminus\Lambda_0$ that it intersects. 
	Let $\Pi_r$ be the $\sigma$-algebra generated by $ \overline B^1_r, \overline B^2_r, \Lambda_r\setminus\Lambda_0$.
	Define 
	\begin{equation}\label{eq:Zr}
	Z_r=Z^k_r:=\Pb( \overline B^0_r\cap \Theta_r=\emptyset \mid \Pi_r).
	\end{equation}
	We define \textit{the generalized non-intersection probabilities} for all $c\in(0,1], k\in\Nb, r\ge 0, \lambda>0$, 
	\begin{equation}\label{eq:Z-r}
	p(c,k,r,\lambda):=\Eb((Z_r)^{\lambda}).
	\end{equation}
Since the Brownian motion and the Brownian loop soup are scale-invariant, by the standard sub-multiplicativity argument, there exists an exponent $\xi_c(k,\lambda)>0$ such that
	\begin{equation}\label{eq:g-intersection}
	p(c,k,r,\lambda)\approx e^{-r\xi_c(k,\lambda)}.
	\end{equation}
	We call $\xi_c(k,\lambda)$ the \textit{generalized intersection exponent}. 
The following results show that the asymptotic estimate \eqref{eq:g-intersection} can be improved to an up-to-constants one. 
	
	\begin{theorem}\label{thm:xi-k}
		Let $c\in (0,1]$, $k\in\Nb$ and $\lambda_0>0$. For all $\lambda\in(0,\lambda_0]$
		\begin{equation}\label{eq:xi-k}
		p(c,k,r,\lambda)\asymp e^{-r\xi_c(k,\lambda)},
		\end{equation}
		where the implied constants only depend on $c,k,\lambda_0$.
	\end{theorem}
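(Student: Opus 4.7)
The plan is to upgrade the sub-multiplicative asymptotic \eqref{eq:g-intersection} to the up-to-constants bound \eqref{eq:xi-k} via quasi-multiplicativity
\begin{equation*}
p(c,k,r+s,\lambda) \asymp p(c,k,r,\lambda)\, p(c,k,s,\lambda),
\end{equation*}
which, iterated, immediately yields \eqref{eq:xi-k} by a standard argument. One direction ($\le$) follows by conditioning on $\Pi_r$, using the Markov property of the $B^i$'s together with the independence of $\Lambda_r \setminus \Lambda_0$ and $\Lambda_{r+s}\setminus\Lambda_r$, and Jensen's inequality to push the $\lambda$-th power inside the conditional expectation. All the real work is in the matching lower bound.

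The difficulty in the lower bound is that on a typical configuration contributing to $\Eb(Z_r^\lambda)$, the $k$ arms $\overline B^1_r,\ldots,\overline B^k_r$, together with their attached cluster of loops, may arrive at $\Cc_r$ in a crowded configuration, making it hard to extend to scale $r+s$ without forcing collisions. The remedy is the \emph{separation lemma} (Lemma~\ref{lem:separation_lem_2}), whose content is that, losing only a universal multiplicative constant in $p(c,k,r,\lambda)$, we may restrict to configurations where the arms land on $\Cc_r$ inside $k$ disjoint wedges of definite angular opening, and where the cluster attached to them remains confined to these wedges in a neighborhood of $\Cc_r$. Its statement and proof are deferred to Section~\ref{sec:separation-lemma}; this is by far the hardest step, and I expect it to be the main obstacle, since, unlike the pure Brownian setting of \cite{MR1386292,MR1901950}, the cluster in $\Lambda_r \setminus \Lambda_0$ has no a priori Markovian structure at the scale $\Cc_r$ and can have arbitrarily complicated geometry there.

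Given separation, I would first treat $k=2$ through the extremal-distance reformulation of Theorem~\ref{thm:up-to-constants-ex}: the conditional non-intersection probability $Z_r$ is comparable, up to universal constants, to $\exp(-L(D_r;\partial_1,\partial_2))$, where $D_r$ is the good domain (Definition~\ref{def:good-domain}) obtained as a suitable component of $\Bc_r\setminus\Theta_r$ with its boundary arcs on $\Cc_0$ and $\Cc_r$. On the separation event at scale $r$, one may then independently sample a configuration at scales $(r,r+s)$ that is also well-separated at both endpoints, concatenate the two pieces, and use the additivity of extremal distance Lemma~\ref{lem:concatenation} (whose geometric hypotheses are exactly what separation provides) to obtain $L(D_{r+s};\partial_1,\partial_2) \le L(D_r;\partial_1,\partial_2) + L(\wt D;\wt\partial_1,\wt\partial_2) + O(1)$. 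Extending the Brownian motions inside their respective wedges is handled by Lemmas~\ref{lem:outer-bridge} and~\ref{lem:intermediate-part}, while Lemma~\ref{lem:cluster-thin} (thinness of $\Lambda$) ensures that clusters in $\Lambda_{r+s}\setminus\Lambda_r$ do not straddle two wedges with more than a constant cost.

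The general $k$ case, formulated as Theorem~\ref{thm:up-to-constants-ex-k-sep}, is carried out in Section~\ref{subsec:other-exponents} by essentially the same scheme with $k$ wedges in place of $2$; only the bookkeeping is heavier, and the extremal-distance composition Lemmas~\ref{lem:comparison-principle}--\ref{lem:concatenation} together with Lemma~\ref{lem:D'} localize the effect of the crowded region near $\Cc_r$ to an additive $O(1)$ in the extremal distance, which becomes a multiplicative constant after exponentiating and taking $\lambda$-th moments. Combining the two directions of quasi-multiplicativity then produces \eqref{eq:xi-k}, with constants depending only on $c$, $k$, and $\lambda_0$ since the separation constants are uniform in $\lambda\in(0,\lambda_0]$.
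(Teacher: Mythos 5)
Your overall architecture matches the paper's (separation lemma, extremal-distance concatenation via Lemma~\ref{lem:concatenation}, thinness of $\Lambda$ for the cluster control), but two specific mechanisms in the sketch would not survive execution. The claim that $Z_r$ is comparable up to universal constants to $\exp(-L(D_r;\partial_1,\partial_2))$, with $D_r$ a component of $\Bc_r\setminus\Theta_r$, glosses over exactly the point that forces the paper to introduce $\wt b_r = r^{-2}\Eb(e^{-\lambda\wt L_r})$ in Theorem~\ref{thm:up-to-constants-ex}. There, $\wt L_r$ is built from the \emph{excursion} parts $Y^i_r$ of the arms only; the meandering initial segments $B^i[0,\sigma^i_r]$ are handled separately and contribute a factor $\asymp r^{-2}$ through the events $D_1,D_2$, so that Corollary~\ref{cor:xi-2} proves the averaged comparison $\Eb(Z_r^\lambda)\asymp r^{-2}\,\Eb(e^{-\lambda\wt L_r})$, not a pointwise one. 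If you carve $D_r$ from the full $\Theta_r$, the boundary arc $\partial D_r\cap\Cc_0$ is determined by those initial segments and $L(D_r)$ loses the clean conformal/compositional structure that makes $\wt L_r$ tractable; moreover the polynomial factors $(s+r)^2/(s^2r^2)$ that drive the super-multiplicativity step (eqs.\ \eqref{eq:U}, \eqref{eq:ub-1}, \eqref{eq:ub-3}) are invisible in your account, and these are precisely what the $r^{-k}$ prefactor in $\wt b_r$ is designed to absorb so that iteration gives a pure exponential.

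The "easy direction by Jensen" is also not right as stated. For $\lambda\in(0,1]$, $x\mapsto x^\lambda$ is concave, so Jensen gives $\Eb(X^\lambda\mid\Fc)\le(\Eb(X\mid\Fc))^\lambda$, which points the wrong way for bounding $\Eb(Z_{r+s}^\lambda)$ from above; and one cannot simply factor $Z_{r+s}$ across the circle $\Cc_r$ because $\Theta_{r+s}\cap\Bc_r$ may strictly contain $\Theta_r$ — loops in $\Lambda_{r+s}\setminus\Lambda_r$ can merge previously separate clusters touching the arms. The paper obtains sub-multiplicativity $\wt b_{s+r+1}\lesssim\wt b_s\,\wt b_r$ from the additivity of extremal distance (Lemmas~\ref{lem:comparison-principle}, \ref{lem:composition-laws}), Lemma~\ref{lem:D1r} for the $r^{-2}$ cost of the intermediate Brownian bridges, and Lemma~\ref{lem:last-exit-uniform} to decouple the endpoints; no Jensen appears. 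With these two points corrected, the proof you outline becomes essentially the one in Sections~\ref{subsec:k=2-ee}--\ref{subsec:other-exponents}.
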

	
	Note that $p(c,k,r,\lambda)$ is decreasing in all four parameters.
	If we let $\lambda=0$ and make the convention $0^0=0$, then \eqref{eq:Z-r} can be extended to 
	\begin{equation}\label{eq:p(c,k,r,0)}
	p(c,k,r,0)=\Pb(Z_r>0)=\lim_{\lambda\rightarrow 0+}\Eb((Z_r)^{\lambda})=\lim_{\lambda\rightarrow 0+}p(c,k,r,\lambda),
	\end{equation}
	which we call the \textit{generalized non-disconnection probability}.
	Note that $\{Z_r>0\}$ is equivalent to the event that $\Theta_r$ does not disconnect $\Cc_0$ from $\Cc_r$. 
	Therefore, the generalized non-disconnection probability \eqref{eq:p(c,k,r,0)} defined here is exactly the same as that introduced before \eqref{eq:wh-p1}.
Theorem~\ref{thm:xi-k} implies the following corollary, which gives the first part of Theorem \ref{thm:same_exponent}.
	\begin{corollary}\label{cor:xi(k,0)}
		Let $c\in (0,1]$, $k\in\Nb$. The generalized intersection exponent $\xi_c(k,\lambda)$ is continuous in $\lambda\ge 0$. In particular, $\xi_c(k,0):=\lim_{\lambda\rightarrow 0+}\xi_c(k,\lambda)$ exists. Furthermore, 
		\begin{equation}
		p(c,k,r,0) \asymp e^{-r\xi_c(k,0)},
		\end{equation}
		where the implied constants only depend on $c,k$.
	\end{corollary}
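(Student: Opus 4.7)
The plan is to extract the corollary directly from Theorem~\ref{thm:xi-k} by passing to the limit $\lambda\downarrow 0$, the key point being that the multiplicative constants in Theorem~\ref{thm:xi-k} are uniform in $\lambda\in(0,\lambda_0]$. First I would record that $Z_r\in[0,1]$ almost surely since it is a conditional probability, so $\lambda\mapsto Z_r^\lambda$ is pointwise nonincreasing and $Z_r^\lambda\uparrow \one_{Z_r>0}$ as $\lambda\downarrow 0$. Monotone convergence then gives $p(c,k,r,\lambda)\uparrow p(c,k,r,0)$ (which also validates the limit identity in \eqref{eq:p(c,k,r,0)}), hence $\lambda\mapsto p(c,k,r,\lambda)$ is nonincreasing and $\lambda\mapsto \xi_c(k,\lambda)$ is nondecreasing on $(0,\infty)$. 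The limit $\xi_c(k,0):=\lim_{\lambda\downarrow 0}\xi_c(k,\lambda)$ therefore exists as an infimum by monotonicity.

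Next I would apply Theorem~\ref{thm:xi-k} with any fixed $\lambda_0>0$ to obtain constants $C_1,C_2>0$ depending only on $c,k,\lambda_0$ such that
\[
C_1\,e^{-r\xi_c(k,\lambda)}\;\le\; p(c,k,r,\lambda)\;\le\; C_2\,e^{-r\xi_c(k,\lambda)}
\]
for all $\lambda\in(0,\lambda_0]$ and $r\ge 0$. Letting $\lambda\downarrow 0$, the middle term converges to $p(c,k,r,0)$ by monotone convergence and $\xi_c(k,\lambda)\downarrow\xi_c(k,0)$ by definition, so the up-to-constants estimate for $p(c,k,r,0)$ follows with exactly the same constants $C_1,C_2$.

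For continuity of $\xi_c(k,\cdot)$ on $(0,\infty)$, I would use that the displayed inequality rearranges to
\[
\bigl|\xi_c(k,\mu)+r^{-1}\log p(c,k,r,\mu)\bigr|\;\le\;r^{-1}\max(|\log C_1|,|\log C_2|),\quad \mu\in(0,\lambda_0].
\]
For each fixed $r$, $\mu\mapsto p(c,k,r,\mu)=\Eb[Z_r^\mu]$ is continuous by dominated convergence (as $|Z_r^\mu|\le 1$), and stays bounded below by $p(c,k,r,\lambda_0)\ge C_1 e^{-r\xi_c(k,\lambda_0)}>0$ uniformly on $(0,\lambda_0]$, so $\mu\mapsto\log p(c,k,r,\mu)$ is continuous as well. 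Applying the displayed inequality at $\mu$ and at $\lambda\in(0,\lambda_0]$ and letting $\mu\to\lambda$ first then $r\to\infty$ yields $\xi_c(k,\mu)\to\xi_c(k,\lambda)$; since $\lambda_0>0$ is arbitrary this gives continuity on $(0,\infty)$, and continuity at $0$ is the very definition of $\xi_c(k,0)$.

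I do not anticipate a real obstacle in the corollary itself: every step is either a bookkeeping exercise with the uniform constants or a routine application of (bounded/monotone) convergence for the family $\{Z_r^\lambda\}_{\lambda\ge 0}$. All of the heavy lifting has already been carried out in Theorem~\ref{thm:xi-k}, whose proof rests on the separation lemma (Lemma~\ref{lem:separation_lem_2}); the value of the present corollary is simply to package that input into the clean statement $p(c,k,r,0)\asymp e^{-r\xi_c(k,0)}$ that will be needed in the subsequent identification $\xi_c(k,0)=\xi_c(k)$ via Proposition~\ref{prop:same-exponents}.
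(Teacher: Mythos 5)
Your proof is correct and is essentially the argument the paper leaves implicit when it writes ``Theorem~\ref{thm:xi-k} implies the following corollary''; the monotone-convergence identity you invoke is exactly the chain recorded in \eqref{eq:p(c,k,r,0)}, and the passage to the limit $\lambda\downarrow 0$ in the uniform up-to-constants bound is the intended deduction. The one small clean-up worth making explicit in the final write-up: after taking $\lambda\downarrow 0$ the constants a priori carry a dependence on the auxiliary $\lambda_0$, but since $\lambda_0$ may be fixed once and for all (say $\lambda_0=1$) this reduces to a dependence on $c,k$ only, as claimed.
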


At the end of this section, we will show that $\xi_c(k,0)=\xi_c(k)$ in Proposition~\ref{prop:same-exponents}, where $\xi_c(k)$ (see \eqref{eq:exponent}) is a reformulation of the generalized disconnection exponent $\eta_{\kappa}(k)$ introduced in \cite{MR4221655} (see Theorem~\ref{thm:dis_def} and the paragraph below).
Combined with Corollary \ref{cor:xi(k,0)}, this gives Theorem \ref{thm:same_exponent}.

\begin{figure}
	\centering
	\includegraphics[scale=0.35]{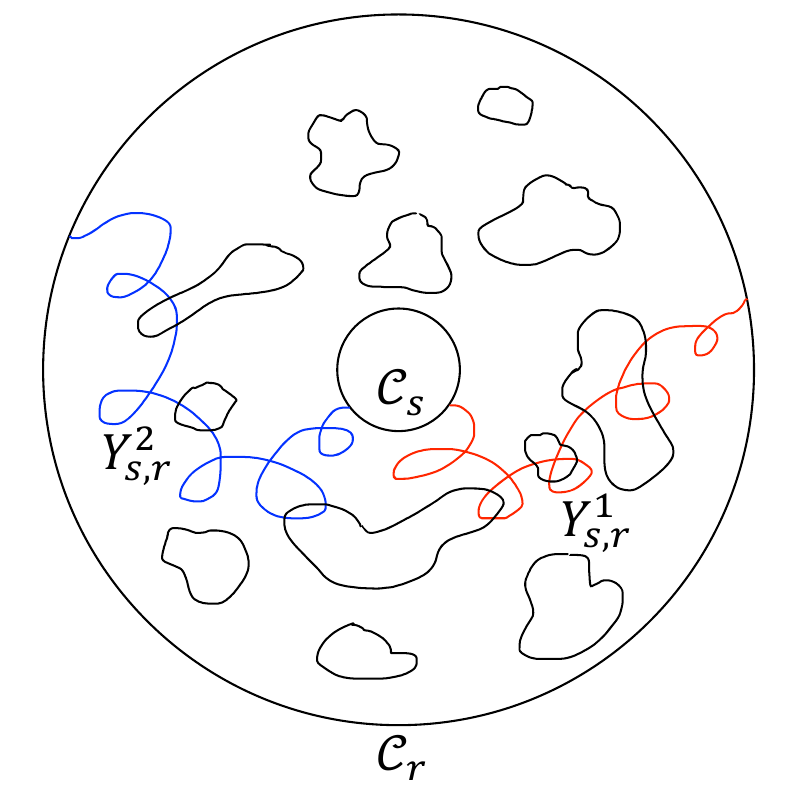}
	\includegraphics[scale=0.27]{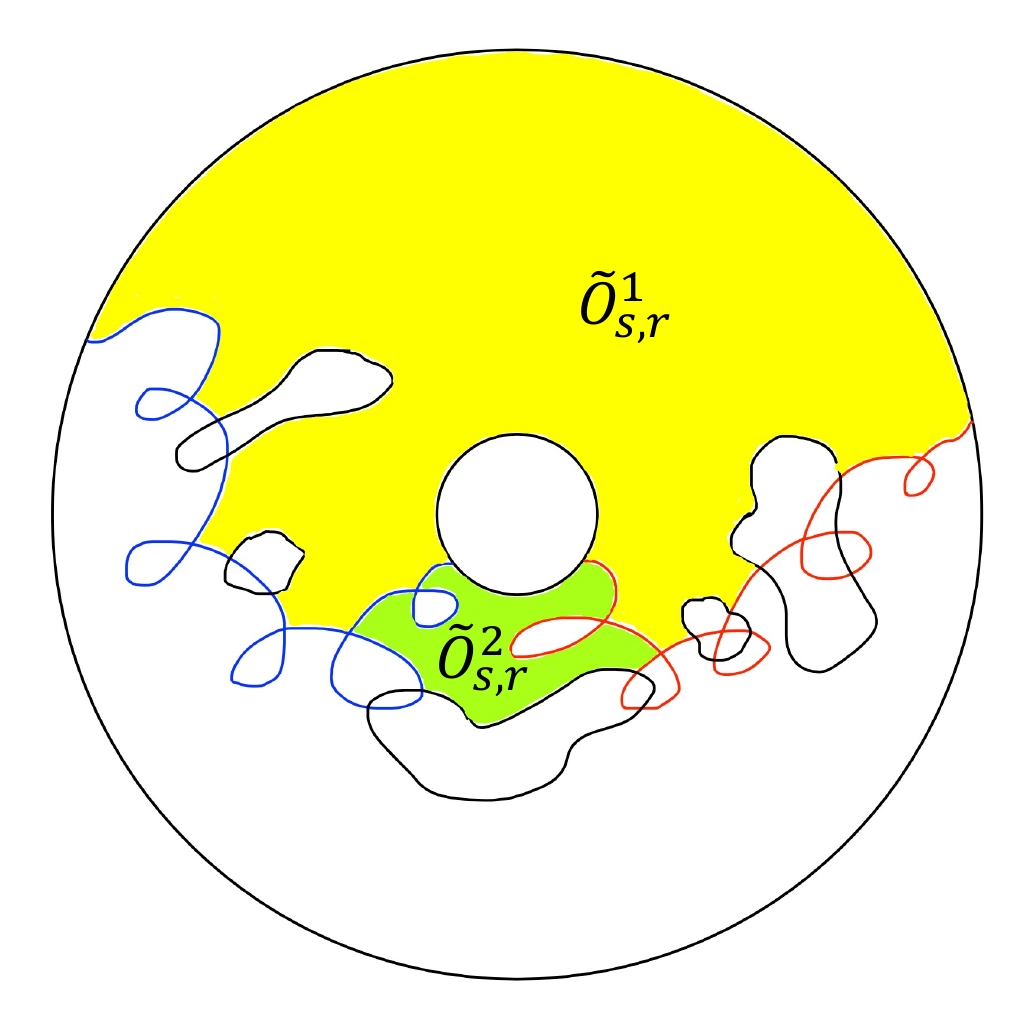}
	\caption{\textbf{Left:} The concentric circles are $\Cc_s$ and $\Cc_r$ respectively. The black loops represent the outermost boundaries of the outermost clusters in $\Lambda_{s,r}$. The red and blue curves are excursions $Y^1_{s,r}$ and $Y^2_{s,r}$ respectively. \textbf{Right:} The yellow and green regions are two connected components $\wt O^1_{s,r}$ and $\wt O^2_{s,r}$ respectively.} 
	\label{fig:br}
\end{figure}

	\subsection{Setup using extremal distances and excursions}\label{subsec:k=2-ee}
	In this subsection, we present a version of Theorem~\ref{thm:xi-k} formulated in terms of extremal distances and excursions.
	We keep the notation introduced in Section~\ref{subsec:Statement of the results}.
	For $i=1,2$ and $0\le s<r$, let $\sigma^i_{s,r}$ be the last time that $B^i$ hits $\Cc_s$ before $\tau^i_r$, i.e., 
	\[
	\sigma^i_{s,r}:=\sup\{ t<\tau^i_r: B^i(t)\in \Cc_s \}.
	\]
	Then for $i=1,2$,
	\begin{equation}\label{eq:Y}
	Y^i_{s,r}(t):=B^i(t+\sigma^i_{s,r}), \quad 0\le t\le \tau^i_r-\sigma^i_{s,r},
	\end{equation}
	are two independent Brownian excursions across the annulus $\mathcal A(s,r)=\Bc_r\setminus \ol\Bc_s$. 
	Define the hitting time of $Y^i_{s,r}$ on $\Cc_u$ as 
	\[
	T^i_u:=\inf\{ t>0: Y^i_{s,r}(t)\in \Cc_u\}.
	\]
	Recall that $\Lambda_{s,r}$ is the collection of the loops in $\Lambda$ that are contained in $\mathcal A(s,r)$.
	We use $Y^i_{s,r}$ to denote the trace $Y^i_{s,r}[0,T_r^i]=B^i[\sigma^i_{s,r},\tau^i_r]$,
and use $\wt Y^i_{s,r}$ to denote the union of $Y^i_{s,r}$ and the collection of clusters in $\Lambda_{s,r}$ that intersect $Y^i_{s,r}$.  Let $\Ic^1_s$ be the counterclockwise arc on $\Cc_s$ from $Y^1_{s,r}(0)$ to $Y^2_{s,r}(0)$. Almost surely, $\mathcal A(s,r)\setminus (\wt Y^1_{s,r}\cup \wt Y^2_{s,r})$ contains two connected components $\wt O^1_{s,r}$ and $\wt O^2_{s,r}$ that are at zero distance from $\Cc_s$, where $\wt O^1_{s,r}$ is the one such that $\partial\wt O^1_{s,r}\cap\Cc_s=\Ic^1_s$. See Figure~\ref{fig:br} for an illustration of these objects. For $i=1,2$, let 
$$
\mbox{$\wt L^i_{s,r}:=L(\wt O^i_{s,r})$ be the $\pi$-extremal distance between $\Cc_s\cap \partial\wt O^i_{s,r}$ and $\Cc_r\cap \partial\wt O^i_{s,r}$ in $\wt O^i_{s,r}$.}
$$
When $\partial\wt O^i_{s,r}\cap \Cc_r=\emptyset$, set $\wt L^i_{s,r}:=\infty$. Let $\wt L_{s,r}=\wt L^1_{s,r} \wedge \wt L^2_{s,r}$. Define 
	\begin{equation}
	\wt b_{s,r}=\wt b_{s,r}(\lambda)=(r-s)^{-2}\mathbb{E}\Big(e^{-\lambda \wt L_{s,r}}\Big) \quad \text{ for all } 0\le s<r.
	\end{equation}
	When $s=0$, we drop the subscript $s$ and write
	\begin{equation}\label{eq:br}
	\wt b_{r}:=\wt b_{0,r}=r^{-2}\mathbb{E}\Big(e^{-\lambda \wt L_{r}}\Big).
	\end{equation}
	By the conformal invariance of Brownian motions, Brownian loop soups and extremal distances, we know that $\wt L_{s,r}$ (resp. $\wt b_{s,r}$) has the same distribution as $\wt L_{r-s}$ (resp. $\wt b_{r-s}$) for all $0\le s<r$. Note that $\wt L_{r}$ is stochastically increasing in $r$ and therefore $\wt b_{r}$ is decreasing in $r$.

We will focus on the proof of the following theorem and later show that it implies Theorem~\ref{thm:xi-k} (see Corollary~\ref{cor:xi-2}).
In particular, we will show that the exponent $\wh\xi_c$ in the theorem below is the same as the exponent $\xi_c$ in Theorem~\ref{thm:xi-k}.
	\begin{theorem}\label{thm:up-to-constants-ex}
		Let $c\in (0,1]$ and $\lambda_0>0$. For all $\lambda\in(0,\lambda_0]$, there exists $\wh\xi_c(2,\lambda)$ such that
		\begin{equation}\label{eq:up-to-constants-ex}
		\wt b_r\asymp  e^{-r \wh\xi_c(2,\lambda)},
		\end{equation}
		where the implied constants only depend on $c,\lambda_0$.
\end{theorem}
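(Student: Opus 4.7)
The plan is to prove up-to-constants sub- and super-multiplicativity of $\wt b_r$, from which the existence of $\wh\xi_c(2,\lambda):=-\lim_{r\to\infty}r^{-1}\log \wt b_r$ and the estimate \eqref{eq:up-to-constants-ex} follow by a standard Fekete-type argument applied to $a_r:=-\log \wt b_r$. Throughout, I will exploit conformal invariance of BM, excursion measures, loop soups, and extremal distances to reduce everything to quantities on annuli $\mathcal{A}(s,r)$.

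For the sub-multiplicative bound $\wt b_{r+s}\lesssim \wt b_r\wt b_s$, I would split the annulus $\mathcal{A}(0,r+s)$ at the intermediate scale $\Cc_r$. The composition law (Lemma~\ref{lem:composition-laws}) yields that $\wt L^i_{r+s}$ is bounded below by the sum of the extremal distances of the restrictions of $\wt O^i$ to $\mathcal{A}(0,r)$ and to $\mathcal{A}(r,r+s)$. Combining this with the strong Markov property of $B^1,B^2$ at their last hits of $\Cc_r$, and with the independence of the two restricted loop soups $\Lambda_r\setminus\Lambda_0$ and $\Lambda_{r+s}\setminus\Lambda_r$, I decouple the two factors $e^{-\lambda \wt L^{(\mathrm{in})}}$ and $e^{-\lambda \wt L^{(\mathrm{out})}}$. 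The endpoint distributions on $\Cc_r$ are controlled via Lemma~\ref{lem:last-exit-uniform} (and its outside-inside analogue after inversion) so that the outer piece, once conformally rescaled, has essentially the law used in the definition of $\wt b_s$. The polynomial normalizations $(r+s)^{-2}$ vs.\ $r^{-2}s^{-2}$ are absorbed into the universal constant provided one reduces to $r,s\ge 1$ as base cases.

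For the super-multiplicative bound $\wt b_{r+s}\gtrsim \wt b_r\wt b_s$, I would invoke the separation lemma (Lemma~\ref{lem:separation_lem_2}) whose proof is deferred to Section~\ref{sec:separation-lemma}. It provides, with conditional probability bounded below, a well-separated configuration at scale $\Cc_r$: the endpoints $Y^1_r(T^1_r), Y^2_r(T^2_r)$ lie in opposite narrow wedges as in the setup of Lemma~\ref{lem:concatenation}, and no loop-soup cluster in $\Lambda_r\setminus\Lambda_0$ bridges a neighborhood of $\Cc_r$ in a way that merges the two thick excursions. Given an independent separated configuration at scale $s$ constructed on $\mathcal{A}(r,r+s)$, one splices the two via a short bridge across $\Cc_r$; the cost of the bridge is bounded using Lemmas~\ref{lem:intermediate-part} and~\ref{lem:outer-bridge} together with Lemma~\ref{lem:cluster-thin} to control stray loop-soup clusters intersecting the bridge. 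The resulting concatenated configuration satisfies the hypotheses of Lemma~\ref{lem:concatenation}, yielding $\wt L_{r+s}\le \wt L_r+\wt L_s+C$ with universal additive $C$. Taking expectations and using the decoupling between scales gives $\wt b_{r+s}\gtrsim \wt b_r\wt b_s$.

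The main obstacle is the super-multiplicativity step, and in turn the separation lemma in the loop-soup setting. In the pure-BM setting of \cite{MR1901950}, separation at an intermediate scale is enforced by Brownian estimates alone. In our setting one must additionally rule out, at bounded multiplicative cost, the event that loop-soup clusters attached to $\wt Y^1_r$ and $\wt Y^2_r$ merge across $\Cc_r$ and destroy the wedge structure needed to apply Lemma~\ref{lem:concatenation}; this is precisely the technical heart of Section~\ref{sec:separation-lemma}. Once both inequalities are established, with $a_r=-\log \wt b_r$ satisfying $|a_{r+s}-a_r-a_s|=O(1)$, a standard subadditivity argument produces $a_r/r\to \wh\xi_c(2,\lambda)$ together with $a_r=r\,\wh\xi_c(2,\lambda)+O(1)$, which is exactly \eqref{eq:up-to-constants-ex}; positivity $\wh\xi_c(2,\lambda)>0$ follows from the non-trivial decay of $\wt b_r$ (for instance by comparison with the $c=0$ Brownian case).
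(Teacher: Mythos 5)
Your high-level plan — sub-multiplicativity for the lower bound on $\wt b_r$, super-multiplicativity via the separation lemma and Lemma~\ref{lem:concatenation} for the upper bound, then a Fekete argument — is exactly the structure of the paper's proof, and your description of the super-multiplicative step (separation, wedges, bridge crossing via Lemma~\ref{lem:intermediate-part}, loop control via Lemma~\ref{lem:cluster-thin}, concatenation via Lemma~\ref{lem:concatenation}, decoupling via Lemma~\ref{lem:last-exit-uniform}) is essentially the paper's.

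However, your sub-multiplicativity sketch has a genuine gap. The claim that \emph{the polynomial normalizations $(r+s)^{-2}$ vs.\ $r^{-2}s^{-2}$ are absorbed into the universal constant provided $r,s\ge 1$} is false: their ratio is $r^2s^2/(r+s)^2\asymp(\min(r,s))^{2}$, which is unbounded. So after the composition law and the decoupling you describe, you only get $\wt b_{r+s}\lesssim \frac{r^2s^2}{(r+s)^2}\,\wt b_r\wt b_s$, which does \emph{not} yield sub-multiplicativity, and hence does not produce the lower bound $\wt b_r\gtrsim e^{-r\wh\xi_c(2,\lambda)}$. The missing factor of $(\min(r,s))^{-2}$ is supplied in the paper by the \emph{intermediate piece} of each Brownian motion: one splits $B^i[0,\tau^i_{s+r+1}]$ into the inner excursion $Y^i_s$, an intermediate bridge $W^i=B^i[\tau^i_s,\sigma^i_{s+1,s+r+1}]$, and the outer excursion $Y^i_{s+1,s+r+1}$, and observes that $\{\wt L_{s+r+1}<\infty\}$ forces the event $H$ that neither $W^1$ nor $W^2$ disconnects $\Cc_s$ from $\infty$; Lemma~\ref{lem:D1r} gives $\Eb(\one_H\mid\Gc_{s,r})\lesssim r^{-2}$ uniformly over endpoint configurations, and together with the trivial $(s+r+1)^{-2}\le s^{-2}$ this closes the chain. (The same factor appears dually in your super-multiplicativity step: there, the bridge cost via Lemma~\ref{lem:intermediate-part} is $\gtrsim (\min(r,s))^{-2}$, not a constant — so your two paragraphs should either both feature this power-law factor or neither.) A related, smaller point is that the paper works at a shifted scale ($\wt b_{s+r+1}$, $\wt b_{s+r+2}$) precisely to leave room for the intermediate piece and for Lemma~\ref{lem:last-exit-uniform}, which compares the last-exit law to the uniform law on the \emph{next} scale; splitting directly at $\Cc_r$ as you propose requires the ``same scale'' variant Lemma~\ref{lem:leu-SameScale}, with its extra hypothesis. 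Once the $r^{-2}$ factor from the intermediate piece is incorporated, your argument aligns with the paper's.
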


	The proof of the lower bound in Theorem \ref{thm:up-to-constants-ex} is relatively easy, as it suffices to show that $\wt b_{r}$ is sub-multiplicative. 
	\begin{proof}[Proof of the lower bound in Theorem \ref{thm:up-to-constants-ex}]
It suffices to show that there exists a universal constant $C>0$ such that for all $c\in(0,1]$, $\lambda>0$ and $r,s\ge 1$,
		\begin{equation*}
		\wt b_{s+r+1}\le C\wt b_{s} \wt b_{r}.
		\end{equation*}
Indeed, this will imply that the limit $\wh\xi_c(2,\lambda):=-\lim_{r\rightarrow\infty}\frac{\log \wt b_r}{r}$ exists and $\wt b_r\gtrsim e^{-r\wh\xi_c(2,\lambda)}$. 

		Let $i=1,2$. We split each Brownian excursion $Y^i_{s+r+1}$ into two non-intersecting  Brownian excursions $Y^i_{s}$ and $Y^i_{s+1,s+r+1}$ with an intermediate part $W^i=Y^i[\tau^i_{s},\sigma^i_{s+1,s+r+1}]$.
		Denote by $H$ the event that neither $W^1$ nor $W^2$ disconnects $\Cc_s$ from infinity. By Lemma~\ref{lem:D1r} and rotation invariance and scaling invariance of the Brownian motion,
		\begin{equation}\label{eq:H}
		\Eb(\one_H\mid \Gc_{s,r})\lesssim r^{-2},
		\end{equation}
	where $\Gc_{s,r}$ is the $\sigma$-algebra generated by the quadruple
\begin{equation}\label{eq:quad}
    \Big(B^1(\tau^1_{s}), B^2(\tau^2_{s}), B^1(\sigma^1_{s+1,s+r+1}),B^2(\sigma^2_{s+1,s+r+1})\Big).
\end{equation}
 Write $\Ub_s$ for the law of the quadruple $(X_1,X_2,X'_1,X'_2)$ with $X_1$ and $X_2$ (resp. $X'_1$ and $X'_2$) independently uniformly distributed on $\Cc_{s}$ (resp. $\Cc_{s+1}$).
By Lemma~\ref{lem:last-exit-uniform}, the law of the quadruple \eqref{eq:quad} has a density with respect to $\Ub_s$ which is uniformly and universally bounded away from $0$ and $\infty$, hence
    \begin{equation}\label{eq:EtoU}
    \Eb\Big( e^{-\lambda \wt L_{s}}\Big)= 
    \Eb\Big(\Eb\Big( e^{-\lambda \wt L_{s}}\Big| \Gc_{s,r} \Big) \Big) \asymp\Ub_s\Big(\Eb\Big( e^{-\lambda \wt L_{s}}\Big| \Gc_{s,r} \Big) \Big).   
    \end{equation}
 Here we slightly abuse $\Ub_s$ as the law of the quadruple in \eqref{eq:quad}.
 In a similar fashion, 
\begin{equation}\label{eq:EtoU2}
 \Eb\Big( e^{-\lambda \wt L_{s+1,s+r+1}}\Big) \asymp \Ub_s\Big(\Eb\Big( e^{-\lambda \wt L_{s+1,r+1}}\Big| \Gc_{s,r} \Big) \Big),
\end{equation}
and
\begin{equation}\label{eq:EtoU3}
        \Eb\Big( e^{-\lambda \wt L_{s}} e^{-\lambda \wt L_{s+1,s+r+1}} \Big)  \asymp \Ub_s\Big(\Eb\Big(e^{-\lambda \wt L_{s}} e^{-\lambda \wt L_{s+1,s+r+1}}\Big|\Gc_{s,r}\Big)\Big). 
\end{equation}
Viewing Brownian excursions $Y^i_{s+r+1}$ $i=1,2$ as normalized path measures, we see that conditioned on $\Gc_{s,r}$, $Y^i_{s}$ and $Y^i_{s+1,s+r+1}$, and $W^i$, $i=1,2$ are independent Brownian excursions and Brownian links, respectively (this follows from the last-exit decomposition of Brownian path measures, see Section \ref{subsec:facts}). Moreover, we know that loops in $\Ac_{0,s}$ and $\Ac_{s+1,r+1}$ are independent. Hence,
	\begin{equation}\label{eq:independence}
		\text{$\wt L_{s}$, $\wt L_{s+1,s+r+1}$ and $\one_H$ are independent conditioned on $\Gc_{s,r}$.} 
	\end{equation}
Therefore,
\begin{equation}\label{eq:ed-compare}
\begin{split}
 &\Eb\Big( e^{-\lambda \wt L_{s}} e^{-\lambda \wt L_{s+1,s+r+1}} \Big)  \overset{\eqref{eq:EtoU3}}{\asymp} \Ub_s\Big(\Eb\Big(e^{-\lambda \wt L_{s}} e^{-\lambda \wt L_{s+1,s+r+1}}\Big|\Gc_{s,r}\Big)\Big) \\
 \overset{\eqref{eq:independence}}{\asymp} &\Ub_s\Big(\Eb\Big(e^{-\lambda \wt L_{s}}\Big|\Gc_{s,r}\Big) \Eb\Big( e^{-\lambda \wt L_{s+1,s+r+1}}|\Gc_{s,r}\Big)\Big)\\
 \asymp\,\;\;&\Ub_s\Big(\Eb\Big( e^{-\lambda \wt L_{s+1,r+1}}\Big| \Gc_{s,r} \Big) \Big)\Ub_s\Big(\Eb\Big( e^{-\lambda \wt L_{s}}\Big| \Gc_{s,r} \Big) \Big)\overset{\eqref{eq:EtoU},\eqref{eq:EtoU2}}{\asymp} 
	\Eb\Big( e^{-\lambda \wt L_{s}}\Big) \Eb\Big( e^{-\lambda \wt L_{s+1,s+r+1}}\Big).
\end{split}
 	\end{equation}
		Furthermore, by Lemmas~\ref{lem:comparison-principle} and \ref{lem:composition-laws}, we have 
		\begin{equation}\label{eq:extremal_distance_ge}
		\wt L_{s+r+1}\ge \wt L_{s}+\wt L_{s+1,s+r+1}.
		\end{equation}
		Combining the above estimates with the fact that $\wt L_{s+r}<\infty$ implies $H$, we obtain
		\begin{align*}
		\wt b_{s+r+1}&\stackrel{\eqref{eq:br}}{=}(s+r+1)^{-2}\Eb\Big( e^{-\lambda \wt L_{s+r+1}} \one_{\wt L_{s+r+1}<\infty} \Big)\\
		&\stackrel{\eqref{eq:extremal_distance_ge}}{\le} s^{-2}\Eb\Big( e^{-\lambda \wt L_{s}} e^{-\lambda \wt L_{s+1,s+r+1}} \one_{H} \Big)\\ 
		&\stackrel{\eqref{eq:independence}}{=} s^{-2}\Eb\left( \Eb\Big( e^{-\lambda \wt L_{s}} e^{-\lambda \wt L_{s+1,s+r+1}} \mid \Gc_{s,r} \Big) \Eb\Big( \one_{H}\mid \Gc_{s,r}  \Big) \right)\\ 
		&\stackrel{\eqref{eq:H}}{\lesssim}  s^{-2} r^{-2}
		\Eb\Big( e^{-\lambda \wt L_{s}} e^{-\lambda \wt L_{s+1,s+r+1}} \Big)\\
		&\stackrel{\eqref{eq:ed-compare}}{\lesssim}  s^{-2}\Eb\Big( e^{-\lambda \wt L_{s}} \Big) 
		r^{-2}\Eb\Big( e^{-\lambda \wt L_{r}} \Big).
		\end{align*}
		This concludes the proof.	
\end{proof}

Next, we will derive the upper bound on $\wt b_r$, which consists of showing that $\wt b_{r}$ is super-multiplicative.  
In order to use Lemma~\ref{lem:reverse-composition} to reverse the inequality \eqref{eq:extremal_distance_ge}, we need the configurations to be well-separated on the intermediate scale where we split the excursions. For this purpose, we introduce some nice events (see Figure~\ref{fig:very-nice0} for an illustration of this event). 

\begin{figure}
	\centering
	\includegraphics[scale=.4]{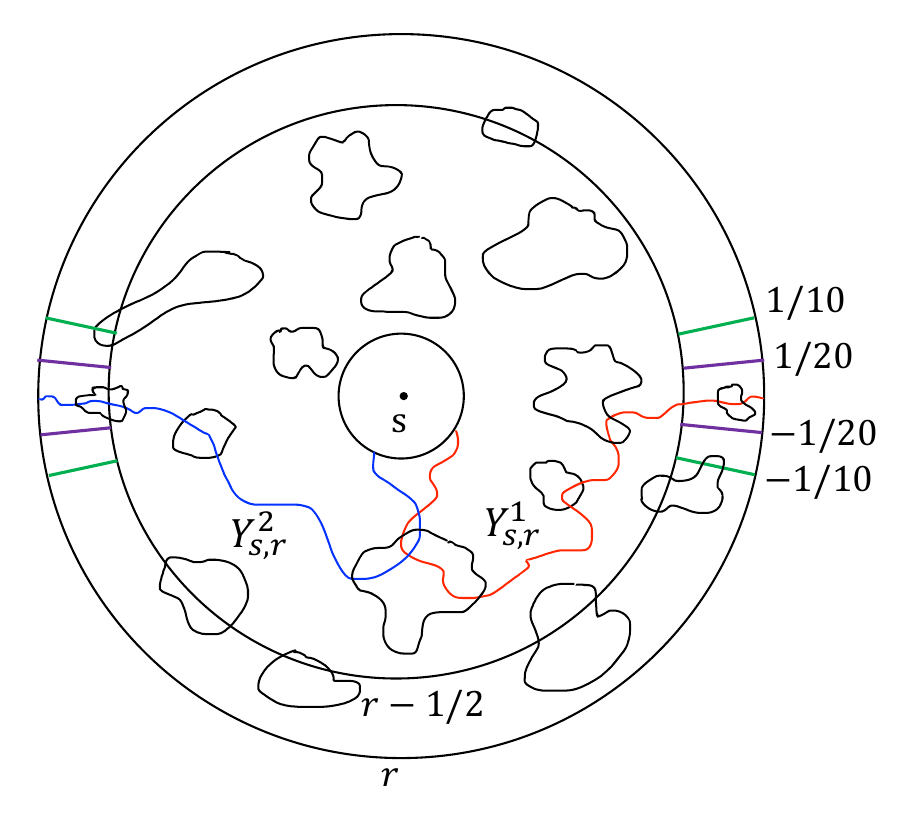}
	\caption{An illustration of the configuration that is very nice at the end (see Definition \ref{def:nice_end}). The three concentric circles are $\Cc_s$, $\Cc_{r-1/2}$ and $\Cc_r$, respectively. The black loops represent the outermost boundaries of the outermost clusters in $\Lambda_{s,r}$, and the red and blue curves are $Y_{s,r}^1$ and $Y_{s,r}^2$ respectively. The angles of the purple and green lines are indicated in the figure. }
	\label{fig:very-nice0}
\end{figure}

\begin{definition}[Very nice at the end and at the beginning]\label{def:nice_end}
For $s\ge 0$ and $r\ge s+1$, let $G^+(s,r)$ be the event that the configuration is \emph{very nice at the end}, i.e., 
	\begin{itemize}
		\item $\partial\wt O^1_{s,r}\cap\Cc_r$ is a counterclockwise (non-empty) arc on $\Cc_r$ from $Y^1_{s,r}(T^1_r)$ to $Y^2_{s,r}(T^2_r)$;
		\item $\wt Y^i_{s,r}\cap\Ac(r-1/2,r)\subseteq \{ z: |\arg(z)+(1-i)\pi| \le 1/20\}$ for $i=1,2$;
		\item $\{ z: |\arg(z)| \le 1/10 \text{ or } |\arg(z)-\pi| \le 1/10 \}\cap\Ac(r-1/2,r)$ does not disconnect $\partial\wt O^1_{s,r}\cap\Cc_s$ from $\partial\wt O^1_{s,r}\cap\Cc_r$.
	\end{itemize}
	
	We can similarly define the event  $G^-(s,r)$ that  the configuration is \emph{very nice at the beginning}:
we impose the same first condition as for $G^+(s,r)$ and replace the two last conditions  for $G^+(s,r)$ by
	\begin{itemize}
		\item $\wt Y^i_{s,r}\cap \Ac(s,s+1/2)\subseteq \{ z: |\arg(z)+(1-i)\pi| \le 1/20\}$ for $i=1,2$;
		\item $\{ z: |\arg(z)| \le 1/10 \text{ or } |\arg(z)-\pi| \le 1/10 \}\cap\Ac(s,s+1/2)$ does not disconnect $\partial\wt O^1_{s,r}\cap\Cc_s$ from $\partial\wt O^1_{s,r}\cap\Cc_r$.
	\end{itemize}
	 We write $G^+(r)$ for $G^+(0,r)$ and $G^-(r)$ for $G^-(0,r)$.	
\end{definition}

	The key ingredient to get the upper bound is the following separation lemma, which states that no matter how bad the initial configuration $\wt O^1_r$ is, a positive fraction of extensions up to scale $e^{r+1}$ is nice at the end under the probability measure weighted by $e^{-\lambda \wt L^1_{r+1}}$.
 Let $\Fc_r$ be the $\sigma$-field generated by $Y^1_{r+1}[0,T^1_r]$ and $Y^2_{r+1}[0,T^2_r]$ (note that $Y^i_{r+1}[0,T^i_r]\neq Y^i_r$ if $B^i$ makes several crossings in $\Ac(0,r)$ before reaching $\Cc_{r+1}$). 
 
	\begin{lemma}[Separation lemma]\label{lem:separation_lem_2}
		For all $r\ge 1$ and $\lambda\in (0,\lambda_0]$,
		\begin{equation}\label{eq:sep-cond}
		\Eb\Big(e^{-\lambda \wt L^1_{r+1}} \one_{G^+(r+1)} \mid\Fc_r\Big)\gtrsim
		\Eb\Big(e^{-\lambda \wt L^1_{r+1}} \mid\Fc_r\Big).
		\end{equation}
	\end{lemma}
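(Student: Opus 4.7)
The proof follows the general philosophy of Lawler's separation lemma \cite{MR1386292} in the extremal-distance formulation of \cite{MR1901950}, but requires substantial additional work to handle the loop-soup component. The plan is to introduce an auxiliary \emph{quasi-niceness} condition at intermediate scales between $r$ and $r+1$, and to show that (i) from a quasi-nice configuration at some intermediate scale one can explicitly build a nice extension to $r+1$ with constant weighted probability, and (ii) from an arbitrary configuration on $\mathcal F_r$ one reaches quasi-niceness at some intermediate scale with positive probability, so that a geometric-series argument over a bounded number of sub-scales closes the bound.

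For the base case, given quasi-niceness at some scale $r' \in [r, r+1]$, I would explicitly construct nice extensions as follows: force both Brownian bridges from $\mathcal C_{r'}$ to $\mathcal C_{r+1}$ to travel through narrow wedges around arguments $0$ and $\pi$ using the bridge estimates of Lemmas \ref{lem:intermediate-part} and \ref{lem:outer-bridge}, force all intersecting clusters of $\Lambda_{r', r+1}$ to have small diameter using Lemma \ref{lem:cluster-thin} and the FKG-Harris inequality (Lemma \ref{lem:FKG}), and require that no cluster crosses the separating wedge region. The comparison principle and concatenation bounds for extremal distance (Lemmas \ref{lem:comparison-principle}, \ref{lem:concatenation}, \ref{lem:D'}) then guarantee that $\wt L^1_{r+1}$ on this constructed event differs from its unrestricted value by at most an additive constant, so that the weighting $e^{-\lambda \wt L^1_{r+1}}$ distorts the probability by only a multiplicative factor depending on $\lambda_0$. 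Meanwhile, the very-nice-at-the-end conditions in Definition \ref{def:nice_end} are precisely the geometric output of this wedge construction.

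For the induction step, I would decompose $\mathcal A(r, r+1)$ into a bounded number of sub-annuli and show that at each intermediate sub-scale one has constant probability (under the weighted measure) of improving the separation. The weighting is kept under control by observing that when quasi-niceness fails at a given sub-scale the corresponding factor in $\wt L^1_{r+1}$ is essentially the same on both sides of \eqref{eq:sep-cond} and therefore cancels. The main obstacle, and the chief novelty compared to \cite{MR1901950}, is that loop-soup clusters are intertwined with the Brownian trajectories in the definition of $\wt O^1_{r+1}$: a small displacement of a Brownian path may cause it to hit or avoid a cluster of macroscopic diameter, producing a drastic change in $\wt L^1_{r+1}$ rather than the $O(1)$ change one has in the classical setting. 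Overcoming this requires first conditioning on the macroscopic clusters of $\Lambda_{r, r+1}$ (above a fixed diameter threshold), using the thinness Lemma \ref{lem:thin} to ensure only finitely many such clusters are relevant in expectation, and then carrying out the wedge construction in the remaining space while avoiding them; the extremal-distance perturbation estimate of Lemma \ref{lem:D'} then absorbs the resulting geometric modification into an additive constant independent of $r$ and of the realization on $\mathcal F_r$.
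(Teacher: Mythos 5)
You have correctly identified the key obstacle — that adding a loop soup makes $\wt L^1_{r+1}$ unstable under small displacements of the Brownian excursions, because a tiny move of a path can cause it to grab or avoid a cluster of macroscopic diameter — but your proposed fix does not actually overcome it, and the argument as sketched would stall precisely where the paper's extra machinery comes in.

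Your base case proposes to ``force all intersecting clusters of $\Lambda_{r',r+1}$ to have small diameter'' via Lemma~\ref{lem:cluster-thin} and FKG. The paper explicitly rules this out in Section~\ref{subsec:proof_strategy}: if the current configuration is $\delta$-nice, then requiring all loops of $\Lambda_{r+1}$ hitting $\Cc_{r'}$ near the excursion endpoints to be smaller than order $\delta e^r$ costs probability of order $\exp(-\alpha/\delta)$, which is not polynomial in $\delta$ and destroys the iteration. Your alternative — reveal the macroscopic clusters first (using thinness) and thread the wedges around them — doesn't repair this, for two reasons. First, thinness gives you finitely many macroscopic clusters \emph{in expectation}, but their locations are arbitrary, and there is no uniform lower bound for the conditional probability of a wedge construction avoiding them when one happens to sit directly over a landing zone; you would need a quantitative statement that the macroscopic clusters are ``well-placed'' with probability polynomial in $\delta$, which is not provided. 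Second, even after removing the macroscopic clusters, the remaining small clusters still cause the same instability of $\wt L^1_{r+1}$ on the fine scale, so Lemma~\ref{lem:D'} cannot absorb the perturbation into an $O(1)$ additive constant uniformly over the conditioning.

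The actual resolution in the paper is a genuinely different idea that your outline does not contain: explore the Brownian excursions and the loop soup at \emph{two different speeds}. The $\delta$-nice event $\Upsilon(r+s,r+t,\delta)$ is measurable with respect to excursions up to $\Cc_{r+s}$ and loops up to $\Cc_{r+t}$ with $s<t$, and the proof works with auxiliary extremal distances $L^*(r+s,r+t,\delta)$ (see \eqref{eq:L*}) that already incorporate the loop soup at the further scale $r+t$. Because the loop-soup information is ``read ahead'' before the excursions are extended, the smallness event $\Lc(r+s,r+t,\delta)$ only needs to control clusters in an \emph{expanding sequence of annuli} around the two excursion endpoints (Definition~\ref{def:loop_delta_small}), and that event has probability $e^{-O(t-s+\Delta)}$, i.e., polynomial in $\delta$. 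The inequalities $L^*(r+s,r+t,\delta/e)\le \wt L^1_{r+t}+C(t-s+\Delta)$ and $\wt L^1_{r+t}\le L^*(r+s,r+t,\delta)$ of Lemma~\ref{lem:ed-comparable} then play the role that the trivial monotonicity $L_{r}\le L_{r+1}$ plays in the classical proof. Without this two-scale bookkeeping the iteration cannot close; this is the genuine gap in your proposal, and it is exactly the step the paper flags as the one it could not make work ``following this line.''
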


	We remark that the proof strategy in \cite{MR1901950} (which addressed the case when $c=0$) can be adapted here to prove Theorem~\ref{thm:up-to-constants-ex}, bypassing the use of the separation lemma. However, the separation lemma is very powerful with numerous applications (see the discussion at the end of Section \ref{subsec:intro_separation}).
	For this reason, we still decide to take the (slightly) longer route and give a proof of the separation lemma in this paper.
	
In the following, we will complete the proof of Theorem \ref{thm:up-to-constants-ex} assuming Lemma~\ref{lem:separation_lem_2}. We stress that Lemma~\ref{lem:separation_lem_2} is stronger than what is necessary for proving Theorem \ref{thm:up-to-constants-ex}. We defer the more technical proof of this lemma to Section~\ref{sec:separation-lemma}.

	\begin{proof}[Proof of the upper bound in Theorem \ref{thm:up-to-constants-ex}] We assume Lemma~\ref{lem:separation_lem_2}.
It suffices to show that for all $r,s\ge 1$ and $\lambda\in(0,\lambda_0]$,
		\begin{equation}\label{eq:super_multi}
		\wt b_{s+r+2}\gtrsim  \wt b_{s} \wt b_r.
		\end{equation}
Since $e^{-\lambda \wt L^1_r}\le e^{-\lambda \wt L_r}\le e^{-\lambda \wt L^1_r}+e^{-\lambda \wt L^2_r}$, we have 
		\begin{equation}
		\Eb\Big(e^{-\lambda \wt L^1_r}\Big)\le \Eb\Big(e^{-\lambda \wt L_r}\Big) \le 2\Eb\Big(e^{-\lambda \wt L^1_r}\Big).
		\end{equation}
Thus it suffices to show
\begin{equation}\label{eq:ub-1}
		\Eb\Big(e^{-\lambda \wt L^1_{s+r+2}}\Big) \gtrsim \left( \frac{s+r}{sr} \right)^2
		\Eb\Big(e^{-\lambda \wt L^1_s}\Big) \Eb\Big(e^{-\lambda \wt L^1_r}\Big).
		\end{equation}
Note that $\wt L^1_r$ and $\wt L^1_{s+2,s+r+2}$ have the same distribution.

On the event $G^+(s)\cap G^-(s+2,s+r+2)$, if we further control  the intermediate parts of the configuration from $\Cc_{s}$ to $\Cc_{s+2}$, then we can relate $ \wt L^1_{s+r+2}$ to  $\wt L^1_s + \wt L^1_{s+2,s+r+2}$ by Lemma~\ref{lem:concatenation}.
In practice, we will first prove
\begin{align}\label{eq:ub-3}
\Eb\Big(e^{-\lambda \wt L_{s+r+2}}\Big)
		\ge C(\lambda_0)\left( \frac{s+r}{sr} \right)^2
		\Eb\Big(e^{-\lambda (\wt L^1_{s} +\wt L^{1}_{s+2,s+r+2}) } \one_{G^+(s)\cap G^-(s+2,s+r+2)} \Big).
\end{align}
Then the separation lemma (Lemma~\ref{lem:separation_lem_2}) comes into play. 
By taking expectations on both sides of \eqref{eq:sep-cond}, we get that for all $s\ge 0, r\ge s+1$, and $\lambda\in (0,\lambda_0]$, 
		\begin{align}\label{eq:sep+}
		\Eb\Big(e^{-\lambda \wt L^1_{s,r}} \one_{G^+(s,r)} \Big)\gtrsim 
		\Eb\Big(e^{-\lambda \wt L^1_{s,r}} \Big).
		\end{align}
By symmetry, we also have
		\begin{align}\label{eq:sep-}
		\Eb\Big(e^{-\lambda \wt L^{1}_{s,r}} \one_{G^-(s,r)} \Big)\gtrsim 
		\Eb\Big(e^{-\lambda \wt L^{1}_{s,r}} \Big).
		\end{align}
Similar to the argument of \eqref{eq:ed-compare}, (and again applying Lemma \ref{lem:last-exit-uniform}), the combination of \eqref{eq:sep+} and \eqref{eq:sep-} gives that
\begin{equation}\label{eq:sep-combination}
	\Eb\Big(e^{-\lambda (\wt L^1_{s} +\wt L^{1}_{s+2,s+r+2}) } \one_{G^+(s)\cap G^-(s+2,s+r+2)} \Big)\gtrsim 
	\Eb\Big(e^{-\lambda \wt L^1_{s} } \Big)
	\Eb\Big(e^{-\lambda\wt L^{1}_{s+2,s+r+2}}\Big).
\end{equation}
Plugging \eqref{eq:sep-combination} into \eqref{eq:ub-3}, we can deduce \eqref{eq:ub-1}, which will complete the proof of the theorem.

\begin{figure}[t]
		\centering
		\includegraphics[scale=0.7]{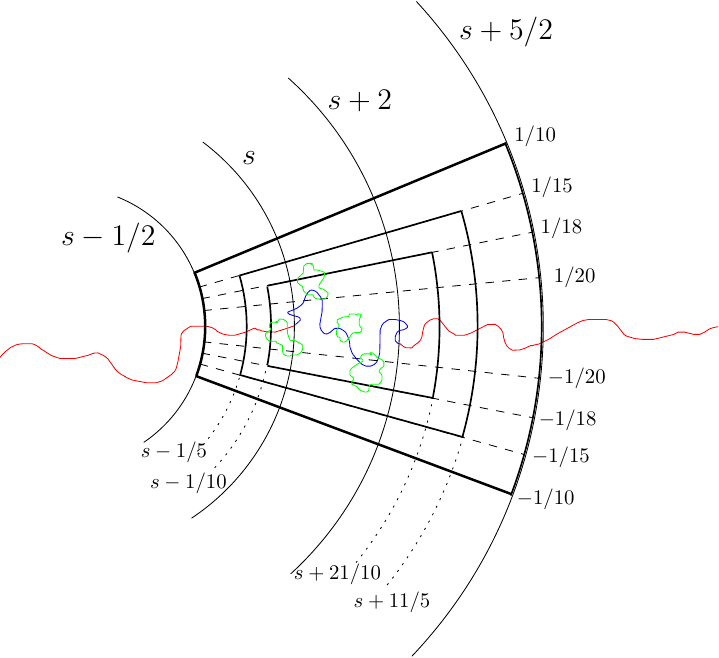}
		\caption{Illustration for wedges defined above Lemma \ref{lem:concatenation} and a typical sample from the event $J$ defined in \eqref{eq:H-def}. For convenience, we only sketch the configuration on the right hand side, corresponding to the excursion $Y^1_{s+r+2}$. 
			First, $Y^1_{s+r+2}$ is decomposed into three parts according to its first hitting on $\Cc_s$ and its last hitting on $\Cc_{s+2}$, and they are in red, blue and red, respectively. 
			The three wedges are $W_1$, $W_1'$ and $W_1^s$ in decreasing order.
			The clusters encountered by $Y^1_{s+r+2}$ and intersect with $\Ac(s,s+2)$ are sketched in green.
			The very nice event $G^+(s)\cap G^-(s+2,s+r+2)$ makes the red curves in $W_1$ stay in the cone with absolute angle $1/20$. 
			The event $U$ requires the blue curve to stay in the smallest wedge $W_1^s$. 
			The event $E$ makes sure the green clusters are sufficiently small.}
		\label{fig:very-nice}
	\end{figure}

We now focus on proving \eqref{eq:ub-3}.
In order to apply Lemma~\ref{lem:concatenation}, we need to condition the intermediate parts of the configuration from $\Cc_{s}$ to $\Cc_{s+2}$ to be good enough. 
More concretely,  let $U$ be the event that for $i=1$ and $2$,
		$B^i[\tau^i_{s},\sigma^i_{s+2,s+r+2}]$ respectively remains in the wedge $W_i^s:=\{ z: |\arg(z)+(1-i)\pi| \le 1/18\}\cap\Ac(s-1/10,s+21/10)$, the smallest wedge in Figure~\ref{fig:very-nice}. By Lemma~\ref{lem:intermediate-part}, there exists $C_1>0$ such that for all $z_1=e^{s+i\theta_1}$, $z_2=e^{s+2+i\theta_2}$, $z_3=e^{s+i\theta_3}$ and $z_4=e^{s+2+i\theta_4}$ with  $|\theta_j|\le 1/20$ for all $1\le j\le 4$,     
	\begin{equation}\label{eq:U}
		\begin{split}
			&\Pb\Big(U\mid B^1(\tau^1_{s})=z_1, B^1(\sigma^1_{s+2,s+r+2})=z_2, B^2(\tau^2_{s})=z_3, B^2(\sigma^2_{s+2,s+r+2})=z_4\Big)   \\
			\ge &C_1 \left(\frac{1}{s}\vee\frac{1}{r}\right)^2\ge \frac{C_1}{4} \frac{(s+r)^2}{s^2r^2}.
		\end{split}
	\end{equation}
Let $E$ be the event that all the clusters in $\Lambda$ that intersect $\Ac(s-1,s+3)$ are of diameter smaller than $e^s/1000$. By scaling invariance and Lemma~\ref{lem:cluster-thin}, 
		\begin{equation}\label{eq:E}
		\Pb(E)\ge C_2 \text{ for some } C_2>0. 
		\end{equation}
Denote
\begin{equation}\label{eq:H-def}
	J:=G^+(s)\cap G^-(s+2,s+r+2)\cap U\cap E
\end{equation} 
for brevity. See Figure~\ref{fig:very-nice} for an illustration.
On the event $J$, we can apply Lemma~\ref{lem:concatenation}. 
More concretely, let $W'_1,W'_2,W_1$ and $W_2$ be the four wedges defined in Lemma~\ref{lem:concatenation}. 
Let $D$, $D'$ and $D''$ denote the good domains $\wt O^1_{s+r+2}$, $\wt O^1_{s}$ and $\wt O^1_{s+2,s+r+2}$, respectively. 
Then, it is straightforward to check that on the event $J$ all the conditions in Lemma~\ref{lem:concatenation} under this setting are satisfied.
It follows from Lemma~\ref{lem:concatenation} that for some $C_3>0$,
            \begin{equation*}
            \wt L^1_{s+r+2}\le \wt L^1_{s}+\wt L^{1}_{s+2,s+r+2}+C_3 \text{ on the event } J.
            \end{equation*}
	Therefore,
		\begin{align*}
		\Eb\Big(e^{-\lambda \wt L_{s+r+2}}\Big)&\ge \Eb\Big(e^{-\lambda (\wt L^1_{s} +\wt L^{1}_{s+2,s+r+2}+C_3)} 
		\one_{J} \Big) \\
		&\ge \frac{C_1}{4} \frac{(s+r)^2}{s^2r^2} e^{-\lambda C_3}
		\Eb\Big(e^{-\lambda (\wt L^1_{s} +\wt L^{1}_{s+2,s+r+2}) } \one_{G^+(s)\cap G^-(s+2,s+r+2)\cap E} \Big)
		\end{align*}
		where the second inequality can be obtained by first conditioning the four endpoints $B^i(\tau^i_{s})$, $B^i(\sigma^i_{s+2,s+r+2})$, $i=1,2$, then using the uniform estimate \eqref{eq:U}.
By  \eqref{eq:E} and FKG inequality (Lemma~\ref{lem:FKG}), we further have
\begin{align*}
\Eb\Big(e^{-\lambda \wt L_{s+r+2}}\Big)&\ge \frac{C_1C_2}{4}  e^{-\lambda C_3} \frac{(s+r)^2}{s^2r^2}
		\Eb\Big(e^{-\lambda (\wt L^1_{s} +\wt L^{1}_{s+2,s+r+2}) } \one_{G^+(s)\cap G^-(s+2,s+r+2)} \Big).
\end{align*}
Choosing $C(\lambda_0)=\frac{C_1C_2}{4} e^{-\lambda_0 C_3}$, we obtain \eqref{eq:ub-3} which concludes the proof of the theorem.
			\end{proof}
		
		\subsection{Up-to-constants estimates for $p(c,2,r,\lambda)$}\label{subsec:k=2}
	In this subsection, we prove Theorem~\ref{thm:xi-k} for $k=2$, which is formulated below as a corollary of Theorem~\ref{thm:up-to-constants-ex}. 
	\begin{corollary}\label{cor:xi-2}
		Let $c\in (0,1]$ and $\lambda_0>0$. For all $\lambda\in(0,\lambda_0]$
		\begin{equation}
		p(c,2,r,\lambda)\asymp e^{-r\wh\xi_c(2,\lambda)},
		\end{equation}
		where the implied constants only depend on $c,\lambda_0$. This implies that $\wh\xi_c(2,\lambda)=\xi_c(2,\lambda)$.
	\end{corollary}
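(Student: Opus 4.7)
Given Theorem~\ref{thm:up-to-constants-ex}, which shows $\wt b_r\asymp e^{-r\wh\xi_c(2,\lambda)}$, it suffices to prove the two-sided comparison $p(c,2,r,\lambda)\asymp \wt b_r$; the identification $\wh\xi_c(2,\lambda)=\xi_c(2,\lambda)$ will then follow by comparing exponential decay rates against the defining relation~\eqref{eq:g-intersection}. My plan is to relate the conditional avoidance probability $Z_r$ to the excursion extremal distance $\wt L_r$ via a Beurling-type estimate, and to account for the normalizing factor $r^{-2}$ in $\wt b_r=r^{-2}\Eb(e^{-\lambda\wt L_r})$ by isolating the event $\mathcal E$ that neither initial segment $B^i[0,\sigma^i_{0,r}]$ (for $i=1,2$) disconnects $\Cc_0$ from infinity---an event of probability $\asymp r^{-2}$ by Lemmas~\ref{lem:BM-prop-1}, \ref{lem:D1r} and~\ref{lem:localization}.

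For the upper bound, I first note that $\{Z_r>0\}\subseteq \mathcal E$, since a disconnecting initial segment would force $\Theta_r$ to disconnect $\Cc_0$ from $\Cc_r$. Conformally mapping the connected components of $\Bc_r\setminus\Theta_r$ that join $\Cc_0$ to $\Cc_r$ onto rectangles, the standard Beurling estimate yields $Z_r\le C\,e^{-\wt L_r}\one_{\mathcal E}$; here I use that $\Theta_r\supseteq \wt Y^1_r\cup\wt Y^2_r$ together with the comparison principle (Lemma~\ref{lem:comparison-principle}) to bound $L(D)\ge \wt L_r$ for every such component $D$. Conditioning on the quadruple $(B^i(0),B^i(\sigma^i_{0,r}))_{i=1,2}$, the initial segments and the excursion-plus-loop configuration become conditionally independent; Lemma~\ref{lem:last-exit-uniform} further shows that the last-exit points have density bounded above and below with respect to the uniform measure on $\Cc_0$, which allows me to decouple and conclude $\Eb(Z_r^\lambda)\lesssim \Eb(e^{-\lambda\wt L_r}\one_{\mathcal E})\lesssim r^{-2}\Eb(e^{-\lambda\wt L_r})=\wt b_r$.

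For the lower bound, I need a matching Beurling-type lower bound $Z_r\gtrsim e^{-\wt L_r}$ on a suitable ``good'' event. Using the symmetric version of Lemma~\ref{lem:separation_lem_2} (namely~\eqref{eq:sep-} for the event $G^-(r)$ of being very nice at the beginning), a positive fraction of the weighted mass $\Eb(e^{-\lambda\wt L_r})$ is carried by configurations where the arcs $\Ic^1_0,\Ic^2_0$ have order-one length and the excursions are well-separated near $\Cc_0$. I would then further restrict to the sub-event on which both initial segments $B^i[0,\sigma^i_{0,r}]$ are localized inside small balls around their endpoints (conditional probability $\asymp r^{-2}$ by Lemma~\ref{lem:localization}). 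On this combined good event, the good component $D$ of $\Bc_r\setminus\Theta_r$ differs from $\wt O^j_r$ only by localized excisions, so Lemma~\ref{lem:D'} (together with Lemma~\ref{lem:segment} to adjust the source boundary to the relevant sub-arc of $\Cc_0$ if needed) gives $L(D)\le \wt L_r+O(1)$, and the Beurling lower bound then produces $Z_r\gtrsim e^{-\wt L_r}$. Assembling the pieces yields $\Eb(Z_r^\lambda)\gtrsim r^{-2}\Eb(e^{-\lambda\wt L_r}\one_{G^-(r)})\gtrsim r^{-2}\Eb(e^{-\lambda\wt L_r})=\wt b_r$. The hardest part will be establishing the Beurling comparison uniformly in the presence of the extra obstacles contributed by $\Theta_r\setminus(\wt Y^1_r\cup\wt Y^2_r)$---in particular the loops in $\Lambda_r\setminus\Lambda_0$ that cross $\Cc_0$ and the (potentially wiggly) initial Brownian segments; it is here that the separation encoded in $G^-(r)$ is indispensable for converting approximate independence into genuine up-to-constants bounds.
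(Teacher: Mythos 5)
Your proposal is essentially correct and closely tracks the paper's own proof: the upper bound isolates the disconnection events $D_1\cap D_2$, compares the excursion avoidance probability to $r e^{-\wt L_r}$, and decouples via Lemma~\ref{lem:last-exit-uniform}; the lower bound works on the very-nice event $G^-(r)$ together with localization of the initial segments and invokes the separation lemma. The one genuine, if small, gap is in the lower bound: you correctly flag the loops in $\Lambda_r\setminus\Lambda_0$ crossing $\Cc_0$ as an obstacle, but then attribute its resolution solely to $G^-(r)$. That cannot work, because $G^-(r)$ only constrains the clusters hit by the excursions $Y^1_r$, $Y^2_r$; it says nothing about clusters that hit the localized initial segments $B^i[0,\sigma^i_{0,r}]$, and such a cluster could be large and intersect $\Theta_r$ (or cause a non-localized excision from $\wt O^j_r$), invalidating the step ``$D$ differs from $\wt O^j_r$ only by localized excisions.'' The paper closes this with an extra decreasing event $E$ (all clusters of $\Lambda_r\setminus\Lambda_0$ meeting the unit disk have small diameter), whose probability is bounded below by Lemma~\ref{lem:cluster-thin}; since $e^{-\lambda\wt L_r}$, $\one_{G^-(r)}$ and $\one_E$ are all decreasing in the loop soup, FKG (Lemma~\ref{lem:FKG}) lets you insert $\one_E$ at only a constant cost and then apply Lemma~\ref{lem:D'}. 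With $E$ (and the companion $r^{-1}$-probability localization of $B^0[0,\sigma^0_r]$, cancelling the $r$ in the excursion-mass comparison, which you leave implicit) your argument goes through exactly as in the paper.
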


	\begin{proof}
		Recall \eqref{eq:Zr} and \eqref{eq:Z-r} for the definitions of $Z_r$ and $p(c,2,r,\lambda)$.
It suffices to show that 
\begin{equation}\label{eq:ezrlambda}
\Eb(Z_r^{\lambda})\asymp r^{-2}\Eb\Big(e^{-\lambda \wt L_r} \Big).
\end{equation}
Then the corollary follows from Theorem~\ref{thm:up-to-constants-ex} directly.
		We write $Y^0_r=B^0[\sigma^0_{r},\tau^0_r]$. 
		Recall $\Theta_r$ is the union of $\overline B^1_r\cup \overline B^2_r$ together with the clusters of loops in $\Lambda_r\setminus\Lambda_0$ that it intersects. For $i=0,1,2$, let $D_i$ be the event that $B^i[0,\sigma^i_{r}]$ does not disconnect $\Cc_0$ from infinity. Then,
		\begin{equation}\label{eq:zr}
		Z_r=\Pb\Big(\ol B_r^0\cap\Theta_r=\emptyset\mid \Pi_r\Big)\le 
		\Pb\Big(Y^0_r\cap(\wt Y^1_{r}\cup \wt Y^2_{r})=\emptyset \mid \wt Y^1_{r},\wt Y^2_{r}\Big)
		\Pb(D_0) \one_{D_1\cap D_2}.
		\end{equation}
We know that the total mass of excursions across the annulus $\Ac(0,r)$ is $2\pi r^{-1}$, and up to multiplicative constants, $e^{-\wt L_r}$ measures the total mass of the excursion measure in $\wt O^1_r$ and $\wt O^2_r$, see \cite[Example 5.10]{MR2129588}. Thus,
		\begin{equation}\label{eq:ed-asym}
		\Pb\Big(Y^0_r\cap(\wt Y^1_{r}\cup \wt Y^2_{r})=\emptyset \mid \wt Y^1_{r},\wt Y^2_{r}\Big)\asymp r e^{-\wt L_r}. 
		\end{equation}
		By Lemma~\ref{lem:BM-prop-1}, we know that $\Pb(D_0)\lesssim r^{-1}$. We also know that $D_1\cap D_2$ is independent of $\wt L_r$ conditioning on $Y^1_r(0)$ and $Y^2_r(0)$, and $\Pb(D_1\cap D_2\mid Y^1_r(0), Y^2_r(0))\lesssim r^{-2}$ uniformly with respect to the locations of $Y^1_r(0)$ and $Y^2_r(0)$. Plugging all into \eqref{eq:zr}, we deduce $\Eb(Z_r^{\lambda})\lesssim r^{-2}\Eb(e^{-\lambda \wt L_r} )$.
		
		For the lower bound of \eqref{eq:ezrlambda}, consider the following events:
		\begin{itemize}
			\item Let $\delta=1/20$ and $V=\{ z: |\arg(z)-\pi/2|\le \delta \}\cap\Cc_0$. Let $U_0$ be the event that $B^0[0,\sigma^0_{r}]$ remains in the $\delta$-neighborhood of $V$. 
			\item Let $U_1$ be the event that $B^i[0,\sigma^i_{0,r}]$ remains in the wedge $\{ z: |\arg(z)+(1-i)\pi|\le 1/15\}\cap \Ac(-1/4,1/4)$ for both $i=1$ and $2$. Then $\Pb(U_1)\gtrsim r^{-2}$ (similar to \eqref{eq:ip-1}).
			\item Let $E$ be the event that all the clusters in $\Lambda_r\setminus\Lambda_0$ that intersect the unit disk have diameter smaller than $1/1000$. This event has probability bounded from below by some universal constant by Lemma~\ref{lem:cluster-thin}.
		\end{itemize}
On the event $U_0\cap U_1\cap E\cap G^-(r)$, $B^0[0,\sigma^0_{r}]$ does not intersect $\Theta_r$. Therefore,
		\begin{equation}\label{eq:Z_r-1}
		Z_r\ge
		\Pb\Big( \{Y^0_r\cap\Theta_r=\emptyset\}\cap U_0 \mid \Pi_r\Big)
		 \one_{U_1\cap E\cap G^-(r)}.
		\end{equation}
By an argument similar to the proof of \eqref{eq:ip-1}, there exists a constant $C$ such that for all $z\in V$, 
\begin{equation}\label{eq:uu}
	\Pb(U_0\mid B^0(\sigma^0_r)=z)\ge C r^{-1}.
\end{equation}
By using the uniform estimate \eqref{eq:uu}, we have 
\begin{equation}\label{eq:uu-1}
	\Pb\Big( \{Y^0_r\cap\Theta_r=\emptyset\}\cap U_0 \mid \Pi_r\Big)
	\gtrsim r^{-1} \Pb\Big( \{Y^0_r\cap\Theta_r=\emptyset\}\cap \{ B^0(\sigma^0_r)\in V \} \mid \Pi_r\Big).
\end{equation}
Note that the probability on the right hand side of \eqref{eq:uu-1} can be written as the proportion of excursion measure from $V$ to $\Cc_r$ in $\Ac(0,r)\setminus\Theta_r$ in the excursion measure across $\Ac(0,r)$ just as \eqref{eq:ed-asym}. Since the excursion measure can be written as the exponential of the corresponding extremal distance, by Lemma~\ref{lem:segment}, on the event $U_1\cap E\cap G^-(r)$,
\begin{equation}\label{eq:seg-V}
	\Pb\Big( \{Y^0_r\cap\Theta_r=\emptyset\}\cap \{ B^0(\sigma^0_r)\in V \} \mid \Pi_r\Big) \asymp \Pb\Big( Y^0_r\cap\Theta_r=\emptyset \mid \Pi_r\Big).
\end{equation} 
Moreover, applying Lemma~\ref{lem:D'}, on the event $U_1\cap E\cap G^-(r)$,
		\begin{equation}\label{eq:Z_r-2}
		\Pb( Y^0_r\cap\Theta_r=\emptyset \mid \Pi_r) 
		\gtrsim \Pb(Y^0_r\cap(\wt Y^1_{r}\cup \wt Y^2_{r})=\emptyset\mid \wt Y^1_{r},\wt Y^2_{r}).
		\end{equation}
		 Combining \eqref{eq:ed-asym}, \eqref{eq:Z_r-1}, \eqref{eq:uu-1}, \eqref{eq:seg-V} and \eqref{eq:Z_r-2}, we have
		\begin{equation}\label{eq:z_r_lambda}
		Z_r^{\lambda}\gtrsim e^{-\lambda \wt L_r} \one_{U_1\cap E\cap G^-(r)}.
		\end{equation}
Note that $U_1$ is independent of the other quantities on the right hand side of \eqref{eq:z_r_lambda}, and that $e^{-\lambda \wt L_r}$, $\one_{E}$, $\one_{G^-(r)}$ are all decreasing with respect to the loop soup. By the FKG inequality (Lemma~\ref{lem:FKG}) and the separation lemma (Lemma~\ref{lem:separation_lem_2}), taking expectations at both sides of  \eqref{eq:z_r_lambda}, we get
		\begin{equation}
		\Eb(Z_r^{\lambda})\gtrsim r^{-2} \,\Eb(e^{-\lambda \wt L_r} \one_{G^-(r)})\gtrsim r^{-2}\,\Eb(e^{-\lambda \wt L_r}).
		\end{equation}
Thus we obtain  \eqref{eq:ezrlambda} and complete the proof of the corollary.
	\end{proof}

	\subsection{Up-to-constants estimates for $p(c,k,r,\lambda)$}\label{subsec:other-exponents}
	The proof of Theorem~\ref{thm:xi-k} for general $k\in \Nb$ is completely analogous to the $k=2$ case. For the sake of brevity, we will summarize the results we need in Theorem~\ref{thm:up-to-constants-ex-k-sep} for the general case without providing the proofs. At the end of this subsection, we will show that $\xi_c(k,0)=\xi_c(k)$ in Proposition~\ref{prop:same-exponents}, where $\xi_c(k)$ is defined in the paragraph just below Theorem~\ref{thm:dis_def} and its value is given by \eqref{eq:exponent}.
	
Recall the definition of $Y^i_{s,t}$ at the beginning of Section~\ref{subsec:k=2-ee}.
	For $k\ge 2$, let $\{ Y^i_{s,t} \}_{1\le i\le k}$ be $k$ independent Brownian excursions across the annulus $\Ac(s,t)$. Write $\wt Y^i_{s,t}$ for the union of $Y^i_{s,t}$ with all the clusters in $\Lambda_{s,t}$ that intersect $Y^i_{s,t}$. Then, the set $\Ac(s,t)\backslash\bigcup_{i=1}^k \wt Y^i_{s,t}$ contains $k_0 \leq k$ connected components joining $\Cc_0$ and $\Cc_r$. Let $\wt L_{s,t}(k)$ be minimal $\pi$-extremal distance between $\Cc_s$ and $\Cc_t$ in the $j$-th component over $1\le j\le k_0$, and set $\wt L_{s,t}(k)=\infty$ if the union of $\wt Y^i_{s,t}$ disconnects $\Cc_s$ from infinity.  When $s=0$, we drop the subscript $s$.
	
	Let us now define the $\alpha$-separated event (also abbreviated as $\alpha$-sep) which is a generalization of the very nice events $G^+(s,r)$ and $G^-(s,r)$ to $k\in \Nb$ excursions.  See Figure~\ref{fig:sep} for an illustration. 
	In the current section,  this event concerns the configuration $\{ Y^i_{s,t} \}_{1\le i\le k}$ inside an independent loop soup $\Lambda_{s,t}$. However, later in Section~\ref{sec:dim}, we will replace $\{ Y^i_{s,t} \}_{1\le i\le k}$ by $k$ Brownian excursions  across the annulus $\Ac(s,t)$ with slightly different laws. Therefore, we choose to define the $\alpha$-sep event for general curves $\{ \gamma^i\}_{1\le i\le k}$ instead of excursions $\{ Y^i_{s,t} \}_{1\le i\le k}$ with specific laws.

	\begin{figure}
		    \vspace{0.4cm}
		\centering
		\includegraphics[scale=0.15]{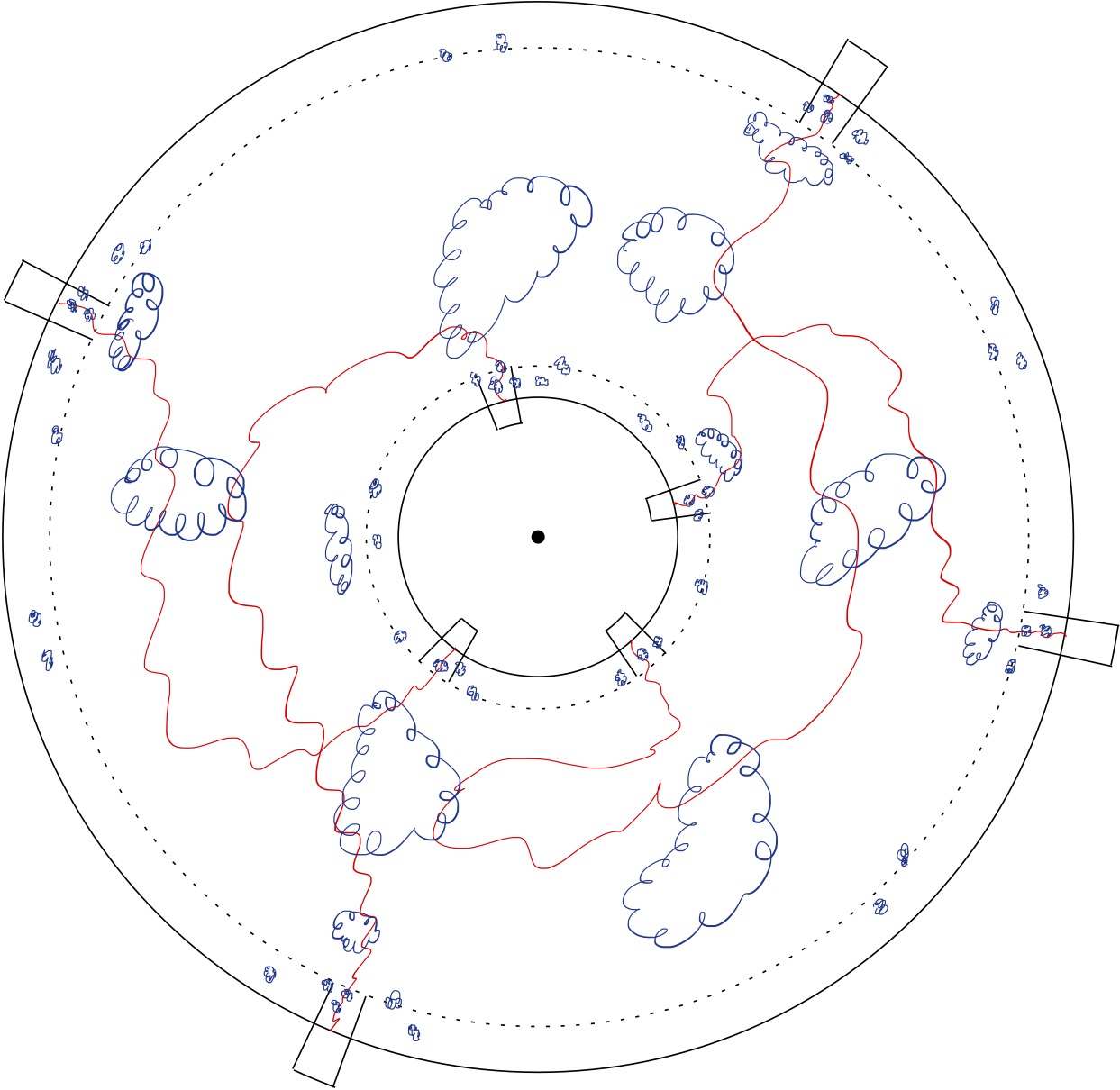}
		\caption{Sketch of the $\alpha\text{-sep}$ configurations for the case $k=4$. The annulus $\Ac(s,r)$ is bounded between solid circles. The clusters in $\Lambda_{s,t}$ are in blue. The curves $\{\gamma^i\}_{1\le i\le k}$ are in red. Their ending parts are contained in the corresponding well-separated wedges.}
		\label{fig:sep}
	\end{figure}
	
	\begin{definition}[$\alpha$-separated event]\label{def:sep}
		
		Suppose $r-s>1$ and $\alpha\in (0,1/2)$. For $1\le i\le k$, let $(\gamma^i(t): 0\le t \le t_i)$ be a curve  contained in the annulus $\Ac(s,r)$ with $\gamma^i(0) \in \Cc_s$ and $\gamma^i(t_i) \in \Cc_r$. Let $\wt\gamma^i$ be the union of $\gamma^i$ with all the clusters of $\Lambda_{s,t}$ that intersect $\gamma^i$. We define the ``landing zones'' of size $\alpha$ in the following bullet points.
		\begin{itemize}
			\item Let $\ell_i$ be the arc on $\Cc_s(x)$ of length $\alpha e^s$  centered at $\gamma^i(0)$, and associated with a wedge 
			\begin{equation*}
			w_i:=\{ u z: e^{-\alpha}\le u\le e^{\alpha}, z \in \ell_i \}.
			\end{equation*}
			\item Let $\ell'_i$ be the arc on $\Cc_r(x)$ of length $\alpha e^r$ centered at $\gamma^i(t_i)$, and associated with a wedge 
			\begin{align*}
w'_i:=\{ u z: e^{-\alpha}\le u\le e^{\alpha}, z \in \ell'_i \}.
\end{align*}
		\end{itemize}
		We say the configuration $\Cs=(\{\gamma^i\}_{1\le i\le k}, \Lambda_{s,t})$ is $\alpha\text{-sep}$ if the following conditions hold. 
		\begin{itemize}
			\item[(i)] The landing zones are well-separated:
			\begin{equation*}
			d(\ell_i,\ell_{i+1})\ge \sqrt{\alpha}e^s \text{ and }  d(\ell'_i,\ell'_{i+1})\ge \sqrt{\alpha}e^r \text{ for each } i, 			\end{equation*}
			where indices are cyclic, i.e., $\ell_{k+1}=\ell_1$ and $\ell'_{k+1}=\ell'_1$.
			\item[(ii)] For  each $1\le i\le k$, $\wt\gamma^i$ lands exactly in the respective landing zone:
			\begin{equation*}
			\wt \gamma^i\subseteq \Ac(s+\alpha,r-\alpha) \cup w_i\cup w'_i.
			\end{equation*}
			\item[(iii)] 			The clusters that intersects $\Ac(s,s+\alpha) $ have diameter smaller than $\alpha e^s/100$ and the clusters that intersects $\Ac(r-\alpha,r) $ have diameter smaller than $\alpha e^r/100$.
			\item[(iv)] The union of 
$\wt \gamma^i\cup w_i\cup w'_i $ over $1\le i \le k$
			does not disconnect $\Cc_s$ from infinity.
		\end{itemize}
	\end{definition}
	We now state the following theorem which contains  Theorem~\ref{thm:xi-k}, Corollary~\ref{cor:xi(k,0)} and a generalized version of the separation lemma (Lemma~\ref{lem:separation_lem_2}) to $k\in \Nb$. 
We omit the proof as it only differs from that of the case $k=2$ in small details.

	\begin{theorem}\label{thm:up-to-constants-ex-k-sep}
		Let $c\in (0,1], k\in \Nb$ and $\lambda_0>0$. Write $\Cs$ for the configuration $(\{ Y^i_r\}_{1\le i\le k}, \Lambda_{0,r})$. There exists $\alpha_0>0$ such that for all $\lambda\in (0,\lambda_0]$ and $\alpha\in(0,\alpha_0]$,
		\begin{equation}\label{eq:up-to-constants-ex-k-sep}
		p(c,k,r,\lambda)\asymp r^{-k}\Eb(e^{-\lambda\wt L_r(k)})
		\asymp r^{-k}\Eb(e^{-\lambda\wt L_r(k)} \one_{\{\Cs\text{ is }\alpha\text{-sep} \}}) 
		\asymp e^{-r\xi_c(k,\lambda)}, 
		\end{equation}
		and 
		\begin{equation}\label{eq:k-sep}
		p(c,k,r,0)\asymp r^{-k}\Pb(\{ \wt L_r(k)<\infty \})
		\asymp r^{-k}\Pb(\{ \wt L_r(k)<\infty \}\cap\{ \Cs \text{ is }\alpha\text{-sep}\}) \asymp e^{-r\xi_c(k,0)},
		\end{equation}
		where the implied constants only depend on $c,k,\lambda_0$ and $\alpha_0$.
	\end{theorem}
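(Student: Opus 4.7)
The plan is to follow the same two-step scheme used for $k=2$: first prove $\wt b_r(k):=r^{-k}\Eb(e^{-\lambda \wt L_r(k)})\asymp e^{-r\xi_c(k,\lambda)}$ via sub- and super-multiplicativity, then compare $p(c,k,r,\lambda)$ to $\wt b_r(k)$. Throughout, one has to replace the domain $\wt O^1_r$ (the only ``good domain'' for $k=2$) by the collection of up to $k$ connected components of $\Ac(s,r)\setminus\bigcup_{i}\wt Y^i_{s,r}$ that still join $\Cc_s$ to $\Cc_r$, and $\wt L_r(k)$ is the minimum over these components of (the $\pi$-extremal distance between the two boundary arcs). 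All the extremal-distance lemmas from Section~\ref{subsec:ed} apply componentwise. The first main task is to extend the separation lemma (Lemma~\ref{lem:separation_lem_2}) to $k$ excursions: conditioned on the initial configuration at scale $r$, a positive fraction of the weighted mass $e^{-\lambda \wt L^1_{r+1}(k)}$ comes from extensions on which $\Cs$ is $\alpha$-separated in $\Ac(r,r+1)$, for some small universal $\alpha=\alpha_0(c,k,\lambda_0)>0$. The proof in Section~\ref{sec:separation-lemma} is expected to carry over verbatim, the only combinatorial complication being the need to enforce $\alpha$-separation for all $k$ pairs of consecutive endpoints rather than a single pair; this is where the bound $\alpha\le\alpha_0(k)$ enters.

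For sub-multiplicativity, one splits each excursion $Y^i_{s+r+1}$ into its initial part across $\Ac(0,s)$ and its final part across $\Ac(s+1,s+r+1)$, connected by an intermediate bridge across $\Ac(s,s+1)$. The event $H$ that none of the $k$ intermediate bridges disconnects $\Cc_s$ from infinity has probability $\lesssim r^{-k}$ by $k$ applications of Lemma~\ref{lem:D1r} conditional on the endpoints (whose joint law has bounded Radon--Nikodym derivative against independent uniforms by Lemma~\ref{lem:last-exit-uniform}). Combined with $\wt L_{s+r+1}(k)\ge \wt L_s(k)+\wt L_{s+1,s+r+1}(k)$ (componentwise composition law, Lemma~\ref{lem:composition-laws}) and conditional independence of the two pieces and the loop-soup restriction, one gets $\wt b_{s+r+1}(k)\lesssim \wt b_s(k)\wt b_r(k)$, hence existence of $\xi_c(k,\lambda)$ and the lower bound $\wt b_r(k)\gtrsim e^{-r\xi_c(k,\lambda)}$.

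For super-multiplicativity, the $k$-excursion separation lemma gives, in analogy with \eqref{eq:sep+}--\eqref{eq:sep-combination},
\[
\Eb\Bigl(e^{-\lambda(\wt L^1_s(k)+\wt L^1_{s+2,s+r+2}(k))}\one_{G^+_k(s)\cap G^-_k(s+2,s+r+2)}\Bigr)\gtrsim \Eb(e^{-\lambda \wt L^1_s(k)})\Eb(e^{-\lambda \wt L^1_r(k)}),
\]
where $G^\pm_k$ are the analogues of the very nice events for $k$ curves (built from pairwise disjoint wedges around each endpoint). Conditioning on the two collections of endpoints, using Lemma~\ref{lem:intermediate-part} to force the $k$ intermediate bridges across $\Ac(s,s+2)$ to remain in disjoint thin wedges (cost $\gtrsim ((s+r)/sr)^2\cdot r^{-2(k-2)}$, or more carefully $\gtrsim r^{-k}$ after accounting for the uniform starting points), and the thinness bound of Lemma~\ref{lem:cluster-thin} to keep the clusters in $\Ac(s-1,s+3)$ small, one applies Lemma~\ref{lem:concatenation} componentwise to conclude $\wt L_{s+r+2}(k)\le \wt L^1_s(k)+\wt L^1_{s+2,s+r+2}(k)+C$. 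This yields $\wt b_{s+r+2}(k)\gtrsim \wt b_s(k)\wt b_r(k)$ and hence the upper bound. At the same time, the same separation lemma immediately gives the middle $\asymp$ in \eqref{eq:up-to-constants-ex-k-sep} and \eqref{eq:k-sep}, since the $\alpha$-separated configurations already carry a positive fraction of the weighted mass.

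Finally, to compare $p(c,k,r,\lambda)$ with $\wt b_r(k)$ one repeats the argument of Corollary~\ref{cor:xi-2} with the test excursion $Y^0_r$ replacing the role of ``an additional Brownian excursion''. The upper bound uses
\[
Z_r\le \Pb\bigl(Y^0_r\cap \bigcup_{i=1}^k \wt Y^i_r=\emptyset\,\big|\,\wt Y^1_r,\dots,\wt Y^k_r\bigr)\cdot\Pb(D_0)\cdot \one_{D_1\cap\cdots\cap D_k},
\]
together with $\Pb(D_0)\lesssim r^{-1}$, the excursion-measure identity giving the conditional probability $\asymp r\,e^{-\wt L_r(k)}$, and $\Pb(D_1\cap\cdots\cap D_k\mid \text{starting points})\lesssim r^{-k}$ (product of $k$ copies of Lemma~\ref{lem:BM-prop-1}). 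The lower bound requires forcing $Y^0_r$ to start in a small arc $V$ well-separated from the landing zones of the other excursions on the $\alpha$-separated good event, using Lemmas~\ref{lem:intermediate-part}, \ref{lem:cluster-thin}, \ref{lem:D'}, \ref{lem:segment}, and the $k$-version of Lemma~\ref{lem:separation_lem_2} (here the FKG inequality absorbs the separation event). Taking $\lambda\searrow 0$ and using $p(c,k,r,0)=\Pb(\wt L_r(k)<\infty)$ (together with the same argument) yields \eqref{eq:k-sep}. The main obstacle is the $k$-excursion separation lemma itself: enforcing pairwise separation of $k$ endpoints simultaneously (rather than of two) is the step where the constant $\alpha_0$ must be taken small depending on $k$, and where the componentwise bookkeeping of the good-domain lemmas is most delicate.
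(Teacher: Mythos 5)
Your sketch follows exactly the route the paper itself points to: the paper explicitly states that ``the proof of Theorem~\ref{thm:xi-k} for general $k\in\Nb$ is completely analogous to the $k=2$ case'' and omits the argument, so the content of this theorem really is a componentwise transcription of Theorem~\ref{thm:up-to-constants-ex}, Lemma~\ref{lem:separation_lem_2}, and Corollary~\ref{cor:xi-2}, which is what you do. Your three building blocks (sub-multiplicativity via $k$ intermediate bridges and Lemma~\ref{lem:D1r}, super-multiplicativity via a $k$-excursion separation lemma plus Lemma~\ref{lem:concatenation}, and the comparison $p(c,k,r,\lambda)\asymp r^{-k}\Eb(e^{-\lambda\wt L_r(k)})$ mirroring Corollary~\ref{cor:xi-2}) are the right ones, and the observation that the middle $\asymp$ is a direct by-product of the separation lemma applied at both ends is correct.

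One small correction: the cost of steering the $k$ intermediate bridges in $\Ac(s,s+2)$ through disjoint wedges is $\gtrsim\bigl(\tfrac{1}{s}\vee\tfrac{1}{r}\bigr)^k\asymp\bigl(\tfrac{s+r}{sr}\bigr)^k$ (one factor of Lemma~\ref{lem:intermediate-part} per bridge), not $\bigl(\tfrac{s+r}{sr}\bigr)^2 r^{-2(k-2)}$; your later correction to ``$\gtrsim r^{-k}$'' is only the special case $s\asymp r$, while the full $\bigl(\tfrac{s+r}{sr}\bigr)^k$ is what makes the normalization $r^{-k}$ in $\wt b_r(k)$ multiply out correctly in the super-multiplicativity inequality $\wt b_{s+r+2}(k)\gtrsim\wt b_s(k)\wt b_r(k)$. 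This is an arithmetic slip, not a conceptual gap. Everything else matches the paper's intended argument.
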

	
	Finally, we show that $\xi_c(k,0)=\xi_c(k)$, which, combined with Corollary \ref{cor:xi(k,0)}, finishes the proof Theorem \ref{thm:same_exponent}.
	
		\begin{proposition}\label{prop:same-exponents}
			Let $W^1,\cdots,W^k$ be $k$ independent Brownian excursions from $0$ to $1$ in $\Ub$. 
			Let $K$ be the filled union of $W^1 \cup \cdots \cup W^k$ together with all the clusters of $\Lambda_0$ which intersect $W^1 \cup \cdots \cup W^k$. Let $q(c,k, r)$ be the probability that the conformal radius of $K$ w.r.t.\ $0$ in $\Ub$ is at most $e^{-r}$. Then
			\begin{equation}\label{eq:q}
			q(c,k,r) \asymp e^{-r\xi_c(k,0)},
			\end{equation}
			where the implied constants only depend on $c,k$. 
			As a result,
			\begin{equation}\label{eq:same-xi}
			\xi_c(k,0)=\xi_c(k).
			\end{equation}
		\end{proposition}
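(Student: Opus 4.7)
The plan is to combine Theorem \ref{thm:dis_def} with Corollary \ref{cor:xi(k,0)}. By Lemma \ref{lem:obs} (applied with $\alpha=0$, $\beta=k$), together with the observation from the paragraph following Lemma \ref{lem:obs} that the filled union of $k$ independent Brownian excursions from $0$ to $1$ in $\Ub$ has law $\Pf^{0,k}_{8/3}$, the random set $K$ in the proposition is distributed as $\Pf^{0,k}_{\kappa(c)}$. Applying Theorem \ref{thm:dis_def} with $R = e^r$ then yields
\[
q(c,k,r) = p^{e^r}_{\kappa(c)}(0,k) = e^{-r \eta_{\kappa(c)}(0,k)}\bigl(C_{\kappa(c)}(0,k) + o(1)\bigr) \asymp e^{-r\xi_c(k)},
\]
where I used $\eta_\kappa(0,\beta) = \eta_\kappa(\beta)$ from the paragraph below Theorem \ref{thm:dis_def}. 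Since Corollary \ref{cor:xi(k,0)} gives $p(c,k,r,0) \asymp e^{-r\xi_c(k,0)}$, the identity \eqref{eq:same-xi} will follow once we establish the up-to-constants comparison $q(c,k,r) \asymp p(c,k,r,0)$, which simultaneously yields \eqref{eq:q}.

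To compare $q$ and $p$, I would rescale the $q$ setup by $e^r$ so that everything lives in $\Bc_r$: the excursions now run from $0$ to the fixed point $e^r \in \Cc_r$, and the event becomes $\mathrm{crad}(K_0, 0) \le 1$. By Koebe's $1/4$ theorem, this is equivalent, up to a universal multiplicative constant in the radius, to $\partial K_0 \cap \Bc_0 \ne \emptyset$. Applying a last-exit decomposition at scale $\Cc_0$ to each scaled excursion (in the spirit of the proof of Theorem \ref{thm:up-to-constants-ex-k-sep}), I would split each excursion into an inside piece (from $0$ to its last exit $X^i$ from $\Cc_0$) and an outside piece (from $X^i$ to $e^r$, not returning to $\Cc_0$), and argue that the crad event decomposes, up to bounded multiplicative constants, as an inside event (a topological encirclement of $0$ by the inside pieces plus clusters in $\Bc_0$ at scale $O(1)$) combined with an outside non-disconnection event (the outside pieces plus their attached clusters in $\Bc_r \setminus \Bc_0$ do not disconnect $\Cc_0$ from $\infty$).

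For the lower bound $q \gtrsim p$, I would realize the $q$ event by combining (i) an $\alpha$-separated outside event of non-disconnection of $\Cc_0$ from $\infty$, whose probability is $\asymp p(c,k,r,0)$ by Theorem \ref{thm:up-to-constants-ex-k-sep}, with (ii) an independent positive-probability inside event realizing the topological encirclement of $0$, constructed via Lemma \ref{lem:cluster-thin}, the FKG inequality (Lemma \ref{lem:FKG}), and an explicit path-plus-cluster configuration. The coupling relies on comparing the law of $(X^1,\dots,X^k) \in (\Cc_0)^k$ with the uniform law via an inner-to-outer analogue of Lemma \ref{lem:last-exit-uniform}; the Poisson-kernel bias introduced by the excursion endpoint $e^r$ is controlled by standard ratio estimates on $H_{\Bc_r}(\cdot, e^r)$ over $\Cc_0$, bounded uniformly in $r$. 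For the upper bound $q \lesssim p$, conditioning on the inside configuration shows that the outside must realize the non-disconnection event with $\alpha$-separated starting points on $\Cc_0$, bounded by $p(c,k,r,0)$ via the same theorem. The separation lemma (Lemma \ref{lem:separation_lem_2}) is used to reconcile the inside and outside configurations at the transition scale $\Cc_0$.

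The main obstacle is translating the conformal-radius condition (a global feature of the random hull $K$) into a clean local geometric event compatible with the separation-lemma framework. While Koebe's theorem immediately gives a distance comparison, upgrading this to the ``inside encirclement plus outside non-disconnection'' decomposition requires a careful topological analysis of the components of the interior of $K$ near $\Cc_0$. A further technical point is handling the Brownian-excursion conditioning on the fixed endpoint $e^r$, which introduces Poisson-kernel factors in the law of the inside/outside split that must be bounded uniformly in $r$ when comparing with the $p$-setup.
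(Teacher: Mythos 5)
Your proposal follows the same route as the paper: identify $q(c,k,r)\asymp e^{-r\xi_c(k)}$ via Lemma~\ref{lem:obs} and Theorem~\ref{thm:dis_def}, and then compare $q$ with $p(c,k,r,0)$ using Koebe's $1/4$-theorem (which turns the conformal-radius condition into non-disconnection of $\Cc_{-r}$), a path decomposition of the excursions together with absolute-continuity comparisons to Brownian motions, and the $\alpha$-separation estimates of Theorem~\ref{thm:up-to-constants-ex-k-sep}. The only slip is the phrase ``topological encirclement of $0$ by the inside pieces'' for the inside event: the $q$ event requires that the inside pieces and nearby clusters \emph{not} disconnect $\Cc_{-r}$ (so that a boundary point of $\Cc_{-r}$ escapes to $\infty$), which the paper handles by imposing mild spatial restrictions on $W^i[0,\sigma^i_{-r,-1}]$ and $W^i[\tau^i_{-1},\tau^i_0]$ after selecting an $\alpha$-separated configuration for the middle excursion parts.
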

		\begin{proof}
			We begin with the proof of \eqref{eq:q}.
			Denote the conformal radius of $K$ from $0$ by $r_K(0)$. By Koebe $1/4$ theorem, we know that
			\[
			\frac{r_K(0)}{4}\le d(\partial K, 0) \le r_K(0).
			\]
			Moreover, $d(\partial K, 0)\le e^{-r}$ is equivalent to the condition that $\Cc_{-r}$ is not disconnected from infinity by $K$, and we denote the probability of the latter event by $p'(c,k,r)$. By \eqref{eq:k-sep}, it suffices to show that 
			\[
			p'(c,k,r) \asymp p(c,k,r,0).
			\]
			Note that the law of the family $\{ W^i[0,\tau^i_{-1}] \}_{1\le i\le k}$ and the law of the family of $k$ independent standard planar Brownian motions stopped upon reaching $\Cc_{-1}$ are mutually absolutely continuous with densities bounded by universal constants (depending on $k$ only). By scaling invariance of the Brownian motion and the Brownian loop soup,
			\[
			p'(c,k,r)\lesssim p(c,k,r-1,0) \lesssim p(c,k,r,0).
			\]
			Next, we show the other direction. Conditioned on the event that $\Cc_{-r}$ is not disconnected from infinity by $K$, we can use \eqref{eq:k-sep} to make the configuration induced by $ W^i[\sigma^i_{-r,-1},\tau^i_{-1}]$ for $1\le i\le k$ and $\Lambda_0$ be $\alpha$-sep with probability bounded from below by a universal positive constant. Furthermore, on the event that the configuration is $\alpha$-sep, one can impose mild spatial restrictions to $W^i[0,\sigma^i_{-r,-1}]$ and $W^i[\tau^i_{-1},\tau^i_0]$ for each $i$ (forcing each of them stay in a certain region) such that attaching them to the $\alpha$-sep configuration still does not disconnect $\Cc_{-r}$ from infinity; this will only cost a constant probability (depending only on $\alpha$). From this observation, we know that $p'(c,k,r)\gtrsim p(c,k,r,0)$, which completes the proof of \eqref{eq:q}.
			
				Recall the notation introduced in Section \ref{subsec:GDE}.
				As explained in the paragraph below Lemma~\ref{lem:obs}, $K$ is distributed according to $\Pf_{\kappa}^{0,k}$, where $\kappa$ and $c$ are related by~\eqref{eq:c_kappa}. Then, by Theorem~\ref{thm:dis_def}, there exists $C>0$ depending only on $c$ and $k$ such that as $r\to\infty$, 
			\[
				q(c,k,r)=p_{\kappa}^{e^r}(0,k)=e^{-r\xi_c(k)}(C+o(1)).
			\]
			Combined with \eqref{eq:q}, the above equation implies \eqref{eq:same-xi}. This finishes the proof.	\end{proof}

	\section{Separation lemma}\label{sec:separation-lemma}
	In this section, we will prove Lemma \ref{lem:separation_lem_2}, the separation lemma for the loop-soup setting, which is a generalization of the classical separation lemma \cite[Lemma 6.1]{MR1901950}.
Throughout this section, we will follow the following convention: for any $\delta\in (0,1)$, we write
\begin{equation}\label{eq:Delta}
	\Delta=\Delta(\delta):=-\log \delta
\end{equation}
to simplify notation since we will be working on exponential scales.

\subsection{General proof strategy}\label{subsec:proof_strategy}

We follow a similar general strategy as in \cite{MR1901950}, but need to deal with several additional difficulties, which makes the present proof more technical than the classical one.
In the proof of \cite{MR1901950}, the authors consider a so-called $\delta$-nice event, which roughly says that the paths of the Brownian excursions stopped at $\Cc_r$ are at least $\delta e^r$ ``away'' from each other.
Then they argue that if the configuration of the Brownian excursions up to $\Cc_r$ is $\delta$-nice,  then one can condition on the future parts of the Brownian excursions up to $\Cc_{r+1}$ to stay in well-chosen disjoint wedges with probability $\delta^{\alpha}$ for some $\alpha>0$, so that on this event $L_{r+1}$ is not much bigger than $L_r$, satisfying
\begin{align}\label{eq:sepa_classic}
L_{r+s} \le L_{r+1} \le L_r +C\Delta  \, \text{ for all } \,  s \in [0,1],
\end{align}
where $\Delta=-\log \delta$ by \eqref{eq:Delta} and $C$ is some universal constant.
They also rely on the evident fact that $L_t\ge L_s$ whenever $t\ge s$.
When we have the additional Brownian loop soup, it is still evident that  $\wt L_t\ge \wt L_s$ whenever $t\ge s$. However, when we go from the configuration in $\Bc_r$ to $\Bc_{r+1}$, it costs a lot to control the loops in $\Lambda_{r+1}$ which intersect $\Cc_r$. For example, if the configuration in $\Bc_r$ is ``$\delta$-nice'' (loosely speaking, the natural analogue of the classical $\delta$-nice event is that the union of the Brownian excursions together with the loops in $\Lambda_r$ is $\delta e^{r}$ ``away'' from disconnecting the origin), then in order for $\wt L_{r+1}$ to not increase too much (or at least to stay finite), one would like to condition the loops in $\Lambda_{r+1}\setminus \Lambda_r$ to satisfy some ``good'' event. For example, we can consider the event that all the loops in $\Lambda_{r+1}$ that intersect $\Cc_r$ have diameter smaller than $\delta e^{r}$. 
However, this event has probability of order $\exp(-\alpha/\delta)$ for some $\alpha>0$, which is too small to allow the whole argument to work.
In fact, following this line, we have been unable to find a good event with sufficiently large probability (i.e., polynomial on $\delta$) on which $\wt L_{r+1}$ is comparable to $\wt L_r$.

To overcome this difficulty, we would explore the Brownian excursions and the loop soup at two different speeds. 
More concretely, we define a $\delta$-nice event (see Definition~\ref{def:delta-nice}) which is measurable with respect to  the $\sigma$-algebra generated by the Brownian excursions up to $\Cc_{r+s}$ and the loops in $\Lambda_{r+t}$ for $0\le s < t \le 1$.  
On this $\delta$-nice event,  we also consider an auxiliary $\pi$-extremal distance $L^*(r+s, r+t, \delta)$ (see \eqref{eq:L*}) which is defined using $Y^1_{r+s}$, $Y^2_{r+s}$ and the loop soup $\Lambda_{r+t}$. 
The quantity $L^*(r+s, r+t, \delta)$ should be thought as a replacement for the $\pi$-extremal distance $\wt L_{r+s}$. Note that the former contains more information about the loop soup  than the latter. 
The inequality $\wt L_{r+s} \le \wt L_{r+t}$ would be replaced by the following less evident one, which holds on the aforementioned $\delta$-nice event (see \eqref{eq:bar-le-wt})
\begin{align*}
L^*(r+s, r+t, \delta) \le \wt L_{r+t} + C \cdot (t-s+\Delta+1).
\end{align*}
It states that  $L^*(r+s, r+t, \delta)$ is not much bigger than $\wt L_{r+t}$. 
More importantly, with this definition we are able to show the analogue of \eqref{eq:sepa_classic}, namely on some ``good'' event which can be achieved with a sufficiently large probability (i.e., polynomial on $\delta$), we have (see \eqref{eq:bar-ge-wt})
\begin{align*}
\wt L_{r+t} \le L^*(r+s, r+t, \delta).
\end{align*}
It turns out that these inequalities would allow the whole strategy to work. 

In Section~\ref{subsec:aux_ed}, we define the relevant auxiliary extremal distances and establish a first inequality.
In Section~\ref{subsec:good_events}, we define some good events and prove the aforementioned inequalities about the extremal distances on these good events. In Section~\ref{subsec:proof_sep}, we complete the proof of the separation lemma. 
Section~\ref{subsec:proof_sep} is in the same spirit as the proof in \cite{MR1901950}, but contains some more technicalities, since we need to adjust things to the new setting. In particular, since the auxiliary extremal distances depend on a larger $\sigma$-algebra, we need to regain independence between random variables in some steps of the proof.

\subsection{Auxiliary extremal distances}\label{subsec:aux_ed}
Throughout Section~\ref{sec:separation-lemma}, we will assume the excursions $Y^1(t)$ and $Y^2(t)$ are independently defined using Bessel processes (see definition (\ref{bessel}) in Section~\ref{subsec:facts} above Remark \ref{rem:adv} and the explanation on advantages of this equivalent definition in Remark~\ref{rem:adv}). Still, denote by $T^i_u$ the hitting time of $\Cc_u$ for $Y^i$. Then, $Y^i_r:=Y^i[0,T^i_r]$ has the same distribution as $B^i[\sigma^i_{r},\tau^i_r]$ for all $r>0$. Moreover, $Y^i_{r+s}[0,T^i_r]=Y^i_r$ for all $s\ge 0$. With this definition of $Y^i$, we can now simply define $\Fc_r$ as the $\sigma$-algebra generated by $Y^1_r$ and $Y^2_r$. 

We define afresh the quantities $\wt Y^i_r$ (union of $Y^i_r$ and loop soup clusters), $\wt O_r^i$ (openings between $\wt Y^i_r$) and $\wt L_r^i$ (extremal distances between $\Cc_0$ and $\Cc_r$ in $O_r^i$) in this section in the same fashion as in the previous section (see \eqref{eq:Y} and below), except here we use $Y^i_r$ instead of $B^i[\sigma^i_{r},\tau^i_r]$.
Note that this does not change the law of these quantities (but the definition of the filtration $\Fc_r$ above is slightly different from the filtration defined in Section \ref{sec:up-to-constant}; see above Lemma \ref{lem:separation_lem_2}).

Let us first define some wedges which are used to define the auxiliary extremal distances. Let $z_1$ and $z_2$ be two points on the circle $\Cc_r$. First suppose that the counterclockwise arc from $z_1$ to $z_2$ has angle at most $\pi$.
Recall our convention from \eqref{eq:Delta} that $\delta=e^{-\Delta}\in (0,1)$.
Let the oriented line segment from $z_1$ (resp.\ $z_2$) to the origin rotate\footnote{Rotation by $\pi/4$ prevents the wedges $W_\pm(z_1,\delta)$ and $W_\pm(z_2,\delta)$ defined below from intersecting each other.} a clockwise (resp.\ counterclockwise) angle $\pi/4$ around $z_1$  (resp.\ $z_2$)  to intersect some point $x_1$  (resp.\ $x_2$) on the half circle $\Cc_{r-\Delta}(z_i) \cap \Bc_r$. Let 
\begin{align*}
W(z_i,\delta):=\{z=x_i+ \rho (z_i-x_i) e^{i \theta}: \rho>0, \, \theta \in (-1/200, 1/200)\},\\
W_+(z_i,\delta):=\{z=x_i+ \rho (z_i-x_i) e^{i \theta}: \rho>0, \, \theta \in (-1/100, 1/100)\},\\
W_-(z_i,\delta):=\{z=x_i+ \rho (z_i-x_i) e^{i \theta}: \rho>0, \, \theta \in (-1/400, 1/400)\}.
\end{align*}
If  the counterclockwise arc from $z_1$ to $z_2$ has angle more than $\pi$, then let $y_1:=z_2$ and $y_2:=z_1$. For $i=1,2$, let $W(z_i, \delta):=W(y_{3-i}, \delta)$ and $W_\pm(z_i, \delta):=W_\pm(y_{3-i}, \delta)$. See Figure~\ref{fig:wedges} for an illustration of $W(z_i, \delta)$.

	\begin{figure}
	\centering
	\includegraphics[scale=.8]{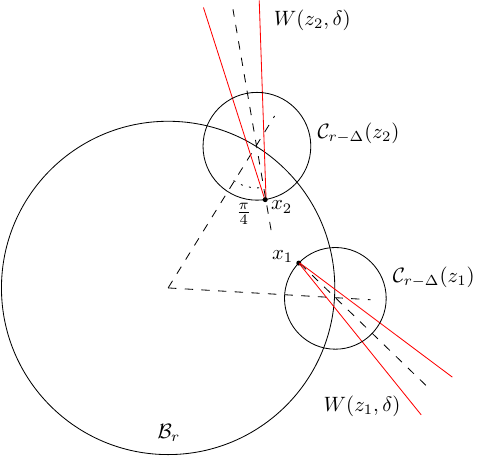}
	\caption{Sketch of $W(z_i,\delta)$ when the counterclockwise arc from $z_1$ to $z_2$ has angle at most $\pi$. The largest ball is $\Bc_r$. The two small circles with centers on $\Cc_r$ are $\Cc_{r-\Delta}(z_1)$ and $\Cc_{r-\Delta}(z_2)$ respectively. For $i=1,2$, $W(z_i,\delta)$ is the wedge in red with vertex $x_i$ and angle $1/100$.}
	\label{fig:wedges}
\end{figure}

	We can now define some auxiliary extremal distances. Let $0\le s<t$. Let $\Ic^1$ be the counterclockwise arc on $\Cc_0$ from $Y^1(0)$ to $Y^2(0)$. 
	Let $\Lambda^{\mathrm{int}}_{s,t}$ be the collection of clusters in $\Lambda_{0,t}$ that intersect both $\Cc_s$ and $Y^1_s\cup Y^2_s$. We call $(K^1,K^2)$ an admissible pair from $s$ to $t$ if for each $i$, $K^i$ is a connected set that joins $Y^i(T^i_{s})$ to $\Cc_{t}$ and $K^i$ does not intersect $\Cc_0$. Then the set 
	\[
	\Ac(0,t)\setminus \left( \wt Y^1_s\cup \wt Y^2_s\cup\Lambda^{\mathrm{int}}_{s,t}\cup K^1\cup K^2  \right)
	\]
	contains a unique connected component $O^*:=O^*(s,t, K^1,K^2)$ that has $\Ic^1$ on its boundary. See Figure~\ref{fig:O-aux}. If $O^*\cap\Cc_{t} \neq \emptyset$ then let
		\[
		L^*(s,t, K^1,K^2)=L(O^*;O^*\cap\Cc_0,O^*\cap\Cc_{t}),
		\] 
		otherwise set $L^*(s,t,K^1,K^2)=\infty$.	
We also define the following abbreviations
	\begin{align}
	\label{eq:W-del}
	&W^i_s(\delta):=W(Y^i(T^i_s),\delta), \quad  W^i_{s,\pm}(\delta):=W_{\pm}(Y^i(T^i_s),\delta),\\
		\label{eq:B-del}
	&\Bc^i_s(\delta):=\Bc_{s-\Delta}(Y^i(T^i_s)), \\
	\notag
	&O^*(s,t,\delta):=O^*(s,t,\Bc^1_s(\delta)\cup W^1_s(\delta),\Bc^2_s(\delta)\cup W^2_s(\delta)),\\
\label{eq:L*}
	&L^{*}(s,t, \delta):=L^*(s,t,\Bc^1_s(\delta)\cup W^1_s(\delta),\Bc^2_s(\delta)\cup W^2_s(\delta)),\\
\label{eq:L+}
	&L^+(s,t,\delta):=L^*(s,t,\Bc^1_s(\delta)\cup W^1_{s,+}(\delta/e),\Bc^2_s(\delta)\cup W^2_{s,+}(\delta/e)).
	\end{align}
Note that $O^*(s,t,\delta)$, $L^{*}(s,t, \delta)$ and $L^+(s,t,\delta)$ are measurable w.r.t.\ the $\sigma$-algebra generated by the loops in $\Lambda_{t}$ and the Brownian excursions up to $\Cc_{s}$.

The following lemma states that on the event $L^+(r+s,r+t,\delta)<\infty$, for any admissible pair $(K^1,K^2)$, no matter how it looks like, $L^*(r+s,r+t, K^1, K^2)$ cannot be too small compared to $L^*(r+s,r+t,\delta/e)$. 
The moral is that we have defined the wedges $W^1_s(\delta)$ and $W^2_s(\delta)$ to be sufficiently away from each other, so that
$L^*(r+s,r+t,\delta/e)$ is not too far from minimizing the $\pi$-extremal distance  $L^*(r+s,r+t, K^1, K^2)$ over all admissible pairs $(K^1, K^2)$.
The proof is similar to that of Lemma~\ref{lem:D'}.
	\begin{lemma}\label{lem:admissible}
		There exists a universal constant $C>0$ such that for all $r, t>0$, $0\le s<t$, and $0<\delta<(e^{t-s}-1)\wedge 1$, if 
		$L^+(r+s,r+t,\delta)<\infty$,
		then for any admissible pair $(K^1,K^2)$ from $r+s$ to $r+t$, 
		\begin{equation}\label{eq:aux-compare}
		L^*(r+s,r+t,\delta/e)\le L^*(r+s,r+t, K^1, K^2)+ C\cdot(t-s+\Delta+1).
		\end{equation}
	\end{lemma}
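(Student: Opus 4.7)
The plan is to adapt the admissible-metric construction from the proof of Lemma~\ref{lem:D'} to compare the extremal distance of a domain bounded by the thin wedges $W^i_{r+s}(\delta/e)$ (together with the balls $\Bc^i_{r+s}(\delta)$) against that of a domain bounded by arbitrary admissible connected sets $K^i$ emanating from the same endpoints $z_i := Y^i(T^i_{r+s})$. Concretely, the goal is to build an admissible metric $\rho$ on $O^* := O^*(r+s, r+t, \delta/e)$ by taking the pointwise maximum of a near-optimal admissible metric $\rho^*$ for $L^*(r+s, r+t, K^1, K^2)$ on $O^{*,K} := O^*(r+s, r+t, K^1, K^2)$ (extended by $0$ outside $O^{*,K}$) with a ``buffer'' metric $\rho_B$ supported on the annular sub-wedges $U^i := W^i_{r+s,+}(\delta/e) \setminus \overline{W^i_{r+s}(\delta/e)}$ (for $i=1,2$) that flank the thin wedges.

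\textbf{Construction and area estimate.} Let $x^+_i$ denote the apex of $W^i_{r+s,+}(\delta/e)$. Define $\rho_B(z) := C_0/|z - x^+_i|$ on $U^i$, where $C_0$ is a universal constant chosen via the angular half-widths $1/100$ and $1/200$, and set $\rho_B := 0$ elsewhere. By construction any arc in $U^i$ crossing angularly from $\partial W^i_{r+s}(\delta/e)$ to $\partial W^i_{r+s,+}(\delta/e)$ has $\rho_B$-length at least $1$. Using the conformal change of coordinates $z \mapsto \log(z - x^+_i)$, which sends each component of $U^i$ to a rectangle of log-radial extent comparable to $t-s+\Delta+1$ and fixed angular width, a direct integration gives $\area_{\rho_B}(U^1 \cup U^2) \le C_1 (t-s+\Delta)$ for a universal $C_1$; hence $\area_\rho \le \area_{\rho^*} + \area_{\rho_B} \le L^*(r+s, r+t, K^1, K^2)/\pi + \eps + C_1(t-s+\Delta)/\pi$ for any fixed $\eps > 0$.

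\textbf{Admissibility and main obstacle.} The crucial step is to verify that $\rho$ is admissible for curves disconnecting $\Ic^1$ from $\Cc_{r+t}$ in $O^*$. Given such a curve $\gamma$, I would distinguish two cases. First, if some sub-arc $\gamma' \subseteq \gamma \cap \overline{O^{*,K}}$ already disconnects $\Ic^1$ from $\Cc_{r+t}$ in $O^{*,K}$, then $\ell_{\rho^*}(\gamma') \ge 1$ and hence $\ell_\rho(\gamma) \ge 1$. Otherwise, $\gamma$ must ``go around'' the thin wedge $W^i_{r+s}(\delta/e)$ for at least one $i \in \{1, 2\}$---meaning any attempt to assemble the sub-arcs of $\gamma$ in $O^{*,K}$ into a disconnecting curve there requires a rerouting through the buffer $U^i$, and the rerouting argument forces $\gamma$ itself to cross one of the two angular components of $U^i$ transversally; such an angular crossing has $\rho_B$-length at least $1$ by construction. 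The main technical difficulty is precisely in this second case: rigorously establishing that $\gamma$ must make an angular crossing of $U^i$ requires a careful planar-topology argument, and it is here that the hypothesis $L^+(r+s, r+t, \delta) < \infty$ enters essentially---it guarantees that the wider obstacle $\Bc^i_{r+s}(\delta) \cup W^i_{r+s,+}(\delta/e)$ does not itself disconnect $\Ic^1$ from $\Cc_{r+t}$, thereby playing exactly the role of the outer set $W_i$ in Lemma~\ref{lem:D'} and furnishing the topological room needed to carry out the rerouting. Sending $\eps \to 0$ after assembling these estimates yields the claimed inequality $L^*(r+s, r+t, \delta/e) \le L^*(r+s, r+t, K^1, K^2) + C(t - s + \Delta)$.
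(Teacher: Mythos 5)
Your overall strategy coincides with the paper's: construct an admissible metric $\rho$ on $O^*(r+s,r+t,\delta/e)$ as the pointwise maximum of a near-optimal metric for $L^*(r+s,r+t,K^1,K^2)$ and an explicit buffer metric, then bound the excess area by a constant multiple of $t-s+\Delta$. You also correctly identify the role of the hypothesis $L^+(r+s,r+t,\delta)<\infty$, and your conformal-rectangle area estimate for the wedge shells is sound.

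There is a genuine gap, however: your buffer metric $\rho_B$ is supported only on the angular shells $U^i=W^i_{r+s,+}(\delta/e)\setminus\ol{W^i_{r+s}(\delta/e)}$, whereas the paper's metric additionally includes a constant piece $2\delta^{-1}e^{-(r+s)}$ supported on the larger balls $\Bc^i_{r+s}(\delta)$. This piece is not optional. A curve $\gamma$ disconnecting $\Ic^1$ from $\Cc_{r+t}$ in $O^*(\cdot,\delta/e)$ can have its endpoint on $\partial\Bc^i_{r+s}(\delta/e)$ on the side of the small ball away from the wedge, cross the annulus $\Bc^i_{r+s}(\delta)\setminus\Bc^i_{r+s}(\delta/e)$, and continue across the domain without ever meeting $U^1\cup U^2$. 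If the admissible pair $K^i$ is a thin curve leaving most of $\Bc^i_{r+s}(\delta/e)$ open, such a $\gamma$ need not disconnect $\Ic^1$ from $\Cc_{r+t}$ in $O^*(r+s,r+t,K^1,K^2)$ (a connecting path can sneak through $\Bc^i_{r+s}(\delta/e)\setminus K^i$), so $\ell_{\rho^*}(\gamma)$ may be arbitrarily small while $\ell_{\rho_B}(\gamma)=0$; your $\rho=\max\{\rho^*,\rho_B\}$ then fails to be admissible. The constant ball metric is designed precisely to catch this case: a crossing of the ball annulus picks up $\rho$-length at least $2(1-e^{-1})>1$, at an area cost of only $O(1)$. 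Your dichotomy (``disconnects in $O^{*,K}$'' vs.\ ``must cross $U^i$ angularly'') therefore misses a third alternative (rerouting through the ball region), and the deferred planar-topology argument cannot close without this additional metric piece. A secondary point: the paper first reduces, via the comparison principle, to the case $O^*(r+s,r+t,\delta/e)\subseteq O^*(r+s,r+t,K^1,K^2)$; this is also needed so that ``$\rho^*$ extended by zero'' does not leave $\rho$ identically zero on portions of $O^*(\cdot,\delta/e)$ lying outside $O^{*,K}$ and outside the buffers.
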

		\begin{figure}
	\centering
	\includegraphics[scale=.9]{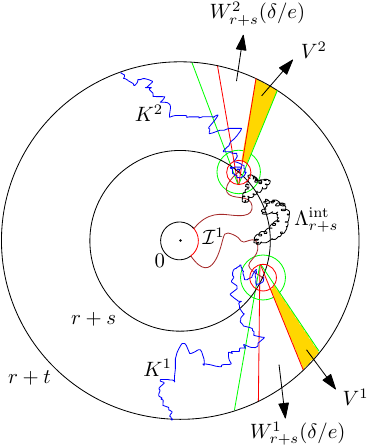}
	\caption{Proof of Lemma~\ref{lem:admissible}. $\Ic^1$ is  the arc on $\Cc_0$ in red, from $Y^1(0)$ to $Y^2(0)$ in counterclockwise order. $\Bc^i_{r+s}(\delta/e)$ (resp.\ $\Bc^i_{r+s}(\delta)$) for $i=1,2$ are the two balls with centers on $\Cc_{r+s}$ in red (resp.\ green). $W^i_{r+s}(\delta/e)$ (resp.\ $W^i_{r+s,+}(\delta/e)$) for $i=1,2$ are the two wedges in red (resp.\ green). $V^1$ and $V^2$ are the yellow parts. The clusters in $\Lambda^{\mathrm{int}}_{r+s}$ are drawn in black (only two possible clusters are sketched here). We give an example of admissible pair $(K^1,K^2)$ in blue.}
	\label{fig:O-aux}
\end{figure}
	\begin{proof}    	
		Abbreviate $V^3:=O^*(r+s, r+t, \delta/e)$ and $V^4:=O^*(r+s, r+t, K^1,K^2)$.
		By Lemma~\ref{lem:comparison-principle} (the comparison principle), it suffices to show \eqref{eq:aux-compare} for the case when $V^3 \subseteq V^4$.
We refer the reader to Figure~\ref{fig:O-aux} for the objects defined below.  For $i=1,2$, let $V^i$ be the connected component of
		\[
		\big(W^i_{r+s,+}(\delta/e)\setminus W^i_{r+s}(\delta/e)\big)\cap \Bc_{r+t}\setminus \Bc^i_{r+s}(\delta/e)
		\]  
		that is within zero distance from $V^3\cap\Cc_{r+t}$. Recall good domains in Definition~\ref{def:good-domain} whose boundaries in counterclockwise order are denoted by $\partial_1, \partial_3, \partial_2, \partial_4$.
For $i=1,2,3,4$, we denote the four parts of $\partial V^i$ by $\partial_1^i,\partial_3^i,\partial_2^i,\partial_4^i$. 
		Let $L^{i}=L(V^{i};\partial^{i}_1,\partial^{i}_2)$. Note that $\partial_1^3=\partial_1^4=\Ic^1$ and
\begin{align}\label{eq:l3l4}
L^3=L^*(r+s,r+t,\delta/e), \qquad  \quad L^4=L^*(r+s,r+t, K^1, K^2).
\end{align}
		Let $\rho^{i}$ be the extremal metric finding the length of the collection $\Gamma^{i}$ of curves in $V^i$ connecting $\partial_3^i$ and $\partial_4^i$ so that 
		\[
		A_{\rho^i}(V^i)=\pi^{-1}L^i.
		\]
		 Define the metric 
		\[
		\rho:=\max \big\{ \rho^4, \rho^1 \one_{V^1}, \rho^2 \one_{V^2}, 2\delta^{-1}e^{-(r+s)} (1_{\Bc^1_{r+s}(\delta)}+1_{\Bc^2_{r+s}(\delta)}) \big\} \text{ in } V^3,
		\]
		Since $L^+(r+s,r+t,\delta)<\infty$,  every curve in $\Gamma^3$ has length at least one in the metric $\rho$. Hence,
		\begin{align*}
&L^3\le \pi A_{\rho}(V^3)\\
		\le& \pi \Big( A_{\rho^4}(V^4)+A_{\rho^1}(V^1)+A_{\rho^2}(V^2)+4\delta^{-2}e^{-2(r+s)}2\pi (\delta^2-\delta^2/4)e^{2(r+s)} \Big)\\
		=&L^4+L^1+L^2+6\pi^2.
		\end{align*}
		By estimates similar to \eqref{eq:wedge-ex}, we know that for $i=1,2$, $L^i\lesssim t-s+\Delta$. Combined with \eqref{eq:l3l4}, this concludes the lemma.
	\end{proof}

\subsection{Good events and some estimates}\label{subsec:good_events}

In this subsection, we define some good events which will be used to control the evolution of the extremal distances in the next subsection. We will also provide some lower bounds for the probabilities of such events.
\begin{definition}[$\delta$-nice event]\label{def:delta-nice}
	For any $0<s<t$, denote by $\Upsilon(s,t,\delta)$ the event that 
	there exists a continuous curve $\gamma: [0,\infty)\rightarrow \mathbb C$ such that $\gamma\cap\Cc_0=\gamma(0)\in\Ic^1$, $\gamma(t)\rightarrow\infty$ as $t\rightarrow\infty$ and $\gamma\cap\Big( Y^1_{s}\cup Y^2_{s}\cup \Bc^1_s(\delta) \cup \Bc^2_s(\delta)\cup \Lambda_{0,t} \Big)=\emptyset$. 
\end{definition}
Note that $\Upsilon(s,t,\delta)$ is measurable w.r.t.\ the $\sigma$-algebra generated by the loops in $\Lambda_{t}$ and the Brownian excursions up to $\Cc_{s}$. Also note that the event $\Upsilon(s,t,\delta)$ is increasing as $\delta$ decreases.

In the following, we assume that $r>0$, $0\le s<t$ and $0<\delta<(e^{t-s}-1)\wedge 1$.
\begin{definition}[Excursions in wedges]\label{def:exc-in-wed}
	Denote by $\Ec(r+s,r+t, \delta)$ the event that 
\begin{align*}
Y^i[T^i_{r+s},T^i_{r+t}]\subseteq W^i_{r+s,-}(\delta)\, \text{ for both }\, i=1,2. 
\end{align*}
	\end{definition}
	Let $\alpha_1>0$ be a constant such that the probability that a complex Brownian motion starting at $(\delta,0)$ reaches the unit circle without leaving the wedge $\{ z: |\arg(z)|\le 1/400 \}$ is at least $\delta^{\alpha_1}$ for all $\delta>0$.
	Then, by scaling invariance, we have 
	\begin{equation}\label{eq:2BM_wedge}
	\Pb(\Ec(r+s,r+t, \delta) \mid \Fc_{r+s})\ge e^{-2\alpha_1(t-s+\Delta)}.
	\end{equation}
\begin{definition}[Loop soup being small]\label{def:loop_delta_small}
For any $z\in \Cc_{r+s}$, denote by $\Lc(r+s,r+t, z,\delta)$ the event that the loop soup is small around $z$ in the following sense:
\begin{itemize}
		\item All clusters in $\Lambda_{r+t}$ that intersect $\Bc_{r+s-\Delta}(z)$ have diameter at most $e^{r+s-\Delta}/100$.
\item All clusters in $\Lambda_{r+t}$ that intersect the annulus $\Bc_{r+s-\Delta+k}(z)\setminus \Bc_{r+s-\Delta+k-1}(z)$ have diameter at most $e^{r+s-\Delta+k-1}/100$ for all $k\in\Nb$ with $k\le k_0:= \left \lfloor t-s+\Delta+1 \right\rfloor$. 
	\end{itemize}
We also define
$$
\Lc(r+s,r+t, \delta):=\Lc(r+s,r+t, Y^1(T^1_{r+s}),\delta)\cap \Lc(r+s,r+t, Y^2(T^2_{r+s}),\delta).$$
\end{definition}

We remark that for all $\delta>0$, we have $\Lc(r+s,r+t, \delta/e) \subseteq \Lc(r+s,r+t, \delta)$.

\begin{lemma}
There exists a constant $\alpha_2>0$ such that for all $z\in \Cc_{r+s}$,
	\begin{equation}\label{eq:loop_small}
	\Pb(\Lc(r+s,r+t, z,\delta))\ge e^{-\alpha_2(t-s+\Delta+3)}.
	\end{equation}
\end{lemma}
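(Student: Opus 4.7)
The plan is to decompose $\Lc(r+s,r+t,z,\delta)$ as the intersection of the $k_0+1$ events indexed by $k\in\{0,1,\dots,k_0\}$, where the event at level $k$ constrains the diameters of clusters in $\Lambda_{r+t}$ meeting the ball $\Bc_{r+s-\Delta}(z)$ (for $k=0$) or the annulus $\Bc_{r+s-\Delta+k}(z)\setminus\Bc_{r+s-\Delta+k-1}(z)$ (for $k\ge 1$). Each such event is \emph{decreasing} with respect to the loop soup: adding loops can only merge clusters, which can only enlarge the maximum cluster diameter meeting any fixed region. The strategy is therefore to bound each event individually from below by a universal constant, and then concatenate these bounds via the FKG–Harris inequality (Lemma~\ref{lem:FKG}).

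For the individual bound at level $k$, I would first pass from $\Lambda_{r+t}$ to the whole-plane loop soup $\Lambda$, noting that clusters in $\Lambda_{r+t}$ are subsets of clusters in $\Lambda$, so the event about $\Lambda_{r+t}$ is \emph{implied} by the corresponding event about $\Lambda$ (a decreasing restriction). The event for $\Lambda$ only involves ratios (diameter bound $=1/100$ times the inner radius of the annulus), so by the translation and scale invariance of the whole-plane Brownian loop soup, its probability is independent of $r+s-\Delta+k$ and $z$. A scaled application of Lemma~\ref{lem:cluster-thin} (choosing the parameter there so that clusters meeting the unit disk have diameter at most $1/100$, say) then provides a uniform lower bound $p_0>0$ that is the same across all levels $k$ and all choices of $z\in\Cc_{r+s}$.

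Since all $k_0+1$ events are decreasing with respect to $\Lambda_{r+t}$, the FKG–Harris inequality (Lemma~\ref{lem:FKG}), applied inductively, gives
\[
\Pb(\Lc(r+s,r+t,z,\delta)) \;\ge\; p_0^{\,k_0+1}.
\]
Using $k_0+1 = \lfloor t-s+\Delta+1\rfloor + 1 \le t-s+\Delta+2$ and setting $\alpha_2:=-\log p_0 > 0$, one obtains
\[
\Pb(\Lc(r+s,r+t,z,\delta)) \;\ge\; e^{-\alpha_2(t-s+\Delta+2)} \;\ge\; e^{-\alpha_2(t-s+\Delta+3)},
\]
as required. No step looks genuinely hard: the only minor care needed is in checking that the natural rescaling correctly turns each region into a bounded region to which Lemma~\ref{lem:cluster-thin} applies, and that the implied constant $p_0$ can be chosen uniformly over $k$ (which is immediate from scale invariance).
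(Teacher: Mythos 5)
Your proposal is correct and follows essentially the same route as the paper: bound each of the $k_0+1$ diameter constraints by a universal constant $e^{-\alpha_2}$ via scale invariance and Lemma~\ref{lem:cluster-thin}, then combine. The paper's own write-up is terser and simply multiplies the probabilities without comment; you do well to note explicitly that the per-level events are decreasing and to invoke the FKG--Harris inequality (Lemma~\ref{lem:FKG}) to justify that product, since the events are not independent (a cluster in $\Lambda_{r+t}$ can span several of the nested annuli).
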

\begin{proof}
By Lemma~\ref{lem:cluster-thin} and scaling invariance of Brownian loop soup, we know that for some universal constant $\alpha_2>0$ the first item in Definition~\ref{def:loop_delta_small} happens with probability at least $e^{-\alpha_2}$, which also holds for the event in the second item for each $k\le k_0$. Therefore, 
\[
\Pb(\Lc(r+s,r+t, z,\delta))\ge e^{-\alpha_2(k_0+1)}\ge e^{-\alpha_2(t-s-\Delta+3)}.
\]
This finishes the proof.
\end{proof}
By the FKG inequality (Lemma~\ref{lem:FKG}) and the above lemma, we obtain
	\begin{equation}\label{eq:2loop_small}
	\Pb(\Lc(r+s,r+t, \delta))\ge e^{-2\alpha_2(t-s+\Delta+3)}.
	\end{equation}

We are now ready to prove the key lemma which compares $L^*(r+s,r+t, \delta/e)$ (see \eqref{eq:L*}) with $\wt L^1_{r+t}$ (see the beginning of Section~\ref{subsec:k=2-ee}) on good events.
	\begin{lemma}\label{lem:ed-comparable}
On the event $\Upsilon(r+s,r+t, \delta)\cap\Lc(r+s,r+t, \delta)$, with the same constant $C$ in Lemma~\ref{lem:admissible}, we have
		\begin{equation}\label{eq:bar-le-wt}
		L^*(r+s,r+t, \delta/e)\le \wt L^1_{r+t}+ C\cdot(t-s+\Delta+1).
		\end{equation}
		Furthermore, on the event $\Upsilon(r+s,r+t, \delta)\cap\Lc(r+s,r+t, \delta)\cap\Ec(r+s,r+t,\delta/e)$, we have
		\begin{equation}\label{eq:bar-ge-wt}
		\wt L^1_{r+t}\le L^*(r+s,r+t, \delta).
		\end{equation}
	\end{lemma}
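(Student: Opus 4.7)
The plan is to prove the two inequalities separately, both by comparing the sets whose removal defines the relevant components and then invoking the comparison principle (Lemma~\ref{lem:comparison-principle}) together with Lemma~\ref{lem:admissible}.

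For the upper bound \eqref{eq:bar-le-wt}, I would apply Lemma~\ref{lem:admissible} to the explicit admissible pair $K^i := Y^i[T^i_{r+s}, T^i_{r+t}]$, which is clearly connected, joins $Y^i(T^i_{r+s})$ to $\Cc_{r+t}$, and stays in $\Ac(r+s, r+t)$, hence avoids $\Cc_0$. Then I would verify that the removed set
$\wt Y^1_{r+s} \cup \wt Y^2_{r+s} \cup \Lambda^{\mathrm{int}}_{r+s,r+t} \cup K^1 \cup K^2$
is contained in $\wt Y^1_{r+t} \cup \wt Y^2_{r+t}$: $\wt Y^i_{r+s}$ contributes clusters of $\Lambda_{r+s}\subset\Lambda_{r+t}$ meeting $Y^i_{r+s}\subset Y^i_{r+t}$; $\Lambda^{\mathrm{int}}$ contributes clusters of $\Lambda_{r+t}$ meeting $Y^i_{r+s}$; and $K^i$ is part of $Y^i_{r+t}$. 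The comparison principle then yields $L^*(r+s,r+t,K^1,K^2) \le \wt L^1_{r+t}$. To invoke Lemma~\ref{lem:admissible}, I would check that $L^+(r+s,r+t,\delta)<\infty$ on $\Upsilon\cap\Lc$: the $\Upsilon$-path to infinity already avoids $\wt Y^i_{r+s}$, $\Bc^i_{r+s}(\delta)$ and $\Lambda_{0,r+t}$; using the small angular width of $W^i_{r+s,+}(\delta/e)$ together with the $\Lc$-bounds on nearby cluster diameters, one can locally reroute this path around each wedge without crossing any loop, producing a path from $\Ic^1$ to $\Cc_{r+t}$ that witnesses $L^+<\infty$. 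Lemma~\ref{lem:admissible} then gives $L^*(r+s,r+t,\delta/e) \le L^*(K^1,K^2) + C(t-s+\Delta) \le \wt L^1_{r+t} + C(t-s+\Delta)$.

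For the lower bound \eqref{eq:bar-ge-wt}, the plan is to show directly that $O^*(r+s,r+t,\delta) \subseteq \wt O^1_{r+t}$, which by the comparison principle forces $L^*(r+s,r+t,\delta) \ge \wt L^1_{r+t}$. It suffices to show that the removed set
$\wt Y^1_{r+s} \cup \wt Y^2_{r+s} \cup \Lambda^{\mathrm{int}}_{r+s,r+t} \cup \bigcup_i (\Bc^i_{r+s}(\delta)\cup W^i_{r+s}(\delta))$
for $O^*(r+s,r+t,\delta)$ contains $\wt Y^1_{r+t} \cup \wt Y^2_{r+t}$. The only nontrivial inclusion is
$Y^i[T^i_{r+s},T^i_{r+t}] \,\cup\, \{\text{clusters of }\Lambda_{r+t}\text{ meeting it}\}
\;\subseteq\; \Bc^i_{r+s}(\delta)\cup W^i_{r+s}(\delta).$
By $\Ec(r+s,r+t,\delta/e)$, the trace itself lies in $W^i_{r+s,-}(\delta/e)\subseteq W^i_{r+s}(\delta)$; for the attached clusters, a scale-by-scale argument against $\Lc$ shows that at distance $\asymp e^{r+s-\Delta+k}$ from $Y^i(T^i_{r+s})$ the allowed cluster diameter is at most $e^{r+s-\Delta+k-1}/100$, which is an order of magnitude smaller than the angular gap between $W^i_{r+s,-}(\delta/e)$ and $W^i_{r+s}(\delta)$, so clusters meeting the narrow trace cannot escape the fatter wedge (and those closest to the endpoint are absorbed into $\Bc^i_{r+s}(\delta)$).

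The main obstacle is the cluster-containment bookkeeping that appears in both parts: one must track, scale by scale, that the diameter bounds of $\Lc$ are small enough, relative to the angular widths of the nested wedges $W^i_{r+s,-}(\delta/e)$, $W^i_{r+s}(\delta)$, $W^i_{r+s,+}(\delta/e)$, to make the needed inclusions literal. This is where the specific numerical constants $1/400$, $1/200$, $1/100$ in the wedge definitions and $1/100$ in $\Lc$ are used. The same multi-scale cluster control underlies the rerouting argument showing $L^+<\infty$ in part~(\ref{eq:bar-le-wt}); once this geometric bookkeeping is in place, the rest is a direct application of the comparison principle and Lemma~\ref{lem:admissible}, and the additive term $C(t-s+\Delta)$ in \eqref{eq:bar-le-wt} is exactly what Lemma~\ref{lem:admissible} produces.
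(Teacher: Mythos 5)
Your proposal matches the paper's proof: the upper bound is obtained by applying Lemma~\ref{lem:admissible} to the admissible pair $K^i=Y^i[T^i_{r+s},T^i_{r+t}]$ after establishing $L^+(r+s,r+t,\delta)<\infty$ on $\Upsilon\cap\Lc$, then comparing $L^*(r+s,r+t,K^1,K^2)$ with $\wt L^1_{r+t}$ via the comparison principle, and the lower bound by showing the domain inclusion $O^*(r+s,r+t,\delta)\subseteq\wt O^1_{r+t}$ using $\Ec$ together with the scale-by-scale cluster-size control from $\Lc$. One minor slip worth flagging: $K^i$ need not stay inside $\Ac(r+s,r+t)$ (the Bessel radius may backtrack below $r+s$), but the only admissibility requirement is that $K^i$ avoids $\Cc_0$, which holds because a three-dimensional Bessel process started from $0$ a.s.\ never returns to $0$.
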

	
	\begin{proof}
		We first show \eqref{eq:bar-le-wt}. The key observation is that 		
		on the event $\Lc(r+s,r+t, \delta)$, for both $i=1,2$, there is no cluster in $\Lambda_{r+t}$ that intersects both $\Cc_{r+s}$ and 
$W^i_{r+s,+}(\delta/e)\setminus\Bc^i_{r+s}(\delta)$ (recall \eqref{eq:W-del} and \eqref{eq:B-del} for the definition), for the following reasons. Let $z_i:=Y^i(T^i_{r+s})$ for $i=1,2$. 
For each $1\le k \le k_0$ (recall $k_0$ in Definition~\ref{def:loop_delta_small}), if a cluster in $\Lambda_{r+t}$ intersects both $\Cc_{r+s}$ and the following set
$$W^i_{r+s,+}(\delta/e) \cap \big( \Bc_{r+s-\Delta+k}(z_i)\setminus \Bc_{r+s-\Delta+k-1}(z_i) \big),$$ 
then this cluster must have diameter at least  $e^{r+s-\Delta+k-1}/10$. However, on the event $\Lc(r+s,r+t, z_i,\delta)$, such clusters do not exist for any $1\le k \le k_0$. It follows that on the event $\Upsilon(r+s,r+t, \delta)\cap\Lc(r+s,r+t, \delta)$,
we have $L^+(r+s,r+t, \delta)<\infty$. We can now apply Lemma~\ref{lem:admissible} and deduce
		\begin{equation*}
		L^*(r+s,r+t, \delta/e)\le L^*(r+s,r+t, Y^1[T^1_{r+s},T^1_{r+t}],Y^2[T^2_{r+s},T^2_{r+t}])+ C\cdot(t-s+\Delta+1).
		\end{equation*}
		Moreover, by Lemma \ref{lem:comparison-principle} (the comparison principle),
		\begin{equation*}
		L^*(r+s,r+t, Y^1[T^1_{r+s},T^1_{r+t}],Y^2[T^2_{r+s},T^2_{r+t}])\le \wt L^1_{r+t}.
		\end{equation*}
		Combined, we obtain \eqref{eq:bar-le-wt}.
\smallskip
		
		We then turn to the proof of \eqref{eq:bar-ge-wt}. Note that  on the event $\Lc(r+s,r+t, \delta)$, for each $1\le k \le k_0$, every cluster in $\Lambda_{r+t}$ that intersects 
	$$W^i_{r+s,-}(\delta/e) \cap \big( \Bc_{r+s-\Delta+k}(z_i)\setminus \Bc_{r+s-\Delta+k-1}(z_i) \big),$$
has diameter at most $e^{r+s-\Delta+k-1}/100$, hence is contained in $W^i_{r+s}(\delta/e)\cup\Bc^i_{r+s}(\delta)$. 
Also, on $\Lc(r+s,r+t, \delta)$, every cluster in $\Lambda_{r+t}$ that intersect $\Bc_{r+s-\Delta-1}(z_i)$ has diameter at most $ e^{r+s-\Delta}/100$, hence is contained in $\Bc^i_{r+s}(\delta)$.
It follows that on $\Lc(r+s,r+t, \delta)$, for both $i=1,2$, the union of the wedge $W^i_{r+s,-}(\delta/e)$ and the clusters in $\Lambda_{r+t}$ it intersects is contained in $W^i_{r+s}(\delta/e)\cup\Bc^i_{r+s}(\delta)$. 

On the event $\Ec(r+s,r+t, \delta/e)$, we have $Y^i[T^i_{r+s},T^i_{r+t}]\subseteq W^i_{r+s,-}(\delta/e)$. Therefore, on $\Lc(r+s,r+t, \delta)\cap\Ec(r+s,r+t, \delta/e)$, the union of $Y^i[T^i_{r+s},T^i_{r+t}]$ and the clusters in $\Lambda_{r+t}$ it intersects is contained in $W^i_{r+s}(\delta)\cup\Bc^i_{r+s}(\delta)$. 
Consequently, on the event $\Upsilon(r+s,r+t, \delta)\cap\Lc(r+s,r+t, \delta)\cap\Ec(r+s,r+t, \delta/e)$, we have $O^*_{r+s,r+t}(\delta) \subseteq \wt O^1_{r+t}$ and \eqref{eq:bar-ge-wt} holds by Lemma \ref{lem:comparison-principle}.
	\end{proof}

\subsection{Proof of the separation lemma}\label{subsec:proof_sep}
	In this subsection, we finally give the proof of the separation lemma (Lemma~\ref{lem:separation_lem_2}). 
	
	To start with, let us prove a weak version of it in the next lemma. That is, for any initial configuration of Brownian excursions up to $\Cc_r$,  under the measure tilted by $\exp(-\lambda \wt L^1_{r+1})$, with uniformly positive probability, the continuations of configuration will be uniformly $\delta_0$-nice (see Definition~\ref{def:delta-nice}) at some stopping time before reaching $\Cc_{r+1/4}$ for some $\delta_0>0$ defined in Lemma \ref{lem:separation_lemma_1}. Recall that $\Fc_r$ is the $\sigma$-algebra generated by $Y^1_r$ and $Y^2_r$.
	\begin{lemma}\label{lem:separation_lemma_1}
		There exist universal constants $C>0$, $\delta_0\in (0,1/2)$ and a stopping time (w.r.t.\  $\Fc_r$) $\zeta\in [r,r+1/4]$ such that
		\begin{equation}
		\Eb\big(e^{-\lambda \wt L^1_{r+1}} \one_{\Upsilon(\zeta, r+1,\delta_0)} \mid\Fc_{r}\big)
		\ge C \Eb\big(e^{-\lambda \wt L^1_{r+1}}\mid\Fc_{r}\big).
		\end{equation}
	\end{lemma}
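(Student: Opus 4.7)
My plan is to define $\zeta := \inf\{r' \in [r, r+1/4] : \Nc(r')\} \wedge (r+1/4)$, where $\Nc(r')$ is an $\Fc_{r'}$-measurable ``mild niceness'' event requiring the angular distance between $Y^1(T^1_{r'})$ and $Y^2(T^2_{r'})$ on $\Cc_{r'}$ to be at least some universal constant (say $1/10$). This is a stopping time for the filtration $\{\Fc_{r'}\}_{r' \ge r}$. The main content of the proof is showing that for any realization of $\Fc_\zeta$,
\begin{equation*}
\Eb\bigl(e^{-\lambda \wt L^1_{r+1}} \one_{\Upsilon(\zeta, r+1, \delta_0)} \,\big|\, \Fc_\zeta\bigr) \gtrsim \Eb\bigl(e^{-\lambda \wt L^1_{r+1}} \,\big|\, \Fc_\zeta\bigr),
\end{equation*}
from which the lemma follows by a further conditional expectation with respect to $\Fc_r$, together with the tower property.

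For the lower bound on the left-hand side (when $\zeta < r+1/4$, so $\Nc(\zeta)$ holds), I would exhibit a concrete good event $A \subseteq \Upsilon(\zeta, r+1, \delta_0) \cap \Lc(\zeta, r+1, \delta_0) \cap \Ec(\zeta, r+1, \delta_0/e)$ by requiring the continuations of the excursions to stay in the wedges $W^i_{\zeta, -}(\delta_0/e)$ (as in Definition~\ref{def:exc-in-wed}) and the loop soup to be small near both endpoints (as in Definition~\ref{def:loop_delta_small}). Since $\Nc(\zeta)$ ensures the two wedges are disjoint and do not cross $\Ic^1$, these sub-events are jointly achievable, and FKG (Lemma~\ref{lem:FKG}) combined with \eqref{eq:2BM_wedge} and \eqref{eq:2loop_small} yields $\Pb(A \mid \Fc_\zeta) \gtrsim 1$. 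On $A$, Lemma~\ref{lem:ed-comparable} gives $\wt L^1_{r+1} \le L^*(\zeta, r+1, \delta_0) + O(1)$, producing a lower bound of the form $e^{-O(\lambda)}\, \Eb(e^{-\lambda L^*(\zeta, r+1, \delta_0)} \mid \Fc_\zeta)$. For the matching upper bound on the right-hand side, I would apply Lemma~\ref{lem:admissible} to the admissible pair $K^i := Y^i[T^i_\zeta, T^i_{r+1}]$ (valid on $\{\wt L^1_{r+1}<\infty\}$, which carries the full tilted measure) to obtain $L^*(\zeta, r+1, \delta_0/e) \le \wt L^1_{r+1} + O(1)$ pointwise. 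Passing between $\delta_0$ and $\delta_0/e$ changes $L^*$ by a bounded additive amount via Lemma~\ref{lem:comparison-principle} and the wedge estimate \eqref{eq:wedge-ex}, so both sides are sandwiched by $\Eb(e^{-\lambda L^*(\zeta, r+1, \delta_0)} \mid \Fc_\zeta)$ up to universal constants, and the inequality reduces to the lower bound $\Pb(A \mid \Fc_\zeta) \gtrsim 1$ already established.

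The main obstacle is the degenerate case $\zeta = r+1/4$, in which $\Nc$ never holds, so the two excursions stay trapped in a narrow angular sector throughout $\Ac(r, r+1/4)$. In this case the endpoints $Y^i(T^i_{r+1/4})$ are close together on $\Cc_{r+1/4}$, so the \emph{complement} of the cone containing $Y^1_{r+1/4} \cup Y^2_{r+1/4}$ has a macroscopic angular range. I would handle this by directly witnessing $\Upsilon(r+1/4, r+1, \delta_0)$: take a radial ray from $\Ic^1$ (which remains macroscopic on $\Cc_0$ since $Y^1(0) \ne Y^2(0)$) that travels through the complementary angular sector to infinity. The probability that this ray avoids $\Lambda_{r+1}$ and the two $\delta_0 e^{r+1/4}$-balls around the excursion endpoints is bounded below by a universal constant via Lemma~\ref{lem:cluster-thin} and FKG. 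One then checks that on this event the above sandwiching still works (with $L^*$ replaced by the extremal distance of the complementary cone, which is controlled by \eqref{eq:wedge-ex}). This case analysis, combined with the main construction, closes the proof.
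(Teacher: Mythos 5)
Your proposal replaces the paper's key construction with a ``mild niceness'' event $\Nc(r')$ depending only on the angular separation of the two excursion endpoints, but this misses the central difficulty of the lemma, and I do not think the argument can be repaired along these lines. The target event $\Upsilon(\zeta,r+1,\delta_0)$ requires a continuous path from the arc $\Ic^1\subset\Cc_0$ all the way to infinity that avoids the \emph{entire} traces $Y^1_\zeta\cup Y^2_\zeta$ in $\Ac(0,\zeta)$, the two balls $\Bc^i_\zeta(\delta_0)$, and the whole loop soup $\Lambda_{0,r+1}$. Given $\Fc_\zeta$, the traces $Y^i_\zeta$ are frozen, and they may wind around the annulus in a way that leaves only an extremely narrow channel from $\Ic^1$ to $\Cc_\zeta$ --- a feature that the endpoint separation $\Nc(\zeta)$ says nothing about. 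In such a configuration the conditional probability of the loop soup allowing a $\delta_0$-wide gap along that channel is \emph{not} bounded below by a universal constant, so your assertion ``$\Pb(A\mid\Fc_\zeta)\gtrsim 1$'' fails. (FKG, \eqref{eq:2BM_wedge} and \eqref{eq:2loop_small} only control the continuations and the loop soup near the endpoints; they cannot manufacture a $\delta_0$-wide corridor through the frozen part of the configuration.) This is precisely what makes the lemma nontrivial, and it is why the paper instead takes $\zeta$ to be governed by the first time the $\delta$-nice event $\Upsilon(r+s, m)$ itself occurs --- which is measurable with respect to the excursions and the loop soup jointly, not the excursions alone. The substance of the paper's proof is then the quantitative estimate that under the tilted measure $e^{-\lambda\wt L^1_{r+1}}$, the time $\tau_m$ until $\delta_0$-niceness is small with high probability, shown via the non-disconnection events $D_m$ and the summable recursion \eqref{eq:recursion-tau_m}.

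The degenerate case $\zeta=r+1/4$ in your proposal has the same flaw in sharper form. First, $\Ic^1$ is an arc on $\Cc_0$, not on $\Cc_{r+1/4}$, and it is not macroscopic in general (it is the counterclockwise arc between $Y^1(0)$ and $Y^2(0)$, which can be arbitrarily short). Second, a radial ray from $\Ic^1$ through a ``complementary angular sector'' at scale $r+1/4$ would still have to avoid the full excursion traces $Y^i_{r+1/4}$ throughout $\Ac(0,r+1/4)$; the fact that the \emph{endpoints} lie in a narrow sector does not imply the \emph{paths} do. So the proposed witness for $\Upsilon(r+1/4,r+1,\delta_0)$ does not exist in general. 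In short: the stopping time you construct is too weak to guarantee $\delta_0$-niceness, and the lemma's entire content is the estimate that $\delta_0$-niceness is achieved early with high tilted probability, which your argument never establishes.
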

	\begin{proof}
In this proof, we make the following abbreviations: for any $\Delta>0$ 
$$\Upsilon(s,\Delta):=\Upsilon(s,r+1, \delta), \,\, \Ec_\Delta:=\Ec(r,r+1, \delta), \,\, \Lc_\Delta:=\Lc (r,r+1, \delta),  \,\, L^{*}_\Delta:=L^{*}(r,r+1,\delta).$$
Recall our convention in \eqref{eq:Delta} that $\delta=-\exp(-\Delta)$) and Definitions~\ref{def:delta-nice},~\ref{def:exc-in-wed},~\ref{def:loop_delta_small} and \eqref{eq:L*} for definitions of the above quantities respectively.
For any positive integer $m$, let 
		\[
		\tau_m:=\inf\{ 0\le s\le 1: \Upsilon(r+s,m)\mbox{ occurs}\}\wedge 1.
		\]
		For each $1\le k\le m^2$, let $D_{k,m}$ be the event that  $Y^i\big[T^i_{r+(k-1)e^{-m/2}}, T^i_{r+ke^{-m/2}}\big]$ does not disconnect the disk of radius $e^{-m} e^{r+(k-1)e^{-m/2}}$ centered at $Y^i\big(T^i_{r+(k-1)e^{-m/2}}\big)$ from infinity for both $i=1$ and $2$. Then there exists $m_0\ge 1$ such that for all $m\ge m_0$ and $1\le k\le m^2$, we have $\Pb(D_{k,m})\le (1-\beta)^2$, where 
		$\beta$ is the probability that a planar Brownian motion started from the origin and stopped upon reaching the circle of radius $2$ centered at the origin disconnects the unit disk from infinity. Let $D_{m}:=\bigcap_{k=1}^{m^2} D_{k,m}$. From the strong Markov property of Brownian motion, we get that for some universal constant $a>0$,
		\begin{equation}\label{eq:ineq-for-D}
		\Pb(D_m)\le e^{-am^{2}}.
		\end{equation} 
Note that 
		\begin{equation}\label{eq:subset_D}
		\{ \wt L^1_{r+1}<\infty \}\cap \{ \tau_m\ge m^{2}e^{-m/2} \}\subseteq D_m.
		\end{equation}
		It then follows that
		\begin{align*}
		\Eb(e^{-\lambda \wt L_{r+1}^{1}} \one_{\tau_{m} \geq m^{2}e^{-m/2} } \mid \mathcal{F}_{r}) e^{-2\alpha_2(m+6)}
		&\le \Eb(e^{-\lambda \wt L_{r+1}^{1}} \one_{D_m} \mid \mathcal{F}_{r}) \Pb(\Lc_{m+2})\\
		&\le \Eb(e^{-\lambda \wt L_{r+1}^{1}} \one_{D_m} \one_{\Lc_{m+2}} \mid \mathcal{F}_{r}),
		\end{align*}
		where we use \eqref{eq:subset_D} and \eqref{eq:2loop_small} in the first inequality, and use Lemma~\ref{lem:FKG} (FKG inequality) in the last inequality since $e^{-\lambda \wt L_{r+1}^{1}}$ and $\one_{\Lc_{m+2}}$ are both decreasing with respect to the loop soup and $D_m$ and $\mathcal{F}_{r}$ are both independent of the loop soup. (Note that $\{ \tau_m\ge m^{2}e^{-m/2} \}$ is increasing with respect to the loop soup.) In order to use Lemma~\ref{lem:FKG}, we have replaced it with $D_m$ to gain independence.
		Moreover, 
		\begin{align*}
		&\Eb(e^{-\lambda \wt L_{r+1}^{1}} \one_{D_m} \one_{\Lc_{m+2}} \mid \mathcal{F}_{r})
		\one_{\Upsilon(r,m+1)}\\
		\le\; &e^{\lambda C (m+2)}\Eb(e^{-\lambda L^*_{m+2}} \one_{D_m} \one_{\Lc_{m+2}} \mid \mathcal{F}_{r})
		\one_{\Upsilon(r,m+1)}\\
		\le\;& e^{\lambda C (m+2)}e^{-am^2}\Eb(e^{-\lambda L^*_{m+2}} \one_{\Lc_{m+2}} \mid \mathcal{F}_{r})
		\one_{\Upsilon(r,m+1)}.
		\end{align*}
In the first inequality above, we use \eqref{eq:bar-le-wt} and the fact that $\Lc_{m+1}$ holds on the event $\Lc_{m+2}$.
In the second inequality we combine \eqref{eq:ineq-for-D} and the fact that $D_m$ is independent of $L^{*}_{m+2}$, $\Lc_{m+2}$ and $\mathcal{F}_{r}$.
We also have
		\begin{align*}
		&e^{-2\alpha_1(m+4)}\Eb(e^{-\lambda L^*_{m+2}} \one_{\Lc_{m+2}} \mid \mathcal{F}_{r})
		\one_{\Upsilon(r,m+1)}\\
		\le\; &\Eb(e^{-\lambda L^*_{m+2}} \one_{\Lc_{m+2}} \one_{\Ec_{m+3}}\mid \mathcal{F}_{r})
		\one_{\Upsilon(r,m+1)}\\
		\le\; &\Eb(e^{-\lambda \wt L_{r+1}^{1}} \mid \mathcal{F}_{r})
		\one_{\Upsilon(r,m+1)}.
		\end{align*}
In the first inequality above, we use \eqref{eq:2BM_wedge} and the fact that $\Ec_{m+3}$ is independent of $\Lc_{m+2}$ and $L^{*}_{m+2}$ given $\Fc_r$. In the second inequality above, we use \eqref{eq:bar-ge-wt} and the fact that $\Upsilon(r,m+2)$ holds on the event $\Upsilon(r,m+1)$.
		Combining all the inequalities above, we get 
		\begin{equation}
		\Eb(e^{-\lambda \wt L_{r+1}^{1}} \one_{\tau_{m} \geq m^{2}e^{-m/2} } \mid \mathcal{F}_{r}) \one_{\Upsilon(r,m+1)}
		\le \eps_m\Eb(e^{-\lambda \wt L_{r+1}^{1}} \mid \mathcal{F}_{r})
		\one_{\Upsilon(r,m+1)}
		\end{equation}
		for $m\ge m_0$, where $\eps_m=e^{-am^2+\lambda C (m+2)+2\alpha_1(m+4)+2\alpha_2(m+6)}$ is a summable sequence. Similarly, by replacing $r$ with $r+\tau_{m+1}$, we see that for all $m\ge m_0$,
		\begin{equation}\label{eq:recursion-tau_m}
		\Eb(e^{-\lambda \wt L_{r+1}^{1}} \one_{\tau_{m} \leq r_m } \mid \mathcal{F}_{r+\tau_{m+1}}) 
		\ge (1-\eps_m) \Eb(e^{-\lambda \wt L_{r+1}^{1}} \mid \mathcal{F}_{r+\tau_{m+1}}) \one_{\tau_{m+1}\le r_{m+1}},
		\end{equation}
		where $$r_{m}=\sum_{l=m}^{\infty}l^2e^{-l/2}.$$ Let $m_1$ be the smallest integer such that $m_1\ge m_0, r_{m_1}<1/4$, and $\eps_l<1$ for all $l\ge m_1$. 
		Iterating \eqref{eq:recursion-tau_m} from $m=m_1$ to infinity using the towering property of conditional expectations, we obtain 
		\begin{equation}
		\Eb(e^{-\lambda \wt L_{r+1}^{1}} \one_{\tau_{m_1} \leq r_{m_1} } \mid \mathcal{F}_{r})
		\ge \prod_{m=m_1}^{\infty} (1-\eps_m) \Eb(e^{-\lambda \wt L_{r+1}^{1}} \mid \mathcal{F}_{r}),
		\end{equation}
		where we used the fact that if $\wt L_{r+1}^{1}<\infty$, then $\tau_m=0$ for all sufficiently large $m$. We conclude the lemma with $\delta_0=e^{-m_1}$, $\zeta=r+(\tau_{m_1}\wedge 1/4)$ and $C=\prod_{m=m_1}^{\infty} (1-\eps_m)$, where we know $C>0$ since $\eps_m$ is a summable sequence.
	\end{proof}

We are now ready to prove Lemma~\ref{lem:separation_lem_2} by using Lemma~\ref{lem:separation_lemma_1}.
		\begin{proof}[Proof of Lemma~\ref{lem:separation_lem_2}]
			Let $\zeta\in [r,r+1/4]$ be the stopping time and $\delta_0$ be the constant  in Lemma~\ref{lem:separation_lemma_1}. By Lemma~\ref{lem:separation_lemma_1}, it suffices to prove that 
			\begin{equation}\label{eq:G-delta}
			\Eb\big(e^{-\lambda \wt L^1_{r+1}} \one_{G^+(r+1)} \mid\Fc_{r}\big)\ge 
			C \Eb\big(e^{-\lambda \wt L^1_{r+1}} \one_{\Upsilon(\zeta, r+1, \delta_0)}\mid\Fc_{r}\big),
			\end{equation}
where $C=C(\delta_0,\lambda_0)>0$ (recall that $\lambda_0$ is the constant from the statement of Lemma~\ref{lem:separation_lem_2}). Throughout the proof, all the constants that we introduce depend only on $\delta_0$ and $\lambda_0$.

The idea of the proof for \eqref{eq:G-delta} is that, on the $\delta_0$-nice event $\Upsilon(\zeta, r+1, \delta_0)$, since the configuration is $\delta_0$-nice at scale $\zeta$, we have got enough space to let the future parts of the Brownian excursions (after hitting $\Cc_\zeta$) evolve in some well-chosen ``tubes'' before reaching $\Cc_{r+1}$, with a positive probability (depending on $\delta_0$). If we further condition the loop soup to be sufficiently small (which costs another positive probability depending on $\delta_0$), then the configuration in $\Ac_{0,r+1}$ will be very nice at the end. 
In practice, in order to gain independence when conditioning on such events, we will look at an additional intermediate layer $\Cc_{r+2/5}$ and replace $\wt L^1_{r+1}$ in \eqref{eq:G-delta} by the auxiliary extremal distance $L^*(\zeta,r+2/5, \delta_0/e)$ (see \eqref{eq:L*}) at an intermediate step.
\smallskip

	\begin{figure}[t]
	\centering
	\includegraphics[scale=0.55]{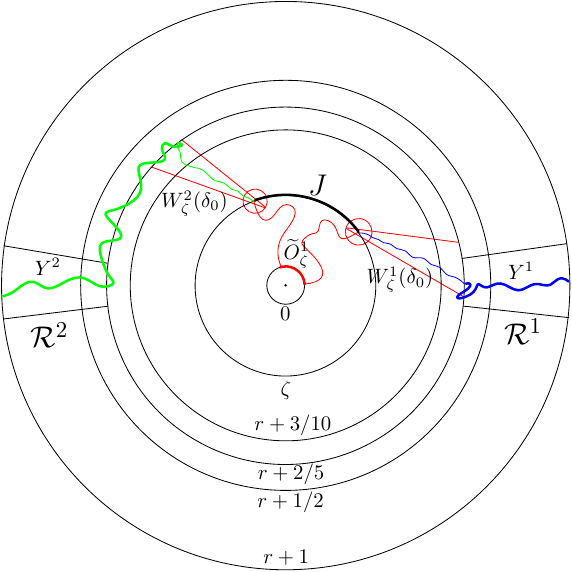}\;
	\includegraphics[scale=0.55]{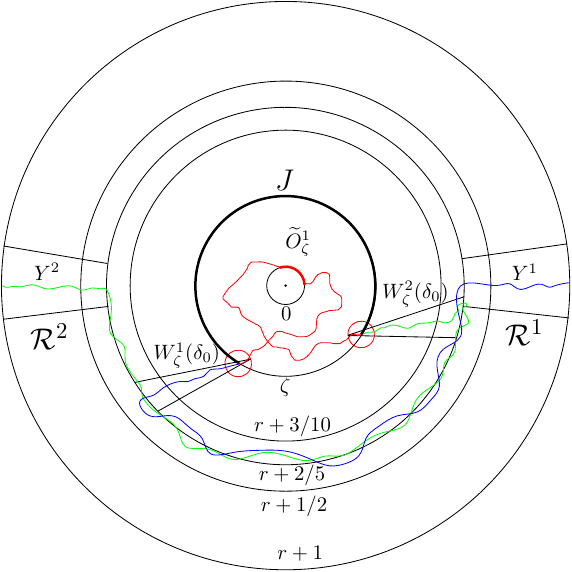}
	\caption{Proof of Lemma~\ref{lem:separation_lem_2}. \textbf{Left:} Illustration of the event $E_1\cap E_1'$. The six concentric circles from the inside to the outside are $\Cc_0$, $\Cc_{\zeta}$, $\Cc_{r+3/10}$, $\Cc_{r+2/5}$, $\Cc_{r+1/2}$ and $\Cc_{r+1}$. 
			The arc $J$ on $\Cc_{\zeta}$ is in bold black. The excursion $Y^1$ (resp. $Y^2$) from (the hitting time of) $\Cc_\zeta$ to $\Cc_{r+2/5}$ is in blue (resp. green), and from $\Cc_{r+2/5}$ to $\Cc_{r+1}$ is in bold blue (resp. green). The ball $\Bc^i_{\zeta}(\delta_0)$ and the wedge $W^i_{\zeta}(\delta_0)$ are in red. We let $Y^i$ first stay in the respective wedge and then reach $\Rc^i$ in the annulus $\Ac(r+3/10,r+1/2)$. \textbf{Right:} Illustration of the event $E_2\cap E_2'$.}
	\label{fig:sepa-1}
\end{figure}

			Let $J:=\partial\wt O_{\zeta}^1\cap\Cc_{\zeta}$. There are two possible orientations for $J$ and we need to treat them separately. Let $E_1$ (resp.\ $E_2$) be the event that $J$ is a counterclockwise (resp.\ clockwise) arc on $\Cc_{\zeta}$ from $Y^1(T^1_\zeta)$  to $Y^2(T^2_\zeta)$. See Figures~\ref{fig:sepa-1} for an illustration. 
			Note that 
			\begin{equation}\label{eq:deltaE1E2}
			 \Upsilon(\zeta, r+1, \delta_0)\subseteq E_1\cup E_2.
			\end{equation}

			We first focus on the case of $E_1$. The case of $E_2$ is similar and we will discuss it at the end of the proof. We aim to prove that
			\begin{equation}\label{eq:E-1}
			\Eb\big(e^{-\lambda \wt L^1_{r+1}} \one_{G^+(r+1)} \mid\Fc_{r}\big)\gtrsim 
			 \Eb\big(e^{-\lambda \wt L^1_{r+1}} \one_{E_1\cap\Upsilon(\zeta, r+1, \delta_0)}\mid\Fc_{r}\big).
			\end{equation}
			We define the following two opposite wedges in $\Ac(r+2/5,r+1)$:
			\begin{align*}
			&\Rc^1:=\{ z: |\arg(z)| \le 1/40 \}\cap\Ac(r+2/5,r+1) ,\\
			&\Rc^2:=\{ z: |\arg(z)-\pi| \le 1/40 \}\cap\Ac(r+2/5,r+1).
			\end{align*}
			We also define 
			$$E_1' := \{W^1_{\zeta}(\delta_0)\cap\Cc_{r+2/5}\cap\ol{\Rc^1}\neq\emptyset\}.$$ 
			Note that $E_1'$ only depends on the points $Y^1(T^1_\zeta)$ and $Y^2(T^2_\zeta)$, and by rotational invariance, we have
			\begin{equation}\label{eq:E'_1}
				\Pb( E_1' \mid E_1\cap\Upsilon(\zeta, r+1, \delta_0)) \ge C_1
			\end{equation}
			for some $C_1>0$. 
We further define the event $G$:
			\begin{itemize}
				\item For $i=1,2$, $Y^i[T^i_{r+2/5},T^i_{r+1}]\subseteq \Ac(r+3/10,r+1/2)\cup\Rc^i$.
				\item The distance between $Y^1[T^1_{r+2/5},T^1_{r+1}]$ and $Y^2[T^2_{r+2/5},T^2_{r+1}]$ is at least $e^r/1000$.
			\end{itemize}		
			Note that $G$ only depends on $Y^i[T^i_{r+2/5},T^i_{r+1}]$, $i=1,2$. Moreover, there exists a constant $C_0>0$ such that 
			\begin{align}\label{eq:G_lower_bound}
			\Pb \big(G \mid Y^1(T^1_{r+2/5}), Y^2(T^2_{r+2/5}) \big) \ge \, C_0 \quad \text{ on } \cap_{i=1}^2\big\{Y^i(T^i_{r+2/5})\in W^i_{\zeta}(\delta_0) \big\}.
			\end{align}	
Let $F$ be the event that all the clusters in $\Lambda_{0,r+1}$ have diameter at most $e^r/1000$. Then $\Pb(F)$ is equal to a universal constant. Recall Definitions~\ref{def:delta-nice},~\ref{def:exc-in-wed} and~\ref{def:loop_delta_small}, and define
			\[
			H:=E_1'\cap E_1 \cap \Upsilon(\zeta, r+2/5,\delta_0) \cap G\cap F\cap\Lc(\zeta,r+2/5, \delta_0/e) \cap\Ec(\zeta,r+2/5, \delta_0 e^{-2}).
			\]
Note that $G^+(r+1)$ holds on the event $H$.
By using a reasoning similar to the proof of Lemma~\ref{lem:concatenation}, on the event $H$,
			we have 
			\begin{equation}\label{eq:r+1tor+1/2}
			\wt L^1_{r+1}\le  L^*(\zeta,r+2/5, \delta_0/e)+C_2
			\end{equation}
			for some constant $C_2>0$.
	Therefore		 
			\begin{equation}
			\Eb\big(e^{-\lambda \wt L^1_{r+1}} \one_{G^+(r+1)} \mid\Fc_{r}\big)\ge 
			e^{-C_2\lambda_0} \Eb\big(e^{-\lambda L^*(\zeta,r+2/5, \delta_0/e)} \one_{H}\mid\Fc_{r}\big).
			\end{equation} 
			By the FKG inequality (Lemma~\ref{lem:FKG}) and the strong Markov Property of the Brownian motion,  we have
			\begin{align*}
			& \Eb\big(e^{-\lambda L^*(\zeta,r+2/5, \delta_0/e)} \one_{H}\mid\Fc_{r}\big) \ge \Pb(F) \times  \Eb\Big[ \Pb( \Ec(\zeta,r+2/5, \delta_0 e^{-2}) \cap G \mid \Fc_\zeta)\\
		&\qquad \qquad \qquad \qquad	\times\Eb\big(e^{-\lambda L^*(\zeta,r+2/5, \delta_0/e)} \one_{E_1'\cap E_1\cap\Upsilon(\zeta, r+1, \delta_0)\cap\Lc(\zeta,r+2/5, \delta_0/e)} \mid\Fc_{\zeta}\big) \mid \Fc_r \Big].
			\end{align*}
Note that $\Pb(\Ec(\zeta,r+2/5, \delta_0 e^{-2}) \mid \Fc_\zeta) \ge C_3$ for some constant $C_3>0$. Combined with \eqref{eq:G_lower_bound}, we get
\begin{align*}
\Pb(\Ec(\zeta,r+2/5, \delta_0 e^{-2})\cap G \mid \Fc_\zeta) \ge C_4
\end{align*}
for some constant $C_4>0$.
This leads to
\begin{align*}
\Eb\big(e^{-\lambda L^*(\zeta,r+2/5, \delta_0/e)} \one_{H}\mid\Fc_{r}\big) \ge C_5 \Eb\big(e^{-\lambda L^*(\zeta,r+2/5, \delta_0/e)} \one_{E_1\cap\Upsilon(\zeta, r+1, \delta_0)\cap\Lc(\zeta,r+2/5, \delta_0/e)}\mid\Fc_{r}\big)
\end{align*}
for some constant $C_5>0$, where we used rotational invariance to get rid of $E_1'$ (see \eqref{eq:E'_1}).
			Furthermore, by \eqref{eq:bar-le-wt} and noting that $\Upsilon(\zeta, r+1, \delta_0)$ implies $\Upsilon(\zeta,r+2/5,\delta_0)$, we have
			\[
			L^*(\zeta,r+2/5, \delta_0/e)\le \wt L^{1}_{r+2/5}+C_6 (2/5+\Delta_0) 
			\]  
on the event $\Upsilon(\zeta, r+1, \delta_0)\cap\Lc(\zeta,r+2/5, \delta_0/e)$ for some constant $C_6>0$, where $\Delta_0:=-\log \delta_0$.
			Noting that $\wt L^1_{r+2/5}\leq \wt L^1_{r+1}$ and using the FKG inequality (Lemma~\ref{lem:FKG}) again, we obtain 
			\begin{align*}
			&\Eb\big(e^{-\lambda L^*(\zeta,r+2/5, \delta_0/e)} \one_{E_1\cap\Upsilon(\zeta, r+1, \delta_0)\cap\Lc(\zeta,r+2/5, \delta_0/e)}\mid\Fc_{r}\big)\\
			\ge \;& e^{-\lambda_0C_6(2/5+\Delta_0)} \Pb(\Lc(\zeta,r+2/5, \delta_0/e)) 
			\Eb\big(e^{-\lambda \wt L^{1}_{r+1}} \one_{E_1\cap\Upsilon(\zeta, r+1, \delta_0)}\mid\Fc_{r}\big).
			\end{align*}
			By \eqref{eq:2loop_small}, we have $ \Pb(\Lc(\zeta,r+2/5, \delta_0/e))  \ge C_7$ for some constant $C_7>0$. Combining all the inequalities above, we obtain \eqref{eq:E-1}.
			
			On the event $E_2$, the component $\wt O^1_{\zeta}$ has on its boundary the clockwise arc on $\Cc_{\zeta}$ from $Y^1(T^1_\zeta)$  to $Y^2(T^2_\zeta)$. We can let the Brownian motions move like that in Figure~\ref{fig:sepa-1} (right) and repeat the argument with $E_2, E_2'$ in place of $E_1, E_1'$, where
\begin{align*}
E_2':=\{W^2_{\zeta}(\delta_0)\cap\Cc_{r+2/5}\cap\ol{\Rc^1}\neq\emptyset\}.
\end{align*}
This will allow us to deduce that
			\begin{equation}\label{eq:E-2}
			\Eb\big(e^{-\lambda \wt L^1_{r+1}} \one_{G^+(r+1)} \mid\Fc_{r}\big)\gtrsim 
			\Eb\big(e^{-\lambda \wt L^1_{r+1}} \one_{E_2\cap\Upsilon(\zeta, r+1, \delta_0)}\mid\Fc_{r}\big).
			\end{equation}
			Combining \eqref{eq:E-1} with \eqref{eq:E-2} and noting \eqref{eq:deltaE1E2}, we obtain \eqref{eq:G-delta}. This completes the proof.
		\end{proof}

	\section{Dimension of multiple points on the outer boundaries of outermost clusters in a Brownian loop soup}\label{sec:dim}
	
	In this section, we will focus on the proof of the following Proposition~\ref{prop:dim-upper}.
	Let $\Gamma_0$ be a Brownian loop soup in the unit disk with intensity $c\in(0,1]$. Let $\Sc_\mathrm{b}$ (resp.\ $\Dc_\mathrm{b}$, $\Tc_\mathrm{b}$) be the set of  simple (resp.\  double,  triple) points of $\Gamma_0$ which are on the outer boundaries of  outermost clusters in $\Gamma_0$. Recall from Theorem~\ref{thm:same_exponent} that the generalized disconnection exponents satisfy $\xi_c(k,0)=\xi_c(k)$ and their values are given by \eqref{eq:exponent}.
	
	\begin{proposition}\label{prop:dim-upper}
		For any $c\in(0,1]$, the following holds almost surely
		\begin{equation}\label{eq:up-bound}
		\dimh(\Sc_\mathrm{b})\le 2-\xi_c(2), \quad
		\dimh(\Dc_\mathrm{b})\le 2-\xi_c(4), \quad \Tc_\mathrm{b}=\emptyset,
		\end{equation}
		and the following holds with positive probability
		\begin{equation}\label{eq:lo-bound}
		\dimh(\Sc_\mathrm{b})\ge 2-\xi_c(2) , \quad
		\dimh(\Dc_\mathrm{b})\ge 2-\xi_c(4).
		\end{equation}
	\end{proposition}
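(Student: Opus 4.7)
The plan is to follow the template of Kiefer-M\"orters~\cite{MR2644878} (dimension of double points on the Brownian frontier), substituting $\xi_c(k)$ for the Brownian exponents and using the up-to-constants estimate of Theorem~\ref{thm:xi-k} as the main analytic input. The guiding heuristic is that near a $k$-tuple point on an outermost cluster boundary one sees $2k$ Brownian-path arms (each loop visit contributes two), and the outer-boundary condition forces their union, together with all attached loop-soup clusters, to not disconnect a microscopic neighborhood from the macroscopic scale; by Theorem~\ref{thm:xi-k}, this occurs with probability $\asymp \eps^{\xi_c(2k)}$ at scale $\eps$.

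For the upper bound I would cover $\Ub$ at scale $\eps=2^{-n}$ by $O(\eps^{-2})$ dyadic squares, and for each such square $Q$ meeting $\Sc_\mathrm{b}$ (resp.\ $\Dc_\mathrm{b}$, $\Tc_\mathrm{b}$) I would exhibit, by picking any loop through the multiple point, $2$ (resp.\ $4$, $6$) non-disconnecting arms from $\eps$ up to a macroscopic scale, so that Theorem~\ref{thm:xi-k} gives $\Pb(Q\cap\cdot\neq\emptyset)\lesssim \eps^{\xi_c(2k)}$. A direct computation from~\eqref{eq:exponent} shows $\xi_c(6)>2$ throughout $c\in(0,1]$ (explicitly $\xi_1(6)=3$ and $\xi_0(6)=(142-2\sqrt{145})/48>2$, and $\xi_c(6)$ is increasing in $c$), so the expected number of $\eps$-squares meeting $\Tc_\mathrm{b}$ is $\lesssim \eps^{\xi_c(6)-2}$, summable along the dyadic sequence, and Borel-Cantelli yields $\Tc_\mathrm{b}=\emptyset$ almost surely. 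For $\Sc_\mathrm{b}$ and $\Dc_\mathrm{b}$ the usual Markov/Borel-Cantelli argument on covering numbers gives $\dimh(\Sc_\mathrm{b})\le 2-\xi_c(2)$ and $\dimh(\Dc_\mathrm{b})\le 2-\xi_c(4)$ almost surely.

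For the lower bound, fix $k\in\{1,2\}$ and for each scale $\eps=2^{-n}$ introduce the event $E_\eps(z)$ that $k$ loops of $\Gamma_0$ each intersect $\Bc(z,\eps)$ and that their union together with all attached clusters does not disconnect $\Bc(z,\eps)$ from $\partial\Bc(z,1/10)$. Define the random measure
\begin{align*}
\mu_\eps(dz):=\Pb(E_\eps(z))^{-1}\,\one_{E_\eps(z)}\,dz,
\end{align*}
so that $\Eb\mu_\eps(A)=|A|$ for every measurable $A\subset\Ub$ at positive distance from $\partial\Ub$. The crux is the two-point estimate
\begin{align*}
\Pb(E_\eps(z)\cap E_\eps(w))\lesssim \eps^{2\xi_c(2k)}\,|z-w|^{-\xi_c(2k)}, \qquad \eps\le|z-w|\le 1/20,
\end{align*}
which says that the two local configurations decouple at scale $|z-w|$. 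This is proved by splitting at scale $|z-w|$ into a joint non-disconnection event from scale $|z-w|$ to scale $1$ involving $2k$ total arms (cost $|z-w|^{\xi_c(2k)}$ by Theorem~\ref{thm:xi-k}) and two essentially independent local events from scale $\eps$ to scale $|z-w|$ around $z$ and $w$ (each of cost $(\eps/|z-w|)^{\xi_c(2k)}$); the concatenation of these three scales is justified by the quasi-multiplicativity furnished by the separation lemma (Lemma~\ref{lem:separation_lem_2}), applied to $\alpha$-separated configurations as in Theorem~\ref{thm:up-to-constants-ex-k-sep}. The two-point estimate then yields $\Eb\mu_\eps(A)^2\lesssim 1$ and finite $s$-energy for every $s<2-\xi_c(2k)$; Paley-Zygmund provides a uniform-in-$\eps$ lower bound on $\Pb(\mu_\eps(A)>\delta)$, weak compactness produces a subsequential limit $\mu$, and Frostman's lemma gives $\dimh(\mathrm{supp}\,\mu)\ge 2-\xi_c(2k)$ on the positive-probability event $\{\|\mu\|>0\}$.

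The main obstacle is the two-point estimate. Carrying out the three-scale decomposition rigorously requires, at the intermediate scale $|z-w|$, conditioning the $2k$ arms to land in pairs of well-separated wedges around $z$ and $w$, and then factorizing the two local events via the spatial Markov property of the loop soup (loops contained in each local ball versus loops crossing the separating annulus are independent). This is precisely where the technology of Section~\ref{sec:separation-lemma} pays off: the very-nice events of Definition~\ref{def:nice_end} are engineered to produce exactly this uniform separation at the endpoints of arms. A secondary but essential task is verifying that weak limits of $\mu_\eps$ are supported on \emph{outer} boundaries of \emph{outermost} clusters rather than on boundaries of buried sub-clusters; this follows because $E_\eps(z)$ was set up to impose non-disconnection all the way up to a fixed macroscopic scale, combined with the positive-probability cluster-separation estimate of Lemma~\ref{lem:cluster_small}.
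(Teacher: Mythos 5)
Your high-level scheme (first and second moment estimates, quasi-multiplicativity via the separation lemma, Frostman/Paley--Zygmund) is in the same spirit as the paper, which also declares that it follows Kiefer--M\"orters. However, the paper does not attempt the direct second-moment construction on $\Gamma_0$ that you propose; it first proves an intermediate result (Proposition~\ref{prop:single_loop}) for the multiple points on the frontier of a \emph{single} Brownian loop $\gamma$ sampled from a bubble measure and placed inside an independent loop soup, using first and second moments on $(k,n)$-squares (Lemma~\ref{prop:(k,n)}), and only then transfers to $\Gamma_0$ via the Poisson structure of the soup (a loop of $\Gamma_0$ of diameter $\ge\eps$, given that one exists, has law mutually absolutely continuous with a fresh loop sampled from $\mu_\eps$). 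That single-loop reduction is precisely what makes the moment estimates tractable: once you have one loop in hand, you can cut it into $2k$ crossings and $2k$ bridges (Lemma~\ref{lem:one_loop_decomp}), show the crossings are absolutely continuous with respect to $2k$ independent excursions, and plug directly into Theorem~\ref{thm:up-to-constants-ex-k-sep}. The factors $n^{-k}$ from $V(S)$, $n^{2k}$ from the conditioned arm probability, and $n^{-k}$ from the bridge non-disconnection cancel to yield the clean $\asymp 2^{-n\xi_c(2k)}$.

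Two concrete gaps in your sketch. First, for the upper bound you write that you would ``pick any loop through the multiple point'' to exhibit $2k$ arms, but this fails when the $k$ visits are made by distinct loops: a double point visited once each by $\gamma_1$ and $\gamma_2$ sees only two arms from either loop. The paper's proof of the upper bound (end of Section~\ref{subsec:dim_out_bdy}) explicitly enumerates configurations according to how many distinct loops contribute and, for the two-loop case, uses mutual absolute continuity with \emph{two} independently sampled loops $\xi_1,\xi_2$ and then decomposes the pair into four crossings; your proposal omits this case analysis entirely, which is exactly where the $\xi_c(4)$ and $\xi_c(6)$ exponents have to be justified. Second, for the lower bound you take $E_\eps(z)$ to be ``$k$ loops of $\Gamma_0$ each intersect $\Bc(z,\eps)$ plus non-disconnection,'' and normalize by $\Pb(E_\eps(z))^{-1}$, but you never establish $\Pb(E_\eps(z))\asymp\eps^{\xi_c(2k)}$ --- which is the core of the argument, and in the loop-soup setting is not an immediate consequence of Theorem~\ref{thm:xi-k} because your event couples a Poissonian hitting count with the $2k$-arm structure. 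The paper sidesteps this by restricting to $k$-tuple points of a \emph{single} loop (a sufficient subset of $\Dc_{\mathrm{b}}$ for the lower bound) so that the hitting probability $\Pb(V(S))\asymp n^{-k}$ and the bridge/crossing cancellation are accessible via the strong Markov property and Lemma~\ref{lem:leu-SameScale}. If you wish to pursue your direct route, you would need to redefine $E_\eps(z)$ in terms of an explicit crossing/bridge decomposition of one or more designated loops, and establish the analogues of Lemmas~\ref{lem:one_loop_decomp} and~\ref{lem:odd-even} to get the polylog cancellations; as written, the proposal asserts the first-moment asymptotic but does not supply the decomposition that would prove it.
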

	
	In order to prove Proposition~\ref{prop:dim-upper}, we need to find an adapted setting which allows us to use the generalized disconnection exponents and the up-to-constants estimates derived in the previous section.
	In Section~\ref{subsec:single_loop}, we will define such a setting  and state a result (Proposition~\ref{prop:single_loop}) for multiple points on the frontier of a single Brownian loop thrown into an independent Brownian loop soup.  In Sections~\ref{subsec:first_moment} and~\ref{subsec:second_moment}, we will prove the first and second moment estimates, which are the key ingredients for the proof of Proposition~\ref{prop:single_loop}. 
	Finally in Section~\ref{subsec:dim_out_bdy}, we will relate the result for a single Brownian loop to the initial setting of  Proposition~\ref{prop:dim-upper} and complete its proof.

	\subsection{Multiple points on a single Brownian loop}\label{subsec:single_loop}
	We will first consider multiple points on a single Brownian loop inside an independent loop soup.
	Let us now describe the  set-up. 
	If $D$ is a domain and $\partial D$ is smooth near $z\in \partial D$, the Brownian bubble measure $\mub_{D}(z)$ is just the boundary-to-boundary measure $\mu^D_{z,z}$ introduced in Section~\ref{subsec:facts} with a different normalizing constant. We refer the reader to Section 3.4 in \cite{MR2045953} for details.
	Let $\Gamma_0$ be a Brownian loop soup in the unit disk $\Ub$ with intensity $c\in (0,1]$.
	Let $S_0=[-1/16,1/16]^2$.
	We fix $r\in[1/4,1/2]$, $\theta\in[0,2\pi)$ and let $\gamma$ be a rooted Brownian loop sampled according to the Brownian bubble measure $\mub_{\Bc(0,r)}(r e^{i\theta})$ conditioned on $\{ \gamma\cap S_0\neq\emptyset \}$. 
	Let $\wt\gamma$ be the cluster of $\gamma\cup\Gamma_0$ which contains $\gamma$. 
	Let $\fr(\wt\gamma)$ be the set of points $z$ on the outer boundary of $\wt\gamma$ such that there exists a continuous path from any neighborhood of $z$ to $\Cc_0$ that does not intersect $\wt\gamma$ (i.e., any neighborhood of $z$ is not disconnected from infinity by $\wt\gamma$).
	For $k\in\Nb$, let $\Es_k(\gamma)=\Es_k(\gamma,\Gamma_0)$ be the set of points on $\fr(\wt\gamma)$ visited at least $k$ times by $\gamma$.
	
We aim to prove the following proposition.
	\begin{proposition}\label{prop:single_loop}
		For all $c\in(0,1]$, the followings hold almost surely
		\begin{equation}\label{eq:S-up}
		\dimh(\Es_1\cap S_0) \le 2-\xi_c(2), \quad
		\dimh(\Es_2\cap S_0) \le 2-\xi_c(4), \quad
		\Es_3 \cap S_0=\emptyset,
		\end{equation}
		and the followings hold with positive probability
		\begin{equation}\label{eq:S-lo}
		\dimh(\Es_1\cap S_0) \ge 2-\xi_c(2), \quad
		\dimh(\Es_2\cap S_0) \ge 2-\xi_c(4).
		\end{equation}
	\end{proposition}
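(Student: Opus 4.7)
The plan is a standard discretisation of $S_0$ into dyadic squares combined with first and second moment estimates, where the probabilistic input is the generalised non-disconnection exponent $\xi_c(2k)$ for $k\in\{1,2,3\}$ and the up-to-constants estimates \eqref{eq:k-sep} of Theorem~\ref{thm:up-to-constants-ex-k-sep}, together with the separation lemma (Lemma~\ref{lem:separation_lem_2}). Concretely, for each $n$, cover $S_0$ by squares $\{Q\}$ of side $2^{-n}$ and, for each $k$ and each $Q$ with center $z_Q$, define $A_Q^{(k)}$ as the event that (i) $\gamma$ enters $\Bc(z_Q,2^{-n})$ in at least $k$ disjoint time-intervals (so $2k$ path-segments cross the annulus $\Ac(z_Q,-n\log 2,0)$) and (ii) these $2k$ segments, together with the clusters of $\Gamma_0$ they intersect, do not disconnect $z_Q$ from $\partial\Bc(0,r)$ inside that annulus. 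Any $z\in\Es_k\cap Q$ forces $A_Q^{(k)}$ at the neighbouring scales.

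For the upper bound, after rescaling by $2^n$ the $2k$ path-segments produced in (i), together with the restricted loop soup, are absolutely continuous (with density uniformly bounded in $n$, $Q$) with respect to $2k$ independent Brownian motions started on $\Cc_0$ and moving outwards inside an independent Brownian loop soup of intensity $c$. Combining \eqref{eq:k-sep} with the identification $\xi_c(2k,0)=\xi_c(2k)$ of Proposition~\ref{prop:same-exponents} then yields
\begin{equation*}
\Pb\!\left(A_Q^{(k)}\right)\;\lesssim\; 2^{-n\,\xi_c(2k)}.
\end{equation*}
Hence the expected number of squares with $A_Q^{(k)}$ is at most $C\,2^{n(2-\xi_c(2k))}$. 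Plugging $k=3$ into \eqref{eq:exponent} gives $\xi_c(6)>2$ for every $c\in(0,1]$, so a Borel--Cantelli argument along $n\to\infty$ forces $\Es_3\cap S_0=\emptyset$ almost surely. For $k\in\{1,2\}$ the same bound, combined with a standard $s$-dimensional Hausdorff-measure covering, gives $\dimh(\Es_k\cap S_0)\le 2-\xi_c(2k)$ almost surely.

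For the lower bound I would run a second moment argument on the random measure
\begin{equation*}
\mu_n^{(k)} \;:=\; \sum_Q 2^{n\,\xi_c(2k)}\,\one_{A_Q^{(k)}}\,\mathrm{Leb}_{|Q},
\end{equation*}
where the matching lower bound $\Pb(A_Q^{(k)})\gtrsim 2^{-n\xi_c(2k)}$ follows from the lower bound in \eqref{eq:k-sep} and Lemma~\ref{lem:separation_lem_2} applied to successive dyadic annuli, plus a Markov-type argument forcing $\gamma$ to make $k$ genuine crossings into $\Bc(z_Q,2^{-n})$. This gives $\Eb[\mu_n^{(k)}(S_0)]\asymp 1$. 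For the second moment, group pairs $(Q,Q')$ according to the integer $m$ with $2^{-m-1}\le |z_Q-z_{Q'}|\vee 2^{-n}<2^{-m}$. A pair in $A_Q^{(k)}\cap A_{Q'}^{(k)}$ forces a three-scale picture: from macroscopic scale down to scale $2^{-m}$, a single joint $2k$-arm non-disconnection event around a common midpoint; and within each of two balls of radius $\asymp 2^{-m}$ around $z_Q,z_{Q'}$, an independent $2k$-arm non-disconnection event between scales $2^{-m}$ and $2^{-n}$. Quasi-multiplicativity obtained by combining \eqref{eq:k-sep} with Lemma~\ref{lem:separation_lem_2} yields
\begin{equation*}
\Pb\!\left(A_Q^{(k)}\cap A_{Q'}^{(k)}\right)\;\lesssim\; 2^{-m\,\xi_c(2k)}\cdot 2^{-2(n-m)\,\xi_c(2k)}.
\end{equation*}
Summing over pairs at fixed $m$ gives a contribution $\lesssim 2^{m(2-\xi_c(2k))}$, and summing over $m\le n$ gives $\Eb[\mu_n^{(k)}(S_0)^2]\lesssim 1$ provided $\xi_c(2k)<2$, which holds for $c\in(0,1]$ and $k\in\{1,2\}$. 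By Paley--Zygmund and tightness, a subsequential weak limit $\mu^{(k)}$ of $\{\mu_n^{(k)}\}$ is non-trivial with positive probability, and iterating the upper bound of Step~1 at finer scales identifies $\mathrm{supp}(\mu^{(k)})\subseteq\Es_k\cap S_0$. The same three-scale decomposition controls the expected $s$-energy $\Eb\iint|z-z'|^{-s}\,d\mu^{(k)}(z)\,d\mu^{(k)}(z')$ for every $s<2-\xi_c(2k)$, and Frostman's lemma gives the desired lower bound on $\dimh(\Es_k\cap S_0)$.

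The main obstacle is the second moment estimate. We must replace the $2k$ path-segments carved out of a \emph{single} Brownian loop $\gamma$ (sampled from a bubble measure and conditioned on $\gamma\cap S_0\neq\emptyset$) by configurations of independent Brownian motions in an independent loop soup, so that Theorem~\ref{thm:up-to-constants-ex-k-sep} applies at each of the three scale ranges and with densities uniformly bounded in $n$, $m$, $Q$, $Q'$. The separation lemma is essential here to compose arm events across scales with constants independent of the previous geometry, and also to absorb the small conditional penalty of bringing the bubble endpoint and the conditioning $\gamma\cap S_0\neq\emptyset$ into the scale-invariant framework. By contrast, the first moment step, the upper bounds, and the emptiness of $\Es_3$ are short consequences of \eqref{eq:k-sep} and \eqref{eq:exponent} once the discretisation is in place.
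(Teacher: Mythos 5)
Your plan follows the same overall route as the paper: discretise $S_0$ into dyadic $n$-squares, prove first- and second-moment estimates for the ``good square'' events using the up-to-constants estimates \eqref{eq:k-sep} and the separation lemma (for quasi-multiplicativity across the three scale ranges), and then run the usual Frostman machinery. You have also correctly identified, as the main technical obstacle, that the $2k$ crossings extracted from a single Brownian loop are not a priori independent excursions, and that one must establish uniform absolute continuity; the paper devotes Lemma~\ref{lem:one_loop_decomp} precisely to this decomposition into crossings and bridges with explicit density bounds.

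However, there is a genuine gap in your definition of $A_Q^{(k)}$ and in the assertion that ``any $z\in\Es_k\cap Q$ forces $A_Q^{(k)}$.'' You require the $k$ visits to $\Bc(z_Q,2^{-n})$ to be separated by excursions that cross the \emph{macroscopic} annulus $\Ac(z_Q,-n\log 2,0)$, but for a generic $k$-tuple point $z$ the successive visits of $\gamma$ to $z$ need only be separated by excursions reaching some scale $\delta(z)>0$, which can be arbitrarily small and is not under control. If $\delta(z)\ll 1$, then for small $n$-squares containing $z$ the path produces $2k$ crossings of the annulus from $2^{-n}$ to $\delta(z)$ but not of the macroscopic annulus, so $A_Q^{(k)}$ fails and your covering misses the point. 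The paper handles this by introducing the parameter $j$, defining $\Es_{k,j}$ as the set of $k$-tuple points whose visits are separated at scale $2^{-j}$ (see \eqref{eq:k-times}), proving the first- and second-moment bounds for $(k,n)$-squares with the annulus $\Ac_S$ taken between scales $2^{-n}$ and $\sim 2^{-j}$ (so the implied constants in Lemma~\ref{prop:(k,n)} depend on $j$), and then using $\Es_k=\bigcup_j\Es_{k,j}$ together with countable stability of Hausdorff dimension. This decomposition is essential for the upper bounds and for $\Es_3\cap S_0=\emptyset$; for the lower bounds your macroscopic version only produces a subset of $\Es_k$, which is harmless. Aside from this missing reduction and the deferred absolute-continuity lemma, your moment computations, the identification $\xi_c(k,0)=\xi_c(k)$, the verification $\xi_c(6)>2$, and the energy argument all match the paper's.
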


	We follow the usual strategy for computing Hausdorff dimensions, and adopt almost the same setting as in  \cite{MR2644878}.
	Let $t_{\gamma}$ be the time length of $\gamma$.  For any open or closed sets $A_1,A_2,\cdots$,  we define the following stopping times for the rooted Brownian loop $\gamma$
	\begin{align*}
	\tau\left(A_{1}\right) &:=\inf \left\{t \geqslant 0: \gamma(t) \in A_{1}\right\} \\
	\tau\left(A_{1}, \ldots, A_{n}\right) &:=\inf \left\{t \geqslant \tau\left(A_{1}, \ldots, A_{n-1}\right): \gamma(t) \in A_{n}\right\}, \quad \text { for } n \geqslant 2,
	\end{align*}
	where, as usual, the infimum of the empty set is understood as infinity.
For all $n\ge3$, we divide $S_0$ into $2^{2(n-3)}$ non-overlapping dyadic compact sub-squares of side length $2^{-n}$, which will be called an $n$-square in the following.
	Let $j\ge 4$ and $n\ge j+4$. Suppose $S$ is an $n$-square centered at $v$ and write
	\begin{equation}\label{eq:Sj}
	S^{j}:=\Bc(v,2^{-j}-2^{-n}/\sqrt{2})
	\end{equation}
	for the large ball of radius $2^{-j}-2^{-n}/\sqrt{2}$ concentric with $S$.
	For any $n\ge j+4$ and any $n$-square $S$, let $V(S)$ be the event that
	\begin{equation}\label{eq:visits}
	\tau(S,\partial S^{j},S,\partial S^{j},\cdots,S)<t_{\gamma},
	\end{equation}
	where $S$ appears $k$ times above. In particular, if $k=1$, the above condition reduces to $\tau(S)<\infty$ regardless of $j$. By standard estimates on the hitting probability of Brownian motion (see \cite{MR2677157}), we have 
	\begin{equation}\label{eq:hitting-BM}
	\Pb(V(S))\asymp n^{-k},
	\end{equation}
	where each visit contributes to the probability $n^{-1}$, and the implied constants can be chosen to depend only on $j$. Furthermore, we say an $n$-square $S$ is a $(k,n)$-square if $V(S)$ occurs and $S$ is not disconnected from infinity by $\wt\gamma$ (the cluster of $\gamma\cup\Gamma_0$ which contains $\gamma$). Denote by $\Sfr_{k,n}=\Sfr_{k,n}(j)$ the set of all $(k,n)$-squares. 
	The proof of Proposition~\ref{prop:single_loop} relies on the first and second moment estimates stated in the following lemma.
	
	\begin{lemma}\label{prop:(k,n)}
		For all $k\ge 1$ and $j\ge 4$, there exist constants $C_1,C_2,C_3>0$ (depending only on $r,k,j$ and $c$) such that for any $n\ge m\ge j+4$ and any $n$-square $S$, we have
		\begin{equation}\label{eq:1-first-moment}
		C_1 2^{-n \xi_c(2k)}\le \Pb(S \in\Sfr_{k,n} )\le C_2 2^{-n \xi_c(2k)},
		\end{equation}
		and for any pair of $n$-squares $S,T$ with distance in $[2^{-m-1},2^{-m}]$, we have
		\begin{equation}\label{eq:1-second-moment}
		\Pb(S,T \in\Sfr_{k,n} )\le C_3 2^{-2n \xi_c(2k)}2^{m\xi_c(2k)}.
		\end{equation}
	\end{lemma}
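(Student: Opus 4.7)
The plan is to carry out the standard dyadic moment-method argument of \cite{MR2644878}, with the up-to-constants bound of Theorem~\ref{thm:up-to-constants-ex-k-sep} replacing the Brownian disconnection estimate used there.

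For the first moment \eqref{eq:1-first-moment}, I would first decompose $\gamma$ at its crossings of $\partial S^j$. On the event $V(S)$, the loop must produce $2k$ Brownian arms across the annulus $S^j \setminus S$: one arm from $\partial S^j$ toward $\partial S$ before each recorded visit of $S$, and one arm from $\partial S$ back to $\partial S^j$ after each recorded visit. After conditioning on the $4k$ endpoints of these arms on $\partial S^j \cup \partial S$ (equivalently, integrating against the relevant Green's- and Poisson-kernel densities via the machinery of Section~\ref{subsec:facts}), the inner configuration becomes $2k$ Brownian excursions across the annulus, together with the independent loop soup $\Gamma_0$ restricted to $S^j$. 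Applying Theorem~\ref{thm:up-to-constants-ex-k-sep} in its $2k$-arm form to the annulus of log-width $(n-j)\log 2$, the non-disconnection event involving the full cluster of $\gamma$ contributes a factor $\asymp 2^{-(n-j)\xi_c(2k)}$. The portion of $\gamma$ outside $S^j$, together with the loops in $\Gamma_0$ not contained in $S^j$, contributes an $O(1)$ factor controlling the disconnection from $\partial S^j$ to infinity. The polynomial-in-$n$ prefactors from the endpoint integration cancel those appearing in \eqref{eq:up-to-constants-ex-k-sep}, leaving $\asymp 2^{-n\xi_c(2k)}$. The matching lower bound uses the $\alpha$-separated form in \eqref{eq:up-to-constants-ex-k-sep}: I would restrict to configurations whose arm endpoints are $\alpha$-separated at both scales and then extend the loop through prescribed corridors outside $S^j$ to realize the global non-disconnection with uniformly positive conditional probability.

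For the second moment \eqref{eq:1-second-moment}, set $\wt S := \Bc(S, 2^{-m-2})$ and $\wt T := \Bc(T, 2^{-m-2})$, which are disjoint by hypothesis. I would decompose the arm structure of $\gamma$ on $\{S, T \in \Sfr_{k,n}\}$ into three annular regions. The two inner annuli $\wt S\setminus S$ and $\wt T\setminus T$ each carry a $2k$-arm non-disconnection across log-width $(n-m)\log 2$, each contributing $\asymp 2^{-(n-m)\xi_c(2k)}$ by the first-moment argument. For the outer annulus $S^j\setminus(\wt S\cup\wt T)$, the dominant contribution comes from configurations in which the $k$ excursions of $\gamma$ inside $S^j$ each visit \emph{both} $\wt S$ and $\wt T$, so that only $2k$ arms cross this outer annulus (rather than the worst case of $4k$); seen from this scale, $\wt S\cup\wt T$ plays the role of a single small target and the non-disconnection factor is $\asymp 2^{-(m-j)\xi_c(2k)}$. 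Multiplying the three contributions yields $\asymp 2^{-(2(n-m)+(m-j))\xi_c(2k)} \asymp 2^{-2n\xi_c(2k)+m\xi_c(2k)}$, the desired bound.

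The main technical obstacle is justifying the $2k$-arm count in the outer annulus for the second-moment upper bound: a priori, every partition of the excursions in $S^j$ into those visiting only $\wt S$, only $\wt T$, or both contributes to the probability, with more arms costing less per configuration but being easier to realize combinatorially. A case analysis combined with the strict monotonicity of $\xi_c$ shows that the fully shared case dominates and that the remaining cases are absorbed into the constant. Secondary technicalities include $\alpha$-separating the arm endpoints at each intermediate scale in order to apply Theorem~\ref{thm:up-to-constants-ex-k-sep} uniformly, and handling loop-soup clusters straddling the annular boundaries; both are dealt with by the $k$-arm generalization of the separation lemma Lemma~\ref{lem:separation_lem_2}.
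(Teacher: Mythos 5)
Your overall strategy matches the paper's: dyadic moment method in the style of \cite{MR2644878}, decompose $\gamma$ into $2k$ crossings of an annulus around $S$ plus connecting bridges, compare the crossings to independent Brownian excursions, and invoke the $2k$-arm case of Theorem~\ref{thm:up-to-constants-ex-k-sep}. However, there are two places where your sketch diverges from or under-specifies what is actually needed.

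For the first moment, your bookkeeping of the polynomial-in-$n$ prefactors is not tight, and the gap could hide an error of order $n^{\Theta(k)}$. The paper's accounting is $\Pb(V(S))\asymp n^{-k}$, times $\Pb(E_1\mid V(S))\asymp n^{2k}2^{-n\xi_c(2k)}$ from the $2k$-excursion non-disconnection estimate, times $\Pb(E_3\mid V(S)\cap E_1)\lesssim n^{-k}$ from requiring the $k$ \emph{inner} bridges $X^{2i}$ (the pieces of $\gamma$ running from $\partial\wh S$ back to $\partial\wh S$ that realize the $k$ recorded visits to $S$) not to disconnect $S$, via Lemma~\ref{lem:D1r}. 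You attribute the cancellation to ``endpoint integration'' and claim the part of $\gamma$ ``outside $S^j$'' contributes $O(1)$; neither of these captures the decisive $n^{-k}$ coming from the inner bridges inside $\wh S$. Without isolating that factor, the three powers of $n$ do not visibly cancel.

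For the second moment, you propose a case analysis over how the excursions share visits to $\wt S$ and $\wt T$, arguing via monotonicity of $\xi_c$ that the $2k$-arm (fully shared) case dominates. This can be made to work, but it is unnecessary and is not what the paper does. The paper simply extracts, by construction from $V(S)$ alone, exactly $2k$ crossings $(\gamma^i(S,T))_{1\le i\le 2k}$ of the honest annulus $\Bc(z,2^{-j-1})\setminus\Bc(z,2^{-m+2})$ centered at the midpoint $z$ (note: a true annulus, not your doubly-connected $S^j\setminus(\wt S\cup\wt T)$), requires them not to disconnect $\Bc(z,2^{-m+2})$, and multiplies by the two inner-annulus estimates and the bridge non-disconnection factors. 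Since any configuration in $\{S,T\in\Sfr_{k,n}\}$ has at least these $2k$ guaranteed crossings, this gives the upper bound directly, with no case analysis and no need to invoke monotonicity of $\xi_c$. The paper also splits into the two regimes $n\ge m+4$ and $n<m+4$ (in the latter the inner annuli degenerate), which your sketch does not address.

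Both of these issues are fixable without new ideas, but as written the proposal does not yet constitute a proof: the first-moment exponent count is unsupported, and the second-moment argument relies on a case decomposition whose bookkeeping you have not actually carried out.
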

	Note that when $k=1$, the definition of $(1,n)$-squares does not depend on $j$ so one can take $j=4$ in the statement of Lemma~\ref{prop:(k,n)}. 
	
	Let us immediately prove Proposition~\ref{prop:single_loop} using Lemma~\ref{prop:(k,n)}.  We will complete the proof of Lemma~\ref{prop:(k,n)} in Sections~\ref{subsec:first_moment} and~\ref{subsec:second_moment}.
	
	\begin{proof}[Proof of Proposition~\ref{prop:single_loop}, assuming Lemma~\ref{prop:(k,n)}]
		For any $k\ge 1, j\ge 4$, let $\Es_{k,j}$ be the set of points $x\in\Es_k$ such that 
		\begin{equation}\label{eq:k-times}
		\tau(x,\partial \Bc(x,2^{-j}),\cdots,x)<t_{\gamma},
		\end{equation}
		where $x$ appears $k$ times above. Then $\Es_k=\cup_{j\ge 1}\Es_{k,j}$. Note that when $k=1$, \eqref{eq:k-times} reduces to $\tau(x)<\infty$ regardless of $j$. 
		Let us first prove that the following holds a.s.\ for all $k\ge 1$
		\begin{align}\label{eq:Kj}
		\Es_{k,j}\cap S_0=\bigcap_{n=j+4}^{\infty}\bigcup_{S\in\Sfr_{k,n}}S \text{ for all } j\ge 4.
		\end{align}
		Note that the definition of $\Es_{1,j}$ does not depend on $j$, and $\Es_{1,j} = \Es_1$ for all $j\ge 4$.

		Let us prove \eqref{eq:Kj}. Note that
		$
		S^j\subseteq \Bc(x,2^{-j}).
		$
		Thus, if $x\in \Es_{k,j}\cap S_0$, then every $n$-square containing $x$ is a $(k,n)$-square, hence $x$ is in the set on the right hand side in \eqref{eq:Kj}. Conversely, suppose that $x$ is in the set on the right hand side in \eqref{eq:Kj}. Let us show that \eqref{eq:k-times} holds. Let us prove it for $k=2$, because the proof for the general case is similar. There exists a sequence of $(2,n)$-squares $(S_n )_{n\ge j+4}$ such that for all $n\ge j+4$ we have
		\begin{equation}\label{eq:S_n-cond}
		x\in S_n, S_{n+1}\subseteq S_n \text{ and } S^{j}_{n}\subseteq S^{j}_{n+1}.
		\end{equation}
		By \eqref{eq:visits}, we have 
		\begin{equation}
		0<\tau(S_n)<\tau(S_n,\partial S^{j}_n)<\tau(S_n,\partial S^{j}_n,S_n)<t_{\gamma}.
		\end{equation}
		By \eqref{eq:S_n-cond} and monotone convergence theorem, we know that the stopping times $\tau(S_n),\tau(S_n,\partial S^{j}_n)$ and $\tau(S_n,\partial S^{j}_n,S_n)$ are all increasing in $n$ and they converge to random times $t_1, t_2$ and $t_3$, respectively. By continuity of $\gamma$, we have $\gamma(t_1)=\gamma(t_3)=x$, $\gamma(t_2)\in\partial \Bc(x,2^{-j})$ and $0<t_1<t_2<t_3<t_{\gamma}$. Combining it with the fact that $x$ is not disconnected from infinity by a path not intersecting $\wt\gamma$, we conclude $x\in\Es_{k,j}\cap S_0$.

		By Lemma~\ref{prop:(k,n)} and \eqref{eq:Kj}, using the usual methods for computing Hausdorff dimensions (see \cite{MR2604525}), we have  that for $k=1,2$
		$$\dimh(\Es_{k,j}\cap S_0) \le 2-\xi_c(2k) \text{ a.s.} \quad \text{and}\quad \Pb\left(\dimh(\Es_{k,j}\cap S_0) \ge 2-\xi_c(2k) \right)>0.$$
		Since $\Es_k=\cup_{j\ge 1}\Es_{k,j}$ and Hausdorff dimension is stable under countable union, the following also holds for $k=1,2$
		$$\dimh(\Es_{k}\cap S_0) \le 2-\xi_c(2k) \text{ a.s.} \quad \text{and}\quad \Pb\left(\dimh(\Es_{k}\cap S_0) \ge 2-\xi_c(2k) \right)>0.$$
		
		For all $c\in (0,1]$ and $k\ge 3$, we have $\xi_c(2k) >2$ by \eqref{eq:exponent}. We deduce from \eqref{eq:1-first-moment} that 
		$$\Pb(|\Sfr_{k, n}| \ge 1)\le \Eb(|\Sfr_{k, n}|) \le C_2 2^{-n (\xi_c(2k)-2)}.$$
		By~\eqref{eq:Kj}, the following holds for all $j\ge 4$ and $n\ge j+4$
		\begin{align*}
		\Pb(\Es_{k,j} \cap S_0 \not=\emptyset) \le C_2 2^{-n (\xi_c(2k)-2)}.
		\end{align*}
		Letting $n\to \infty$, we have that $\Es_{k,j} \cap S_0 =\emptyset$ a.s. Therefore $\Es_k \cap S_0=\emptyset$ a.s.
	\end{proof}

	\subsection{First moment estimate}\label{subsec:first_moment}
	The goal of this subsection is to establish the first moment estimate  \eqref{eq:1-first-moment}.
	Throughout, fix $k\ge 1$, $j\ge 4$ and $n \ge j+4$.   
	
	Intuitively, standing on a $(k,n)$-square, one can see $2k$ crossings of Brownian paths starting from it reaching a macroscopic distance without disconnecting it. 
	In the following, we decompose the Brownian loop $\gamma$ to extract these $2k$ crossings, see Figure~\ref{fig:one-square}.
	\begin{figure}
		\centering
		\includegraphics[scale=.7]{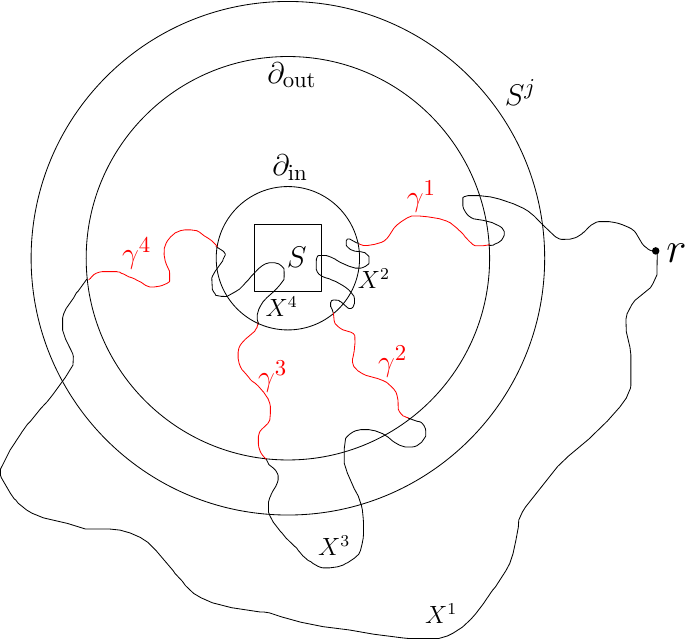}
		\caption{Decomposition of case $k=2$. The square is $S$ and the three concentric balls are $\wh S, \check S^j$ and $S^j$ respectively. $\Ac_S$ is the annulus $\check S^j\setminus \wh S$. The boundaries of $\Ac_S$ are $\partial_{\rm in}=\partial \wh S$ and $\partial_{\rm out}=\partial \check S^j$, respectively. The decomposed crossings $\gamma^i$ between $\partial_{\rm in}$ and $\partial_{\rm out}$ are in red and the segments in between are Brownian links $X^i$ which are in black.}
		\label{fig:one-square}
	\end{figure}
	Suppose $V(S)$ holds (see \eqref{eq:visits}). 
   Let $\wh S:=\Bc(S,2^{-n})$ be the ball of radius $2^{-n}$ concentric with $S$. Let $\check S^j:=\frac12 S^j$ be the ball concentric with $S^j$ but with half of its radius. Then, we have the following relations 
    \[
    S\subseteq \wh S \subseteq \check S^j \subseteq S^j.
    \] 
	Write $\Ac_S$ for the annulus $\check S^j\setminus \wh S$ and $\partial_{\rm in}$ and $\partial_{\rm out}$ for its inner and outer boundaries, respectively. We define
	$
	u_1=\tau(S^j)
	$
	and for all $1\le i\le k$,
	\begin{align*}
	v_i=\inf\{ t>u_i: \gamma(t)\in S \},&\qquad\qquad
	u_{i+1}=\inf\{ t>v_i: \gamma(t)\in \partial S^j \}, \\
	s_{2i-1}=\sup\{ t<v_{2i-1} : \gamma(t)\in \partial_{\rm out}\}, &\qquad\qquad
	t_{2i-1}=\inf\{ t>s_{2i-1} : \gamma(t)\in \partial_{\rm in}\}, \\
	s_{2i}=\sup\{ t<u_{2i+1} : \gamma(t)\in \partial_{\rm in}\}, &\qquad\qquad
	t_{2i}=\inf\{ t>s_{2i} : \gamma(t)\in \partial_{\rm out}\}.
	\end{align*} 
    We also define $\gamma^1=\gamma([s_1, t_1])$ and for all $1\le i\le 2k-1$,
    \begin{align*}
    	X^{i+1}&=\gamma([t_i, s_{i+1}]), \\
    	\gamma^{i+1}&=\gamma([s_{i+1}, t_{i+1}]).
    \end{align*} 
Finally, let $X^1$ be the concatenation of $\gamma([t_{2k}, t_\gamma])$ and $\gamma([0,s_1])$.
This decomposition induces $2k$ crossings $\gamma^i$ across $\Ac_S$, and $2k$ links $X^i$ which connect the crossings back into the loop $\gamma$. 
	For each $i$, denote by $\wt\gamma^i$ the cluster of $\gamma^i\cup\Lambda_{\Ac_S}$ that contains $\gamma^i$.
	
	However, the crossings $\gamma^i$'s are not exactly the same as excursions we considered in previous sections (see \eqref{eq:Y}), and they are not independent of each other. Nevertheless, in what follows, we are going to show that in an ``up-to-constants'' sense, it is still  ``almost'' the case, and then estimates similar to Theorem~\ref{thm:up-to-constants-ex-k-sep} follows naturally.
	
	\begin{lemma}\label{lem:one_loop_decomp}
		For given $k$ and $S$, on the event $V(S)$, the followings hold.
		\begin{enumerate}[(1)]
			\item\label{it:crossing} Let $(Y^1, \ldots, Y^{2k})$ be a family of $2k$ independent Brownian excursions across the annulus $\Ac_S$. Then the law of the family of crossings $(\gamma^1, \ldots, \gamma^{2k})$ has a density w.r.t. that of $(Y^1, \ldots, Y^{2k})$ which is bounded away from $0$ and $\infty$ (the bounds only depend on $j,k$).
			
			\item\label{it:br_exc} Conditionally on the endpoints $\gamma(s_i)$ and $\gamma(t_i)$ for $1\le i\le 2k$, the Brownian links $X^i$ are conditionally independent of each other and further independent of the crossings $\gamma^i$'s. Moreover, if $i$ is odd (resp.\ even), then $X^i$ is distributed as a Brownian link between its endpoints in $\Bc(0,r)$ conditioned on first hitting $\partial S^j$ (resp.\ $S$) and then going to the end $\gamma(s_i)$ in $S^c$ (resp.\ $S^j$). 
		\end{enumerate}
	\end{lemma}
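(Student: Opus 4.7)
The plan is to disintegrate the restriction of the Brownian bubble measure $\mub_{\Bc(0,r)}(re^{i\theta})$ to the event $V(S)$ according to the $4k$ endpoints $\{\gamma(s_i),\gamma(t_i)\}_{i=1}^{2k}$, by iterated application of the first-hit and last-exit path decompositions \eqref{eq:led}--\eqref{eq:cled}. Cutting $\gamma$ at all $4k$ of these stopping times and invoking the strong Markov property at each cut, one obtains that, conditionally on the endpoints, the pieces $\{\gamma^i,X^i\}_{i=1}^{2k}$ are mutually independent; each crossing $\gamma^i$ is a Brownian bridge in $\Ac_S$ between $\gamma(s_i)$ and $\gamma(t_i)$; and each $X^i$ is a Brownian bridge in $\Bc(0,r)$ between $\gamma(t_{i-1})$ and $\gamma(s_i)$ subject to the hitting and avoidance events in part~(2). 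For instance, when $i$ is odd, the bridge $X^i$ must hit $\partial S^j$ (at an intermediate $u$-time), and the definitions of $t_{i-1}$ as a first-hitting time of $\partial_{\rm out}$ and of $s_i$ as a last-exit time of $\partial_{\rm out}$ before the next visit to $S$ force $X^i$ to avoid $S$ after its first hit of $\partial S^j$, exactly matching the description in part~(2). The wrap-around bridge $X^1$, which crosses the root of the bubble, will be handled by a preliminary re-rooting of $\gamma$ via the reversibility of $\mub_{\Bc(0,r)}(re^{i\theta})$, which yields part~(2).

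For part~(1), conditionally on $\gamma(s_i),\gamma(t_i)$ the crossing $\gamma^i$ will have exactly the same law as a Brownian excursion across $\Ac_S$ conditioned on the same endpoints, by definition~(i) in Section~\ref{subsec:facts}. Consequently, the Radon--Nikodym derivative of the joint law of $(\gamma^1,\ldots,\gamma^{2k})$ with respect to the law of $2k$ independent excursions across $\Ac_S$ is encoded entirely in the joint density of the endpoints. After integrating out the $X^i$'s in the disintegration, this endpoint density factors as the product of the $2k$ boundary Poisson kernels in $\Ac_S$ (which exactly matches the endpoint density of $2k$ independent excursions, up to total mass) multiplied by Poisson kernels and hitting/avoidance probabilities in $\Bc(0,r)$ coming from the $X^i$'s and the root. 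I expect each of these remaining factors to be bounded above and below by constants depending only on $r,k,j$, by standard Green's-function and Beurling-type estimates, because all relevant domains have conformal moduli controlled by the ratio $2^{n-j}$, which enters the same number of times in numerator and denominator.

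The main technical obstacle will be the bookkeeping: one must carry out the $4k$ nested applications of \eqref{eq:led} in the correct order, track which Green's-function factor corresponds to which piece, verify that after integrating out the $X^i$'s the residual density against the independent-excursion product law is uniformly bounded in $n$, and handle the cyclic wrap-around piece $X^1$ cleanly using reversibility. The analytic content is routine once the combinatorial structure of the decomposition is set up, since the domains involved have moduli controlled by $r,j,k$ only.
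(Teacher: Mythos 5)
Your proposal takes essentially the same route as the paper's proof, which is itself a one-line pointer to the strong Markov property and repeated applications of Lemma~\ref{lem:leu-SameScale} (together with its outside-to-inside variant). One point worth sharpening: the substance is not that each bridge contributes a bounded probability factor, but that each successive last-exit endpoint, given the preceding entry point, is distributed comparably to uniform on the relevant circle \emph{and approximately decoupled from that entry point} --- the decoupling coming precisely from the conditioning of the bridge to travel deep (to $S$, respectively to $\partial S^j$), which is exactly what Lemma~\ref{lem:leu-SameScale} packages; this is what makes the chain of endpoint densities collapse to the product of $2k$ independent excursion endpoint laws up to constants depending only on $j,k$, and is the precise mechanism behind the cancellation-of-moduli heuristic you invoke.
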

	\begin{proof}
	    The lemma follows by using the strong Markov property of the Brownian motion and Lemma~\ref{lem:leu-SameScale} (also the outside-to-inside version in the paragraph following this lemma) repetitively.
	\end{proof}
	
	For $\alpha\in (0,1/2)$, we say a set of points on $\Cc_s$ is $\alpha$-isolated if the minimum distance between all pairs of these points is greater than $2\alpha e^s$.
	
	\begin{lemma}\label{lem:odd-even}
		Let $S$ be an $n$-square. Suppose $V(S)$ occurs and the endpoints of even and odd links are $\alpha$-isolated on $\partial_{\rm in}$ and $\partial_{\rm out}$ respectively. Let $w_i$ and $w'_i$ be the landing wedges of scale $\alpha$ associated with $2k$ crossings $\gamma^i$'s on $\partial_{\rm in}$ and $\partial_{\rm out}$ respectively (see Definition~\ref{def:sep}). Recall $\Ac_S=\check S^j\setminus \wh S$. Let $E$ be the following event (with the convention $w'_0=w'_{2k}$) 
		\begin{align}\label{eq:loc-2}
		X^i\cap\Ac_S\subseteq w_{i-1}\cup w_{i} \text{ for }  i \text{ even}, \quad X^i\cap\Ac_S\subseteq w'_{i-1}\cup w'_{i} \text{ for }  i \text{ odd}.
		\end{align}
		Then there exists a constant $C=C(\alpha,j,r,k)>0$ such that conditioned on $\alpha$-isolated endpoints of $X^i$'s,
		\begin{equation}\label{eq:E-even}
		\Pb(E\mid V(S))\ge C n^{-k}.
		\end{equation}  
	\end{lemma}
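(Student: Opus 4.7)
The plan is to use Lemma~\ref{lem:one_loop_decomp}~(\ref{it:br_exc}) to decouple the bridges from one another and from the crossings, and then to estimate each bridge's contribution separately using planar Brownian motion/Green's function techniques from Section~\ref{subsec:facts}.

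\smallskip

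First, conditional on the $4k$ endpoints $\{\gamma(s_i),\gamma(t_i)\}_{i=1}^{2k}$ (which are prescribed by the crossings' landing points), Lemma~\ref{lem:one_loop_decomp}~(\ref{it:br_exc}) asserts that the bridges $\{X^i\}_{i=1}^{2k}$ are mutually independent and independent of the crossings. Each $X^i$ is a Brownian bridge in $\Bc(0,r)$ between its two endpoints conditioned on the first-hit/tail-containment event described there. Consequently
\[
\Pb(E\mid V(S),\text{endpoints})=\prod_{i=1}^{2k}\Pb\bigl(X^i\cap\Ac_S\subseteq W_i\bigm|\text{endpoints}\bigr),
\]
where $W_i=w_{i-1}\cup w_i$ for even $i$ and $W_i=w'_{i-1}\cup w'_i$ for odd $i$. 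Thus it suffices to give a lower bound for each factor.

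\smallskip

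For an odd bridge $X^i$, the two endpoints lie on $\partial_{\rm out}$ at $\alpha$-isolated separation $\sim\alpha\,2^{-j}$. All relevant geometry is macroscopic: both the conditioning (first hit $\partial S^j$ and tail in $S^c$) and the wedge localization concern the scale $2^{-j}$, with $S$ a negligible microscopic target. I would conclude, via Lemmas~\ref{lem:outer-bridge} and~\ref{lem:BM-prop-1}--\ref{lem:last-exit-uniform}, that $\Pb(\cdot\mid\text{endpoints})\ge c_1(\alpha,j,r)>0$, independent of~$n$.

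\smallskip

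For an even bridge $X^{2i}$ the endpoints $z_1,z_2\in\partial_{\rm in}$ lie at $\alpha$-isolated separation $\sim\alpha\,2^{-n}$. My strategy is to lower-bound the factor by the probability of the sub-event $\{X^{2i}\subseteq\wh S\}$, i.e.\ the bridge never leaves the microscopic ball $\wh S$. On this sub-event, $X^{2i}\cap\Ac_S=\emptyset$, so the wedge-localization holds trivially; moreover $S\subseteq\wh S\subseteq S^j$ guarantees compatibility with the conditioning ``first hit $S$, tail in $S^j$''. By a Brownian bridge/Markov property computation, the probability in question is essentially the ratio of the Brownian bridge mass in $\wh S$ to that in $\Bc(0,r)$. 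Via conformal invariance the annulus $\Bc(0,r)\setminus\wh S$ is a strip of modulus $\asymp n$, which forces the Green's function ratio to scale like $1/n$, so the factor is bounded below by $c_2(\alpha,j,r)/n$. Multiplying the $k$ even-bridge factors and $k$ odd-bridge factors gives
\[
\Pb(E\mid V(S),\text{endpoints})\ge c_1^{\,k}\cdot(c_2/n)^{k}=C(\alpha,j,r,k)\,n^{-k}.
\]

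\smallskip

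The main obstacle is the even-bridge estimate, which is where the $n^{-k}$ factor in \eqref{eq:E-even} ultimately comes from. One must carefully justify that the conditioning (first hit $S$ and tail in $S^j$) does not destroy the $1/n$ bound produced by restricting the bridge to $\wh S$; in particular, one needs to show that the unconditioned bridge visits $S$ with probability of order $1$ (so the conditioning is mild), and that within $\wh S$ the bridge indeed hits $S$ with positive probability. These both follow from standard Green's function asymptotics (using the normalization \eqref{eq:Green}) at the microscopic scale, but require explicit estimates. The odd-bridge part is comparatively routine, as all the relevant scales are macroscopic and the probabilities in question are bounded away from zero.
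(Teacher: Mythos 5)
Your proof follows the same route as the paper: decouple the bridges via Lemma~\ref{lem:one_loop_decomp}\eqref{it:br_exc}, then show each odd bridge contributes a constant factor and each even bridge contributes a factor of order $n^{-1}$. The paper's own (very terse) proof cites Lemma~\ref{lem:localization} for the even bridges and Lemma~\ref{lem:outer-bridge} for the odd bridges; you cite the latter but re-derive the even-bridge estimate from scratch, and there the argument has a technical slip. The sub-event you propose, $\{X^{2i}\subseteq \wh S\}$, has probability \emph{zero}: both endpoints $z_1=\gamma(t_{2i-1})$ and $z_2=\gamma(s_{2i})$ lie on $\partial\wh S$, so Brownian motion a.s.\ enters $\Ac_S$ immediately after time $t_{2i-1}$ and immediately before time $s_{2i}$. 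The correct sub-event is $\{X^{2i}\subseteq \wh S\cup \Bc(z_1,\alpha 2^{-n})\cup\Bc(z_2,\alpha 2^{-n})\}$, on which $X^{2i}\cap\Ac_S$ is confined to the two small balls (which, after the $\alpha$-isolation and the definition of the landing zones in Definition~\ref{def:sep}, lie inside $w_{2i-1}\cup w_{2i}$). This is precisely the quantity controlled by Lemma~\ref{lem:localization} with the Green's function bound $G_{\wh S\cup \Bc(z_1,\alpha 2^{-n})\cup\Bc(z_2,\alpha 2^{-n})}(z_1,z_2)\gtrsim 1$ versus $G_{S^j}(z_1,z_2)\lesssim n$, rather than a modulus-of-annulus argument. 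Your separate worry about whether the conditioning ``first hit $S$, then stay in $S^j$'' is mild is well placed but not actually needed for a one-sided bound: the denominator in the Green's-function ratio is trivially at most the mass of the unconditioned bridge, so the conditioning can only help; it is also true (by the comparable computation $G_{\Bc(0,r)\setminus S}(z_1,z_2)\asymp 1$) that it is of order one.
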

	
	\begin{proof}
		This lemma follows from Lemmas~\ref{lem:localization}, ~\ref{lem:outer-bridge} and~\ref{lem:one_loop_decomp}. In fact, every even link corresponds to the estimate \eqref{eq:even-br} and every odd link corresponds to the estimate \eqref{eq:odd-br}. Only even links will contribute an additional factor $n^{-1}$ and there are $k$ of them.
	\end{proof}

	\begin{proof}[Proof of \eqref{eq:1-first-moment}  in Lemma~\ref{prop:(k,n)}] Let us now prove the first moment estimate.
		Throughout the proof, all the constants can depend on $c,\alpha,j,r,k$ but they are independent of $n$.
		Fix $\alpha\in(0,\alpha_0]$.  
		Recall Definition~\ref{def:sep} for the notion of $\alpha$-sep configuration. In the proof the configuration refers to the collection of crossings and the loop soup $(\{\gamma^i\}_{1\le i\le 2k}, \Lambda_0)$.
		For simplicity, in the following, we say that $A$ does not disconnect $B$ if $A$ does not disconnect $B$ from infinity. 
		Define the following events: 
		\begin{align*}
		E_1&:=\{ \wt\gamma^1 \cup \cdots \cup \wt\gamma^{2k} \text{ does not disconnect } S \},\\ 
		E_2&:=E_1\cap \{ \text{the configuration is $\alpha$-sep} \},\\
		E_3&:=\{ X^{2i} \text{ does not disconnect $S$ for all } 1\le i\le k\}.
		\end{align*}
		
		Applying \eqref{it:crossing} of Lemma~\ref{lem:one_loop_decomp} and Theorem~\ref{thm:up-to-constants-ex-k-sep} (note that $\xi_c(k,0)=\xi_c(k)$), we have
		\begin{align}
			\label{eq:P-E-1}
			&\Pb(E_1 \mid V(S))\asymp n^{2k} 2^{-n\xi_c(2k)}, \\
			\label{eq:P-E'-2}
			&\Pb(E_2 \mid V(S))\asymp n^{2k} 2^{-n\xi_c(2k)}.
		\end{align}
		By strong Markov property of Brownian motion, \eqref{it:br_exc} of Lemma~\ref{lem:one_loop_decomp} and Lemma~\ref{lem:D1r}, we have
		\begin{equation}\label{eq:P-E-2}
		\Pb(E_3 \mid V(S)\cap E_1)\lesssim n^{-k}
		\end{equation}
		To get the upper bound in \eqref{eq:1-first-moment}, note that $\{S \in \Sfr_{k,n} \}$ is contained in $V(S)\cap E_1\cap E_3$. 
		By \eqref{eq:P-E-1}, \eqref{eq:P-E-2} and \eqref{eq:hitting-BM}, we have 
		\begin{equation}\label{eq:k-upper-bound}
		\Pb(S \in \Sfr_{k,n} )\le \Pb(V(S)\cap E_1\cap E_3) \lesssim 2^{-n \xi_c(2k)}.
		\end{equation}
	
		To get the lower bound in \eqref{eq:1-first-moment}, note that 
		\begin{equation}\label{eq:add-restriction}
		\Pb(S \in \Sfr_{k,n} )\gtrsim \Pb(V(S)\cap E_2\cap E \cap E_4),
		\end{equation}
		where $E$ is the event defined in Lemma~\ref{lem:odd-even}, and $E_4$ is the event that the followings hold
		\begin{itemize}
			\item all the clusters of $\Lambda_0$ intersecting $\partial_{\rm out}$ have diameter smaller than $\alpha2^{-j}/100$; \item all the clusters of $\Lambda_0$ intersecting $\partial_{\rm in}$ have diameter smaller than $\alpha2^{-n}/100$;
			\item there is no cluster in $\Lambda_0$ which encircles $S^j$. 
		\end{itemize}
		This is because on the good event $V(S)\cap E_2\cap E \cap E_4$, to make $\{ S \in \Sfr_{k,n} \}$ happen, we only need to impose mild spatial conditions on the links outside $\Ac_S$ such that they stay in well-chosen ``tubes'' and attaching them to the configuration will not cause disconnection. Such further conditions only cost a constant probability which only depends on $\alpha$ and $k$. Furthermore, note that $E_4$ and $V(S)\cap E_2\cap E$ are decreasing events for the loop soup, so by Lemma~\ref{lem:FKG}, we have
		\begin{equation}\label{eq:E4}
		\Pb(V(S)\cap E_2\cap E \cap E_4) \ge \Pb(E_4) \Pb(V(S) \cap E_2 \cap E) 
		\gtrsim \Pb(V(S) \cap E_2 \cap E),
		\end{equation}
	where we used Lemmas~\ref{lem:FKG} and~\ref{lem:cluster-thin} to get $\Pb(E_4)\ge C$ for some constant $C>0$.
        By Lemma~\ref{lem:odd-even},
        \begin{equation}\label{eq:oe}
        	 \Pb(E\mid V(S) \cap E_2) \gtrsim n^{-k}.
        \end{equation}
         Combining estimates \eqref{eq:add-restriction}, \eqref{eq:E4} and \eqref{eq:oe}, we have 
         \[
         \Pb(S\in \Sfr_{k,n} )\gtrsim \Pb(V(S) \cap E_2) \, n^{-k}.
         \] 
         Finally, plugging estimates \eqref{eq:P-E'-2} and \eqref{eq:hitting-BM} into the  RHS above, we obtain 
		\begin{equation*}
		\Pb(S\in \Sfr_{k,n} )\gtrsim 2^{-n \xi_c(2k)}.
		\end{equation*}
		This completes the proof of \eqref{eq:1-first-moment}.
	\end{proof}

	\subsection{Second moment estimate}\label{subsec:second_moment}
	Let us now address the second-moment estimates \eqref{eq:1-second-moment}. Throughout, fix $k\ge 1$, $j\ge 4$ and $n \ge m\ge j+4$.   
	Let $S$ and $T$ be  two $n$-squares with distance in $[2^{-m-1}, 2^{-m}]$. 
	We will work on the event $V(S) \cap V(T)$ where $\gamma$ visits $k$ times both $S$ and $T$.
	We decompose $\gamma$ by treating the following two cases separately.
	
	\smallskip
	\noindent\textbf{Case 1.}
	Suppose $n\ge m+4$. By considering the annuli around $S,T$ and $S\cup T$ respectively, we can extract $3$ disjoint families of $2k$ crossings. This decomposition is similar to the one we did for the first-moment estimate (see above Lemma~\ref{lem:one_loop_decomp}).
	\begin{enumerate}[(i)]
		\item  Let $( \gamma^i(S) )_{1\le i\le 2k}$ be the $2k$ crossings induced by $\gamma$ in this annulus $\Bc(S,2^{-m-3})\setminus S$. For each $i$, let $\wt\gamma^i(S)$ be the union of $\gamma^i(S)$ and the clusters of the loop soup $\Lambda_0$ restricted on this annulus that intersect it.
		\item For the annulus $\Bc(T,2^{-m-3})\setminus T$, we define $(\gamma^i(T) )_{1\le i\le 2k}$ and $(\wt\gamma^i(T) )_{1\le i\le 2k}$ analogously.
		\item\label{it:ST} Let $z$ be the middle point between the centers of $S$ and $T$. For the annulus $\Bc(z,2^{-j-1})\backslash$ $\Bc(z,2^{-m+2})$, we define $( \gamma^i(S,T) )_{1\le i\le 2k}$ and $( \wt\gamma^i(S,T) )_{1\le i\le 2k}$ analogously. 
	\end{enumerate}  
	If we cut off the above $6k$ crossings from $\gamma$,  the remaining part of $\gamma$ consists of $3$ families of $2k$ Brownian links
	$( X^i(S) )_{1\le i\le 2k}$, $( X^i(T) )_{1\le i\le 2k}$ and $( X^i(S,T) )_{1\le i\le 2k}$. Each link is labeled according to the crossing following it. 
	
	\smallskip
	\noindent\textbf{Case 2.} 
	Suppose $n<m+4$. Let $( \gamma^i(S,T) )_{1\le i\le 2k}$, $(\wt\gamma^i(S,T) )_{1\le i\le 2k}$ be defined as in \eqref{it:ST} of Case 1. 
	The complement of the $2k$ crossings in $\gamma$ is composed of $2k$ links $( \wh X^i(S,T) )_{1\le i\le 2k}$.

	\begin{proof}[Proof of \eqref{eq:1-second-moment} in Lemma~\ref{prop:(k,n)}]
		The proof is very similar to that of \eqref{eq:k-upper-bound}, so we will omit some details. We condition on $V(S)\cap V(T)$ and consider the two cases defined above separately.
		
		\smallskip		
		\noindent\textbf{Case 1.} Suppose $n\ge m+4$.
		Let $F_1$ be the event that the following three points hold
		\begin{itemize}
			\item $ \wt\gamma^1(S) \cup \cdots \cup \wt\gamma^{2k}(S)$ does not disconnect $S$;
			\item  $ \wt\gamma^1(T) \cup \cdots \cup \wt\gamma^{2k}(T)$  does not disconnect $T$;
			\item  $ \wt\gamma^1(S, T) \cup \cdots \cup \wt\gamma^{2k}(S, T)$  does not disconnect $\Bc(z,2^{-m+2})$.
		\end{itemize}
		By Lemma~\ref{lem:one_loop_decomp}, the three families $(\wt\gamma^i(S) )_{1\le i\le 2k}$, $(\wt\gamma^i(T) )_{1\le i\le 2k}$, $(\wt\gamma^i(S,T) )_{1\le i\le 2k}$ are defined using crossings that are mutually absolutely continuous w.r.t. independent Brownian excursions, and using loops in $\Lambda_0$ restricted to three disjoint annuli. 
		By Theorem~\ref{thm:up-to-constants-ex-k-sep}, we have
		\begin{equation}\label{eq:second-1}
		\Pb(F_1 \mid V(S) \cap V(T) )\asymp (n-m)^{4k} m^{2k} 2^{-2(n-m)  \xi_c(2k)}  2^{-m\xi_c(2k)}.
		\end{equation}
		Let $F_2$ be the event that the following three points hold
		\begin{itemize}
			\item $ X^{2i}(S)$ does not disconnect $S$ for all $1\le i\le k$;
			\item $X^{2i}(T)$ does not disconnect $T$ for all $1\le i\le k$;
			\item $X^{2i}(S,T)$ does not disconnect $\Bc(z,2^{-m+2})$ for all $1\le i\le k$.
		\end{itemize}
		Then, by Lemma~\ref{lem:D1r}, we have
		\begin{equation}\label{eq:second-2}
		\Pb(F_2 \mid V(S) \cap V(T)\cap F_1)\lesssim (n-m)^{-2k} m^{-k}
		\end{equation}
		By repetitively using \eqref{eq:hitting-BM}, we get 
		\begin{equation}\label{eq:second-3}
		\Pb(V(S)\cap V(T))\asymp m^{-k} (n-m)^{-2k}.
		\end{equation}
		Combining \eqref{eq:second-1}, \eqref{eq:second-2} and \eqref{eq:second-3}, we have 
		\begin{equation}
		\Pb(S,T \in \Sfr_{k,n})
		\le \Pb(V(S)\cap V(T)\cap F_1\cap F_2)
		\lesssim 2^{-2(n-m)  \xi_c(2k)}  2^{-m\xi_c(2k)}.
		\end{equation} 
		
		\smallskip
		\noindent\textbf{Case 2.} 
		Suppose $n<m+4$. Let $G_1$ be the event $ \wt\gamma^1(S,T) \cup \cdots \cup \wt\gamma^{2k}(S,T)$ does not disconnect $\Bc(z,2^{-m+2})$. Let $G_2$ be the event that $\wh X^{2i}(S,T) $ does not disconnect $\Bc(z,2^{-m+2})$ for all $1\le i \le k$. In the same way, we have 
		\begin{equation*}
		\Pb(G_1 \mid V(S) \cap V(T))\asymp m^{2k}2^{-m\xi_c(2k )}, \quad  \Pb(G_2 \mid V(S) \cap V(T)\cap G_1)\lesssim m^{-k}, 
		\end{equation*}
		and 
		\begin{equation*}
		\Pb(V(S)\cap V(T))\asymp m^{-k}.
		\end{equation*}
		It follows that
		\begin{equation*}
		\Pb(S,T \in \Sfr_{k,n})
		\le \Pb(V(S)\cap V(T)\cap G_1\cap G_2)
		\lesssim 2^{m\xi_c(2k)}.
		\end{equation*}
		Since $n-m\in \{0,1,2,3\}$, this implies \eqref{eq:1-second-moment} in this case.
	\end{proof}

	\subsection{Proof of Proposition~\ref{prop:dim-upper}}\label{subsec:dim_out_bdy}
	We will now prove  Proposition~\ref{prop:dim-upper}, using the result (Proposition~\ref{prop:single_loop}) on dimensions of multiple points on the frontier of a single loop inside a Brownian loop soup. 
	
	The main idea is that, for any loop $\eta$ in a Brownian  loop soup $\Gamma_0$, the law of $(\eta, \Gamma_0\setminus \{\eta\})$ is mutually absolutely continuous with respect to the law of $(\gamma, \Gamma_0)$ where $\gamma$ is a Brownian loop sampled independently from $\Gamma_0$. Then Proposition~\ref{prop:single_loop} allows one to deduce the dimension of multiple points on the frontier of $\eta$ inside $\Gamma_0$. We then use the fact that there are a.s.\ countably many loops in a Brownian loop soup and Hausdorff dimension is stable under countable union. 
	
	However, we need to be careful that visits to a $k$-tuple point inside a loop soup for $k\ge 2$ can be made by different loops.
	In this respect, it is simpler to establish the dimension lower bound \eqref{eq:lo-bound},  because we only need to consider points which are visited $k$ times by the same loop, since they form a subset of all $k$-tuple points. Let us prove the lower bound first.
	\begin{proof}[Proof of \eqref{eq:lo-bound} in Proposition~\ref{prop:dim-upper}]
		Let $\mu_\Ub$ be the (infinite) measure on Brownian loops in the unit disk $\Ub$, introduced in Section~\ref{subsec:BLS}.
		For each $\eps>0$, let $\mu_\eps$ denote the measure $\mu_\Ub$ restricted to loops with diameter at least $\eps$. 
		Let $\Gamma^\eps$ be the set of loops in $\Gamma_0$ with diameter at least $\eps$.  
		We can sample $\Gamma^\eps$ by first sampling  a Poisson random variable $\Nc_\eps$ with parameter $c |\mu_\eps|$, and then sampling the loops $\gamma_i$ for $1\le i \le \Nc_\eps$ independently according to $\mu_\eps$.
		Conditioned on $\{ \Nc_\eps\ge 1 \}$ (which occurs with positive probability), the law of 
		$(\gamma_1, \Gamma_0\setminus\{\gamma_1\})$
		is mutually absolutely continuous with respect to the law of $(\xi, \Gamma_0)$, where $\xi$ is sampled independently according to $\mu_\eps$. 
		
		The measure $\mu_\Ub$ admits the following decomposition according to the point on $\xi$ which is farthest to the origin (the proof is very similar to \cite[Proposition 8]{MR2045953})
		\begin{align}\label{eq:mu_decomp}
		\mu_\Ub=\frac{1}{\pi} \int_{0}^{1} \int_0^{2\pi} \mub_{\Bc(0, r)} (r e^{i\theta}) d\theta r dr.
		\end{align}
		If $\xi$ is sampled according to $\mu_\eps$, then with probability at least $C>0$ (where $C$ does not depend on $\eps$), we have $\max\{|z| : z\in \xi\}\in[1/4, 1/2]$ and $\xi\cap S_0\neq\emptyset$. On this event, the conditional law of $\xi$ is given by
		\begin{align*}
		\frac{1}{\pi} \int_{1/4}^{1/2} \int_0^{2\pi} \mub_{\Bc(0, r)} (r e^{i\theta})[\cdot\mid \xi\cap S_0\neq\emptyset] d\theta r dr.
		\end{align*}
		Then Proposition~\ref{prop:single_loop} implies that with positive probability, the dimension of simple (resp.\ double) points on the {frontier} of $\xi$ inside $\Gamma_0 \cup \{\xi\}$ is at least $2-\xi_c(2)$ (resp.\ $2-\xi_c(4)$).
		By the previous paragraph, we further have that the dimension of simple (resp.\ double) points on the frontier of $\gamma_1$ inside $\Gamma_0$ is at least $2-\xi_c(2)$ (resp.\ $2-\xi_c(4)$).
		Since the latter set is a subset of $\Sc_\mathrm{b}$ (resp.\ $\Dc_\mathrm{b}$), this proves  \eqref{eq:lo-bound}.
	\end{proof}
	
	Let us now prove the remaining statements of Proposition~\ref{prop:dim-upper}. The proof of the dimension upper bound is slightly more complicated, especially for double and triple points, because they can be visited by more than one loop. 
	
	\begin{proof}[End of proof of Proposition~\ref{prop:dim-upper}]
		It suffices to prove the dimension upper bound~\eqref{eq:up-bound}. We need to consider, for simple, double and triple points, all the possible configurations according to the number of different loops that visit these points. For the sake of brevity, as well as to keep notation simple, we will only give a detailed proof for the case of double points that are visited by two different loops. The proofs of the other cases are similar (or even simpler).
		
		Let $\mu_\Ub$, $\mu_\eps$, $\Gamma^\eps$, $\Nc_\eps$ and $(\gamma_i)_{1\le i \le \Nc_\eps}$ be defined as in the previous proof of \eqref{eq:lo-bound}. Conditionally on $\{\Nc_\eps\ge 2\}$  (which occurs with positive probability), the law of $(\gamma_1, \gamma_2, \Gamma_0\setminus\{\gamma_1, \gamma_2\})$
		is mutually absolutely continuous with respect to the law of $(\xi_1, \xi_2, \Gamma_0)$, where $\xi_1$ and $\xi_2$ are sampled independently according to $\mu_\eps$. Let $\Dc_0$ be the set of points which are both on $\xi_1 \cap \xi_2$ and on the outer boundary of the cluster in $\Gamma_0\cup \{\xi_1, \xi_2\}$ containing $\xi_1$ and $\xi_2$. If $\xi_1\cap \xi_2=\emptyset$ or if the cluster containing $\xi_1, \xi_2$ is not an outermost cluster in $\Gamma_0\cup \{\xi_1, \xi_2\}$, let $\Dc_0=\emptyset$.
		It suffices to prove that 
		\begin{equation}\label{eq:2-loops}
		\dimh(\Dc_0 \cap \Bc(0, 1-\eps) )\le 2-\xi_c(4) \, \text{ a.s. for all } c\in(0,1].
		\end{equation}
		Indeed, thanks to the mutual absolute continuity, this will imply that \eqref{eq:2-loops} also hold for $\Dc_1$ in the place of $\Dc_0$, where $\Dc_1$ is the set of double points on the outer boundary of the outermost cluster in $\Gamma_0$ containing $\gamma_1$ and $\gamma_2$ (when there exists such a cluster) visited by both $\gamma_1$ and $\gamma_2$. Note that this is true for any $\gamma_1, \gamma_2 \in \Gamma^\eps$ and for any $\eps>0$. Since there are countably many loops in the loop soup $\Gamma_0$, this implies that the  set of double points on the outer boundaries of outermost clusters in $\Gamma_0$ visited by at least two different loops has dimension at most $2-\xi_c(4)$ a.s.\ for all $c\in(0,1]$, completing the proof for this case.
		
		Let us now prove~\eqref{eq:2-loops}. Fix $N\in \Nb$ such that $2^{-N} \le \eps/8$. Let $Q$ be a dyadic square with side length $2^{-N}$ and center $z_0$ such that $\Bc(z_0, \eps) \subset\Ub$. The union of all such dyadic squares covers $\Bc(0, 1-\eps)$. Therefore, to prove \eqref{eq:2-loops}, it is enough to prove that 
		\begin{align}\label{eq:2-loops2}
		\dimh(\Dc_0 \cap Q)\le 2-\xi_c(4) \, \text{ a.s. for all } c\in(0,1].
		\end{align}
		To prove~\eqref{eq:2-loops2}, it is enough to work on the event where both $\xi_1$ and $\xi_2$ intersect $Q$. Let $\mu$ be the measure of Brownian loops in the whole plane (so that $\mu_\eps$ is equal to $\mu$ restricted to the loops with diameter at least $\eps$ in $\Ub$). We can decompose $\mu$ according to the point on the loop which is farthest to $z_0$
		\begin{align*}
		\mu=\frac{1}{\pi} \int_0^\infty \int_0^{2\pi} \mub_{\Bc(z_0, r)} (z_0+r e^{i\theta}) d\theta r dr.
		\end{align*}
		On the event $\{\xi \cap Q \not=\emptyset\}$, the measure $\mu_\eps$ is absolutely continuous with respect to
		\begin{align}\label{eq:wt-mu-eps}
		\wt\mu_\eps=\frac{1}{\pi} \int_{\eps/4}^2 \int_0^{2\pi} \mub_{\Bc(z_0, r)} (z_0+r e^{i\theta}) d\theta r dr.
		\end{align}
		Indeed, a loop $\xi$ sampled according to $\mu_\eps(\cdot \mid \xi \cap Q\not=\emptyset)$ reaches distance at least $\eps/4$ from $z_0$ and is contained in $\Ub \subset \Bc(z_0, 2)$. It is therefore enough to prove \eqref{eq:2-loops2} for $\xi_1$ and $\xi_2$ sampled independently according to $\wt\mu_\eps$.
		We can now use the set-up in Section~\ref{subsec:single_loop}, with the following notion of good squares in place of $(k,n)$-squares. For $j\ge N$ and $n\ge j+4$, we subdivide $Q$ into $2^{2(n-N)}$ squares of side length $2^{-n}$ as what we have done before for $S_0$. With slight abuse of terminology, we also call these sub-squares $n$-squares.
		Then, for each $n$-square $S$, we say that it is a good square if $\xi_1$ and $\xi_2$ both intersect $S$, and $S$ is not disconnected from infinity by $\xi_1\cup\xi_2\cup\Gamma_0$. Let $\Sfr_n$ be the set of all such good squares.
		As in the proof of Lemma~\ref{prop:(k,n)}, for each $n$-square $S$, we can decompose $\xi_1$ and $\xi_2$ to extract four crossings going in and out of $S$, and four links connecting the crossings. We can similarly deduce that there exists $C>0$ such that for any $n$-square $S$, 
		\begin{align*}
		\Pb(S \in\Sfr_n )\le C 2^{-n \xi_c(4)}.
		\end{align*}
		Arguing like in the proof of Proposition~\ref{prop:single_loop}, we can deduce \eqref{eq:2-loops2}. This completes the proof.
	\end{proof}

		\section{Zero-one law}\label{sec:0-1}
	The goal of this section is to prove a zero-one law for multiple points on the outer boundary of every outermost cluster in a loop soup $\Gamma_0$. 
	As a consequence, we obtain the following proposition, which contains the  result of Theorem~\ref{main-thm} for outer boundaries of outermost clusters in $\Gamma_0$.

	\begin{proposition}
		\label{prop:0-1}
Let $\Gamma_0$ be a Brownian loop soup with intensity $c\in(0,1]$ in the unit disk. Let $\Sc$ and $\Dc$ be the simple and double points of $\Gamma_0$. The following holds almost surely. For every outermost cluster in $\Gamma_0$ with outer boundary $\gamma$, for every portion $\ell_0$ of $\gamma$, we have
		\begin{align*}
		\dimh(\Sc \cap \ell_0) = 2-\xi_c(2), \quad \dimh(\Dc \cap \ell_0) = 2-\xi_c(4).
		\end{align*}
	\end{proposition}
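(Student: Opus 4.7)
The upper bounds in Proposition~\ref{prop:0-1} follow directly from Proposition~\ref{prop:dim-upper}. For any outermost cluster $K$ with outer boundary $\gamma$ and any portion $\ell_0\subset\gamma$, we have $\Sc\cap\ell_0\subset\Sc_{\rm b}$ and $\Dc\cap\ell_0\subset\Dc_{\rm b}$; the almost-sure inequalities $\dimh(\Sc_{\rm b})\le 2-\xi_c(2)$ and $\dimh(\Dc_{\rm b})\le 2-\xi_c(4)$ thus transfer to $\ell_0$ without any further work.

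The content of Proposition~\ref{prop:0-1} therefore lies in the matching lower bounds, quantified over \emph{all} portions and \emph{all} outermost clusters. The plan is first to reduce to a countable collection of events. Fix a countable basis $\{B_n\}_{n\in\Nb}$ of open balls in $\Ub$ with rational centers and radii such that $\ol{B_n}\subset\Ub$. Since any portion $\ell_0$ of any outer boundary $\gamma$ contains a sub-arc of the form $\gamma\cap B_n$ for some $n$ with $\gamma\not\subset\ol{B_n}$, it is enough to prove: for each fixed $n$, almost surely every outermost cluster $K$ of $\Gamma_0$ whose outer boundary $\gamma$ crosses $B_n$ satisfies $\dimh(\Sc\cap\gamma\cap B_n)\ge 2-\xi_c(2)$ and $\dimh(\Dc\cap\gamma\cap B_n)\ge 2-\xi_c(4)$.

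The main ingredient for each fixed $n$ will be the partial exploration of Brownian loop soups from \cite{MR3901648}, which allows one to reveal the outermost CLE loops of $\Gamma_0$ (i.e., the outer boundaries of outermost clusters) in a Markovian fashion, one at a time. Suppose such an exploration has revealed the outer boundary $\gamma$ of an outermost cluster $K$ crossing $B_n$, together with the full cluster structure exterior to the region enclosed by $\gamma$. The loops of $\Gamma_0$ forming $K$ inside $\gamma$ then have an explicit conditional law; after conformally mapping the interior of $\gamma$ to a standard domain, the local configuration in any compact sub-domain is absolutely continuous with respect to the setup used in Section~\ref{sec:dim} (a single Brownian loop thrown into an independent loop soup of intensity $c$, as in Proposition~\ref{prop:single_loop}). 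This reduces the dimension question on $\gamma\cap B_n$ to re-running the first- and second-moment arguments of Section~\ref{sec:dim} on the conditional configuration, producing (with positive probability bounded away from $0$) a subset of $\Sc\cap\gamma\cap B_n$ of dimension at least $2-\xi_c(2)$, and similarly for $\Dc$.

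To upgrade this positive-probability lower bound to an almost-sure one, I would iterate the above scheme along a nested sequence of dyadic sub-balls $B(z_0,2^{-m})\subset B_n$ centered at a suitable $z_0\in\gamma\cap B_n$. By the restriction and Markov properties of the Brownian loop soup combined with the CLE exploration of \cite{MR2979861}, the events ``the first- and second-moment estimates succeed on $\gamma\cap B(z_0,2^{-m})$'' become essentially independent across $m$, each with probability uniformly bounded away from zero; a Borel-Cantelli argument then yields the desired dimension lower bound almost surely. The main obstacle will be making this scale-to-scale (near-)independence precise and identifying the conditional distribution of the cluster inside $\gamma$ sharply enough to invoke the moment estimates of Section~\ref{sec:dim} verbatim --- this is exactly what the partial exploration of \cite{MR3901648} together with conformal restriction is designed to handle.
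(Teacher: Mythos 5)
Your upper-bound reduction and the idea of reducing to a countable family of test sets are sound, and you correctly identify that the essential content is the almost-sure lower bound and that the partial-exploration machinery of \cite{MR3901648} is the right vehicle. However, the route you propose for the almost-sure upgrade is genuinely different from the paper's, and the gap you flag at the end is, I think, fatal as stated. Your events $A_m=\{$the moment estimates succeed on $\gamma\cap B(z_0,2^{-m})\}$ are not even approximately independent across $m$: whether a Frostman measure of positive mass can be built inside $B(z_0,2^{-m})$ depends on the loop-soup configuration at \emph{all} scales finer than $2^{-m}$, and that information is shared with $A_{m'}$ for every $m'>m$. Indeed, if you take $A_m$ to literally be $\{\dimh(\Sc\cap\gamma\cap B(z_0,2^{-m}))\ge 2-\xi_c(2)\}$, these events are decreasing in $m$, so $\Pb(\bigcup_m A_m)=\Pb(A_1)$ and Borel--Cantelli buys you nothing beyond the positive-probability bound you started from. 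Making the events ``fresh'' would require localizing to a finite range of scales in each annulus and then recombining, which is essentially an independent re-derivation of a second-moment/Kochen--Stone argument across a tree of scales --- precisely the kind of argument the paper says does not adapt from the Brownian-motion case.

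The paper sidesteps this entirely: it proves a genuine zero-one law. After Lemma~\ref{lem:qian} sets up the Markovian exploration, Lemma~\ref{lem:0-1-first} shows the dimension of $\Sc\cap\eta([0,t])$ is a \emph{single} random variable $D_1$ (independent of $t$, by scaling plus monotonicity) measurable with respect to the ``past at time $0^+$'' of the exploration. Lemma~\ref{lem:pinned-0-1} then works with the infinite measure $\nu$ on one-point pinned complete clusters and shows that the relevant germ $\sigma$-algebra $\bigcap_\eps\Fc_\eps(\theta)$ is trivial, using two specific inputs you do not invoke: the germ $\sigma$-algebra of the driving Brownian motion of SLE$_\kappa$ is trivial, and no Brownian loop of a $\nu$-sampled cluster touches the pinning point (Lemma~\ref{lem:loop_origin}). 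Once $D_1$ is deterministic, the positive-probability bound of Proposition~\ref{prop:dim-upper} forces $D_1=2-\xi_c(2)$, and the ``for every portion'' quantifier is then obtained not via your countable ball cover but by a countable family of dyadic-grid explorer curves $\chi$ that can pin the root of $\gamma$ near any prescribed portion. In short: your scheme replaces a 0-1 law by independence across scales, but that independence is exactly what is not available here, and the germ-$\sigma$-algebra triviality (together with the $\nu$-measure formalism and Lemma~\ref{lem:loop_origin}) is the missing ingredient you would need to supply.
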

	The proof of Proposition~\ref{prop:0-1} relies on a result from \cite{MR3901648} on the decomposition of the Brownian loop soup when we explore outer boundaries of its clusters in a Markovian way. 
The Markovian exploration is carried out using conformal maps, and we will repeatedly and implicitly use the following lemma.
\begin{lemma}\label{lem:conformal_dim}
Suppose $A \subset \ol \Ub$ and moreover $A \cap \partial \Ub$ only contains finitely many points. Suppose that $f$ is a continuous map defined on $\ol\Ub$ which is conformal on $\Ub$, then we have $\dimh(A)=\dimh (f(A))$.
\end{lemma}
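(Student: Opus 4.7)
The plan is to split $A$ into its interior part $A_1 := A \cap \Ub$ and its boundary part $A_2 := A \cap \partial \Ub$. Since $A_2$ is finite by assumption and $f$ is continuous, $f(A_2)$ is also finite, so both have Hausdorff dimension zero and neither contributes to the dimension of $A$ or $f(A)$. Using stability of $\dimh$ under (countable) unions, it suffices to establish $\dimh(A_1) = \dimh(f(A_1))$.

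For the interior part, I would exhaust $\Ub$ by an increasing sequence of compact sets, for example $K_n := \{z \in \ol\Ub : |z| \le 1 - 1/n\}$, so that $A_1 = \bigcup_{n \ge 1}(A_1 \cap K_n)$. On each $K_n$, the map $f$ restricted to $\Ub$ is conformal (in particular holomorphic and injective); by a standard compactness argument, $f$ and its inverse are both Lipschitz on $K_n$ (the derivative $f'$ is continuous and non-vanishing on $\Ub$ since $f$ is injective holomorphic, hence bounded above and away from zero on the compact set $K_n$). Bi-Lipschitz maps preserve Hausdorff dimension, so $\dimh(f(A_1 \cap K_n)) = \dimh(A_1 \cap K_n)$. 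Taking the supremum over $n$ and using countable stability of $\dimh$ on both sides yields $\dimh(f(A_1)) = \dimh(A_1)$.

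Finally, combining with the trivial bound for the finite piece,
\begin{equation*}
\dimh(A) = \max\bigl(\dimh(A_1), \dimh(A_2)\bigr) = \dimh(A_1) = \dimh(f(A_1)) = \dimh(f(A)),
\end{equation*}
which is the claim. The only mild subtlety is that $f$ need not be bi-Lipschitz or even injective up to the boundary, but this is exactly why the hypothesis ``$A\cap\partial\Ub$ is finite'' is imposed: it ensures the potentially ill-behaved boundary contribution has dimension $0$ on both sides and can be discarded.
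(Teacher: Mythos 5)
Your proposal is correct and follows essentially the same approach as the paper's proof: the paper sets $A_\eps := A \cap \ol B(0,1-\eps)$, notes that the finitely many boundary points (and their images) have dimension zero so $\dimh(A) = \lim_{\eps\to 0}\dimh(A_\eps)$ and likewise for $f(A)$, and observes that $f'$ is bounded above and away from zero on each $\ol B(0,1-\eps/2)$, which is precisely your bi-Lipschitz-on-compact-exhaustion argument with the sequence $K_n$ replaced by a continuous parameter.
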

\begin{proof}
For $\eps\in(0,1)$, let $A_\eps: =A \cap \ol B(0, 1-\eps)$, where $\ol B(0, 1-\eps)$ is the ball of radius $(1-\eps)$ centered at the origin. Since $A \cap \partial \Ub$ only contains finitely many points, we have 
$\dimh(A)=\lim_{\eps\to 0} \dimh(A_\eps)$. Since $f(A\cap \partial\Ub)$ also only contains finitely many points, we have $\dimh(f(A))=\lim_{\eps\to 0} \dimh(f(A_\eps)).$
For any fixed $\eps\in(0,1)$, $f'$ restricted to $\ol B(0, 1-\eps/2)$ is bounded and bounded away from $0$, so $\dimh(A_\eps) = \dimh(f(A_\eps))$. This completes the proof.
\end{proof}

In Section~\ref{subsec:exploration}, we will recall the partial exploration of the Brownian loop soup, and prove some first  results. In Section~\ref{subsec:complete_proof}, we will complete the proof of Proposition~\ref{prop:0-1}.
	
	\subsection{Decomposition of the Brownian loop soup via partial exploration of cluster boundaries}\label{subsec:exploration}
	In this subsection, we will recall a result from \cite{MR3901648} on the decomposition of the Brownian loop soup, which relies on a partial exploration process of the  outer boundaries of the outermost clusters.
	The latter collection is distributed as a CLE, and this exploration process is a version of the CLE exploration process defined in \cite{MR2979861}.
	
	Let us now describe one possible version of the CLE exploration process (see \cite{MR2979861} for more details). We place ourselves in the upper half-plane $\Hb$, and let $\Omega$ be a CLE in $\Hb$.
	Suppose that $\chi$ is a deterministic continuous curve from $0$ to $\infty$ parametrized by $\Rb^+$, such that for all $t>0$, $\Hb\setminus \chi([0,t])$ is simply connected.
	According to \cite{MR2979861}, for each $t>0$, conditionally on all the loops (in this paragraph, we only talk about CLE loops, not Brownian loops) that intersect $\chi([0, t])$, in each connected component of the complement of the union of the discovered loops and $\chi([0,t])$, there is again an independent CLE. 
	This allows us to explore the loops of $\Omega$ in their order of appearance when we move along the curve $\chi$.
	As we encounter a loop, instead of discovering the entire loop at once, we can also discover it progressively by tracing the loop in the clockwise direction until we close the loop. After that, we continue moving along $\chi$ and trace the next (infinitely many) loops as we encounter them. Let $\xi$ be the piece-wise right-continuous curve which is the concatenation of all the loops that we have traced in this process.
	We can view $\xi$ as a Loewner curve in $\Hb$ and parametrize it according to the half-plane capacity. For each $t>0$, let $K_t$ be the hull of $\xi([0,t])$. 
	Let $T$ be a stopping time with respect to the filtration generated by $\xi$. The results of  \cite{MR2979861} imply the following fact. If at time $T$, $\xi(T)$ belongs to some loop $\gamma_0$ in $\Omega$, then we denote by $\sigma(T)$ the time at which we start tracing $\gamma_0$. If $T>\sigma(T)$, then the conditional law of the rest of that loop (i.e., $\gamma_0\setminus \xi([\sigma(T), T])$) is that of an SLE$_\kappa$ from $\xi_T$ to $\xi_{\sigma(T)}$ in $\Hb\setminus K_T$.
	
	\begin{figure}
		\centering
		\includegraphics[scale=0.12]{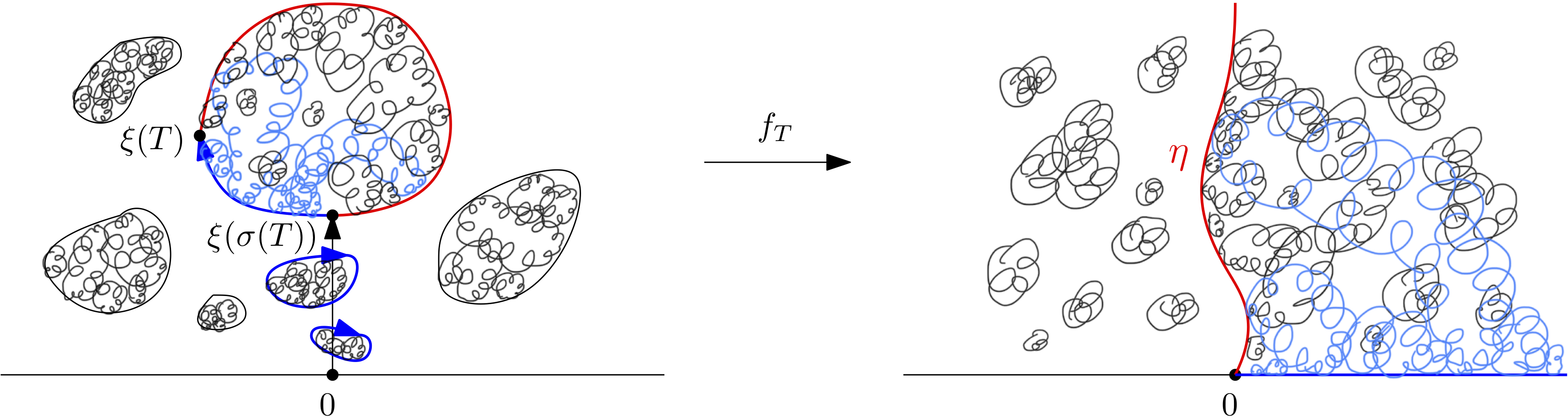}
		\caption{We explore the Brownian loop soup using the Markovian exploration of a CLE (which is the collection of outer boundaries of the outermost clusters). We illustrate the case where $\chi$ is the vertical half line starting from $0$. On the right, the loops in $\wt\Xi^1$ are in blue and the loops in $\wt\Xi^0$ are in black. The curve $\eta$ is in red.}
		\label{fig:partial_exploration}
	\end{figure}

	In \cite{MR3901648}, this exploration is applied to the Brownian loop soup. Let $\Xi$ be a Brownian loop soup in $\Hb$ such that the outer boundaries of its outermost clusters form a CLE $\Omega$. 
	Let $\xi$ be the Markovian exploration of $\Omega$ defined above, and let $T$ be a stopping time such that one is a.s.\ in the middle of tracing a CLE loop at time $T$ (i.e., $\xi_T\not=\xi_{\sigma(T)}$). Let $f_T$ be a conformal map from $\Hb\setminus K_T$ onto $\Hb$ that maps $\xi_T$ and $\xi_{\sigma(T)}$ to $0$ and $\infty$.
	Let $\Xi^1_T$ be the collection of loops in the loop soup that intersect $\xi([\sigma(T), T])$. Let $\Xi^0_T$ be the collection of loops in $\Hb\setminus K_T$ that are not in $\Xi^1_T$. See Figure \ref{fig:partial_exploration} for an illustration.
	The following is a reformulation of \cite[Theorem 1.6]{MR3901648} for the upper half-plane setting.
	
	\begin{lemma}[\cite{MR3901648} Theorem 1.6]\label{lem:qian}
		The sets $f_T(\Xi^1_T)$ and $f_T(\Xi^0_T)$ are independent and they are further independent from $\xi([0,T])$. Moreover, they satisfy the following properties:
		\begin{itemize}
			\item $f_T(\Xi^0_T)$ is a Brownian loop soup of intensity $c$ in $\Hb$.
			\item The union of loops in $f_T(\Xi^1_T)$ satisfies the one-sided chordal restriction. 
		\end{itemize}
	\end{lemma}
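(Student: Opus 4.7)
The plan is to establish this as a reformulation, in the upper half-plane setting, of Theorem~1.6 of \cite{MR3901648}, by combining the Markov property of the Sheffield--Werner CLE exploration, the Poissonian/restriction properties of the Brownian loop soup, and the conformal invariance of both.

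First I would condition on $\mathcal{F}_T$, the $\sigma$-algebra generated by $\xi([0,T])$ together with its Loewner data; in particular $K_T$ and $f_T$ are $\mathcal{F}_T$-measurable. The task is to describe the conditional law of the pair $(\Xi^0_T,\Xi^1_T)$ given $\mathcal{F}_T$, and to show that, after pushing forward by $f_T$, the two components are independent and their joint law is a deterministic product that does not depend on the realization of $\mathcal{F}_T$.

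For $\Xi^0_T$: because $\xi([\sigma(T),T])\subset K_T$, any Brownian loop contained in the open set $\Hb\setminus K_T$ is automatically disjoint from $\xi([\sigma(T),T])$, and conversely every loop in $\Xi^0_T$ lies in $\Hb\setminus K_T$. Thus $\Xi^0_T$ coincides with the subcollection of $\Xi$ whose loops are contained in $\Hb\setminus K_T$. By the Poisson point process description of the loop soup and the $\mathcal{F}_T$-measurability of $K_T$, conditionally on $\mathcal{F}_T$ the family $\Xi^0_T$ is a Brownian loop soup of intensity $c$ in $\Hb\setminus K_T$, independent of the subcollection of loops of $\Xi$ that meet $K_T$. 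Conformal invariance of the Brownian loop measure then gives that $f_T(\Xi^0_T)$ is, conditionally on $\mathcal{F}_T$, a Brownian loop soup of intensity $c$ in $\Hb$; the resulting conditional law is the same for every realization of $\mathcal{F}_T$, which yields both the first bullet point and the unconditional independence of $f_T(\Xi^0_T)$ from $\xi([0,T])$.

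For $\Xi^1_T$: the Sheffield--Werner Markov property of the CLE exploration \cite{MR2979861} identifies the conditional law of the remaining portion of the currently-traced CLE$_\kappa$ loop as an SLE$_\kappa$ from $\xi_T$ to $\xi_{\sigma(T)}$ in $\Hb\setminus K_T$, independent of the residual CLE in the complementary components of $\Hb\setminus K_T$. Under $f_T$ this becomes an SLE$_\kappa$ from $0$ to $\infty$ in $\Hb$. The second input is the loop-soup construction of CLE$_\kappa$ of Sheffield--Werner together with the Werner--Wu identification used in \cite{MR3901648}: the filled cluster of a CLE$_\kappa$ loop, seen from one side, has the law of a one-sided chordal restriction sample with the parameter determined by $c$ via~\eqref{eq:c_kappa}. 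Applying this identification in $\Hb\setminus K_T$ and pushing forward by $f_T$ gives that the filled union $f_T(\bigcup_{\gamma\in\Xi^1_T}\gamma)$ satisfies one-sided chordal restriction, with a law that again does not depend on $\mathcal{F}_T$. Finally, the independence of $f_T(\Xi^0_T)$ from $f_T(\Xi^1_T)$ is inherited from the Poisson point process decomposition at stopping time $T$: loops entirely contained in $\Hb\setminus K_T$ and loops intersecting $K_T$ form independent Poisson point processes given $\mathcal{F}_T$, and $\Xi^1_T$ is determined by the latter.

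The main obstacle is the clean identification of the law of $f_T(\Xi^1_T)$ as a one-sided chordal restriction sample; this is the substantive content of \cite{MR3901648} and requires going through the loop-soup construction of CLE$_\kappa$ carefully. Once this identification and the Markov property for the exploration at a generic stopping time $T$ with $\xi_T\neq\xi_{\sigma(T)}$ are in hand, the assembly above is routine and the lemma follows.
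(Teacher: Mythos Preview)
The paper does not prove this lemma; it is stated directly as a reformulation of \cite[Theorem~1.6]{MR3901648} in the upper half-plane, with no argument beyond the citation. Your sketch is consistent with this: you correctly identify the ingredients (Poisson restriction for the loop soup, conformal invariance, the Sheffield--Werner Markov property of the CLE exploration) and you correctly flag that the identification of $f_T(\Xi^1_T)$ as a one-sided chordal restriction sample is the substantive step that is carried out in \cite{MR3901648} rather than here.

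One point to sharpen: the Poisson splitting you invoke for the independence of $\Xi^0_T$ and $\Xi^1_T$ is not the elementary ``split a Poisson point process by a deterministic region'' argument, because $K_T$ is itself a function of the loop soup (through the CLE and its exploration). The fact that one can nonetheless condition on $\mathcal{F}_T$ and obtain an independent loop soup in $\Hb\setminus K_T$ is precisely the Markovian content of the exploration established in \cite{MR2979861,MR3901648}; your phrasing ``by the Poisson point process description \ldots\ and the $\mathcal{F}_T$-measurability of $K_T$'' slightly undersells this. Other than this nuance, your outline matches the paper's implicit reliance on the cited reference.
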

	We will not need the one-sided chordal restriction property of  $f_T(\Xi^1_T)$, but will need the fact that the law of $f_T(\Xi^1_T)$ is invariant under scaling, which is implied by the one-sided chordal restriction property (see \cite{MR3901648} for more details). To simplify notation, let $\wt\Xi^0:= f_T(\Xi^0_T)$ and $\wt\Xi^1:= f_T(\Xi^1_T)$.
	
	As we have mentioned earlier, the conditional law of $\xi$ from time $T$ up to the time that it completes the loop that it is tracing is an SLE$_\kappa$ in $\Hb\setminus K_T$. We denote by $\eta$ the image under $f_T$ of this part of $\xi$, which is an SLE$_\kappa$ in $\Hb$ from $0$ to $\infty$. We parametrize $\eta$ according to its half-plane capacity, and let $g_t$ be the conformal map from $\Hb\setminus\eta([0,t])$ onto $\Hb$ which sends $\eta(t)$ to $0$ and fixes $\infty$. Note that $\eta$ is also the left boundary of the union of $\wt\Xi^1$ with all the clusters in $\wt\Xi^0$ that $\wt\Xi^1$ intersects.
	We can continue to explore the collection of loops $\wt\Xi^1 \cup \wt\Xi^0$ along the curve $\eta$. 
	We parametrize $\eta$ according to its half-plane capacity. 
	For $t>0$, let $H^1_t$ be the union of $\wt\Xi^1$ with the collection of loops in $\wt\Xi^0$ that intersect $\eta([0,t])$. Let $H^0_t$ be the collection of loops in $\wt\Xi^0$ that do not intersect $\eta([0,t])$. Then $H^1_t\cup H^0_t =\wt\Xi^1 \cup \wt\Xi^0$ for all $t>0$.
	Lemma~\ref{lem:qian} implies the following result.
	
	\begin{lemma}\label{lem:eta_indep}
		The collections $g_t(H^0_t)$ and $g_t(H^1_t)$ are independent from each other and further independent from $\eta([0,t])$. Moreover, $g_t(H^0_t)$ is distributed as a Brownian loop soup in $\Hb$.
	\end{lemma}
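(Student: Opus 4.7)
The plan is to reduce Lemma~\ref{lem:eta_indep} to Lemma~\ref{lem:qian} applied at a later stopping time $T'>T$ of the underlying CLE exploration $\xi$. Heuristically, performing the partial exploration of $\eta$ up to time $t$ (in the $f_T$-coordinates) is nothing more than continuing the original Markovian CLE exploration $\xi$ a bit further along the same CLE loop.

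First, I would let $T'$ be the time such that $\xi([T,T'])=f_T^{-1}(\eta([0,t]))$. Since $\xi$ traces a single CLE loop during the time interval $[\sigma(T),T']$ (recall $\sigma(T')=\sigma(T)$), and since for $\kappa\in(8/3,4]$ this tracing is a simple curve (it is an SLE$_\kappa$ path), we have $K_{T'}=K_T\cup f_T^{-1}(\eta([0,t]))$. Moreover, because the Loewner parametrization of $\eta$ pulls back via $f_T$ to a deterministic function of $\xi$, $T'$ is a stopping time with respect to the filtration generated by $\xi$. This is the point that needs to be carefully verified, and I expect it to be the main technical obstacle.

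Next, I would apply Lemma~\ref{lem:qian} at the stopping time $T'$. The conformal map $f_{T'}:\Hb\setminus K_{T'}\to\Hb$ is only specified up to a positive scaling that fixes $\{0,\infty\}$; by scale invariance of the Brownian loop soup and of the one-sided chordal restriction property, the conclusion of Lemma~\ref{lem:qian} is insensitive to this choice. I would therefore choose $f_{T'}$ so that the composition $f_{T'}\circ f_T^{-1}$ coincides with the hydrodynamically normalized Loewner map $g_t:\Hb\setminus\eta([0,t])\to\Hb$ (both maps send $\eta(t)\mapsto 0$ and $\infty\mapsto\infty$, hence differ only by a positive scalar, so the choice is possible).

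It then remains to identify the objects. Since $\xi$ does not self-intersect in $[\sigma(T),T']$ and $K_{T'}=K_T\cup \xi([T,T'])$, a loop of $\Xi^0_T$ lies either in $\Xi^0_{T'}$ or intersects $\xi([T,T'])$; applying $f_T$ this says that a loop of $\wt\Xi^0$ either avoids $\eta([0,t])$ or intersects it. Combined with $f_T(\Xi^1_T)=\wt\Xi^1$, this yields $f_T(\Xi^0_{T'})=H^0_t$ and $f_T(\Xi^1_{T'})=H^1_t$, and therefore
\[
f_{T'}(\Xi^0_{T'})=g_t(H^0_t),\qquad f_{T'}(\Xi^1_{T'})=g_t(H^1_t).
\]
By Lemma~\ref{lem:qian} applied at $T'$, the right-hand sides are independent of each other and of $\xi([0,T'])$, and $g_t(H^0_t)$ is a Brownian loop soup in $\Hb$. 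Since $\eta([0,t])=f_T(\xi([T,T']))$ is measurable with respect to $\xi([0,T'])$, this gives the desired independence from $\eta([0,t])$ and completes the proof.
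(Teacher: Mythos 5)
Your proof is correct and takes essentially the same route as the paper's (one-line) proof, which simply observes that $g_t(H^0_t)$ and $g_t(H^1_t)$ coincide with $f_{T'}(\Xi^0_{T'})$ and $f_{T'}(\Xi^1_{T'})$ for a suitable stopping time $T'>T$ and then invokes Lemma~\ref{lem:qian}. You have filled in the details the paper leaves implicit (that $T'$ is a stopping time, that the normalizations of $f_{T'}\circ f_T^{-1}$ and $g_t$ can be matched up to the scaling freedom, and the set-theoretic identification of the loop collections), all of which are handled correctly.
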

	\begin{proof}
		This lemma directly follows from Lemma~\ref{lem:qian}. Indeed, the collections  $g_t(H^0_t)$ and $g_t(H^1_t)$ are equal to $f_{T'}(\Xi^1_{T'})$ and $f_{T'}(\Xi^0_{T'})$ in Lemma~\ref{lem:qian} for some stopping time $T'>T$.
	\end{proof}
	
	\begin{lemma}\label{lem:0-1-first}
Let $\Sc$ and $\Dc$ respectively be the set of simple and double points in the collection of loops $\wt\Xi^0 \cup \wt\Xi^1$. There exist two random variables $D_1$ and $D_2$, such that almost surely
$\dimh(\Sc\cap \eta([0,t])) = D_1$ and $\dimh(\Dc\cap \eta([0,t]))= D_2$ for all $t>0$. Moreover, $D_1$ and $D_2$ are measurable with respect to the $\sigma$-algebra generated by $\wt\Xi^1$.  
	\end{lemma}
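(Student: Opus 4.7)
The plan is to establish, in order: (i) that $\dimh(\Sc \cap \eta([0,t]))$ and $\dimh(\Dc \cap \eta([0,t]))$ are almost surely constant in $t$, yielding random variables $D_1$ and $D_2$; and (ii) that these constants are measurable with respect to $\sigma(\wt\Xi^1)$. The argument for $D_2$ is identical to that for $D_1$, so I would only discuss the latter in detail.

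For (i), define $D(t) := \dimh(\Sc \cap \eta([0,t]))$, which is non-decreasing in $t$. I would exploit the joint scale invariance of $(\eta, \wt\Xi^0, \wt\Xi^1)$: the map $z \mapsto \lambda z$ (together with the half-plane capacity reparametrization $t \mapsto \lambda^2 t$ of $\eta$) preserves the joint law, since $\wt\Xi^0$ is a Brownian loop soup (scale invariant), $\wt\Xi^1$ satisfies one-sided chordal restriction by Lemma~\ref{lem:qian} (also scale invariant), the two are independent, and $\eta$ is the left boundary of the cluster of $\wt\Xi^0 \cup \wt\Xi^1$ containing $\wt\Xi^1$, a construction that is scale-covariant. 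Since Hausdorff dimension is scale invariant, this yields $D(t) \overset{d}{=} D(\lambda^2 t)$ for all $\lambda > 0$. Combined with the monotonicity and the bound $D \le 2$, the elementary fact that $X \le Y$ a.s.\ with $X \overset{d}{=} Y$ forces $X = Y$ a.s.\ gives $D(s) = D(t)$ a.s.\ for each pair $0 < s \le t$. Applying this along a countable dense set of $t$ and interpolating by monotonicity yields $D(t) = D(1) =: D_1$ simultaneously for all $t > 0$, almost surely.

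For (ii), the key observation is that, for every $t > 0$, $D_1 = D(t)$ is measurable with respect to $\sigma(\wt\Xi^1, H^1_t, \eta|_{[0,t]})$; by Lemma~\ref{lem:eta_indep}, this sigma-algebra is independent of $\sigma(g_t(H^0_t))$, so the pair $(D_1, \wt\Xi^1)$ is independent of $g_t(H^0_t)$ for each $t > 0$. I would then let $t \to 0$: since $\eta$ is continuous with $\eta(0) = 0$, for any compact $K \subset \Hb \setminus \{0\}$ and all sufficiently small $t$, the map $g_t$ is defined on a neighborhood of $K$ and converges uniformly to the identity on $K$, and moreover every loop of $\wt\Xi^0$ contained in $K$ already lies in $H^0_t$. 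A portmanteau-type approximation argument in the configuration space of loops then yields that $g_t(H^0_t)$, restricted to loops in $K$, converges to $\wt\Xi^0$ restricted to $K$ in a suitable weak sense. Using that almost surely no loop of $\wt\Xi^0$ passes through the single point $0$, and taking a countable exhaustion of $\Hb \setminus \{0\}$ by such $K$, one obtains that $(D_1, \wt\Xi^1)$ is independent of $\wt\Xi^0$. Finally, since $D_1 \in \sigma(\wt\Xi^0, \wt\Xi^1)$ and $(D_1, \wt\Xi^1) \perp \wt\Xi^0$, a standard disintegration argument shows that the conditional law of $D_1$ given $\wt\Xi^1$ is a.s.\ a Dirac mass, i.e., $D_1$ is $\sigma(\wt\Xi^1)$-measurable modulo null sets.

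The main technical obstacle will be making the limit $t \to 0$ rigorous in a topology on loop configurations strong enough to transfer the independence relation, and then carefully passing from independence of $(D_1, \wt\Xi^1)$ from all the pre-limit loop configurations $g_t(H^0_t)$ to independence from the limiting $\wt\Xi^0$. A monotone-class argument, combined with the local finiteness of $\wt\Xi^0$ away from any fixed point and the fact that loops missing a neighborhood of $0$ are almost surely unaffected by $g_t$ for $t$ small enough, should suffice.
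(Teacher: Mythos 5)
Your proof establishes part (i), the $t$-independence of the dimensions, by the same scale-invariance and monotonicity argument as the paper. For part (ii), you take a genuinely different route. The paper introduces $\Fc_t = \sigma(\eta([0,t]), g_t(H^1_t))$, notes $D_1\in\bigcap_{t>0}\Fc_t$, and then identifies this germ $\sigma$-algebra with $\sigma(\wt\Xi^1)$ using two ingredients: Blumenthal's zero--one law for the driving Brownian motion of $\eta$ (so the germ of $\eta$ is trivial), and the fact that a.s.\ no loop of $\wt\Xi^0$ touches $0$ (so the germ of $g_t(H^1_t)$ reduces to $\sigma(\wt\Xi^1)$). You instead dualize: you show $(D_1,\wt\Xi^1)$ is measurable with respect to $\sigma(g_t(H^1_t), \eta|_{[0,t]})$, hence independent of $g_t(H^0_t)$ for every $t>0$ by Lemma~\ref{lem:eta_indep}; let $t\to 0$ to get $(D_1,\wt\Xi^1)\perp \wt\Xi^0$; then combine with $D_1\in\sigma(\wt\Xi^0,\wt\Xi^1)$ and $\wt\Xi^0\perp\wt\Xi^1$ to conclude via disintegration. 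Your route sidesteps Blumenthal's law entirely, which is a mild simplification, at the cost of a slightly more involved limiting argument. Both routes use Lemma~\ref{lem:eta_indep} and the fact that a.s.\ no loop of $\wt\Xi^0$ touches the origin; both leave a similar level of technical work at the $t\to 0$ limit (the paper in identifying the germ $\sigma$-algebra, you in transferring independence from $g_t(H^0_t)$ to $\wt\Xi^0$), which you are more explicit about flagging. One small point worth making explicit in your write-up: the measurability of $\wt\Xi^1$ with respect to $\sigma(g_t(H^1_t),\eta|_{[0,t]})$ relies on the observation that $\wt\Xi^1$ can be recovered inside $H^1_t$ as the collection of loops whose closure touches $\Rb$, since a.s.\ no loop of $\wt\Xi^0$ does; this is implicitly assumed but deserves a line. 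Your disintegration step (that $X=f(Z,Y)$, $Y\perp Z$ and $(X,Y)\perp Z$ force $X$ to be $\sigma(Y)$-measurable modulo null sets) is correct.
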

	\begin{proof}
Let us prove the lemma for simple points, then the same proof applies to double points.
		The law of $\dimh(\Sc\cap \eta([0,t]))$ is the same for all $t>0$, because the law of $\wt\Xi^1 \cup \wt\Xi^0$ is invariant under scaling. The value of $\dimh(\Sc\cap \eta([0,t]))$ is increasing in $t$, so they should all be equal, namely there exists a random variable $D_1$ such that almost surely $\dimh(\Sc\cap \eta([0,t])) = D_1$ for all $t>0$.
		
		For $t>0$, let $\Fc_t$ be the $\sigma$-algebra generated by $\eta([0,t])$ and $g_t(H^1_t)$.
		Note that $\dimh(\Sc\cap \eta([0,t]))$ is measurable with respect to $\Fc_t$. It follows that $D_1$ is measurable with respect to $\cap_{t>0}\Fc_t$. Note that the germ $\sigma$-algebra generated by $\eta([0,t])$ is trivial, for it is the same as the germ $\sigma$-algebra of the Brownian motion which generates the driving function of $\eta([0,t])$. Moreover, if a loop in $\wt\Xi^0$ intersects $\eta([0,t])$ for all $t>0$, then it must also intersect $\eta(0)=0$, since it is a continuous curve. This implies that the germ $\sigma$-algebra generated by $g_t(H^1_t)$ is independent from $\wt\Xi^0$, and is the same as the $\sigma$-algebra generated by $\wt\Xi^1$. Therefore, $\cap_{t>0}\Fc_t$ is also equal to the $\sigma$-algebra generated by $\wt\Xi^1$.
	\end{proof}

	\subsection{Proof of Proposition~\ref{prop:0-1}} \label{subsec:complete_proof}
	To complete the proof of Proposition~\ref{prop:0-1}, we need to use the measure on \emph{one-point pinned complete clusters} defined in \cite{MR3901648}, which is closely related to the measure on one-point pinned CLE loops defined in \cite{MR2979861}. Given a collection of Brownian loops, we say that $\theta$ is a \emph{complete cluster}, if it is equal to the union of an outermost cluster and all the loops that are encircled by the outer boundary of this cluster.
	
	In the Markovian exploration of CLE defined in the previous subsection, let $\tau$ be the first time that $\xi$ reaches the loop $\gamma(i)$ in $\Omega$ encircling $i$. Let $h_{\tau}$ be the unique conformal map from $\Hb\setminus K_\tau$ onto $\Hb$ which sends $i$ and $\xi_\tau$ to $i$ and $0$, respectively. Then $h_\tau (\gamma(i))$ is a loop in $\Hb$ pinned at $0$ and encircling $i$. Let $\mu(i)$ be the probability measure of the loop $h_\tau (\gamma(i))$.
	The probability measure $\mu(i)$ can be extended to an infinite measure $\mu$ on simple loops pinned at $0$. Moreover, there exists $\beta>0$ such that $\mu$ can be characterized by the following properties (see  \cite{MR2979861} for more details): 
	\begin{itemize}
		\item The restriction of $\mu$ to loops encircling $i$ is equal to $\mu(i)$.
		\item (Conformal covariance) For any conformal transformation $\psi$ from $\Hb$ onto itself with $\psi(0)=0$, we have
		$
		\psi \circ \mu =|\psi'(0)|^{-\beta} \mu.
		$
	\end{itemize}

	We can explore a Brownian loop soup $\Xi$ in $\Hb$ using the same exploration process. Let $\theta(i)$ be the complete cluster in $\Xi$ which encircles $i$. Let $\nu(i)$ be the probability measure of $h_\tau(\theta(i))$. Then $\nu(i)$ can also be extended to an infinite measure $\nu$ on one-point pinned complete clusters. More precisely, $\nu$ can be reconstructed from $\mu\otimes \Qb$ as follows.
	\begin{itemize}
		\item Let $\Qb$ be the probability law of $f(\theta(i))$ where $f$ is the conformal map from the domain encircled by the outer boundary of $\theta(i)$ onto $\Ub$ which sends $i$ to $0$ with $f'(i)>0$.
		By \cite[Theorem 1]{MR3994105}, we know that $f(\theta(i))$ is independent from $\gamma$, and its law is invariant under all conformal maps from $\Ub$ onto itself.
		
		\item Suppose that $\gamma$ is a one-point pinned loop distributed according to $\mu$. Suppose that $\Theta$ is a collection of loops in $\overline \Ub$ distributed according to $\Qb$.
		Let $f_\gamma$ be a conformal map from the domain encircled by $\gamma$ onto $\Ub$ (we can choose an arbitrary normalization for $f$). Then $f_\gamma^{-1}(\Theta)$ is distributed according to $\nu$.
	\end{itemize}
	This construction ensures that $\nu$ also satisfies the conformal covariance property. More precisely, there exists $\beta>0$ such that for any conformal transformation $\psi$ from $\Hb$ onto itself with $\psi(0)=0$, we have
	\begin{align}\label{eq:conf_cov}
	\psi \circ \nu =|\psi'(0)|^{-\beta} \nu.
	\end{align}

Let us also record the following lemma which follows from \cite[Lemma 4.17]{MR3901648}.
\begin{lemma}[\cite{MR3901648} Lemma 4.17]\label{lem:loop_origin}
Let $\theta$ be a complete cluster sampled according to $\nu$. Then there is $\nu$ a.e.\ no Brownian loop in $\theta$ which touches the origin.
\end{lemma}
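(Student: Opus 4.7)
The plan is to combine the explicit construction of $\nu(i)$ via the Markovian CLE exploration with the domain Markov property (Lemma~\ref{lem:qian}), reducing the claim to a Poisson-process statement about planar Brownian loops avoiding prescribed points. Concretely, realize $\theta\sim\nu(i)$ as $h_\tau(\theta(i))$ in the setup immediately preceding Lemma~\ref{lem:qian}: $\Xi$ is a Brownian loop soup in $\Hb$ of intensity $c$, $\theta(i)$ is its complete cluster encircling $i$, $\gamma(i)$ is the outer CLE loop bounding $\theta(i)$, $\tau$ is the first time the exploration $\xi$ reaches $\gamma(i)$, and $h_\tau\colon\Hb\setminus K_\tau\to\Hb$ is the conformal map sending $\xi_\tau\in\gamma(i)\cap\partial K_\tau$ to $0$. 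A Brownian loop of $\theta$ touches $0$ if and only if the corresponding loop of $\theta(i)$ contains the specific point $\xi_\tau$, so the task reduces to showing that almost surely no loop of $\theta(i)$ passes through $\xi_\tau$.

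Next, I would apply Lemma~\ref{lem:qian} at $T=\tau$. Since $\tau$ is the very first time $\xi$ touches $\gamma(i)$ one has $\sigma(\tau)=\tau$ and $\xi([\sigma(\tau),\tau])=\{\xi_\tau\}$, so $\Xi^1_\tau$ is exactly the (random) collection of loops of $\Xi$ whose trace contains $\xi_\tau$, while $\Xi^0_\tau$ is, conditionally on the past of the exploration, an independent Brownian loop soup in $\Hb\setminus K_\tau$. By the very definition of $\mu_{\Hb\setminus K_\tau}$, every loop of $\Xi^0_\tau$ has compact trace in the \emph{open} set $\Hb\setminus K_\tau$ and is therefore at strictly positive distance from $\xi_\tau\in\partial K_\tau$. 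Hence every loop of $\theta(i)$ which happens to lie in $\Xi^0_\tau$ automatically avoids $\xi_\tau$, and the whole problem collapses to showing $\theta(i)\cap\Xi^1_\tau=\emptyset$ almost surely.

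This last reduction is the main obstacle, and it is essentially what \cite[Lemma~4.17]{MR3901648} establishes. The guiding intuition is that, for any \emph{deterministic} $z\in\Hb$, the Brownian loop measure $\mu$ assigns zero mass to $\{L:z\in L\}$ because a planar Brownian motion almost surely does not hit prescribed points, so by Slivnyak--Mecke the expected number of loops of $\Xi$ passing through any fixed $z$ is zero; the difficulty is that $\xi_\tau$ is itself a non-trivial functional of $\Xi$. I would circumvent this by approximating $\tau$ from below by stopping times $\tau_\eps$ at which $\xi$ has approached $\gamma(i)$ to within distance $\eps$ but has not yet touched it: at each $\tau_\eps$ the tip $\xi_{\tau_\eps}$ is measurable with respect to the $\sigma$-algebra generated by the loops of $\Xi$ intersecting $K_{\tau_\eps}$, and is therefore conditionally independent of the (independent) future soup on $\Hb\setminus K_{\tau_\eps}$. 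A Slivnyak--Mecke computation in that conditional setting bounds the expected number of future loops passing through a small ball around $\xi_{\tau_\eps}$, and bounds it by a quantity tending to $0$ with $\eps$; passing to the limit $\eps\downarrow 0$, using continuity of the Loewner driving function and the compactness of individual Brownian loop traces, rules out the existence of any loop of $\theta(i)$ containing $\xi_\tau$. Finally, the passage from the $\nu(i)$-a.s.\ statement to the $\nu$-a.e.\ statement is immediate from the conformal covariance~\eqref{eq:conf_cov}, since ``some loop touches the origin'' is invariant under all Möbius automorphisms of $\Hb$ fixing $0$.
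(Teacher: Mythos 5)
The paper does not prove this statement; it is imported verbatim from \cite[Lemma~4.17]{MR3901648}, so there is no internal proof to compare against. Your outline is in the right spirit — reduce to showing that no Brownian loop of the soup passes through the random point $\xi_\tau$, then exploit the fact that the loop measure assigns zero mass to loops through any fixed point — and the reductions at the start (identifying $\theta\sim\nu(i)$ with $h_\tau(\theta(i))$, observing that any loop of the pinned cluster touching $0$ corresponds to a loop of $\Xi$ through $\xi_\tau$, and discarding the loops of $\Xi^0_\tau$ because they are compactly contained in the open set $\Hb\setminus K_\tau$) are sound. But the crux, which you correctly identify as the content of the cited lemma, is handled with gaps that would not survive scrutiny.

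The most serious problem is the approximating times $\tau_\eps$: defined as the first time $\xi$ comes within distance $\eps$ of $\gamma(i)$, these are \emph{not} stopping times for the exploration filtration, because $\gamma(i)$ has not yet been discovered before time $\tau$, so the distance from $\xi_t$ to $\gamma(i)$ is not measurable with respect to the past. Consequently $\xi_{\tau_\eps}$ is not a function of the loops intersecting $K_{\tau_\eps}$, and the conditional independence from the future soup that your Slivnyak--Mecke step relies on is unjustified. Even granting a stopping-time replacement, the Slivnyak--Mecke estimate is stated too loosely — the expected number of loops through any ball of positive radius is infinite because of arbitrarily small loops, so one must cut by diameter — and, more importantly, the scheme only controls \emph{future} loops (those in $\Hb\setminus K_{\tau_\eps}$); it says nothing about loops already discovered by time $\tau_\eps$ that nevertheless pass through $\xi_\tau$, which are precisely the dangerous ones since the cluster $\theta(i)$ hugs the contact point. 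Finally, your appeal to Lemma~\ref{lem:qian} at $T=\tau$ is not licensed: that lemma requires $\xi_T\neq\xi_{\sigma(T)}$, which fails at $T=\tau$ exactly as you observe ($\sigma(\tau)=\tau$), so a separate argument is needed to justify the Markov decomposition of the soup at that time.
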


	\begin{figure}
		\centering
		\includegraphics[scale=.12]{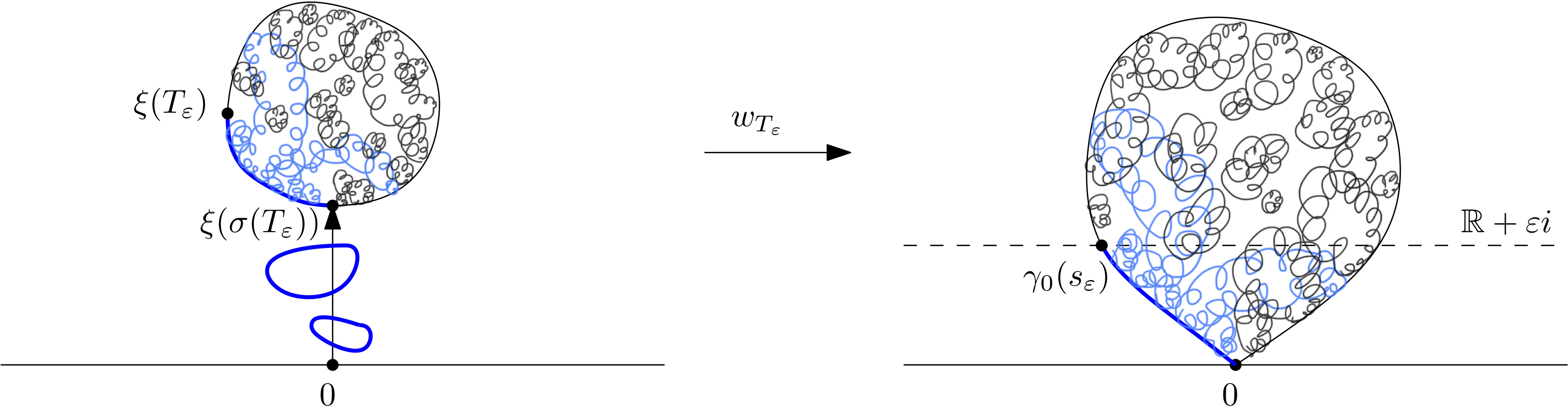}
		\caption{Proof of Lemma~\ref{lem:pinned-0-1}. On the right, we have $\theta_\eps$ which is a one-point pinned complete cluster conditioned to reach $\Rb+\eps i$. The $\sigma$-algebra $\Fc_\eps(\theta_\eps)$ is generated by $\gamma_0([0,s_\eps])$ (in blue) and all the Brownian loops that intersect $\gamma_0([0,s_\eps])$ (also colored in blue).}
		\label{fig:pinned_loop}
	\end{figure}	
	
	In order to prove Proposition~\ref{prop:0-1}, we first prove the following lemma.
	\begin{lemma}\label{lem:pinned-0-1}
		For $\nu$ a.e.\ one-point pinned complete cluster $\theta$, the following holds. Let $\gamma$ be the outer boundary of $\theta$. We parametrize $\gamma$ injectively and continuously by $[0,1]$ in the clockwise direction starting from $0$. For all $t\in(0,1]$, we have 
\begin{align*}
\dimh(\Sc \cap \gamma([0,t])) =2-\xi_c(2), \quad \dimh(\Dc \cap \gamma([0,t])) =2-\xi_c(4).
\end{align*}
	\end{lemma}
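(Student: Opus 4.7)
The plan is to transfer the dimension result of Proposition~\ref{prop:dim-upper} to a $\nu$-almost-sure statement along every initial portion $\gamma([0,t])$, by combining three ingredients: (i) the matching a.s.\ upper bound and positive-probability lower bound from Proposition~\ref{prop:dim-upper}, (ii) the a.s.\ constancy of dimensions along initial exploration segments in Lemma~\ref{lem:0-1-first}, and (iii) a germ $\sigma$-algebra argument to upgrade from positive probability to $\nu$-a.s.

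I would realize $\nu(i)$ concretely via the Markovian exploration: sample a Brownian loop soup $\Xi$ in $\Hb$, let $\tau$ be the first time the exploration curve $\xi$ reaches the outermost cluster $\theta(i)$ encircling $i$, and set $\gamma=h_\tau(\gamma(i))$, where $\gamma(i)$ is the outer boundary of $\theta(i)$. Since $h_\tau$ is conformal, Lemma~\ref{lem:conformal_dim} lets us work equivalently with $\gamma(i)$ in $\Xi$ instead of $\gamma$ under $\nu$. Proposition~\ref{prop:dim-upper}, transferred from $\Ub$ to $\Hb$ by conformal invariance of the loop soup, gives a.s.\ the upper bounds $\dimh(\Sc\cap\gamma(i))\le 2-\xi_c(2)$, $\dimh(\Dc\cap\gamma(i))\le 2-\xi_c(4)$, and $\Tc\cap\gamma(i)=\emptyset$. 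For the matching lower bound, I would slightly adapt the proof of \eqref{eq:lo-bound}: sampling the auxiliary loops $\xi_1,\xi_2$ from the restriction of $\mu$ to loops of diameter at least $\eps$ contained in a small neighborhood of $i$ forces both to lie in $\theta(i)$ with positive probability, giving $\dimh(\Sc\cap\gamma(i))= 2-\xi_c(2)$ and $\dimh(\Dc\cap\gamma(i))= 2-\xi_c(4)$ with positive probability.

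To upgrade from positive probability to $\nu$-a.s., I would apply Lemma~\ref{lem:qian} at each stopping time $T\in(\tau,\tau')$ (with $\tau'$ the closing time of $\gamma(i)$) and invoke Lemma~\ref{lem:0-1-first} to produce random constants $D_1(T),D_2(T)$ equal a.s.\ to $\dimh(\Sc\cap\eta_T([0,s]))$ and $\dimh(\Dc\cap\eta_T([0,s]))$ for every $s>0$, where $\eta_T$ is the SLE$_\kappa$-image under $f_T$ of the still-unexplored portion of $\gamma(i)$. These constants are measurable with respect to $\wt\Xi^1_T=f_T(\Xi^1_T)$, the collection of loops of $\Xi$ intersecting $\xi([\tau,T])$. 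Letting $T\downarrow\tau$, the arc $\xi([\tau,T])$ shrinks to the single point $\xi_\tau$ and $\wt\Xi^1_T$ becomes a.s.\ the empty collection, while $D_1(T)\to\dimh(\Sc\cap\gamma(i))$ and $D_2(T)\to\dimh(\Dc\cap\gamma(i))$ by Lemma~\ref{lem:0-1-first} (taking $s$ large so that $\eta_T$ covers essentially all of $\gamma(i)\setminus\xi([\tau,T])$). Hence the two total-boundary dimensions are measurable with respect to $\bigcap_{T>\tau}\sigma(\wt\Xi^1_T)$, which identifies with the germ $\sigma$-algebra of $(\Xi,\xi)$ at $\xi_\tau$. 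I would then combine Blumenthal's 0-1 law for the driving Brownian motion of the SLE$_\kappa$-pieces of $\xi$ near $\xi_\tau$ with Kolmogorov's 0-1 law for the independent increments of the Poisson point process $\Xi$ to conclude that this germ $\sigma$-algebra is trivial, so the total-boundary dimensions are $\nu$-a.s.\ deterministic; combined with the positive-probability equality they must equal $2-\xi_c(2)$ and $2-\xi_c(4)$.

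Finally, mapping back through $h_\tau$ and applying Lemma~\ref{lem:0-1-first} once more to obtain constancy of $\dimh(\Sc\cap\gamma([0,t]))$ and $\dimh(\Dc\cap\gamma([0,t]))$ in $t$ for small $t$, I can combine this with the a.s.\ upper bounds and the monotonicity of the dimension in $t$ to conclude that $\nu$-a.e., $\dimh(\Sc\cap\gamma([0,t]))=2-\xi_c(2)$ and $\dimh(\Dc\cap\gamma([0,t]))=2-\xi_c(4)$ for all $t\in(0,1]$, and $\Tc\cap\gamma=\emptyset$. The hard part will be the germ-triviality step: one must carefully pass the $\wt\Xi^1_T$-measurability of $D_i(T)$ to the limit $T\downarrow\tau$, and then decouple the SLE-driver contribution (handled by Blumenthal) from the loop-soup contribution (handled by the PPP 0-1 law) in the joint exploration, where these two sources of randomness interact through the definition of $\xi$.
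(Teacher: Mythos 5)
Your proposal takes essentially the same approach as the paper: condition $\nu$ via the Markovian exploration, use Lemma~\ref{lem:0-1-first} to get constancy of the dimensions along initial arcs, pass to a germ $\sigma$-algebra as the explored arc shrinks to the pinned point, argue the germ is trivial, and identify the a.s.\ constant via Proposition~\ref{prop:dim-upper}. This is correct in outline, so the disagreement is about a specific step rather than the overall structure.

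The genuine gap is in the germ-triviality step, and it is precisely the ``hard part'' you flagged. You assert that $\wt\Xi^1_T$ (the loops of $\Xi$ meeting $\xi([\tau,T])$) ``becomes a.s.\ the empty collection'' as $T\downarrow\tau$, and you propose to handle the loop-soup contribution via a Kolmogorov 0--1 law for the independent increments of the PPP. But the pinned point $\xi_\tau$ is \emph{not} a deterministic point; it is determined by the loop soup itself (it is the first hitting point of the exploration on a cluster boundary), and the events ``some Brownian loop passes through $\xi_\tau$'' are certainly not in the tail $\sigma$-algebra of the PPP restricted to shrinking shells around a \emph{fixed} point. So the standard 0--1 law does not apply as stated, and the claim that no loop passes through $\xi_\tau$ requires a separate, non-obvious argument. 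In the paper this is exactly Lemma~\ref{lem:loop_origin} (a reformulation of Lemma 4.17 of \cite{MR3901648}), which says that $\nu$-a.e.\ there is no Brownian loop in $\theta$ touching the pinned point. Without invoking this (or re-proving it), the limiting $\sigma$-algebra $\bigcap_{T>\tau}\sigma(\wt\Xi^1_T)$ could still carry nontrivial information about loops accumulating at $\xi_\tau$, and the triviality claim is unjustified. The paper sidesteps the ``joint exploration'' entanglement you worry about by working directly with the pinned complete cluster $\theta$ under $\Pf_\eps$ and the $\sigma$-algebra $\Fc_\eps(\theta)$ generated by $\gamma_0([0,s_\eps])$ \emph{and the loops of $\theta$ that touch it}; Lemma~\ref{lem:loop_origin} then kills the loop contribution outright as $\eps\to 0$, leaving only the SLE-driver germ, which is trivial by Blumenthal.

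Two smaller points. First, you realize $\nu$ through $\nu(i)$, conditioning the cluster to encircle $i$; the paper instead conditions on intersecting $\Rb+ri$ and then uses the conformal covariance \eqref{eq:conf_cov} to sweep out all $r>0$ and hence exhaust $\nu$. Encircling $i$ does not exhaust all complete clusters, so you still need a scaling step of this sort to upgrade from ``conditionally a.s.'' to ``$\nu$-a.e.''. Second, the statement of Lemma~\ref{lem:pinned-0-1} only concerns $\Sc$ and $\Dc$; the triple-point statement is proved elsewhere, so your concluding ``$\Tc\cap\gamma=\emptyset$'' is outside the scope of this lemma (though harmless).
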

	
	\begin{proof}
		For each $t>0$, we define $w_t$ to be the unique conformal map from $\Hb\setminus K_{\sigma(t)}$ onto $\Hb$ which sends $\xi(\sigma(t))$ to $0$ and such that $w_t(z)=z+ O(1)$ as $z\to\infty$ (recall that $\sigma(t)$ is the first time that $\xi$ reaches the cluster that it is tracing at time $t$).
		For each $\eps>0$, let $T_\eps$ be the first time $t>0$ that $w_t(\xi ([\sigma(t), t]))$ reaches the horizontal line $\Rb+ \eps i$. 
		Let $\theta_\eps$ be the complete cluster that $\xi$ is tracing at time $T_\eps$.
		Then $w_{T\eps}(\theta_\eps)$ is distributed according to a probability measure $\Pf_\eps$ obtained from $\nu$ conditioned on the complete cluster intersecting $\Rb+\eps i$ (see  \cite{MR3901648,MR2979861} for more details). 
		
		Let $\gamma_0$ be the outer boundary of the one-point pinned complete cluster $w_{T\eps}(\theta_\eps)$, viewed as a simple loop rooted at $0$ and parametrized in a clockwise manner.  Then $w_{T_\eps}(\xi([\sigma(T_\eps), T_\eps]))$ is equal to the part of $\gamma_0$ from $0$ to the first time $s_\eps$ that it reaches $\Rb+\eps i$.  
		Let $S$ be the total time length of $\gamma_0$ (according to the half-plane capacity parametrization).
For $s_\eps< t \le S$, let $D_\eps (\theta_\eps, t)$ be the dimension of simple points on $\gamma_0([s_\eps, t])$.
		Let $\Fc_\eps(\theta_\eps)$ be the $\sigma$-algebra generated by $\gamma_0([0,s_\eps])$ and the collection of loops in $\theta_\eps$ that intersect $\gamma_0([0,s_\eps])$. See Figure \ref{fig:pinned_loop} for an illustration.
		Lemma~\ref{lem:0-1-first} implies that for all $s_\eps< t \le S$, $D_\eps(\theta_\eps,t)$ is equal to some $D_\eps(\theta_\eps)$ which is measurable with respect to $\Fc_\eps(\theta_\eps)$.

		Now, let $\Pf_1$ be the probability measure on one-point pinned clusters $\theta$ obtained from conditioning $\nu$ on the event that $\theta$ intersects $\Rb + i$. Let $\theta$ be a complete cluster with distribution $\Pf_1$. Let $\gamma$ be the outer boundary of $\theta$, viewed as a parametrized loop starting and ending at $0$, oriented clockwise.  Let $S$ be the total time length of $\gamma$. Fix $\eps\in(0,1)$. Let $s_\eps$ be the first time that $\gamma$ reaches $\Rb + \eps i$. 
		Let $\Fc_\eps(\theta)$ be the $\sigma$-algebra generated by $\gamma([0,s_\eps])$ and the collection of loops in $\theta$ that intersect $\gamma([0,s_\eps])$. We have the following facts.
 For $s_\eps< t \le S$, let $D_\eps(\theta, t)$ be the dimension of simple points on $\gamma([s_\eps, t])$.
			Since $\Pf_1$ can be obtained from $\Pf_\eps$ by conditioning on the complete cluster to reach $\Rb +i$, the previous paragraph implies that for all  $s_\eps< t \le S$, $D_\eps(\theta, t)$ is equal to some $D_\eps(\theta)$ which is measurable with respect to $\Fc_\eps(\theta)$. 
			Note that for each $s_\eps< t \le S$, $D_\eps(\theta, t)$ is increasing as $\eps\to 0$, and $\dimh(\Sc\cap \gamma([0,t])) = \lim_{\eps\to 0}D_\eps(\theta, t) =\lim_{\eps\to 0}D_\eps(\theta)$. This implies that $\dimh(\Sc\cap \gamma([0,t]))$ is the same for all $0<t\le S$, and is measurable with respect to $\cap_{\eps> 0}\Fc_\eps(\theta)$. Since $\gamma_0$ is an SLE curve, the germ $\sigma$-algebra generated by $\gamma_0([0,s_\eps])$ is the same as the germ $\sigma$-algebra of the Brownian motion which generates the driving function of $\gamma_0$, hence is trivial. Furthermore, thanks to Lemma~\ref{lem:loop_origin}, there is no Brownian loop in $\theta$ which intersects the origin. Combined, we can deduce that $\cap_{\eps> 0}\Fc_\eps(\theta)$ is the trivial $\sigma$-algebra. We have therefore proved that $\dimh(\Sc\cap \gamma([0,t]))$ is $\Pf_1$ a.s.\ equal to the same deterministic number, which must be $2-\xi_c(2)$ by Proposition~\ref{prop:dim-upper}, because a complete cluster sampled according to $\Pf_1$ can be obtained from exploring the loop soup in a Markovian way.		
		By scaling and \eqref{eq:conf_cov},  the conclusions of the previous items also hold for $\Pf_r$ instead of $\Pf_1$ for any $r>0$. This implies that for $\nu$ a.e.\ complete cluster $\theta$ with outer boundary $\gamma$, if $\gamma$ has total time length $S$, then for all $0<t\le S$, we have $\dimh(\Sc\cap \gamma([0,t])) = \dimh(\Sc\cap \gamma) = 2-\xi_c(2)$. Here, we have parametrized $\gamma$ according to its half-plane capacity, but the same result obviously holds for any parametrization of $\gamma$ which is continuous and injective from $[0,1]$ in the clockwise direction starting from the origin. The claim for double points follows from the same proof.
\end{proof}
	
We are now ready to complete the proof of Proposition~\ref{prop:0-1}. 
\begin{proof}[Proof of Proposition~\ref{prop:0-1}]
Let us now work in the unit disk $\Ub$. 
Let $\Gamma_0$ be a Brownian loop soup in $\Ub$. We remark that it suffices to prove the result for the outermost cluster in $\Gamma_0$ which encircles the origin, thanks to the conformal invariance of the loop soup.

Let $\Omega_0$ be the collection of outer boundaries of the outermost clusters in $\Gamma_0$, so that $\Omega_0$ is distributed as a CLE in $\Ub$. Let $\gamma_0$ be the loop in $\Omega_0$ which encircles the origin.
Let $\chi$ be any given deterministic curve from a point on $\partial \Ub$ to $0$ parametrized by $[0,1]$ such that for all $0<t\le 1$, $\Ub\setminus \chi([0,t])$ is simply connected. Let $o \in [0,1]$ be the first time that $\chi$ intersects the loop $\gamma_0$. We parametrize $\gamma_0$ from $\chi(o)$ back to itself in the clockwise direction injectively and continuously by $[0,1]$, so that $\gamma_0(0) =\gamma_0(1) = \chi(o)$. Then Lemmas~\ref{lem:pinned-0-1} and~\ref{lem:conformal_dim} imply that almost surely, for any $0< s\le 1$, we have 
$$\dimh(\Sc \cap \gamma_0([0,s])) =2-\xi_c(2), \quad \dimh(\Dc \cap \gamma_0([0,s])) =2-\xi_c(4),$$
where $\Sc$ and $\Dc$ are respectively the sets of simple and double points of $\Gamma_0$. 

Suppose now that the statement of Proposition~\ref{prop:0-1} is not true for $\gamma_0$, Without loss of generality, suppose that with positive probability, there exists a portion $\ell_0\subset \gamma_0$, such that
$$\dimh(\Sc \cap \ell_0) < 2-\xi_c(2).$$
Note that for any portion $\ell$ of $\gamma$, we always have $\dimh(\Sc\cap \ell) \le \dimh(\Sc \cap \gamma)$. 
The proof works the same way if we suppose $\dimh(\Dc \cap \gamma_0([0,s])) < 2-\xi_c(4)$ instead.
From this assumption, we deduce that there exists $n \in \Nb$ and a curve $\chi$ which follows the dyadic grid $2^{-n} \Zb^2$, such that with positive probability, $\chi(o)$ is in the interior of some portion $\ell_0$ with the property that $\dimh(\Sc \cap \ell_0) < 2-\xi_c(2)$. This contradicts the statement of the previous paragraph. Therefore, we have proved Proposition~\ref{prop:0-1}.
\end{proof}

\section{Proof of Theorem~\ref{main-thm}}\label{sec:main}
Theorem~\ref{main-thm} states that a set of properties hold almost surely for every boundary $\ell$ of every cluster in $\Gamma_0$.
In Proposition~\ref{prop:0-1}, we have proved that these properties hold when $\ell$ is the outer boundary of an outermost cluster. We will now extend this result to all boundaries of all clusters in the Brownian loop soup.

Let us first extend this result to inner boundaries of an outermost cluster.
\begin{lemma}\label{lem:inner_boundary}
Let $K$ be the outermost cluster encircling $0$ in $\Gamma_0$. Let $\ell$ be the boundary of the connected component of the complement of $K$ containing $0$. Then almost surely, for any portion $\ell_0$ of $\ell$, we have
\begin{align*}
\dimh(\ell_0 \cap \Sc)= 2- \xi_c(2), \quad \dimh(\ell_0 \cap \Dc)= 2- \xi_c(4), \quad \ell\cap \Tc=\emptyset.
\end{align*}
\end{lemma}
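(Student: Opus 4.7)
The strategy is to adapt the proof of Proposition~\ref{prop:0-1} to the inner boundary $\ell$: first obtain the dimension upper bounds, the absence of triple points, and the positive-probability lower bounds by adapting the first- and second-moment estimates of Section~\ref{sec:dim} to inner-frontier points; then upgrade to almost-sure statements via a zero-one law analogous to Proposition~\ref{prop:0-1}.

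The key observation for the first step is that the generalized disconnection exponent $\xi_c(2k)$ is conformally invariant, so the first- and second-moment estimates of Lemma~\ref{prop:(k,n)} carry through verbatim once we replace the condition ``$S$ is not disconnected from infinity by $\wt\gamma$'' by ``$S$ is not disconnected from $0$ by $\wt\gamma$''. The arguments of Sections~\ref{subsec:first_moment} and~\ref{subsec:second_moment} then yield the inner-frontier analogue of Lemma~\ref{prop:(k,n)}, namely the first-moment estimate of order $2^{-n\xi_c(2k)}$ and a matching second-moment estimate for inner-frontier $(k,n)$-squares. Following the proof of Proposition~\ref{prop:single_loop} and then of Proposition~\ref{prop:dim-upper}, one obtains $\dimh(\ell\cap\Sc)\le 2-\xi_c(2)$, $\dimh(\ell\cap\Dc)\le 2-\xi_c(4)$ and $\ell\cap\Tc=\emptyset$ almost surely (the last from $\xi_c(6)>2$), together with the matching lower bounds on the dimensions with positive probability.

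To upgrade the lower bounds to almost-sure statements for every portion $\ell_0\subseteq\ell$, I would replicate the proof of Proposition~\ref{prop:0-1}. Using the CLE exploration of $\Gamma_0$ along a deterministic curve $\chi$ from a point of $\partial\Ub$ to $0$, combined with Qian's partial loop-soup exploration (Lemmas~\ref{lem:qian} and~\ref{lem:eta_indep}), one reveals the outermost cluster $K$ encircling $0$ (and hence its inner boundary $\ell$) progressively. Arguments analogous to Lemmas~\ref{lem:0-1-first} and~\ref{lem:pinned-0-1} then show that $\dimh(\Sc\cap\ell)$ and $\dimh(\Dc\cap\ell)$ are measurable with respect to a trivial germ $\sigma$-algebra and are scale-invariant, hence almost-sure deterministic constants, which the positive-probability lower bounds from the first step force to be $2-\xi_c(2)$ and $2-\xi_c(4)$, respectively. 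A final dyadic-grid refinement of $\chi$, as at the very end of the proof of Proposition~\ref{prop:0-1}, promotes the conclusion to every portion $\ell_0$ of $\ell$. The main obstacle is ensuring that Qian's partial exploration---originally designed to reveal only the outer boundaries of outermost clusters---extends to reveal $\ell$ itself in a Markovian and scale-invariant way; a workable approach is a two-step procedure that first exposes the outer boundary $\gamma_0$ of $K$ via the CLE exploration, then, conditionally on $\gamma_0$, exposes the $K$-loops attached to $\gamma_0$ from inside using the cluster decomposition of~\cite{MR3901648} until $\ell$ is fully revealed, and verifying the required Markov and scale-invariance properties is the central technical step.
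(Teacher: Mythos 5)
Your proposal takes a genuinely different route from the paper, and it has a gap in the second half that the paper's approach avoids entirely.

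The paper's proof is a short conditional-inversion argument: it introduces the events $E_r$ on which $K$ avoids $B(0,r)$ and is the unique cluster wrapping around $\Ub\setminus B(0,r)$ in the restricted loop soup, shows $\Pb(\cup_{r}E_r)=1$, and then exploits the fact that the loop soup in $\Ub\setminus B(0,r)$ is invariant under the M\"{o}bius inversion $z\mapsto r/z$, which swaps the two boundary circles. Under this map the inner boundary of $K$ becomes the outer boundary of a cluster wrapping around the same annulus, so the almost-sure conclusion of Proposition~\ref{prop:0-1} (which has already been established for outer boundaries of outermost clusters) transfers directly via Lemma~\ref{lem:conformal_dim}. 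No new moment estimates and no new zero-one law are needed. Your proposal instead re-runs the whole machinery: adapting Sections~\ref{subsec:first_moment}--\ref{subsec:second_moment} to inner-frontier squares, then re-deriving a zero-one law by a new two-step Markovian exploration that first exposes $\gamma_0$ and then the $K$-loops attached to it from inside.

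The first half of your proposal is plausible but redundant. The second half contains a genuine gap. You yourself flag the crux, namely that Qian's partial exploration is designed to expose outer boundaries of outermost clusters from the domain boundary, whereas $\ell$ is neither a CLE loop nor reachable from $\partial\Ub$ without first crossing $K$, and you leave as ``the central technical step'' the verification that your proposed two-step exploration is Markovian and scale-invariant. This is not a routine verification: the germ $\sigma$-algebra arguments in Lemmas~\ref{lem:0-1-first} and~\ref{lem:pinned-0-1} hinge on the explored curve being SLE-like (so its driving Brownian motion has a trivial germ $\sigma$-algebra) and on Lemma~\ref{lem:loop_origin}, neither of which has an obvious analogue for an exploration that starts conditionally on $\gamma_0$ and peels off the $K$-loops from the inside. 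Without resolving this you do not have a complete proof, whereas the inversion trick sidesteps the issue by literally turning the inner boundary into an outer boundary for which the zero-one law has already been proved.
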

\begin{proof}
For each $r\in(0,1)$, let $E_r$ be the event that the following holds
\begin{itemize}
\item $K \cap B(0,r) =\emptyset$;
\item for the loop soup $\Gamma_0$ restricted to the annulus $\Ub \setminus B(0,r)$,  $K$ is the only cluster going around the annulus.
\end{itemize}
For each $r\in (0,1)$, $E_r$ has positive probability. Indeed, with positive probability, $\Gamma_0$ contains one big loop contained in $\Ub \setminus B(0, (1+r)/2)$ and going around the annulus, and all other loops of $\Gamma_0$ form clusters with diameter at most $r/2$. This event is contained in $E_r$.
We also have that 
\begin{align}\label{eq:union_E}
\Pb\left(\cup_{r\in (0,1)} \, E_r\right)=1.
\end{align}
To see \eqref{eq:union_E}, we define $$
\mbox{$R_1:=\sup\{r>0: K\cap B(0,r)=\emptyset\}$ and $R_2:=\sup\{r>0: B(0,r)\in {\rm hull}(K_2)\}$}
$$
where $K_2$ is the outermost cluster within the connected component of the complement of $K$ containing $0$ and $${\rm hull}(K_2):=\{x\in\mathbb{C}; \mbox{ $x$ is disconnected from infinity by $K_2$}\}$$ stands for the hull of $K_2.$
Then $\{R_2 < r < R_1\} \subseteq E_r$. Since $R_1> R_2$ a.s., the union of $\{R_2 < r < R_1\}$ over $r\in(0,1)$ covers the whole event space.

Let $\ell_1$ be the outer boundary of $K$. Then $\ell_1$ satisfies all the almost sure properties of Proposition~\ref{prop:0-1}. These properties remain a.s.\ true even after we condition on $E_r$.

On the event $E_r$, the loop soup in $\Ub \setminus B(0,r)$ contains a unique cluster $K$ which goes around the annulus, and the outer boundary of $K$ a.s.\ satisfy the properties of Proposition~\ref{prop:0-1}.
However, the law of the loop soup in $\Ub \setminus B(0,r)$ is invariant under the conformal map $z\mapsto r z^{-1}$. This implies that the inner boundary of $K$ (i.e., the boundary of the connected component of the complement of $K$ containing $0$) also satisfy the properties of Proposition~\ref{prop:0-1} a.s.

Thus we have obtained the statements of the lemma, conditionally on $E_r$. Then the lemma follows from \eqref{eq:union_E}. 
\end{proof}

We can now complete the proof of Theorem~\ref{main-thm}.
\begin{proof}[Proof of Theorem~\ref{main-thm}]
For each $z\in\Ub$, let $K_z$ be the outermost cluster encircling $z$ in $\Gamma_0$. 
By Lemma~\ref{lem:inner_boundary} and conformal invariance of the loop soup, we know that the properties of Proposition~\ref{prop:0-1} a.s.\ hold for the boundary of the connected component of the complement of $K_z$ containing $z$. Since this is true for all $z\in\Ub$,  the properties of Proposition~\ref{prop:0-1} a.s.\ hold for all the boundaries of all the outermost clusters in $\Gamma_0$.

We need to extend this result to those clusters in $\Gamma_0$ which are not outermost.
Suppose that $K$ is an outermost cluster in $\Gamma_0$. Conditionally on $K$, in each bounded connected component $O$ of the complement of $K$, $\Gamma_0$ restricted to $O$ is distributed as a loop soup in $O$. Therefore  the properties of Proposition~\ref{prop:0-1} also hold for the boundaries of the outermost clusters of $\Gamma_O$. By iterating the above argument, we  complete the proof of Theorem~\ref{main-thm}.
\end{proof}

\bibliographystyle{plain} 
\bibliography{cr}

\end{document}